\documentclass[11pt, reqno]{amsart}
\usepackage[includehead,includefoot,margin=20mm]{geometry}
\usepackage[latin1]{inputenc}
\usepackage{amssymb}
\usepackage{amsmath}
\usepackage{amsfonts}
\usepackage{mathtools}
\usepackage{mathrsfs}
\usepackage{hyperref}
\usepackage[all]{xy}  
\usepackage{verbatim}
 \usepackage{graphicx}
\usepackage[dvipsnames]{xcolor}
\usepackage{xy}
\hypersetup{colorlinks=true,citecolor=blue,linkcolor=red,urlcolor=red}
\usepackage{tikz}
\usetikzlibrary{arrows.meta, positioning}
\usetikzlibrary{arrows.meta}
\usepackage{nicematrix}
\usepackage{csquotes}
\usepackage{epigraph}
\theoremstyle{plain}
\newtheorem{theorem}{Theorem}[section]

\newtheorem{lemma}[theorem]{Lemma}
\newtheorem{proposition}[theorem]{Proposition}

\newtheorem{corollary}[theorem]{Corollary}
\newtheorem*{theorem*}{Theorem}

\theoremstyle{definition}
\newtheorem{definition}[theorem]{Definition}

\newtheorem{example}[theorem]{Example}

\theoremstyle{remark}

\newtheorem{remark}[theorem]{Remark}

\title[Fano Generalized Bott-Samelson Varieties]{Fano Generalized Bott-Samelson Varieties}

\author[B.~N.~Chary]{Narasimha Chary Bonala}
\address{Narasimha Chary Bonala\\
Department of Mathematics, Indian Institute of Technology Kanpur,
U.P. India, 208016.
}
\email{chary@iitk.ac.in}

\author[S.~Pattanayak]{Santosha Pattanayak}
\address{Santosha Pattanayak\\
Department of Mathematics, Indian Institute of Technology Kanpur, U.P. India, 208016.
}
\email{santosha@iitk.ac.in}

\author[Y. Singh]{Yogendra Singh}
\address{Yogendra Singh\\ Department of Mathematics, Indian Institute of Technology Kanpur,
U.P. India, 208016.}
\email{ysingh23@iitk.ac.in}

\thanks{ }

\keywords{Generalized Bott-Samelson varieties, Schubert varieties, Canonical divisor, Peaks}

\subjclass[2020]{14M15, 14J45, 14L30 (primary), 14C20, 14D06 (secondary).} 
\begin{document}
\begin{abstract}

The Bott-Samelson varieties provide natural desingularizations of Schubert varieties, and their generalizations, called generalized Bott-Samelson varieties, were constructed by Nicolas Perrin as towers of locally trivial fibrations with fibers isomorphic to Schubert varieties. In this article, we give a complete characterization of Fano and weak Fano generalized Bott-Samelson varieties. As a consequence, we recover the corresponding results for  ($G$-)Bott-Samelson varieties and minuscule generalized Bott-Samelson varieties. 
\end{abstract}
\maketitle
\section{Introduction}

 Let $G$ be a semisimple simply-connected complex algebraic group. Fix a maximal torus $T \subset G$ and a Borel subgroup $B \subset G$ containing $T$. Let 
$
W := N_G(T)/T
$
denote the Weyl group of $G$ with respect to $T$, where $N_G(T)$ is the normalizer of $T$ in $G$. For a parabolic subgroup $P$ containing $B$, denote by $W_P$ the Weyl group of $P$, 
and by $W^{P}$ the set of minimal length representatives of the cosets in $W/W_P$. For $w\in W^{P}$, consider the associated Schubert variety \[
X(w):=\overline{BwP}/P \subseteq G/P.
\]
It is well known that $X(w)$ is normal, Cohen-Macaulay, and has rational singularities 
\cite{Ram85, And85, LSe86}. However, in general, it is not smooth. Schubert varieties are central objects in representation theory and have been extensively studied. There are several approaches to understanding their geometry and singularities. One important approach is to study the singular locus, its irreducible components, and the nature of singularity at a general point of each component. This problem is largely understood for Schubert varieties in partial flag varieties, but remains incomplete, in general. Another approach involves studying cohomology and vanishing theorems for line bundles, often via resolutions of singularities. For a survey on singularities of Schubert varieties, we refer to \cite{BL00}. 

For a sequence $\tilde{w} = (s_{i_1}, s_{i_2}, \dots, s_{i_r})$ of simple reflections 
such that $w = s_{i_1}s_{i_2}\cdots s_{i_r}\in W^{P}$ is a reduced expression, we can associate a
Bott-Samelson variety $Z(\tilde{w})$, which provides a natural 
desingularization of the Schubert variety $X(w)$. 
These varieties were originally introduced by Bott and Samelson \cite{BS58} in a differential-geometric and topological setting, and later independently adapted by Demazure \cite{Dem74} and Hansen \cite{Han73} to the algebro-geometric context. 
The Bott-Samelson variety $Z(\tilde{w})$ can be viewed as an iterated $\mathbb{P}^1$-bundle associated with the reduced expression $\tilde{w}$.  The geometry of these varieties depends on the given expression of the Weyl group element corresponding to the Schubert variety (see, for instance, \cite[Page 32]{BKP15} and \cite{BK17}).

The Bott-Samelson resolutions are rarely small: fibers often have large dimensions and there are many contracted subvarieties. Also, the automorphisms of the target need not lift to the resolution (see \cite[Section~7]{BKP15} for explicit examples).
The \emph{generalized} Bott-Samelson varieties include small resolutions of Schubert varieties constructed by Zelevinsky \cite{Zel83}, by Sankaran and Vanchinathan \cite{SV94, SV95}, and by Perrin in full generality (see \cite[Cor.~7.9]{Per07}). 
 They are constructed as a tower of locally trivial fibrations with fibers isomorphic to
Schubert varieties. 
 From the construction, it is easy to observe that the generalized Bott-Samelson varieties
$\widehat{X}(\widehat{w})$ are normal, Cohen-Macaulay, and have rational singularities (see Lemma \ref{rmk:smoothness} and Theorem~\ref{prop:rational-sing}). These descriptions are well suited for 
the computations of Kazhdan-Lusztig polynomials.

The Gorenstein property provides a measure of how far an algebraic variety is from being smooth: every smooth variety is Gorenstein, and every Gorenstein variety is Cohen-Macaulay. In general, a  (normal) variety is Gorenstein if it is Cohen-Macaulay and its canonical sheaf is a line bundle. Recall that a  projective Gorenstein variety $X$ is called Fano (resp. weak Fano) if its anticanonical divisor $-K_X$ is ample (resp. nef and big).

There is a classification of minuscule Schubert varieties that are Gorenstein (or Fano); see \cite{WY06, Per09}. Recently, in \cite{CKM25}, Li, Rietsch, and Yang gave a classification of Fano Schubert varieties in the simply laced case.

The first author characterized Fano (weak Fano) Bott-Samelson varieties associated with reduced expressions in \cite{BC18}. Bott-Samelson varieties admit natural toric degenerations to Bott manifolds, Fano and weak Fano Bott manifolds were classified in \cite{BC18a}. Subsequently, as an application of this toric degeneration, a classification of Fano and weak Fano Bott-Samelson varieties was obtained in \cite{BC21}. In \cite{BS24}, Bhaumik and Saha considered arbitrary expressions (not necessarily reduced) and provided a different characterization of Fano (weak Fano) Bott-Samelson varieties, and also classified Fano (weak Fano) $G$-Bott-Samelson varieties. By the work of Perrin in \cite{Per07}, one can give a classification of minuscule generalized Bott-Samelson varieties that are Fano (weak Fano); see \cite[Proposition 6.3 and Corollary 6.7]{Per07}.

In this paper, we classify the Fano (weak Fano) \emph{generalized Bott-Samelson varieties}.
To state our main results, we first set up some notation.

Let $w \in W$ and let $\widehat{w} = (w_1, \dots, w_m)$ be an \emph{admissible 
generalized (not necessarily reduced) decomposition} of $w$. Denote by $\widehat{X}(\widehat{w})$ the associated generalized Bott-Samelson variety. See Section \ref{sec:gBS definition} for more details.

We first describe the line bundles and divisors on $\widehat{X}(\widehat{w})$.
Let $S$ be the set of simple roots of $G$ with respect to $B$, and let $P^w$ be the largest parabolic subgroup of $G$ containing $B$ such that $w$ is a minimal-length representative in $W/W_{P^w}$. For $1 \leq j \leq m$, let $I^{w_j} \subseteq S$ be the subset corresponding to $P^{w_j}$. 

For each $1 \leq j \leq m$ and each $\alpha \in S \setminus I^{w_j}$, we associate a line bundle $\mathcal{L}_{j,\alpha}$ on $\widehat{X}(\widehat{w})$ (see Section~\ref{map:pi} and Eq.\eqref{eq: Line bundles on gBS}). We prove that the Picard group of $\widehat{X}(\widehat{w})$ is a free abelian group with a basis consisting of these $\mathcal{L}_{j, \alpha}$'s (see Theorem~\ref{thm:linebundles}). Moreover, we provide a characterization of very ample line bundles on $\widehat{X}(\widehat{w})$ (see Theorem~\ref{thm:very ample}), generalizing the result of Lauritzen and Thomsen for Bott-Samelson varieties (see Theorem~3.1 in \cite{LT04}). As a consequence, we show that any ample line bundle on $\widehat{X}(\widehat{w})$ is very ample (Corollary~\ref{cor:ample}). We further prove that a line bundle on $\widehat{X}(\widehat{w})$ is globally generated if and only if it is nef (Corollary~\ref{cor:globally}).

We construct a finite sequence $\operatorname{Peaks}\widehat{X}(\widehat{w})$ of roots of $(G,T)$ and prove that it is in one-to-one correspondence with a basis of the divisor class group $\operatorname{Cl}(\widehat{X}(\widehat{w}))$, see Proposition \ref{pro: divisor classes for gBS}. This construction generalizes the notion of peaks of a quiver associated with minuscule generalized Bott-Samelson variety in \cite[Section 4]{Per07}. For further details, see Section~\ref{sec: Fano gBS}.
Using this notion, we obtain the following description of the anticanonical line bundle on $\widehat{X}(\widehat{w})$ (see Proposition ~ \ref{anti-can}). 
\begin{proposition}
The anticanonical divisor class of $\widehat{X}(\widehat{w})$ is given by
\[
[-K_{\widehat{X}(\widehat{w})}]
=
\sum_{\widehat{\eta}\in \operatorname{Peaks}\widehat{X}(\widehat{w})}
\big(\operatorname{ht}(\widehat{\eta}^\vee)+1\big)\widehat{D}_{\widehat{\eta}},
\]
where $\operatorname{ht}(\widehat{\eta}^\vee)$ denotes the height of the coroot $\widehat{\eta}^\vee$ and $\widehat{D}_{\widehat{\eta}}$ is the divisor associated with the peak $\widehat{\eta}$.
\end{proposition}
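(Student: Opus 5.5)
We plan to compute the anticanonical class inductively along Perrin's tower of fibrations. Write $\widehat{w}=(w_1,\dots,w_m)$. Then $\widehat{X}(\widehat{w})$ maps to $\widehat{X}(\widehat{w}')$, with $\widehat{w}'=(w_1,\dots,w_{m-1})$. This map is a locally trivial fibration with fibre the Schubert variety $X(w_m)\subseteq G/P^{w_m}$. Concretely, $\widehat{X}(\widehat{w}) = \widehat{Y}\times^{P} X(w_m)$ for a suitable principal bundle $\widehat{Y}\to\widehat{X}(\widehat{w}')$ under a parabolic $P$. By the earlier results (Lemma~\ref{rmk:smoothness}, Theorem~\ref{prop:rational-sing}) all spaces are normal and Cohen--Macaulay. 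Hence the canonical sheaf is reflexive and divisor classes can be compared on the smooth locus, whose complement has codimension $\geq 2$.

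On the smooth locus of a locally trivial fibration we have $K_{\widehat{X}(\widehat{w})} = \pi^*K_{\widehat{X}(\widehat{w}')} + K_{\pi}$. Here the relative canonical class $K_\pi$ is obtained by twisting the canonical class of the fibre. For the fibre we use the classical formula for a Schubert variety in $G/P$:
\[
-K_{X(w)} = \sum_{\alpha} (\text{coefficient})\, X(ws_\alpha\text{-type divisors}),
\]
in the $B$-stable divisor form. Explicitly, $-K_{X(w)}$ is the restriction of $\mathcal{L}(\rho_P)$-type line bundles plus the boundary $\partial X(w)$. That is, $\omega_{X(w)} = \mathcal{L}(-\rho)\otimes\mathcal{O}(-\partial X(w))$ in the Ramanathan/Kumar form. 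Rewritten in terms of the Schubert divisors $X(ws_\beta)$ with $\beta$ a positive root such that $ws_\beta\lessdot w$, this becomes $-K_{X(w)}=\sum_\beta(\langle\rho,\beta^\vee\rangle+1)[X(ws_\beta)]$ up to linear equivalence. The key observation is that $\langle \rho,\beta^\vee\rangle=\operatorname{ht}(\beta^\vee)$, which is the source of the $\operatorname{ht}(\widehat{\eta}^\vee)+1$ coefficients. We will establish this for a single Schubert variety first, by writing the pull-back of $\mathcal{L}(\rho)$ via Chevalley's formula.

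Next we globalize. The boundary divisors of each fibre $X(w_j)$, and the $\rho$-twists, sweep out $B$-stable prime divisors $\widehat{D}_{\widehat{\eta}}$ of the total space. By Proposition~\ref{pro: divisor classes for gBS}, these are exactly the divisors indexed by $\operatorname{Peaks}\widehat{X}(\widehat{w})$, and their classes form a basis of $\operatorname{Cl}$. We show by induction on $m$ that the twisting line bundle $\widehat{Y}\times^P \mathcal{L}(\rho_{w_m})$ pulled back from the base combines with $\pi^*(-K_{\widehat{X}(\widehat{w}')})$. The pulled-back divisors $\widehat{D}_{\widehat{\eta}'}$ with $\widehat{\eta}'$ a peak of $\widehat{X}(\widehat{w}')$ then either stay peaks with unchanged coroot height, or are absorbed. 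In the latter case the peak root is replaced by its image under $w_m^{-1}$-type transformations as in the definition of $\operatorname{Peaks}$, and the height bookkeeping matches.

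The main obstacle is this last step. We must check that Chevalley's formula for the relative twist by $\rho$ reproduces exactly the peak roots with the correct coroot heights, and that no extra multiplicities appear. We would first verify this in the case $m=2$ by explicit computation of $\mathcal{L}_{j,\alpha}$ restricted to the divisors. We would then follow the recursive definition of $\operatorname{Peaks}$ to conclude in general.
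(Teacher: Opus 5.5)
Your proposal stops at the step that carries the content of the statement, and the inductive route you sketch does not supply it. Over the open $B$-orbit $U$ of the base $\widehat{X}(\widehat{w}')$ you have $\pi^{-1}(U)\cong U\times X(w_m)$. Restricting to $\pi^{-1}(U)$ therefore only recovers the coefficients $\operatorname{ht}(\eta^\vee)+1$ of the divisors $\widehat{D}_\eta$ with $\eta\in\operatorname{Peaks}X(w_m)$. It determines $[-K_{\widehat{X}(\widehat{w})}]$ only modulo $\pi^*\operatorname{Cl}(\widehat{X}(\widehat{w}'))$.

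Write $\widehat{D}'$ for divisors on the base. Then $\widehat{D}_{w_m^{-1}\widehat{\eta}'}=\pi^*\widehat{D}'_{\widehat{\eta}'}$, and the induction hypothesis puts coefficient $\operatorname{ht}((\widehat{\eta}')^\vee)+1$ on it. So the whole inductive step is to show that the base part of $-K_\pi$ equals $\sum_{\widehat{\eta}'}\big(\operatorname{ht}((w_m^{-1}\widehat{\eta}')^\vee)-\operatorname{ht}((\widehat{\eta}')^\vee)\big)\pi^*\widehat{D}'_{\widehat{\eta}'}$. This correction is generally nonzero for every base peak, so there is no dichotomy ``unchanged height or absorbed''. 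It can also drive coefficients to $0$ or below for non-reduced $\widehat{w}$.

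Already for $\widehat{w}=(s_1s_2,s_1)$ in type $A_2$ you get $\widehat{X}(\widehat{w})=G/B\to G/P_{\alpha_1}\cong\mathbb{P}^2$. The base peak $\alpha_1+\alpha_2$ has coefficient $3$ in $-K_{\mathbb{P}^2}$. Its transport $s_1(\alpha_1+\alpha_2)=\alpha_2$ has coefficient $2$ in $-K_{G/B}$. The $-1$ comes from $-K_\pi=\mathcal{L}(\alpha_1)=2\widehat{D}_{\alpha_1}-\widehat{D}_{\alpha_2}$.

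The same example shows that the structure group $P^{w_{m-1}}\cap G_{w_{m-1}}$ (here $P_{\alpha_1}$) need not preserve the Schubert divisors of the fibre. Here it acts transitively on the fibre $\mathbb{P}^1$, and $\widehat{D}_{\alpha_1}=X(s_1s_2)$ is not a sub-bundle. So ``twisting the fibre canonical class'' does not by itself give a computable divisor. You would need a relative Mehta--Ramanathan formula for $\omega_\pi$ as a linearized sheaf, plus a Demazure-type formula on the base for the resulting line bundle; neither is provided. Also, $\omega_{X(w)}=\mathcal{L}(-\rho)\otimes\mathcal{O}(-\partial X(w))$ holds in $G/B$, not in $G/P^w$, where $\rho$ is not a character of $P^w$. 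The $G/P^w$ formula must be obtained by a birational push-forward.

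The paper avoids all of this. The map $\tilde\pi\colon Z(\tilde w)\to\widehat{X}(\widehat{w})$ is birational with normal target, so $[-K_{\widehat{X}(\widehat{w})}]=\tilde\pi_*[-K_{Z(\tilde w)}]$. Corollary~\ref{cor:antican} gives $-K_{Z(\tilde w)}=\sum_l\big(1+\operatorname{ht}(\gamma_l^\vee)\big)\xi_l$ with $\gamma_l=s_{i_r}\cdots s_{i_{l+1}}(\alpha_{i_l})$. For the letter in position $(d,j)$, this root is $(w_{j+1}\cdots w_m)^{-1}s_{r_j,j}\cdots s_{d+1,j}(\alpha_{d,j})$. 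The $\xi_{d,j}$ whose inner root is not a peak of $X(w_j)$ are contracted, and the others push forward to the $\widehat{D}_{\widehat{\eta}}$. The transport of base peaks and the possibly negative heights therefore come for free.

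If you want to keep an inductive argument, run it one letter at a time on $Z(\tilde w)$ and then push forward. There the relative tangent bundle $T_j=\sum_{k\le j}\langle\beta_j,\beta_k\rangle\xi_k$ is exactly the base correction you are missing.
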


If the anticanonical divisor $-K_{\widehat{X}(\widehat{w})}$ is Cartier, then there exist integers $c_{j,\alpha}\in \mathbb{Z}$ such that\begin{equation}\label{eq: coff intro}
\mathcal O_{\widehat{X}(\widehat{w})}\big(-K_{\widehat{X}(\widehat{w})}\big)=\sum_{j=1}^{m}\sum_{\alpha\in S\setminus I^{w_j}}c_{j,\alpha}\mathcal{L}_{j,\alpha}.
\end{equation}

We denote the divisor class associated to $\mathcal{L}_{j,\alpha}$ by $[\mathcal{L}_{j,\alpha}]$. We then express the divisor classes $[\mathcal{L}_{j,\alpha}]$ in the basis 
$\{\widehat{D}_{\widehat{\eta}} \mid \widehat{\eta}\in \operatorname{Peaks}\widehat{X}(\widehat{w})\}$ 
(see Proposition~\ref{prop: divisor classes for line bundles on gBS}) and compare coefficients in Eq.\eqref{eq: coff intro}. 
This yields a linear system (cf. Eq.\eqref{eq:linear systems}), from which we derive a subsystem of linear equations and provide an explicit method for solving it and computing the coefficients $c_{j,\alpha}$ (see Subsection~\ref{subsection: M1}).

When $G$ is simply laced type, for each $1\leq j\leq m$, there is a bijection
\[
\mu_{w_j} : S\setminus I^{w_j} \longrightarrow \mathcal B_{P^{w_j}} \subseteq \operatorname{Peaks}X(w_j),
\]
such that $\langle \omega_\alpha ,\mu_{w_j}(\beta)\rangle=\delta_{\alpha\beta}$
for all $\alpha, \beta \in S\setminus I^{w_j}$, where $\operatorname{Peaks}X(w_j)$ denotes the set of peaks of the Schubert variety $X(w_j)$ (see Subsection~\ref{subsub: Fano for Xw} and Lemma~\ref{lem:B_w,P^w}).
Then, we prove the following.

\begin{theorem}[see Theorem~\ref{thm: Fano gBS}]\label{thm:main} Assume that $G$ is of simply laced type.  The generalized Bott-Samelson variety $\widehat{X}(\widehat{w})$ is Fano (resp. weak Fano) if and only if the following conditions hold:

\begin{enumerate}
    \item For each $1 \le j \le m$ and  
    each $
    \eta \in \operatorname{Peaks} X(w_j) \setminus \mathcal{B}_{P^{w_j}},
    $
    we have
    \begin{equation*}\label{eq:height relations for Xwi}
    \operatorname{ht}(\eta^\vee) + 1
    = \sum_{\alpha \in S \setminus I^{w_j}} 
      d_\alpha \big( \operatorname{ht}(\mu_{w_j}(\alpha)^\vee) + 1 \big),
    \end{equation*}
    where $d_\alpha \in \mathbb{Z}_{\ge 0}$ denotes the coefficient of $\alpha^\vee$ in the expansion of $\eta^\vee$.
    \item For all $1 \le j \le m$ and all $\alpha \in S \setminus I^{w_j}$, we have 
    \[
    c_{j,\alpha} > 0 \quad (\text{resp. } c_{j,\alpha} \ge 0).
    \]
\end{enumerate}
\end{theorem}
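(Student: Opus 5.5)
The argument splits into two independent parts: condition (1) says exactly that $-K$ is Cartier, and condition (2) says exactly that the resulting line bundle is ample (resp. nef and big).

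\emph{Condition (1) is equivalent to $-K$ being Cartier.} Since Fano and weak Fano both require Gorenstein, we first show that $-K_{\widehat{X}(\widehat{w})}$ is Cartier if and only if (1) holds. Start from the anticanonical formula of Proposition~\ref{anti-can}, $[-K]=\sum_{\widehat\eta}(\operatorname{ht}(\widehat\eta^\vee)+1)\widehat D_{\widehat\eta}$. The Picard group is freely generated by the $\mathcal L_{j,\alpha}$ (Theorem~\ref{thm:linebundles}). The classes $[\mathcal L_{j,\alpha}]$ in the peak basis are given by Proposition~\ref{prop: divisor classes for line bundles on gBS}. So $-K$ is Cartier exactly when the linear system \eqref{eq:linear systems} has an integral solution $(c_{j,\alpha})$.

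The peaks of $\widehat X(\widehat w)$ are organised level by level, with the $j$-th level coming from $\operatorname{Peaks}X(w_j)$. In the simply laced case, the bijection $\mu_{w_j}$ with $\langle\omega_\alpha,\mu_{w_j}(\beta)\rangle=\delta_{\alpha\beta}$ (Lemma~\ref{lem:B_w,P^w}) makes the subsystem indexed by the peaks in $\mathcal B_{P^{w_j}}$ triangular in the $c_{j,\alpha}$. I would solve that subsystem recursively in $j$, as in Subsection~\ref{subsection: M1}. This determines all $c_{j,\alpha}$ uniquely as integers.

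It remains to check the equations indexed by the other peaks $\eta\in\operatorname{Peaks}X(w_j)\setminus\mathcal B_{P^{w_j}}$. For such $\eta$, the coefficient of $\widehat D_\eta$ in $[\mathcal L_{j,\alpha}]$ should be $\langle\omega_\alpha,\eta^\vee\rangle=d_\alpha$, up to contributions from other levels. Those contributions are the same ones that already appear in the $\mathcal B$-equations, so they cancel after subtracting. The remaining equation is then precisely the height identity in (1).

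The main obstacle is exactly this cancellation of the cross-level terms: one must verify that the pullback and pushforward structure of the fibration $\widehat X(\widehat w)\to\widehat X(w_1,\dots,w_{m-1})$ contributes identically to peaks in $\mathcal B_{P^{w_j}}$ and outside it. I would first try induction on $m$, using the fibre $X(w_m)$ and the restriction of the divisors $\widehat D_{\widehat\eta}$ to it.

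\emph{Condition (2) is equivalent to ampleness (resp. nef and big).} Once $-K=\sum c_{j,\alpha}\mathcal L_{j,\alpha}$, we apply the characterisation of very ample line bundles (Theorem~\ref{thm:very ample}). Together with Corollary~\ref{cor:ample}, this says a line bundle is ample if and only if all $c_{j,\alpha}>0$. This gives the Fano statement.

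For weak Fano, Corollary~\ref{cor:globally} and the ample criterion identify nef with all $c_{j,\alpha}\ge0$. Bigness of $-K$ then follows automatically: $-K$ is effective with coefficients $\operatorname{ht}+1>0$ on every peak divisor. One then argues that a nef divisor which is a positive combination of all the boundary-type divisors $\widehat D_{\widehat\eta}$ is big, since $\widehat X\setminus\bigcup\widehat D_{\widehat\eta}$ is affine, being an open $B$-orbit-type cell. Conversely, weak Fano forces nef and hence $c_{j,\alpha}\ge0$, which completes the proof.
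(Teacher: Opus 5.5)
Your treatment of condition (2) via Theorem~\ref{thm:very ample} is the paper's. The route to condition (1), however, has a genuine gap exactly where you flag it, and the cancellation you describe does not happen.

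Set $u:=w_{j+1}\cdots w_m$ and $\gamma:=\eta-\sum_\alpha d_\alpha\mu_{w_j}(\alpha)\in\bigoplus_{\beta\in I^{w_j}}\mathbb Z\beta$. Subtract $\sum_\alpha d_\alpha$ times the $\mu_{w_j}(\alpha)$-equations from the $\eta$-equation of \eqref{eq:linear systems} (everything is linear in the simply laced case). What remains is
\[
\sum_{\alpha\in S\setminus I^{w_j}}d_\alpha-1=\operatorname{ht}\big((u^{-1}\gamma)^\vee\big)-\sum_{t=j+1}^{m}\ \sum_{\beta\in S\setminus I^{w_t}}c_{t,\beta}\,\big\langle\omega_\beta,(w_{j+1}\cdots w_t)^{-1}\gamma\big\rangle ,
\]
whereas (1) says $\sum_\alpha d_\alpha-1=\operatorname{ht}(\gamma^\vee)$. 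The right-hand sides of the system involve $\operatorname{ht}((u^{-1}\eta)^\vee)$, not $\operatorname{ht}(\eta^\vee)$. The cross-level terms are nonzero and must exactly compensate this twist. In the paper's first $SL_4$ example, $\widehat w=(s_2s_1s_3,s_2s_1s_3s_2)$ with $\eta=\alpha_1+\alpha_2+\alpha_3$ and $\gamma=\alpha_2$, the cross term is $c_{2,\alpha_2}\langle\omega_2,w_2^{-1}\alpha_2\rangle=-4=\operatorname{ht}(w_2^{-1}\alpha_2)-\operatorname{ht}(\alpha_2)$.

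So you need the identity
\[
\sum_{t>j}\sum_\beta c_{t,\beta}\langle\omega_\beta,(w_{j+1}\cdots w_t)^{-1}\gamma\rangle=\operatorname{ht}\big((u^{-1}\gamma)^\vee\big)-\operatorname{ht}(\gamma^\vee).
\]
This is a nontrivial statement about $-K$ of the fibre $\widehat X(w_{j+1},\dots,w_m)$ and its $P_{J_{j+1}}$-linearization, and you give no argument for it.

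The paper never faces this computation. Gorensteinness is local and the fibrations are flat and locally trivial, so by Theorem~\ref{thm:Gorenstein} (Corollary~\ref{thm: Gorenstein gBS}) $\widehat X(\widehat w)$ is Gorenstein iff each $X(w_j)$ is. Theorem~\ref{thm: Gorenstein Xw} is then applied to each $X(w_j)\subseteq G/P^{w_j}$ separately, where there are no cross-level terms. Your proposed induction through the fibre would essentially have to reprove this locality (for instance, that $K_{U\times F}$ is Cartier iff $K_U$ and $K_F$ are), so invoke it directly.

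Your bigness argument also fails for non-reduced decompositions, which the theorem covers. There, peaks of $\widehat X(\widehat w)$ can be negative roots and $\operatorname{ht}(\widehat\eta^\vee)+1$ can be $\le 0$; it is $0$ on $\widehat D_{-\alpha_2}$ in the same $SL_4$ example. So $-K$ is not presented as a positive combination of all peak divisors. Instead use $K^{-1}_{Z(\tilde w)}\cong\mathcal O(\sum_j\xi_j)\otimes\mathcal O_r(\rho)$:
\begin{enumerate}
\item The first factor is effective with all coefficients $1$, and the complement of its support is $\cong\mathbb A^r$, hence affine, so it is big.
\item The second factor is globally generated, so $-K_{Z(\tilde w)}$ is big for every expression.
\end{enumerate}
Bigness then passes to $-K_{\widehat X(\widehat w)}=\tilde\pi_*(-K_{Z(\tilde w)})$ along the birational $\tilde\pi$, as in Proposition~\ref{prop:big}.
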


\begin{remark}\ 

\begin{enumerate}
\item Condition (1) in Theorem~\ref{thm:main} gives a characterization of Gorenstein generalized Bott-Samelson varieties. 
    \item When $G$ is not simply laced, such functions $\mu_{w_j}$ need not exist; see Remark~\ref{B5} and Remark~\ref{rem:nonsimplylaced1}. However, for non simply laced group $G$, if we additionally assume that for each $1 \leq j \leq m$, there exists an injective map
$$
\mu_{w_j} : S \setminus I^{w_j} \rightarrow \mathrm{Peaks}(X(w_j)),
$$
such that $\langle \omega_\alpha ,\mu_{w_j}(\beta)\rangle=\delta_{\alpha\beta}$
for all $\alpha, \beta \in S\setminus I^{w_j}$, then conditions (1) and (2) in Theorem~\ref{thm:main} yield a classification of Fano (resp. weak Fano) generalized Bott-Samelson varieties. 
    \item In Section \ref{sec:Recover}, we prove that ($G$-)Bott-Samelson varieties
and flag Bott-Samelson varieties (see \cite[Remark 2.2]{FLS20}) satisfy this additional assumption. Therefore, our results recover those of \cite{BC18} and \cite{BS24}; see Theorem \ref{thm:FanoBS} and Theorem \ref{thm: GBS Fano}.
\item Theorem \ref{thm:main} also recovers the results for minuscule generalized Bott-Samelson varieties in \cite{Per07}; see Section~\ref{sec:minuscle} and Remark~\ref{rmk:minuscluse1}. 
\end{enumerate}

\end{remark}

\subsection*{Structure of the paper}

The paper is organized as follows. In Section~\ref{sec:basics}, we review the necessary background material and recall the basic definitions used throughout the paper. In Section~\ref{sec:gBS definition}, we present the construction of generalized Bott-Samelson varieties and explain their relationship with classical Bott-Samelson varieties. In Section~\ref{map:pi}, we prove that generalized Bott-Samelson varieties have rational singularities. We then construct line bundles on these varieties and provide a characterization of very ample line bundles. Section~\ref{sec: Fano Xw} is devoted to the study of divisors and peaks on Schubert varieties. In Section~\ref{sec: Fano gBS}, we establish the relationship between peaks and divisors on generalized Bott-Samelson varieties. We then describe the canonical divisor in terms of peaks and provide a characterization of Fano and weak Fano generalized Bott-Samelson varieties.
In Section~\ref{sec:Recover}, we derive explicit formulas for the $c_{j,\alpha}$'s in the special cases of ($G$-)Bott-Samelson varieties and minuscule generalized Bott-Samelson varieties, from which we recover the known results. In Section \ref{sec:examples}, we present some examples of (weak)Fano and non-(weak)Fano generalized Bott-Samelson varieties.

\section{Preliminaries}\label{sec:basics}
In this section, we briefly set up the notation and review relevant definitions. For the necessary background on linear algebraic groups and Schubert varieties, we refer to \cite{BK07}, \cite{Hum2}, \cite{Bor}, and \cite{Spr}.

Let $G$ be a semisimple simply-connected complex algebraic group of rank $n$. Fix a maximal torus $T \subset G$ and a Borel subgroup $B \subset G$ containing $T$.
Let $R$ denote the root system of $(G, T)$ and let $R^+ \subset R$ be the subset of positive roots corresponding to $(B, T)$. 
Let $R^- = -R^+$ be the corresponding set of negative roots, and let
\[
S = \{ \alpha_1, \ldots, \alpha_n \} \subset R^+
\]
be the set of simple roots.  The Weyl group $W = N_G(T)/T$ is generated by the simple reflections $s_1, \ldots, s_n$ corresponding to the simple roots. For $w\in W$, we denote by $w^{\perp}$ the set of simple roots $\alpha$ such that $s_\alpha$ commutes with $w$. That is, $
w^\perp:=\{\alpha\in S\mid s_\alpha w=ws_\alpha\}.
$ 
Let $\mathfrak{g}$ and $\mathfrak{h}$ denote the Lie algebras of $G$ and $T$, respectively.
The group of all characters of $T$ is denoted by $X(T)$. Then
\[
X(T)\otimes_{\mathbb{Z}}\mathbb{R}
=\operatorname{Hom}_{\mathbb{R}}(\mathfrak{h}_{\mathbb{R}},\mathbb{R}),
\]
the dual of the real form $\mathfrak{h}_{\mathbb{R}}$ of $\mathfrak{h}$. The positive-definite $W$-invariant bilinear form on
$\operatorname{Hom}_{\mathbb{R}}(\mathfrak{h}_{\mathbb{R}},\mathbb{R})$ induced by the Killing form on $\mathfrak{g}$ is denoted by $(\cdot,\cdot)$. For any
$\mu\in X(T)\otimes_{\mathbb{Z}}\mathbb{R}$ and $\alpha\in R$, define
\[
\langle \mu,\alpha\rangle
:=\frac{2(\mu,\alpha)}{(\alpha,\alpha)}=(\mu, \alpha^\vee).
\]

For any root $\beta \in R$, denote by $U_\beta \subset G$ the associated root subgroup and by $G_\beta \subseteq G$ the subgroup generated by $U_\beta$ and $U_{-\beta}$. For any simple root $\alpha\in S$,  the maximal and minimal standard parabolic subgroups  associated with $\alpha$ denoted by $P^{\alpha}$ and  $P_{\alpha}$, respectively.
We also consider the coroot system $R^\vee$, whose simple coroots $\alpha_1^\vee, \ldots, \alpha_n^\vee$ form a basis of the cocharacter lattice $Y(T)$. 
The dual basis of the character lattice $X(T)$ consists of the fundamental weights $\omega_1, \ldots, \omega_n$. More precisely, for any simple root $\alpha$, we denote by $\omega_\alpha$ the fundamental weight defined by
\[
{\langle \omega}_\alpha , \beta\rangle =
\begin{cases}
1 & \text{if } \beta = \alpha, \\
0 & \text{if } \beta \neq \alpha,
\end{cases}
\]
for all simple roots $\beta$. Let $\rho:=\omega_1+\cdots+\omega_n$.  
Then the height of any coroot $\gamma^{\vee} \in R^\vee$ is defined by $\operatorname{ht}(\gamma^{\vee}):=\langle \rho,\gamma \rangle$.
Equivalently, if $\gamma^\vee=\sum_{i=1}^n a_i\alpha_i^\vee$, then $\operatorname{ht}(\gamma^\vee)=\sum_{i=1}^n a_i$.
 Let $X$ be a projective variety equipped with a transitive $G$-action. Choose a point $x\in X$ and assume that the stabilizer subgroup $G_x$ is connected. Then $X$ can be identified with the homogeneous space $G/P$, where $P:=G_x$ is a parabolic subgroup of $G$. Under this identification, the points $x \in X$ and $eP \in G/P$ are both referred to as base points.

Given a subset $I \subseteq S$, let $P_I$ denote the standard parabolic subgroup of $G$ corresponding to $I$, that is, $P_I$ is the subgroup of $G$ generated by $B$ and the root subgroups $U_{-\alpha}, \alpha \in I$. Let $W_I$ be the subgroup of $W$ generated by the simple reflections $s_\alpha$ for $\alpha \in I$. Then
\[
W_I = N_{L_I}(T)/T,
\]
where $L_I$ is a Levi factor of $P_I$. Define $W^I$ as the subset of $W$ consisting of elements $w$ such that $w(\alpha) \in R^+$ for all $\alpha \in I$; this subset is often denoted by $W^{P_I}$ in the literature. Then $W^I$ forms a set of minimal length representatives of the cosets in $W/W_I$.

For $w \in W$, the \textit{support} $\operatorname{Supp}(w)$ of $w$ is the set of simple roots that appear in a reduced expression for $w$. Let $G_w$ denote the subgroup of $G$ generated by the root subgroups $U_{\pm \alpha}$ for $\alpha \in \operatorname{Supp}(w)$. Then $G_w$ is the derived subgroup of the Levi subgroup $L_{\operatorname{Supp}(w)}$ and is therefore semisimple, normalized by $T$ and contains a representative of $w$.

Let $P^w$ be the largest parabolic subgroup of $G$ such that $B \subseteq P^w$ and $w \in W^{P^w}$. Then
\[
P^w = P_{I^w}, \quad \text{where } I^w := \{ \alpha \in S \mid w(\alpha) \in R^+ \}.
\]
Equivalently, $P^w$ is generated by $B$ and $U_{-\alpha}$ with $\alpha\in I^{w}$. 
Consider the base point $x = eP^w \in G/P^w$ and the point $wx \in G/P^w$. The $B$-orbit $Bwx$ is a locally closed subvariety of $G/P^w$ and the associated \textit{Schubert variety} $X(w)$ is defined as its Zariski closure:
\[
X(w) := \overline{Bwx} \subseteq G/P^w.
\]

Let $P_w$ be the subgroup of $G$ stabilizing $X(w)$, i.e.,
$
P_w := \{ g \in G \mid g X(w) = X(w) \}.
$
Then $P_w$ is a parabolic subgroup containing $B$, and is of the form $P_{I_w}$, where
\[
I_w := \{ \alpha \in S \mid s_\alpha w \leq w \}
\]
with respect to the Bruhat order on $W^{I^w}$. 
Finally, observe that $P_w \cap G_w$ is a parabolic subgroup of $G_w$ corresponding to $I_w\cap \operatorname{Supp}(w)$, and we have an isomorphism
\[
X(w) = \overline{P_w w P^w}/P^w \simeq \overline{(P_w \cap G_w) w (P^w \cap G_w)}/(P^w \cap G_w) \subseteq G_w / (P^w \cap G_w).
\]

\smallskip

The following lemma will be used for the computations in the sequel.

\begin{lemma}\label{lem: Supp(w)}
    For any $w\in W$, we have $ S\setminus I^{w}\subseteq \operatorname{Supp}(w).$
\end{lemma}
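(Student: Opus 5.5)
We argue by contraposition: we show that if a simple root $\alpha$ does not lie in $\operatorname{Supp}(w)$, then $\alpha\in I^{w}$, that is, $w(\alpha)\in R^+$. Taking complements in $S$ then gives $S\setminus I^w\subseteq \operatorname{Supp}(w)$.

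So fix $\alpha\in S\setminus\operatorname{Supp}(w)$, and choose a reduced expression $w=s_{i_1}\cdots s_{i_r}$. Every $s_{i_k}$ is a reflection in a simple root $\alpha_{i_k}\neq\alpha$.

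The first step is a one-reflection observation. Let $\beta$ be any positive root whose expansion in simple roots has a strictly positive coefficient on $\alpha$. For a simple root $\gamma\neq\alpha$ we have
\[
s_\gamma(\beta)=\beta-\langle\beta,\gamma\rangle\gamma .
\]
This changes only the coefficient of $\gamma$, so the coefficient of $\alpha$ in $s_\gamma(\beta)$ is still strictly positive. Since every root is either positive or negative, $s_\gamma(\beta)$ is again a positive root with positive $\alpha$-coefficient.

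The second step is an induction. Start from $\beta=\alpha$ and apply $s_{i_r}, s_{i_{r-1}},\dots,s_{i_1}$ in turn. By the first step, every intermediate root keeps a positive coefficient on $\alpha$. Hence $w(\alpha)$ is a positive root, which means $\alpha\in I^w$.

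No step here is a real obstacle. The only point that needs care is that a root cannot have mixed-sign coefficients; this is what lets us conclude positivity from the single coefficient of $\alpha$.

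An alternative route uses the parabolic subgroup $W_{S\setminus\{\alpha\}}$: $w$ lies in it and permutes the positive roots with nonzero $\alpha$-coefficient. The argument above is the elementary form of this fact.
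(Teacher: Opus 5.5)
Your argument is correct, but its key mechanism differs from the paper's. The paper fixes a reduced expression and expands
$w(\alpha)=\alpha-\sum_{j=1}^{r}\langle\alpha,\alpha_{i_j}\rangle\beta_j$ with $\beta_j=s_{i_1}\cdots s_{i_{j-1}}(\alpha_{i_j})$. It then uses two facts: the $\beta_j$ are positive, which is where reducedness enters, and $\langle\alpha,\alpha_{i_j}\rangle\le 0$ because $\alpha_{i_j}\neq\alpha$. Hence $w(\alpha)$ is $\alpha$ plus a nonnegative combination of positive roots.

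You instead observe that reflections in simple roots other than $\alpha$ leave the $\alpha$-coefficient unchanged. You combine this with the fact that the coefficients of a root all have the same sign. Your route needs neither reducedness (any word in the $s_\gamma$ with $\gamma\neq\alpha$ works) nor the sign of the off-diagonal Cartan integers. It also gives the sharper statement that the $\alpha$-coefficient of $w(\alpha)$ is exactly $1$. This is the elementary form of the fact that $W_{S\setminus\{\alpha\}}$ preserves the positive roots outside the root subsystem spanned by $S\setminus\{\alpha\}$, as you note at the end.

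The paper's route, in exchange, gives an explicit expansion of $w(\alpha)$ in terms of the roots $\beta_j$ attached to the reduced word. This matches the notation the paper uses later for Bott--Samelson varieties.
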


\begin{proof}
   Let \( w \in W \) and 
$
\tilde w = s_{i_1}s_{i_2}\cdots s_{i_r}
$
be a reduced expression of $w$. 
Then, for any simple root \( \alpha \in S \), one has
\begin{equation}\label{eq:action-on-beta}
    s_{i_1}s_{i_2}\cdots s_{i_r}(\alpha)
    = \alpha 
      - \sum_{j = 1}^{r} 
        \langle \alpha, \alpha_{i_j} \rangle \, \beta_j,
\end{equation}where $\beta_j:= s_{i_1}\cdots s_{i_{j-1}}(\alpha_{i_j})$. Note that $\beta_{j}>0$ for all $1\leq j\leq r$.
Moreover, if $\alpha\in S\setminus \text{Supp}(w)$ then $\langle \alpha, \alpha_{i_j} \rangle \leq 0$
for all $1\leq j \leq r$. Then by Eq.\eqref{eq:action-on-beta}, we have $w(\alpha) = s_{i_1}s_{i_2}\cdots s_{i_r}(\alpha)  >0.$ Therefore, $ S\setminus \text{Supp}(w)\subseteq I^w$ and $ S\setminus I^w \subseteq \text{Supp}(w).$ 
\end{proof}

\section{Generalized Bott-Samelson varieties \texorpdfstring{$\widehat{X}(\widehat{w})$}{X(w-hat)} }\label{sec:gBS definition}

In this section, we outline a generalization of the classical Bott-Samelson construction, based on suitable factorizations in the Weyl group. We follow \cite{Per07}.

We first recall the following action:
Let an algebraic group $H$ act on varieties $X$ and $Y$, on the right on $X$ and on the left on $Y$. The \emph{twisted fiber product} $X \times^H Y$ is defined as the quotient of $X \times Y$ by the relation
\[
(x, y) \sim (xh, h^{-1}y), \quad \text{for all } h \in H.
\]
For a given $w\in W$, let us denote 
\[
\widehat{X}(w):=\overline{(P_w \cap G_w) w (P^w \cap G_w)}/(P^w \cap G_w).
\]
Note that $\widehat{X}(w)\simeq X(w)\subseteq G/P^w$. 

Let $w, w_1, w_2 \in W$ such that $w = w_1w_2$ and $P^{w_1} \cap G_{w_1} \subseteq P_{w_2}$. Define
\[
\widehat{X}(w_1,w_2) := \overline{(P_{w_1} \cap G_{w_1}) w_1 (P^{w_1} \cap G_{w_1})} \times^{P^{w_1} \cap G_{w_1}} \widehat{X}(w_2),
\]
where $\widehat{X}(w_2) \simeq X(w_2)$. 
This variety is projective and admits a Zariski locally-trivial fibration
\[
f_{w_1,w_2} : \widehat{X}(w_1,w_2) \to \widehat{X}(w_1), \quad [p_1, p_2] \mapsto [p_1],
\]
with fiber $\widehat{X}(w_2)$ and is equivariant under $P_{w_1} \cap G_{w_1}$ acting on the first component. 

There is also a morphism
\[
\pi_{w_1,w_2} : \widehat{X}(w_1,w_2) \to G/P^w, \quad [p_1, p_2] \mapsto p_1p_2P^w.
\]
This construction extends inductively under suitable conditions.

\begin{definition}\label{def:gBS} Let $w\in W$. Define:

\begin{enumerate} 
    \item 
A sequence $\widehat{w} = (w_1, \dots, w_m)$ in $W$ is a \emph{generalized decomposition} of $w$ if $w = w_1 \cdots w_m$.
\item For any generalized decomposition $\widehat{w}$ of $w$, inductively define a sequence of parabolic subgroups $(P{_{J_i}})_{1\leq i\leq m}$ as follows: \[\quad {J_m}:=I_{w_m}\quad \text{and} \quad J_i:=( J_{{i+1}}\cup\operatorname{Supp}(w_i))\cap( w_{i}^{\perp} \cup \operatorname{Supp}(w_i))\cap (I_{w_i}\cup \operatorname{Supp}(w_i)^c),\]
here $\operatorname{Supp}(w_i)^c:=S\setminus \operatorname{Supp}(w_i)$.
\item A generalized decomposition $\widehat{w}$ of $w$ is \emph{admissible} if for all $1\leq i<m$:\[
P^{w_i}\cap G_{w_i}\subseteq P_{J_{i+1}}, \quad\text{equivalently}\quad I^{w_i}\cap \operatorname{Supp}(w_i)\subseteq J_{i+1}.
\]
\item 
 A generalized decomposition $\widehat{w}$ of $w$ is \emph{good} if for all $1 \le i < m$, we have
\[
P^{w_i} \cap G_{w_i} \subseteq P_{w_{i+1}\cdots w_m}
\quad \text{and} \quad
 I_{w_i \cdots w_m} \subseteq w_i^\perp \cup \operatorname{Supp}(w_i).
\]
\end{enumerate}
\end{definition}

\begin{definition}
An admissible generalized decomposition $\widehat{w}$ of $w\in W$ is \emph{reduced} if $\ell(w) = \sum_{j=1}^m \ell(w_j)$.
\end{definition}

    We refer to \cite[Remark 3.2]{BS26} for comparisons between the notions of admissible and good decompositions.
    
    Throughout this paper, we assume that $\widehat{w}$ is an admissible generalized (not necessarily reduced) decomposition, unless otherwise stated.

\begin{remark}\label{rmk: Parabolic subset} \ 

\begin{enumerate}
\item The condition $P^{w_i} \cap G_{w_i} \subseteq P_{w_{i+1}\cdots w_m}$ given in Definition \ref{def:gBS}, is equivalent to 
\[
I^{w_i}\cap \operatorname{Supp}(w_i)\subseteq I_{w_{i+1}\cdots w_m}.\]

    \item If $\widehat{w} = (w_1,\dots,w_m)$ is an admissible generalized decomposition of $w$, then each tail $(w_{i+1},\dots,w_m)$ is also an admissible generalized decomposition of $w_{i+1}\cdots w_m$.
    \end{enumerate}
\end{remark}
For any $1\leq i<m$, since $
I_{w_{i+1}}\cap\operatorname{Supp}(w_{i+1})\subseteq J_{i+1}$,
the action of $P_{w_{i+1}}\cap G_{w_{i+1}}$ on $\widehat{X}(w_{i+1},\dots,w_m)$ extends to an action of $P_{J_{i+1}}$ on $\widehat{X}(w_{i+1},\dots,w_m)$; see \cite[Lemma~5.1]{Per07}. The condition $P^{w_i}\cap G_{w_i}\subseteq P_{J_{i+1}}$ in the definition of admissible ensures that there exists a well defined twisted fiber product \[\widehat{X}(w_i,\dots,w_m):=\overline{(P_{w_i}\cap G_{w_i})w_i(P^{w_i}\cap G_{w_i})}\times^{P^{w_i}\cap G_{w_i}} \widehat{X}(w_{i+1},\dots,w_{m}).\] Thus, we have the following definition. 

\begin{definition}
    Let $w\in W$ and $\widehat{w} = (w_1, \dots, w_m)$ be an admissible 
    generalized decomposition of $w$. The associated projective variety $\widehat{X}(\widehat{w})$ is defined recursively and admits a Zariski locally trivial fibration
\[
\widehat{f} : \widehat{X}(\widehat{w}) \to \widehat{X}(w_1), \quad [p_1,\dots,p_m] \mapsto [p_1],
\]
with fiber $\widehat{X}(w_2,\dots,w_m)$. This is called the \emph{generalized Bott-Samelson variety}.

\end{definition}

Recall that the action of $P_{w_1} \cap G_{w_1}$ on $\widehat{X}(\widehat{w})$ extends to the action of the parabolic subgroup $P_{J_1}$, see \cite[Lemma 5.1(ii)]{Per07}. Moreover, there is an $(P_{w_1} \cap G_{w_1})$-equivariant morphism
\begin{equation}\label{eq:pi map}
\widehat{\pi}: \widehat{X}(\widehat{w}) \to {X}(w), \quad [p_1,\dots,p_m] \mapsto p_1 \cdots p_m P^w.
\end{equation}

\begin{lemma}\label{rmk:smoothness}
 Let $w\in W$ and $\widehat{w} = (w_1, \dots, w_m)$ be an 
 admissible generalized decomposition of $w$. 
\begin{enumerate}
    \item  $\widehat{X}(\widehat{w})$ is normal and Cohen-Macaulay, and 
    \item  $\widehat{X}(\widehat{w})$ is smooth (locally factorial) if and only if the Schubert variety $X(w_i)$ is smooth (resp. locally factorial) for $i = 1, \dots, m$.  
\end{enumerate}
    \end{lemma}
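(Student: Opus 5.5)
The plan is to argue by induction on $m$, the length of the admissible decomposition $\widehat{w}=(w_1,\dots,w_m)$. The engine is the Zariski locally trivial fibration $\widehat{f}:\widehat{X}(\widehat{w})\to \widehat{X}(w_1)$ with fiber $\widehat{X}(w_2,\dots,w_m)$. By Remark~\ref{rmk: Parabolic subset}(2), the tail $(w_2,\dots,w_m)$ is again admissible, so the induction hypothesis applies to the fiber. The base case $m=1$ is just $\widehat{X}(w_1)\simeq X(w_1)$. For this case I will quote the classical facts recalled in the introduction: Schubert varieties are normal and Cohen--Macaulay \cite{Ram85, And85}, and the smoothness and local factoriality statements are tautological.

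For the inductive step in part (1), I use the local triviality of $\widehat{f}$. Every point of $\widehat{X}(\widehat{w})$ has a Zariski open neighbourhood of the form $U\times F$, where $U\subseteq \widehat{X}(w_1)$ is open and $F=\widehat{X}(w_2,\dots,w_m)$. Concretely, I take $U$ to be a translate of the big cell: the fibration $\overline{(P_{w_1}\cap G_{w_1})w_1(P^{w_1}\cap G_{w_1})}\to \widehat{X}(w_1)$ is a principal $P^{w_1}\cap G_{w_1}$-bundle that is locally trivial in the Zariski topology, because the opposite unipotent radical gives local sections. The twisted product is then trivial over $U$. So it suffices to know that the product of two normal (resp. Cohen--Macaulay) varieties over $\mathbb{C}$ is again normal (resp. Cohen--Macaulay). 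For Cohen--Macaulayness, the local ring of $U\times F$ at a point is a localization of a tensor product of CM algebras over a field, hence CM. For normality, over an algebraically closed field of characteristic $0$ the tensor product of normal domains of finite type is normal, since normality is preserved under smooth base change and $\mathcal{O}_U\otimes_{\mathbb{C}}\mathcal{O}_F$ is a flat extension with normal geometric fibers.

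For part (2), the same local product structure gives the "if" direction. A product of smooth varieties is smooth. For local factoriality I will not use local products of UFDs, which can fail in general. Instead I will use the fact that the Schubert variety $X(w_1)$ and the fiber are covered by affine opens of the form (affine space)$\times$(opposite cell slice), together with the criterion $\operatorname{Cl}=\operatorname{Pic}$ locally. Alternatively, and more cleanly, I will argue with class groups: for a Zariski locally trivial fibration with base $U$ an affine space (a big-cell chart), $\operatorname{Cl}(\mathbb{A}^k\times F')=\operatorname{Cl}(F')$ for affine $F'$. So a Weil divisor on $U\times F'$ is Cartier precisely when its restriction to the fiber is, up to pullback. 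I will arrange that $U$ is isomorphic to an open subset of affine space, and then reduce to the open neighbourhoods $\mathbb{A}^k\times(\text{open in }F)$ of points; here "$X$ locally factorial at $(u,y)$" is equivalent to "$F$ locally factorial at $y$" via $\operatorname{Cl}(\mathcal{O}_{\mathbb{A}^k\times F,(u,y)})\cong\operatorname{Cl}(\mathcal{O}_{F,y})$. Applying the same trick with base and fiber exchanged requires care, since the base is a Schubert variety and not affine space. I will handle it by viewing $\widehat{X}(\widehat{w})$ near a point as a product of the base chart $U\cap X(w_1)$ with a fiber chart, and using the Schubert cell stratification. Each point of $X(w_1)$ lies in $gC$, where $C$ is the opposite big cell intersected with $X(w_1)$. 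Any cell $BvP/P$ then gives a product decomposition $C\cap X(w_1)\simeq \mathbb{A}^{\ell(v)}\times(\text{Kazhdan--Lusztig slice})$, reducing everything to slices.

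For the "only if" direction, the fiber $\widehat{X}(w_2,\dots,w_m)$ embeds as a closed fiber. Smoothness of the total space of a locally trivial fibration forces smoothness of both base and fiber, because locally $X\simeq U\times F$ and a product is smooth iff both factors are. The factorial analogue follows from the class group isomorphism above: using the cell decompositions, one factor can be taken to be an affine space times a slice. Induction then gives the condition on every $X(w_i)$. The main obstacle is the locally factorial statement, specifically justifying that unique factorization behaves well for the product $U\times F$ when neither factor is an affine space. I would first try to reduce both factors to (affine space)$\times$(slice) via Kazhdan--Lusztig slices, and then use the standard result that $\operatorname{Cl}(A\otimes B)=\operatorname{Cl}(A)\oplus\operatorname{Cl}(B)$ for graded normal domains with $\mathbb{C}^*$-actions contracting to the point. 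These slices do carry such actions.
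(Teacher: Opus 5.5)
Your argument for (1) and for the smoothness half of (2) is the paper's: induction on $m$ along the Zariski-locally trivial fibration $\widehat f$, using that these properties are local and pass to and from products $U\times F$. Your added justifications (local sections from the opposite unipotent radical, flatness with geometrically normal, resp. Cohen--Macaulay, fibres) are fine.

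The paper handles local factoriality with the single word ``Similarly''. You are right that the ``if'' direction is the delicate point there. However, the tool you propose for it is false as stated: $\operatorname{Cl}(A\otimes_{\mathbb{C}}B)=\operatorname{Cl}(A)\oplus\operatorname{Cl}(B)$ does not hold for positively graded normal domains. Take $A=B=\mathbb{C}[x,y,z]/(x^3+y^3+z^3)$, the cone $C$ over an elliptic curve $E$. Removing the codimension-two set $(\{0\}\times C)\cup(C\times\{0\})$ leaves a $\mathbb{G}_m^2$-torsor over $E\times E$. Hence $\operatorname{Cl}(C\times C)\cong\operatorname{Pic}(E\times E)/\langle p_1^*H,p_2^*H\rangle$, which contains an extra summand $\operatorname{End}(E)$. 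Concretely, the prime divisor $\overline{\{(p,q)\mid q\in\mathbb{C}^*p\}}$ lying over the diagonal is not in the image of $\operatorname{Cl}(A)\oplus\operatorname{Cl}(B)$. So passing to Kazhdan--Lusztig slices does not settle the product question without further input. In addition, your slice charts are only set up for the Schubert base $X(w_1)$, not for the fibre $F=\widehat{X}(w_2,\dots,w_m)$, which is not a Schubert variety.

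The missing idea is that you need no local product structure at the bad points; instead, compute $\operatorname{Cl}(U\times F)$ using an open cell of $F$.
\begin{itemize}
\item Suppose $\widehat f^{-1}(U)\cong U\times F$, and let $\Omega\cong\mathbb{A}^k$ be the open subset of $F$ obtained by trivializing successively over the open Schubert cells of $X(w_2),\dots,X(w_m)$.
\item Then $\operatorname{pr}_1^*\colon\operatorname{Cl}(U)\to\operatorname{Cl}(U\times\Omega)$ is an isomorphism, and the divisorial components of $U\times(F\setminus\Omega)$ are the $U\times D_i$.
\item Hence every class in $\operatorname{Cl}(U\times F)$ has the form $\operatorname{pr}_1^*[D']+\sum_i n_i\,\operatorname{pr}_2^*[D_i]$.
\item When $U$ and $F$ are locally factorial these divisors are Cartier, so every Weil divisor on $U\times F$ is Cartier, i.e. $U\times F$ is locally factorial.
\end{itemize}
The converse is fine as you have it, via the charts $U\times\Omega$ and $\mathbb{A}^{\ell(w_1)}\times F$. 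It also follows directly: $\mathcal{O}_{U,u}\to\mathcal{O}_{U\times F,(u,y)}$ and $\mathcal{O}_{F,y}\to\mathcal{O}_{U\times F,(u,y)}$ are faithfully flat, hence injective on class groups.
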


\begin{proof}
   It is well known that each Schubert variety $X(w_i)$ is both Cohen--Macaulay 
and normal. Since these properties are local and the morphism 
\[
\widehat{f} : \widehat{X}(\widehat{w}) \to \widehat{X}(w_1)
\]
is a locally trivial fibration with fiber $\widehat{X}(w_2, \dots, w_m)$, 
it follows by induction on $m$ that $\widehat{X}(\widehat{w})$ also has these properties. This proves (1). Similarly, the proof of (2) follows. 
\end{proof}

\subsection{Link with Bott-Samelson Varieties}\label{sec:linktoBS}

In this subsection, we obtain a Bott-Samelson variety as a desingularization of $\widehat{X}(\widehat{w})$.

Let $w\in W$ and let $\tilde{w}=s_{i_1}\cdots s_{i_r}$ be an expression (not necessarily reduced)
of $w$ in terms of simple reflections. Then the associated \textit{Bott-Samelson variety} $Z(\tilde{w})$ is the quotient 
\begin{center}
    $Z(\tilde{w}):=\big(P_{\alpha_{i_1}}\times P_{\alpha_{i_2}}\times\cdots\times P_{\alpha_{i_r}}\big)/B^{r}$
\end{center}
where $B^r$ acts from right on $P_{\alpha_{i_1}}\times P_{\alpha_{i_2}}\times\cdots\times P_{\alpha_{i_r}}$ by \begin{center}
    $(p_1,\dots,p_r)\cdot(b_1,\dots,b_r):=(p_1b_1, b_1^{-1}p_2b_2,\dots,b_{r-1}^{-1}p_rb_r)$
\end{center}
for $p_j\in P_{\alpha_{i_j}}$ and $b_{j}\in B$ for all $1 \leq j \leq r$. Moreover, if the expression $\tilde w$ is reduced then $Z(\tilde{w})$ is a desingularization of the Schubert variety $X(w)$.

\begin{remark}
    We may also view $Z(\tilde{w})$ as 
 $P_{\alpha_{i_1}}\times^B P_{\alpha_{i_2}}\times^B\cdots \times^B P_{\alpha_{i_r}}/B$. Thus, it is a twisted product of $\mathbb P^1$'s.
\end{remark}

\begin{lemma}\label{lem:reduced decomposition}
Let $\widehat{w} = (w_1, \dots, w_m)$ be an admissible generalized decomposition of $w\in W$, and
$\tilde w_j= \prod_{k=1}^{r_j} s_{k,j}$, where $r_j=\ell(w_j)$ for all $1\leq j\leq m$. Then the sequence
\[
\tilde{w} = (s_{1,1}, \dots, s_{r_1,1}, \dots, s_{1,m}, \dots, s_{r_m,m})
\]
is also an admissible generalized decomposition of $w$.
\end{lemma}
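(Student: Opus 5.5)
The plan is to check the admissibility condition of Definition~\ref{def:gBS}(3) directly for each entry of $\tilde w$. It should hold trivially, because every entry of $\tilde w$ is a simple reflection. The condition depends on the tail $(J_i)$ only through the requirement $I^{w_i}\cap\operatorname{Supp}(w_i)\subseteq J_{i+1}$. So it suffices to show that the left-hand side is empty whenever $w_i$ is a simple reflection. Then no information about the sets $J_{i+1}$ is needed, and in particular we never use the admissibility of the original decomposition $\widehat w$ except for the equality $w=w_1\cdots w_m$.

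First I check that $\tilde w$ is a generalized decomposition of $w$. Each $\tilde w_j=\prod_{k=1}^{r_j}s_{k,j}$ is a (reduced) expression of $w_j$, so the product of all entries of $\tilde w$, taken in order, equals $w_1\cdots w_m=w$. If some $w_j=e$, then $r_j=0$ and that block contributes nothing, which is harmless.

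Next, let $s=s_\alpha$ be an arbitrary entry of $\tilde w$, with $\alpha\in S$, and compute the relevant sets.
\begin{itemize}
\item We have $\operatorname{Supp}(s_\alpha)=\{\alpha\}$.
\item Since $s_\alpha(\alpha)=-\alpha<0$, we have $\alpha\notin I^{s_\alpha}$.
\item For $\beta\neq\alpha$ in $S$, $s_\alpha(\beta)$ is a positive root, since $s_\alpha$ permutes $R^+\setminus\{\alpha\}$. Hence $I^{s_\alpha}=S\setminus\{\alpha\}$.
\end{itemize}
Consequently $I^{s_\alpha}\cap\operatorname{Supp}(s_\alpha)=\emptyset$. (This is consistent with Lemma~\ref{lem: Supp(w)}.)

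Therefore, for every position $i$ in $\tilde w$ other than the last, the condition $I^{s}\cap\operatorname{Supp}(s)\subseteq J_{i+1}$ reads $\emptyset\subseteq J_{i+1}$, which holds whatever the recursively defined set $J_{i+1}$ is. Equivalently, $P^{s_\alpha}\cap G_{s_\alpha}$ is the Borel subgroup $B\cap G_{s_\alpha}$ of $G_{s_\alpha}\cong SL_2$, and this is contained in every standard parabolic subgroup $P_{J_{i+1}}$. Hence $\tilde w$ is admissible.

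There is no real obstacle here. The only point that needs care is noticing that the recursive definition of the $J_i$ never has to be unwound, because the left-hand side of the admissibility condition is empty.
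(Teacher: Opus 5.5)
Your proof is correct and follows the paper's route: the paper simply notes that any sequence of simple reflections is admissible. You make explicit why, namely that $I^{s_\alpha}\cap\operatorname{Supp}(s_\alpha)=\emptyset$ (equivalently, $P^{s_\alpha}\cap G_{s_\alpha}=B\cap G_{\alpha}$ lies in every standard parabolic), so the admissibility condition holds whatever $J_{i+1}$ is.
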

\begin{proof}
   This follows readily from the definition as the sequence of simple reflections is admissible.
\end{proof} 

Consider the natural morphism $
\tilde{\pi}: Z(\tilde{w}) \to \widehat{X}(\widehat{w}),
$
given by
\begin{equation}\label{eq:tilde pi}
[p_{1,1}, \dots, p_{r_1,1}, \dots, p_{1,m}, \dots, p_{r_m,m}] 
\mapsto 
\left[\prod_{k=1}^{r_1} p_{k,1}, \dots, \prod_{k=1}^{r_m} p_{k,m}\right],
\end{equation} 
where $p_{k,j}\in P_{\alpha_{k,j}}$ for all $1\leq j \leq m $ and $1\leq k \leq r_j$. 

\begin{corollary}\label{cor:birational} 
The morphism $
\tilde{\pi}: Z(\tilde{w}) \to \widehat{X}(\widehat{w})
$ is birational.
\end{corollary}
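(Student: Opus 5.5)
We argue by induction on $m$, using the fibration structure on both sides. On the source, the Bott-Samelson variety $Z(\tilde w)$ fibres over $Z(\tilde w_1)$ with fibre the Bott-Samelson variety of the tail $(s_{1,2},\dots,s_{r_m,m})$. On the target, $\widehat{f}:\widehat{X}(\widehat{w})\to\widehat{X}(w_1)$ has fibre $\widehat{X}(w_2,\dots,w_m)$. The map $\tilde\pi$ of Eq.\eqref{eq:tilde pi} is compatible with these two fibrations, i.e.\ the square
\[
Z(\tilde w)\xrightarrow{\ \tilde\pi\ }\widehat{X}(\widehat{w})\xrightarrow{\ \widehat{f}\ }\widehat{X}(w_1)
\]
commutes with the composite $Z(\tilde w)\to Z(\tilde w_1)\xrightarrow{\tilde\pi_1}\widehat{X}(w_1)$. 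Here $\tilde\pi_1$ is the classical Bott-Samelson map $Z(\tilde w_1)\to X(w_1)\simeq\widehat{X}(w_1)$. It is birational because $\tilde w_1$ is a reduced expression, $r_1=\ell(w_1)$, and $X(w_1)$ has dimension $\ell(w_1)$ in $G/P^{w_1}$. By Remark~\ref{rmk: Parabolic subset}(2) and Lemma~\ref{lem:reduced decomposition}, the tails are again admissible, so the induction hypothesis applies to $(w_2,\dots,w_m)$.

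\textbf{Well-definedness.} We first check that $\tilde\pi$ is well defined. This amounts to checking compatibility of the $B^r$-action with the twisted product relations defining $\widehat{X}(\widehat{w})$. The inner $B$'s within a block cancel in the product $\prod_k p_{k,j}$. A boundary $b$ between blocks $j$ and $j+1$ lies in $B$, which is contained in $P^{w_j}$ and hence acts compatibly. Strictly speaking, the twisted product uses $P^{w_j}\cap G_{w_j}$. We handle this by writing $B=(B\cap G_{w_j})\cdot T'$, where the remaining torus/unipotent part acts on the later factors through the extended $P_{J_{j+1}}$-action (Perrin, Lemma~5.1). The image of each block product also lies in $\overline{(P_{w_j}\cap G_{w_j})w_j(P^{w_j}\cap G_{w_j})}$, since $\tilde w_j$ is reduced with support in $\operatorname{Supp}(w_j)$.

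\textbf{Birationality.} It suffices to find a dense open $U\subseteq\widehat{X}(w_1)$ over which $\tilde\pi_1$ is an isomorphism. Over $U$, the preimage in $Z(\tilde w)$ is $\tilde\pi_1^{-1}(U)\times^{B}Z(\text{tail})$, and the preimage in $\widehat{X}(\widehat w)$ is the corresponding twisted product of $\widehat{X}(w_2,\dots,w_m)$. We then compare these via local trivializations. Over a local section $\sigma$ of $(P_{w_1}\cap G_{w_1})w_1(P^{w_1}\cap G_{w_1})\to\widehat{X}(w_1)$ (e.g.\ on the big cell $U^-_{w_1}w_1$), both sides become $U\times(\text{fibre})$. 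Under this identification, $\tilde\pi$ becomes $\mathrm{id}\times\tilde\pi_{\text{tail}}$, possibly twisted by an automorphism coming from the action of an element of $P^{w_1}\cap G_{w_1}\subseteq P_{J_2}$ on the fibre. That map is birational by induction, so $\tilde\pi$ is birational. Both varieties are irreducible: $Z(\tilde w)$ is an iterated $\mathbb P^1$-bundle, and $\widehat{X}(\widehat w)$ is a locally trivial fibration with irreducible base and fibre. Since they have equal dimension $\sum r_j$, this suffices.

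\textbf{Main obstacle.} The delicate step is the trivialization comparison. The section $\sigma$ lands in $P_{w_1}\cap G_{w_1}$-orbits, while $Z(\tilde w_1)$ is a $B$-twisted product. So we must check that the correction element $g\in P^{w_1}\cap G_{w_1}$ relating the two trivializations acts on the fibre $Z(\text{tail})$ compatibly with its action on $\widehat{X}(w_2,\dots,w_m)$. This is exactly where admissibility enters: the condition $P^{w_1}\cap G_{w_1}\subseteq P_{J_2}$ lets $g$ act on both fibres equivariantly. We would first try to prove that $\tilde\pi_{\text{tail}}$ is $P_{J_2}$-equivariant on a $P_{J_2}$-stable open set, or else simply use that $g$ acts by automorphisms commuting with $\tilde\pi_{\text{tail}}$ on $B$-translates, which is enough for birationality.
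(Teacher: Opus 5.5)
\emph{Verdict.} Your argument is correct in substance, but it takes a different route from the paper.

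\emph{Comparison of routes.} The paper's proof is a single observation. Because each $\tilde w_j$ is reduced, $\tilde\pi$ carries an open $B$-stable neighbourhood $B\tilde x$ of the base point $\tilde x=[s_{1,1},\dots,s_{r_m,m}]$ isomorphically onto the open $B$-stable neighbourhood $B\widehat x$ of $\widehat x=[w_1,\dots,w_m]$. You instead induct on $m$. You combine the classical birationality of $Z(\tilde w_1)\to X(w_1)$ with local trivializations of the two fibrations over the Schubert cell of $\widehat X(w_1)$.

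Your route is longer but more explicit, and it has a real advantage: it only uses that each block $\tilde w_j$ is reduced, never that the concatenated word $\tilde w$ is. This matters because the paper allows non-reduced $\widehat w$. If $B\tilde x$ is read as the $B$-orbit of the base point, it is open only when $\tilde w$ is reduced. For example, for $\widehat w=(s_1,s_1)$ in $SL_2(\mathbb{C})$ this orbit is a curve in the surface $Z(s_1,s_1)$. The dense open set on which $\tilde\pi$ is an isomorphism is really the iterated twisted product of the blockwise Bruhat cells, and that is exactly what your induction produces.

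\emph{First correction: the ``main obstacle''.} This obstacle is not real, and your first proposed remedy cannot work as stated. The fibre $Z(s_{1,2},\dots,s_{r_m,m})$ carries only a $P_{\alpha_{1,2}}$-action, not a $P_{J_2}$-action. In fact nothing needs to act on the source fibre. With arbitrary local sections on both sides, the map over $U$ has the form
$(c,z)\mapsto\bigl(\tilde\pi_1(c),\,q(c)\cdot\tilde\pi_{\mathrm{tail}}(z)\bigr)$,
with $q(c)\in P_{I^{w_1}\cap\operatorname{Supp}(w_1)}\subseteq P_{J_2}$. Compose with the fibrewise automorphism
$(x,y)\mapsto\bigl(x,\,q(\tilde\pi_1^{-1}(x))^{-1}y\bigr)$
of $U\times\widehat X(w_2,\dots,w_m)$. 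This reduces you to $\tilde\pi_1\times\tilde\pi_{\mathrm{tail}}$, which is birational by induction.

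Simpler still, take
$(u_1,\dots,u_{r_1})\mapsto u_1\dot s_{1,1}\cdots u_{r_1}\dot s_{r_1,1}$, with $u_k\in U_{\alpha_{k,1}}$,
as the section on both sides. Its values lie in $(B\cap G_{w_1})\dot w_1$, so no twist appears at all.

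\emph{Second correction: well-definedness.} Your well-definedness paragraph claims the block product lies in $\overline{(P_{w_j}\cap G_{w_j})w_j(P^{w_j}\cap G_{w_j})}$. That is false as written. A block product $\prod_k p_{k,j}$ with $p_{k,j}\in P_{\alpha_{k,j}}$ lies in $\overline{Bw_jB}$, which is not contained in $G_{w_j}$. What you need is the identification
$G_{w_j}\times^{P^{w_j}\cap G_{w_j}}F\simeq P_{\operatorname{Supp}(w_j)}\times^{P_{I^{w_j}\cap\operatorname{Supp}(w_j)}}F$
for $F=\widehat X(w_{j+1},\dots,w_m)$. This is legitimate because admissibility gives $P_{I^{w_j}\cap\operatorname{Supp}(w_j)}\subseteq P_{J_{j+1}}$. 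The paper takes this well-definedness for granted.
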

\begin{proof} 
Since $\tilde w_j= \prod_{k=1}^{r_j} s_{k,j}$ is a reduced expression for each $1\leq j\leq m$, we have the Bott-Samelson resolution $Z(\tilde w_j)\to X(w_j)$. Then, by the construction of $\widehat{X}(\widehat{w})$, the morphism $\tilde{\pi}$ sends the open $B$-stable neighbourhood $B\tilde{x}$ of the base point $\tilde{x}=[s_{1,1}, \dots, s_{r_1,1}, \dots, s_{1,m}, \dots, s_{r_m,m}] \in Z(\tilde{w})$ isomorphically onto the open $B$-stable neighbourhood $B\widehat{x}$ of the base point $\widehat{x} =[w_1, \ldots, w_m]\in \widehat{X}(\widehat{w})$.  Hence, the result follows.   
\end{proof}

\begin{remark}
    If $\widehat{w}$ is not a reduced decomposition, then $\widehat{X}(\widehat{w})$ is not birational to the Schubert variety $X(w)$.
\end{remark}

\section{Line bundles on \texorpdfstring{$\widehat{X}(\widehat{w})$}{X(w-hat)}}\label{map:pi}

In this section, we construct a basis for the Picard group of $\widehat{X}(\widehat{w})$ and give a characterization of very ample and nef line bundles. As a consequence, we prove that every ample line bundle on $\widehat{X}(\widehat{w})$ is very ample. Moreover, we show that a line bundle on $\widehat{X}$ is globally generated if and only if it is nef. 

In this direction, we first briefly recall the line bundles on  Bott-Samelson varieties studied by Lauritzen and Thomsen in \cite{LT04}. We then extend the construction to the generalized Bott-Samelson varieties.

\subsection{Line bundles on \texorpdfstring{$Z(\tilde{w})$}{Z(tilde w)}} \label{subsec:line}

Let $w \in W$ with an expression $\tilde w = s_{i_1} \cdots s_{i_m}$. We denote the sequence  $(i_1, \dots, i_m)$ by $\bf i$. Then, $\widehat{w} = (s_{i_1}, \dots, s_{i_m})$ is an admissible generalized decomposition. For $1 \leq j \leq m$, note that $P^{s_{i_j}} = P_{S \setminus \{\alpha_{i_j}\}}$ is a maximal parabolic subgroup with $G_{s_{i_j}} = G_{\alpha_{i_j}}$, and $P_{s_{i_j}} = P_{\{\alpha_{i_j}\} \cup s_{i_j}^\perp}$. Thus,
\[
P^{s_{i_j}} \cap G_{s_{i_j}} = B \cap G_{\alpha_{i_j}}
\]
is a Borel subgroup of $G_{\alpha_{i_j}}$. It follows that
\begin{align*}
\widehat{X}(s_{i_1}, \dots, s_{i_m}) &= G_{\alpha_{i_1}} \times^{B \cap G_{\alpha_{i_1}}} \widehat{X}(s_{i_2}, \dots, s_{i_m}) \\
&\simeq P_{\alpha_{i_1}} \times^B \widehat{X}(s_{i_2}, \dots, s_{i_m}) \\
&\simeq P_{\alpha_{i_1}} \times^B \cdots \times^B P_{\alpha_{i_m}}/B,
\end{align*}
which is a Bott-Samelson variety. Here, $\widehat{w} = \tilde{w}$ and $\widehat{X}(\widehat{w}) \simeq Z(\tilde{w})$.

For $1 \leq j \leq m$, let $Z({\tilde{w}}(j))$ be the Bott-Samelson variety associated to the subexpression $\tilde{w}(j) = (s_{i_1}, \dots, s_{i_j})$. Consider the morphisms
\[
\widehat{\pi}_j \colon Z({\tilde{w}}(j)) \to G/B, \quad [p_1, \dots, p_j] \mapsto p_1 \cdots p_j B,
\]
and define  \begin{align*}    
\widehat{f}_j: Z(\tilde{w})&\to Z(\tilde{w}(j))\\ [p_1,\dots, p_m]&\mapsto [p_1,\dots,p_j]. \end{align*} 
Now, we define the line bundle on $Z(\tilde{w})$ by
\begin{equation}\label{eq: lines on BSvs}
\mathcal{L}_j := \widehat{f}_j^* \widehat{\pi}_j^* \mathcal{L}_{G/B}({\omega}_{i_j}),
\end{equation}
where $\mathcal{L}_{G/B}({\omega}_{i_j})$ is the line bundle on $G/B$ associated to the fundamental weight ${\omega}_{i_j}$. For a $m$-tuple $\mathbf{n} = (n_1, \dots, n_m) \in \mathbb{Z}^m$, set
\[
\mathcal{L}_{\mathbf{i}, \mathbf{n}} := \mathcal{L}_1^{n_1} \otimes \cdots \otimes \mathcal{L}_m^{n_m}.
\]
\begin{theorem}{\cite[Theorem~3.1]{LT04}}\label{thm:LT line bundles}
\begin{enumerate}\ 
    \item The isomorphism classes of $\mathcal{L}_1, \dots, \mathcal{L}_m$ form a basis of $\mathrm{Pic}(Z(\tilde{w}))$. In particular, the map $\mathbb{Z}^m \to \mathrm{Pic}(Z(\tilde{w}))$ given by $\mathbf{n} \mapsto \mathcal{L}_{\mathbf{i}, \mathbf{n}}$ is an isomorphism.
    \item The line bundle $\mathcal{L}_{\mathbf{i}, \mathbf{n}}$ is (very) ample if and only if $n_j > 0$ for all $1\leq j \leq m$.
\end{enumerate}
\end{theorem}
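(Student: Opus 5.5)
We argue by induction on $m$, the length of the expression $\tilde w$, using the iterated $\mathbb{P}^1$-bundle structure
\[
\widehat{f}_{m-1}\colon Z(\tilde w)\to Z(\tilde w(m-1)).
\]
This map is the $\mathbb{P}^1$-bundle $Z(\tilde w(m-1))\times^B P_{\alpha_{i_m}}/B\to Z(\tilde w(m-1))$. It is Zariski locally trivial, since $P_{\alpha_{i_m}}\to P_{\alpha_{i_m}}/B$ is locally trivial.

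For part (1), the projective bundle formula gives
\[
\mathrm{Pic}(Z(\tilde w))\cong \widehat{f}_{m-1}^*\mathrm{Pic}(Z(\tilde w(m-1)))\oplus \mathbb{Z}\cdot[\mathcal{M}]
\]
for any line bundle $\mathcal{M}$ of degree $1$ on the fibres. By induction, $\widehat{f}_{m-1}^*$ of the basis $\mathcal{L}_1,\dots,\mathcal{L}_{m-1}$ of $\mathrm{Pic}(Z(\tilde w(m-1)))$ gives the first $m-1$ basis elements; here we note $\widehat{f}_j=\widehat{f}'_j\circ \widehat{f}_{m-1}$ for $j<m$. It remains to check that $\mathcal{L}_m$ has degree $1$ on a fibre $F\cong P_{\alpha_{i_m}}/B\cong\mathbb{P}^1$. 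Under $\widehat{\pi}_m$, $F$ maps isomorphically onto a translate $gP_{\alpha_{i_m}}/B\subset G/B$. The restriction of $\mathcal{L}_{G/B}(\omega_{i_m})$ to the $\mathbb{P}^1=P_{\alpha}/B$ has degree $\langle\omega_{i_m},\alpha_{i_m}\rangle=1$, and degree is invariant under $G$-translation. This completes (1). The base case $m=1$ is $Z=\mathbb{P}^1$ with $\mathcal{L}_1=\mathcal{O}(1)$.

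For part (2), we first show necessity. For each $j$ we exhibit a curve $C_j\subset Z(\tilde w)$ with $\mathcal{L}_k\cdot C_j=\delta_{jk}$ for $k\ge j$, possibly with further nonzero terms only for $k<j$. The natural choice is
\[
C_j=\{[e,\dots,e,p_j,e,\dots,e] : p_j\in P_{\alpha_{i_j}}\}\cong\mathbb{P}^1 .
\]
On $C_j$ the bundle $\mathcal{L}_k$ is trivial for $k<j$, since $\widehat{f}_k$ contracts $C_j$ to a point. For $k\ge j$, $\widehat{\pi}_k(C_j)=P_{\alpha_{i_j}}/B$, so $\mathcal{L}_k$ has degree $\langle\omega_{i_k},\alpha_{i_j}^\vee\rangle$ on $C_j$. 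That degree is not $\delta_{jk}$, so the $C_j$ alone do not suffice. Instead, following Lauritzen–Thomsen, we use the curves
\[
C'_j=\{[e,\dots,e,p,p^{-1}\!\cdot\!\text{(adjusted)},\dots]\},
\]
chosen so that the products $p_1\cdots p_k$ are constant for $k>j$. Concretely, we take the fibre of $\widehat{f}_{j-1}$ intersected with the locus where $\widehat{\pi}_k$ is constant for all $k>j$. Such a curve exists only when $i_k\neq i_j$ appropriately, and constructing this "triangular" family of test curves is the main obstacle. With these curves, $\mathcal{L}_{\mathbf{i},\mathbf{n}}\cdot C'_j=n_j$, so ampleness forces $n_j>0$.

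For sufficiency, the bundles $\mathcal{L}_j$ are globally generated, being pullbacks of globally generated bundles. We then show that $\mathcal{L}_{\mathbf{i},\mathbf{n}}$ with all $n_j>0$ is very ample by induction. Write it as $\widehat{f}_{m-1}^*\mathcal{A}\otimes\mathcal{L}_m^{n_m}$ with $\mathcal{A}$ very ample on the base. Since $\mathcal{L}_m$ is globally generated and relatively very ample for $\widehat{f}_{m-1}$, the map given by
\[
\widehat{f}_{m-1}\times\widehat{\pi}_m\colon Z(\tilde w)\to Z(\tilde w(m-1))\times G/B
\]
is a closed embedding, because it is injective on each $\mathbb{P}^1$-fibre and on tangent vectors. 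Pulling back the very ample bundle $\mathcal{A}\boxtimes\mathcal{L}_{G/B}(\omega_{i_m})^{n_m}$ along this embedding shows $\mathcal{L}_{\mathbf{i},\mathbf{n}}$ is very ample. Ampleness in the necessity direction then follows directly.
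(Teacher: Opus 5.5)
The paper does not prove this theorem; it quotes it from Lauritzen--Thomsen, so I am judging your argument on its own. Part (1) is fine: $\mathcal{L}_m$ has degree $\langle\omega_{i_m},\alpha_{i_m}^\vee\rangle=1$ on the fibres of the Zariski-locally trivial $\mathbb{P}^1$-bundle $Z(\tilde w)\to Z(\tilde w(m-1))$, and induction does the rest.

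\textbf{The gap: the ``only if'' half of (2).} You never construct the test curves $C'_j$, and the recipe you give cannot work. Write $g_l:=p_1\cdots p_l$. Suppose $\widehat\pi_{j+1}$ is constant as a map to $G/B$ along a curve in a fibre of $\widehat f_{j-1}$. Then $g_jB$ lies in $g_{j-1}P_{\alpha_{i_j}}/B\cap xP_{\alpha_{i_{j+1}}}/B$ for a fixed $x$. When $i_{j+1}\neq i_j$ this intersection is at most one point, so $\mathcal{L}_j$ would have degree $0$ on such a curve.

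Since $\mathcal{L}_l$ is pulled back from $G/P^{s_{i_l}}$, you only need the images $g_lP^{s_{i_l}}$ to be constant for $l\neq j$. With that weaker requirement the curves always exist. Let $k>j$ be the next index with $i_k=i_j$, and set
\[
C'_j=\bigl\{[e,\dots,e,p,e,\dots,e,p^{-1},e,\dots,e]\ :\ p\in P_{\alpha_{i_j}}\bigr\},
\]
with $p$ in slot $j$ and $p^{-1}$ in slot $k$. If there is no such $k$, omit $p^{-1}$, i.e.\ take your $C_j$.
\begin{itemize}
\item $C'_j$ is well defined on $P_{\alpha_{i_j}}/B$, because a factor $b\in B$ can be pushed through the intermediate entries $e$.
\item Along $C'_j$ one has $g_l=e$ for $l<j$ and for $l\ge k$, and $g_l=p$ for $j\le l<k$.
\item For $j<l<k$ we have $i_l\neq i_j$, so $p\in P_{\alpha_{i_j}}\subseteq P^{s_{i_l}}$ and $g_lP^{s_{i_l}}$ is constant.
\end{itemize}
Hence $\mathcal{L}_l\cdot C'_j=\delta_{lj}$ and $\mathcal{L}_{\mathbf{i},\mathbf{n}}\cdot C'_j=n_j$.

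The curves $C_j$ alone only give $\sum_{k\ge j,\,i_k=i_j}n_k>0$, which is strictly weaker. For example, take $G=SL_2$, $\tilde w=(s_1,s_1)$ and $\mathbf{n}=(-1,2)$. This $\mathbf{n}$ passes the $C_j$ test. Yet $\mathcal{L}_{\mathbf{i},\mathbf{n}}=\mathcal{O}(-1,2)$ on $Z(\tilde w)\cong\mathbb{P}^1\times\mathbb{P}^1$ (via $[p_1,p_2]\mapsto(p_1B,p_1p_2B)$) is not ample.

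\textbf{A fixable error in the ``if'' half.} The map $\widehat f_{m-1}\times\widehat\pi_m$ is indeed a closed embedding; it is the fibre product $Z(\tilde w(m-1))\times_{G/P_{\alpha_{i_m}}}G/B$. However, $\mathcal{A}\boxtimes\mathcal{L}_{G/B}(\omega_{i_m})^{n_m}$ is not very ample once $\operatorname{rank}G\ge 2$. The reason is that $\mathcal{L}_{G/B}(\omega_{i_m})$ is trivial on the lines $P_\beta/B$ for $\beta\ne\alpha_{i_m}$.

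The fix is to replace $G/B$ by $G/P^{s_{i_m}}$. Since $P_{\alpha_{i_m}}\cap P^{s_{i_m}}=B$, each fibre $gP_{\alpha_{i_m}}/B$ maps isomorphically onto a line in $G/P^{s_{i_m}}$. So $\widehat f_{m-1}\times(\pi^m\circ\widehat\pi_m)$ is proper, injective and unramified, hence a closed embedding. On this target $\mathcal{O}_{G/P^{s_{i_m}}}(\omega_{i_m})$ is very ample. Iterating gives exactly the embedding $\Phi_{\tilde w}\colon Z(\tilde w)\hookrightarrow\prod_k G/P^{s_{i_k}}$ with $\mathcal{L}_k=\Phi_{\tilde w}^*\mathcal{M}_k$ recorded in Remark~\ref{remark3}. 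Very ampleness when all $n_k>0$ is then immediate.
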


\smallskip

\subsection{Line bundles on \texorpdfstring{$\widehat{X}(\widehat{w})$}{X(w-hat)}}
 Let $w\in W$ and $\widehat{w} = (w_1, \dots, w_m)$ be an admissible generalized decomposition of $w$.
We begin by recalling that the variety $\widehat{X}(\widehat{w})$ admits a closed embedding into the product $\prod_{j=1}^{m} G/P^{w_j}$. More precisely, we have the following.

\begin{proposition}[\cite{BS26},~Proposition~4.2] \label{line}
The variety $\widehat{X}(\widehat{w})$ is embedded as a closed subvariety inside $\prod_{j=1}^{m} G/P^{w_j}$. The embedding is given by  
\begin{align*} 
\Phi_{\widehat{w}}: \widehat{X}(\widehat{w}) &\to \prod_{j=1}^{m} G/P^{w_j} \\ 
[p_1,\dots,p_m] &\mapsto (p_1 P^{w_1}, p_1 p_2 P^{w_2}, \dots, (\prod_{j=1}^{m} p_j) P^{w_m}),
\end{align*} 
where $p_j \in \overline{(P_{w_j} \cap G_{w_j}) w_j (P^{w_j} \cap G_{w_j})} \subseteq G_{w_j}$ for all $1 \leq j \leq m$.
\end{proposition}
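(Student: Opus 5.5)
The claim is that $\Phi_{\widehat{w}}$ is well defined, is a closed map, and is an isomorphism onto its image. I will argue by induction on $m$, using the recursive definition
\[
\widehat{X}(w_1,\dots,w_m)=\overline{(P_{w_1}\cap G_{w_1})w_1(P^{w_1}\cap G_{w_1})}\times^{P^{w_1}\cap G_{w_1}}\widehat{X}(w_2,\dots,w_m).
\]
For $m=1$ the statement is the identification $\widehat{X}(w_1)\simeq X(w_1)\subseteq G/P^{w_1}$ recalled in Section~\ref{sec:basics}.

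\textbf{Well-definedness.} Set $H=P^{w_1}\cap G_{w_1}$. Replacing $(p_1,p_2,\dots)$ by $(p_1h,h^{-1}p_2,\dots)$ with $h\in H$ leaves every partial product $p_1\cdots p_j$ unchanged for $j\ge 2$. The first coordinate $p_1P^{w_1}$ is also unchanged, because $h\in P^{w_1}$.

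The deeper equivalences in $\widehat{X}(w_2,\dots,w_m)$ are handled the same way. Here I use that the $H$-action on $\widehat{X}(w_2,\dots,w_m)$ comes from the $P_{J_2}$-action of \cite[Lemma~5.1]{Per07}, which acts by left multiplication on the first factor. So the map $\Phi$ is given by products of group elements taken modulo the respective parabolics, and it descends to the quotient.

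\textbf{Equivariance of the tail embedding.} By induction, $\Phi_{(w_2,\dots,w_m)}$ is a closed embedding into $\prod_{j\ge2}G/P^{w_j}$. I claim it is $P_{J_2}$-equivariant for the diagonal left action on the product. The map $[p_2,\dots]\mapsto(p_2P^{w_2},p_2p_3P^{w_3},\dots)$ clearly intertwines left multiplication on $p_2$ with the diagonal action. Hence $\Phi_{\widehat{w}}$ factors as
\[
\overline{(P_{w_1}\cap G_{w_1})w_1H}\times^{H}\widehat{X}(w_2,\dots,w_m)\hookrightarrow \overline{(P_{w_1}\cap G_{w_1})w_1H}\times^{H}\prod_{j\ge2}G/P^{w_j}\xrightarrow{\ \sim\ } X(w_1)\times\prod_{j\ge 2}G/P^{w_j}.
\]
The first arrow is a closed embedding, since taking the twisted product with a closed $H$-stable subvariety is a closed embedding; this can be checked locally, using that $G_{w_1}\to G_{w_1}/H$ is locally trivial. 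The second arrow is $[p_1,(y_j)]\mapsto(p_1H,(p_1y_j))$. It is an isomorphism because the $H$-action on $\prod G/P^{w_j}$ extends to the group $G_{w_1}$. Explicitly, the standard untwisting $G_{w_1}\times^H Y\simeq G_{w_1}/H\times Y$, $[g,y]\mapsto(gH,gy)$, restricts over the closed subvariety $X(w_1)\simeq\overline{(P_{w_1}\cap G_{w_1})w_1H}/H\subseteq G_{w_1}/H$.

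\textbf{Conclusion.} Finally, I identify $G_{w_1}/H$ with its image in $G/P^{w_1}$, the $G_{w_1}$-orbit of the base point, which is a closed embedding. Composing the arrows gives exactly $\Phi_{\widehat{w}}$. Hence $\Phi_{\widehat{w}}$ is a closed embedding, and its image is closed because $\widehat{X}(\widehat{w})$ is projective.

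\textbf{Main obstacle.} The delicate point is the equivariance step. One must confirm that the action of $H$ on the fiber used in the twisted product, given by the extension of the $P_{w_2}\cap G_{w_2}$-action to $P_{J_2}$ from \cite[Lemma~5.1]{Per07}, really is left multiplication compatible with the diagonal action on $\prod_{j\ge2}G/P^{w_j}$. I would check this directly from Perrin's construction of the extended action.
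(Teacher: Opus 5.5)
The paper gives no argument of its own here; it quotes \cite[Proposition~4.2]{BS26} and only notes that reducedness is not used. So your proposal has to stand on its own. Its skeleton is sound:
\begin{itemize}
\item induction on $m$;
\item the untwisting $G_{w_1}\times^H Y\simeq G_{w_1}/H\times Y$ for the $G$-variety $Y=\prod_{j\ge2}G/P^{w_j}$, restricted over $X(w_1)$;
\item closedness of $E\times^H Y'$ in $E\times^H Y$ for a closed $H$-stable $Y'$;
\item the closed orbit $G_{w_1}/H\hookrightarrow G/P^{w_1}$.
\end{itemize}
The gap is exactly the step you flag, and it is not cosmetic. Well-definedness of $[p_1,y]\mapsto(p_1P^{w_1},p_1\Phi_{(w_2,\dots,w_m)}(y))$ on $E\times^H\widehat X(w_2,\dots,w_m)$ \emph{is} the $H$-equivariance of the tail embedding. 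Your first arrow also requires it.

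Your justification does not establish this equivariance. The group $H=P^{w_1}\cap G_{w_1}$ is usually not contained in $G_{w_2}$. Already for $\widehat w=(s_1,s_2)$ in $SL_3$, one has $H=B\cap G_{\alpha_1}\supseteq U_{\alpha_1}$. For $h\notin G_{w_2}$ we get $hp_2\notin G_{w_2}$. Consequently:
\begin{itemize}
\item the extended action is not left multiplication on the first factor;
\item the tuple $(p_1h,h^{-1}p_2,\dots)$ does not represent a point of $\widehat X(\widehat w)$;
\item ``clearly intertwines'' only covers $H\cap G_{w_2}\subseteq P_{w_2}\cap G_{w_2}$.
\end{itemize}

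To close the gap, make the extension explicit.
\begin{enumerate}
\item From the definition, $J_2\cap\operatorname{Supp}(w_2)=I_{w_2}\cap\operatorname{Supp}(w_2)$.
\item The set $K:=J_2\setminus\operatorname{Supp}(w_2)$ lies in $J_3$ and is orthogonal to $\operatorname{Supp}(w_2)$. Indeed, for $\alpha\notin\operatorname{Supp}(w_2)$, the condition $w_2(\alpha)=\alpha$ forces this, by the formula in the proof of Lemma~\ref{lem: Supp(w)}.
\item Hence $P_{J_2}$ is generated by $P_{w_2}\cap G_{w_2}$, by $T$, by the unipotent radical $U'$ of $P_{\operatorname{Supp}(w_2)}$, and by the $U_{-\gamma}$ with $\gamma$ a positive root in the span of $K$. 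Here $G_{w_2}$ normalizes $U'$ and centralizes each such $U_{-\gamma}$.
\item On the last three kinds of generators the extended action is
\[
t\cdot[p_2,y]=[tp_2t^{-1},t\cdot y],\quad u\cdot[p_2,y]=[p_2,(p_2^{-1}up_2)\cdot y],\quad v\cdot[p_2,y]=[p_2,v\cdot y],
\]
where $P_{J_3}$ acts on $y\in\widehat X(w_3,\dots,w_m)$.
\item Equivariance of $\Phi_{(w_2,\dots,w_m)}$ for each generator then follows from two inputs. The first is $P_{J_3}$-equivariance of $\Phi_{(w_3,\dots,w_m)}$, by induction; the base case $\widehat X(w_m)=X(w_m)$ with $P_{J_m}=P_{w_m}$ is tautological. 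The second is that $T$, $U'$ and each such $U_{-\gamma}$ lie in $P^{w_2}$. For the $U_{-\gamma}$ this holds because $K\subseteq S\setminus\operatorname{Supp}(w_2)\subseteq I^{w_2}$.
\end{enumerate}
This is also why your inductive claim must be $P_{J_i}$-equivariance, as you state it, rather than just $H$-equivariance: $p_2^{-1}up_2$ lies in $B$ but not in $P^{w_2}\cap G_{w_2}$. With this step inserted, your argument is a complete proof.
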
 

\begin{remark}    
We note that in \cite[Proposition~4.2]{BS26}, we assumed that the decomposition was reduced. However, the same proof works for arbitrary decompositions, not necessarily reduced.
\end{remark}

To describe the line bundles on $\widehat{X}(\widehat{w})$, we use an equivalent definition for the Bott-Samelson variety $Z({\tilde{w}})$ associated with an expression $\tilde w$ of $w\in W$ constructed by Magyar in \cite{Mag98}.

Let $\tilde{w} = (s_{i_1}, \dots, s_{i_m})$ be an expression of $w\in W$ and let $\pi_{j}: G/B \to G/P_{\alpha_{i_{j}}}$ be the projection morphism whose fibers are isomorphic to $P_{\alpha_{i_j}}/B$. Note that $P_{\alpha_{i_j}}$ is the minimal parabolic subgroup, so $P_{\alpha_{i_j}}/B \cong \mathbb{P}^1$. Let $\mathbb{P}(x,\alpha_{i_j})$ denote the projective line $\pi^{-1}_{j}(\pi_{j}(x))$ for $x\in G/B$.

For any $x\in G/B$ the map $G/B\to G/P^{\alpha_{i_j}}$ restricted to $\mathbb{P}(x,\alpha_{i_j})$ is an isomorphism onto its image called $\overline{\mathbb{P}}(x,\alpha_{i_j})$. If $y\in G/B$ and $\overline{x}\in \overline{\mathbb{P}}(y,\alpha_{i_{j}})$ for $x\in \mathbb{P}(y, \alpha_{i_{j}})$ we abuse the notation by writing $\overline{\mathbb{P}}(\overline{x}, \alpha_{i_{j+1}})=\overline{\mathbb{P}}({x}, \alpha_{i_{j+1}})$. Under the morphism $\Phi_{\tilde{w}}$ defined in Proposition \ref{line}, the variety $Z(\tilde{w})$ is isomorphic to \begin{equation}\label{eq: equivalent to Zw} \tilde{X}(\tilde{w}):= \Big\{\, (x_1,\dots,x_m) \in \prod_{j=1}^{m} G/P^{s_{i_j}} \;\Big|\; 
x_0 = eB \ \text{and} \ x_j \in \overline{\mathbb{P}}(x_{j-1}, \alpha_{i_j}) \ \text{for all } 1 \leq j \leq m \,\Big\}
\end{equation}
 see, \cite[Theorem~1]{Mag98} and \cite[Section~2]{Per07}. In this setting, for $1\leq j \leq m$, we have natural projections $pr_j: \tilde{X}(\tilde{w})\to G/P^{s_{i_j}}$ and we define the line bundles \[\mathcal{M}_j:= pr_j^*(\mathcal{O}_{G/P^{s_{i_j}}}(\omega_{\alpha_{i_j}}))\] on $\tilde{X}(\tilde{w})$.

 \begin{remark}\label{remark3} For $1\leq j \leq m$,  let $\pi^j:G/B\to G/P^{s_{i_j}}$ be a natural morphism.
Consider the following commutative diagram
 \begin{center}
\begin{tikzpicture}[scale=1.5, >=Stealth, every node/.style={font=\small}]

  \node (P1) at (-1,6) {$Z(\tilde{w})$};
  \node (P2) at (-1,4.5) {$Z(\tilde{w}(j))$};
  \node (P3) at (2.5,4.5) {$G/B$};
  \node (P4) at (2.5,6) {$\tilde{X}(\tilde{w})\subseteq \prod_{k=1}^{m}G/P^{s_{i_k}}$};
  \node (P5) at (4,4.5) {$G/P^{s_{i_j}}$};

  \draw[->] (P1) -- node[left] {$\widehat{f}_j$} (P2);
  \draw[->] (P2) -- node[below] {$\widehat{\pi}_j$}(P3);
  \draw[->] (P1) -- node[above] {$\Phi_{\tilde{w}}$}(P4);
  \draw[->] (P4) -- node[right=8pt] {$pr_j$}(P5);
  \draw[->] (P3) -- node[below] {$\pi^j$}(P5);
\end{tikzpicture}
\end{center}

 Then, we have 
 \begin{center}
$pr_j \circ \Phi_{\tilde{w}}= \pi^j\circ\widehat{\pi}_{j}\circ \widehat{f}_j $
 \end{center}
 for all $1\leq j\leq m$. Therefore, the pullback of the line bundle $ \mathcal{M}_j$ under $\Phi_{\tilde{w}}$ to $Z({\tilde{w}})$ is the same as the line bundle $\mathcal{L}_j$. That is,\[
\mathcal{L}_j = \Phi_{\tilde{w}}^*\big( \mathcal{M}_j \big)
\quad \text{for all } 1 \leq j \leq m.
\]
 \end{remark}
 
\begin{proposition}

The set $ \big\{ \mathcal{M}_j \mid 1 \leq j \leq m \big\} $ forms a basis for $ \operatorname{Pic}(\tilde{X}(\tilde{w})) $.
\end{proposition}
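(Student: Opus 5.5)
We transport the statement from $Z(\tilde{w})$ to $\tilde{X}(\tilde{w})$ using Magyar's isomorphism and then invoke Theorem~\ref{thm:LT line bundles}.

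First, $\Phi_{\tilde{w}}\colon Z(\tilde{w})\to \tilde{X}(\tilde{w})$ is an isomorphism of varieties. This follows from Proposition~\ref{line} applied to the admissible decomposition $\tilde{w}=(s_{i_1},\dots,s_{i_m})$, together with the identification of its image recalled in Eq.~\eqref{eq: equivalent to Zw} (see \cite[Theorem~1]{Mag98}). Consequently, pullback
\[
\Phi_{\tilde{w}}^*\colon \operatorname{Pic}(\tilde{X}(\tilde{w}))\longrightarrow \operatorname{Pic}(Z(\tilde{w}))
\]
is a group isomorphism, with inverse $(\Phi_{\tilde{w}}^{-1})^*$.

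Next, Remark~\ref{remark3} gives $\Phi_{\tilde{w}}^*(\mathcal{M}_j)=\mathcal{L}_j$ for every $1\le j\le m$. This rests on the commutativity $pr_j\circ\Phi_{\tilde{w}}=\pi^j\circ\widehat{\pi}_j\circ\widehat{f}_j$ and on the identity $(\pi^j)^*\mathcal{O}_{G/P^{s_{i_j}}}(\omega_{\alpha_{i_j}})=\mathcal{L}_{G/B}(\omega_{i_j})$. The latter holds because $\omega_{i_j}$ is a character of $P^{s_{i_j}}=P_{S\setminus\{\alpha_{i_j}\}}$, and the line bundle it defines on $G/B$ is the pullback of the one it defines on $G/P^{s_{i_j}}$.

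By Theorem~\ref{thm:LT line bundles}(1), the classes $\mathcal{L}_1,\dots,\mathcal{L}_m$ form a $\mathbb{Z}$-basis of $\operatorname{Pic}(Z(\tilde{w}))$. An isomorphism of abelian groups carries a basis to a basis, so the preimages $\mathcal{M}_1,\dots,\mathcal{M}_m$ under $\Phi_{\tilde{w}}^*$ form a basis of $\operatorname{Pic}(\tilde{X}(\tilde{w}))$.

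The only step needing care is the commutativity of the diagram in Remark~\ref{remark3}, namely checking that $pr_j\circ\Phi_{\tilde{w}}$ sends $[p_1,\dots,p_m]$ to $p_1\cdots p_jP^{s_{i_j}}$. This follows directly from the formula for $\Phi_{\tilde{w}}$ in Proposition~\ref{line}, so no real obstacle is expected.
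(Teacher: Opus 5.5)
Your proposal is correct and is the paper's argument. The isomorphism $\Phi_{\tilde w}\colon Z(\tilde w)\to\tilde X(\tilde w)$ induces an isomorphism of Picard groups that sends $\mathcal{M}_j$ to $\mathcal{L}_j$ by Remark~\ref{remark3}, so the Lauritzen--Thomsen basis of Theorem~\ref{thm:LT line bundles}(1) carries over to $\{\mathcal{M}_j\}$. Your justification of $(\pi^j)^*\mathcal{O}_{G/P^{s_{i_j}}}(\omega_{\alpha_{i_j}})=\mathcal{L}_{G/B}(\omega_{i_j})$, which the paper leaves implicit, is also correct.
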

\begin{proof}
    Since $Z(\tilde{w})\simeq \tilde{X}(\tilde{w})$ and by Theorem \ref{thm:LT line bundles}(1), the set $ \big\{ \mathcal{L}_j \mid 1 \leq j \leq m \big\} $ forms a basis for $Z(\tilde{w})$, by Remark \ref{remark3} we get the result.
\end{proof}

Now, we define the line bundles on $\widehat{X}(\widehat{w})$ as follows:

For $1\leq j \leq m$, let $pr_j: \prod_{i=1}^{m}G/P^{w_i}\to G/P^{w_j}$ be the projections and recall the map $\Phi_{\widehat{w}}$ from Proposition \ref{line} and set
\begin{equation}\label{eq: Line bundles on gBS}
\mathcal{L}_{j,\alpha} := \Phi_{\widehat{w}}^* pr_j^* \mathcal{O}_{G/P^{w_j}}(\omega_\alpha),
\end{equation}
where $\mathcal{O}_{G/P^{w_j}}(\omega_{\alpha})$ is the homogeneous line bundle on $G/P^{w_j}$ associated to the fundamental weight $\omega_{\alpha}$ with $\alpha\in (S\setminus I^{w_j})\cap\operatorname{Supp}(w_j)$. Note that by Lemma \ref{lem: Supp(w)}, we have $(S\setminus I^{w_j})\subseteq \operatorname{Supp}(w_j)$.

\begin{lemma}\label{lem: rank inequality}
  Let $w\in W$ and $\widehat{w} = (w_1, \dots, w_m)$ be an admissible generalized 
  decomposition of $w$. Then the set of line bundles $\{\mathcal{L}_{j,\alpha}\mid 1\leq j\leq m, \alpha\in S\setminus I^{w_j}\}$ is linearly independent. In particular, \[
    \operatorname{rank}\big(\operatorname{Pic}\widehat{X}(\widehat{w})\big)\geq \sum_{j=1}^{m}\#(S\setminus I^{w_j}).
    \]
\end{lemma}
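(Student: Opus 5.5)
We will test linear independence against curves, exhibiting for each pair $(j,\alpha)$ with $\alpha\in S\setminus I^{w_j}$ an irreducible curve $C_{j,\alpha}\subseteq \widehat{X}(\widehat{w})$ such that the intersection matrix $\big(\deg \mathcal{L}_{k,\beta}|_{C_{j,\alpha}}\big)$ is triangular with nonzero diagonal. An alternative route is to pull back along the birational morphism $\tilde{\pi}: Z(\tilde{w})\to \widehat{X}(\widehat{w})$ of Corollary~\ref{cor:birational} and use the basis of Theorem~\ref{thm:LT line bundles}. We prefer the curve approach because it is more direct.

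\textbf{Step 1: the curves.} Fix $j$ and $\alpha\in S\setminus I^{w_j}$. Then $w_j(\alpha)<0$, so $\beta:=-w_j(\alpha)$ is a positive root. Consider the $T$-stable curve in $X(w_j)\subseteq G/P^{w_j}$ through the $T$-fixed point $w_jP^{w_j}$ given by $\overline{U_{-\beta}w_jP^{w_j}}$. More intrinsically, inside $\widehat{X}(w_j)$ take
\[
\Gamma_{j,\alpha}=\overline{w_j U_{-\alpha}(P^{w_j}\cap G_{w_j})}/(P^{w_j}\cap G_{w_j}),
\]
which makes sense because $\alpha\in\operatorname{Supp}(w_j)$ by Lemma~\ref{lem: Supp(w)}. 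This is a $\mathbb{P}^1$ joining $w_j$ and $w_js_\alpha<w_j$, so it lies in $X(w_j)$. Define $C_{j,\alpha}\subseteq\widehat{X}(\widehat{w})$ as the set of points $[w_1,\dots,w_{j-1},p,e,\dots,e]$ with $p$ ranging over $\Gamma_{j,\alpha}$. Here $e\in X(w_k)$ is the base point of each later factor, which is legitimate since each fibre contains the identity coset.

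\textbf{Step 2: the degrees.} Under $\Phi_{\widehat{w}}$ of Proposition~\ref{line}, the $k$-th coordinate of a point of $C_{j,\alpha}$ is $w_1\cdots w_{j-1}\,p\,P^{w_k}$ for $k\ge j$, and is constant for $k<j$. Hence $\mathcal{L}_{k,\beta}\cdot C_{j,\alpha}=0$ for $k<j$. For $k=j$, the curve maps to a translate of $\overline{U_{-\alpha}P^{w_j}}/P^{w_j}$, and
\[
\deg\big(\mathcal{O}(\omega_\beta)|_{C_{j,\alpha}}\big)=\langle\omega_\beta,\alpha^\vee\rangle=\delta_{\alpha\beta}.
\]
So, ordering the pairs $(j,\alpha)$ by decreasing $j$, the matrix $M$ with entries $\mathcal{L}_{k,\beta}\cdot C_{j,\alpha}$ has identity diagonal blocks and vanishes on one side. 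The entries with $k>j$ are unknown but irrelevant. Thus $M$ is block triangular with invertible diagonal blocks, hence invertible.

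\textbf{Step 3: conclusion.} Suppose $\bigotimes\mathcal{L}_{k,\beta}^{n_{k,\beta}}$ is trivial. Then its degree on every $C_{j,\alpha}$ vanishes. Invertibility of $M$ forces all $n_{k,\beta}=0$, which gives linear independence in $\operatorname{Pic}$ and the stated rank bound.

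\textbf{Main obstacle.} The delicate point is to make Step 1 rigorous inside the twisted product. We must check that $C_{j,\alpha}$ is a well-defined closed curve in $\widehat{X}(\widehat{w})$, that its image in the $j$-th factor is isomorphic to the $\mathbb{P}^1$ above (a left translate by $w_1\cdots w_{j-1}$), and that the degree computation uses the correct translate. Left translation preserves $\mathcal{O}(\omega_\beta)$ up to isomorphism, so the degree is unchanged. I would verify these points by computing directly with the explicit embedding $\Phi_{\widehat{w}}$ rather than through fibration arguments.
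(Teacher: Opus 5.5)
Your argument is correct once two small repairs to Step 1 (below) are made, but it takes a different route from the paper.

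\textbf{The paper's route.} The paper pulls back along $\tilde{\pi}\colon Z(\tilde{w})\to\widehat{X}(\widehat{w})$. By Proposition~\ref{pull back}, $\tilde{\pi}^*\mathcal{L}_{j,\alpha}=\mathcal{L}_{m_j(d),j}$, and distinct pairs $(j,\alpha)$ give distinct members of the Lauritzen--Thomsen basis of $\operatorname{Pic}Z(\tilde{w})$ (Theorem~\ref{thm:LT line bundles}). Any relation among the $\mathcal{L}_{j,\alpha}$ therefore pulls back to a relation among basis elements, so it is trivial.

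\textbf{Your route.} You pair with test curves and obtain an intersection matrix that is block triangular with identity diagonal blocks.

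\textbf{Comparison.} The paper's argument takes two lines once Proposition~\ref{pull back} is available, and that proposition is needed anyway to express $[\mathcal{L}_{j,\alpha}]$ in the peak divisors. It does, however, rest on the Bott--Samelson machinery (Magyar's model and Lauritzen--Thomsen). Yours uses only the embedding $\Phi_{\widehat{w}}$ of Proposition~\ref{line} and the degree $\delta_{\alpha\beta}$ of $\mathcal{O}(\omega_\beta)$ on a translate of the line $P_\alpha P^{w_j}/P^{w_j}\cong\mathbb{P}^1$. It also proves slightly more: the classes are independent already modulo numerical equivalence, via an explicit unimodular pairing with curves on $\widehat{X}(\widehat{w})$ itself.

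\textbf{Repair 1: the sign of the root.} $\overline{U_{-\beta}w_jP^{w_j}}$ is a point, since $U_{-\beta}w_j=w_jU_\alpha$. The curve you want is $\overline{U_{\beta}w_jP^{w_j}}=\overline{w_jU_{-\alpha}P^{w_j}}$. It lies in $X(w_j)$ because $U_\beta\subseteq B$, not merely because it joins $w_j$ and $w_js_\alpha$.

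\textbf{Repair 2: well-definedness of $C_{j,\alpha}$.} You cannot let $p$ range over cosets in $\Gamma_{j,\alpha}$. Replacing $p$ by $pq$ with $q\in P^{w_j}\cap G_{w_j}$ gives $[w_1,\dots,w_{j-1},p,q,e,\dots,e]$, and $q$ need not fix the base point of $\widehat{X}(w_{j+1},\dots,w_m)$. For example, for the admissible decomposition $\widehat{w}=(s_1s_2,s_1)$ in type $A_2$, $U_{-\alpha_1}\subseteq P^{w_1}$ but $U_{-\alpha_1}\not\subseteq P^{s_1}$.

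Instead, set $p=w_jg$ with $g\in G_\alpha$.
\begin{enumerate}
\item Since $\ell(w_js_\alpha)<\ell(w_j)$, Bruhat multiplication in $G_{w_j}$ gives $w_jG_\alpha\subseteq\overline{(B\cap G_{w_j})w_j(B\cap G_{w_j})}$, so these are legitimate points.
\item Under $\Phi_{\widehat{w}}$ the point becomes $(w_1P^{w_1},\dots,w_1\cdots w_{j-1}P^{w_{j-1}},w_1\cdots w_jgP^{w_j},\dots,w_1\cdots w_jgP^{w_m})$.
\item Since $B\subseteq P^{w_k}$ for every $k$ and $\Phi_{\widehat{w}}$ is injective, this point depends only on $g(B\cap G_\alpha)$.
\end{enumerate}
This gives a morphism $\mathbb{P}^1\to\widehat{X}(\widehat{w})$, and your degree computation in Step 2 goes through verbatim on it.
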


To prove this lemma, we begin by constructing certain indexing sets associated with a generalized decomposition of an element $w\in W$. 

Consider the two admissible generalized decompositions of $w \in W$, namely $$\widehat{w} = (w_1, \dots, w_m)\quad \text{and}\quad \tilde{w} = (s_{1,1}, \dots, s_{r_1,1}, \dots, s_{1,m}, \dots, s_{r_m,m}),$$ as given by Lemma~\ref{lem:reduced decomposition}.
We denote the expression $\tilde{w}$ by the sequence 
\[{\bf i}=(i_{1,1}, \ldots, i_{r_1, 1}, \ldots, i_{1, r_m,},\ldots, i_{r_m, m}).\]

For $1\leq j \leq m$ and $1\leq d\leq r_j$, we set \begin{equation}\label{eq:m_j(d)}   
m_j(d): = \max \{q \mid 1\leq q\leq r_j~,~ \ s_{q,j}= s_{d,j}
\}.
\end{equation} Thus, $m_j(d)$ is the largest index at which the simple reflection $s_{d,j}$ appears in the chosen reduced expression of $\tilde w_j$. Let
$$m(\widehat{w}):=\{m_j(d)\mid 1\leq j\leq m, 1\leq d\leq r_j\}.$$

\begin{example}\label{ex:indexing sets}
    In $SL_4(\mathbb{C})$, consider the element $
w = s_1 s_2 s_3 s_1 s_2.$
Then, two admissible generalized   decompositions of $w$ are given by
\[
\widehat{w} = (w_1, w_2) = (s_1 s_2 s_3 s_1,\, s_2)
\]
and
\[
\tilde{w} = (s_1, s_2, s_3, s_1, s_2)
= (s_{1,1}, s_{2,1}, s_{3,1}, s_{4,1}, s_{1,2}).
\] For each $1\leq i\leq 2$, note that  $S\setminus I^{w_i}\subseteq \operatorname{Supp}(w_i)$. More precisely, we have
  \[
    S\setminus I^{w_1}= \{\alpha_1=\alpha_{1,1}=\alpha_{4,1}, \alpha_3=\alpha_{3,1}\} \quad \text{and} \quad S\setminus I^{w_2}= \{\alpha_2=\alpha_{1,2}\}.
    \]
    Then, by Eq.\eqref{eq:m_j(d)}, We have \[
    m_1(1)=m_1(4)=4 \quad \text{and} \quad m_2(1)=1.
    \]
\end{example}

\begin{proposition}\label{pull back} 
   For $1 \leq j \leq m$ and  $\alpha:=\alpha_{d,j}\in S\setminus I^{w_j}$ for some $1\leq d\leq r_j$,   
   let $\mathcal{L}_{j,\alpha}$ denote the line bundle on $\widehat{X}(\widehat{w})$ defined in Eq.\eqref{eq: Line bundles on gBS}. Then, for $a_{j,\alpha}\in \mathbb{Z}$, the pullback $\tilde{\pi}^*\mathcal{L}^{\otimes{a_{j,\alpha}}}_{j,\alpha}$ is given by $$\tilde{\pi}^*\mathcal{L}^{\otimes{a_{j,\alpha}}}_{j,\alpha} = \mathcal{L}_{\mathbf{i}, \mathbf{n}},$$ where $\mathbf{n}=(n_{1,1},\dots,n_{r_1,1},\dots,n_{1,m},\dots,n_{r_m,m})$ and the integers $n_{q,t}$ for $1\leq t\leq m$, $1\leq q \leq r_t$ with  \[
n_{q,t} = 
\begin{cases}
    a_{j,\alpha} & \text{if } t=j\,\, \text{and}\,\, q= m_j(d) , \\
    0 & \text{otherwise}.
\end{cases}
\]
\end{proposition}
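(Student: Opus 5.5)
Since pullback is a group homomorphism, it suffices to treat $a_{j,\alpha}=1$ and show $\tilde{\pi}^*\mathcal{L}_{j,\alpha}\cong \mathcal{L}_{m_j(d)+\sum_{t<j} r_t}$, the Lauritzen--Thomsen bundle attached to position $(m_j(d),j)$ of $\tilde w$. The plan is to factor $pr_j\circ\Phi_{\widehat{w}}\circ\tilde{\pi}$ through $G/B$ and compare with the maps defining the bundles $\mathcal{L}_{q,t}$ in Eq.\eqref{eq: lines on BSvs}.

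\textbf{Step 1: a commutative diagram.} Let $N=\sum_{t<j}r_t$ and write $\tilde w(N+q)$ for the prefix of $\tilde w$ ending at position $(q,j)$. On a point $[p_{1,1},\dots,p_{r_m,m}]$ of $Z(\tilde w)$, the definitions in Eq.\eqref{eq:tilde pi} and Proposition \ref{line} give
\[
pr_j\circ\Phi_{\widehat{w}}\circ\tilde{\pi}\,[p_{1,1},\dots,p_{r_m,m}]
=\Big(\prod_{t\le j}\prod_{k=1}^{r_t}p_{k,t}\Big)P^{w_j}
=\pi^{P^{w_j}}\circ\widehat{\pi}_{N+r_j}\circ\widehat{f}_{N+r_j}[\dots],
\]
where $\pi^{P^{w_j}}:G/B\to G/P^{w_j}$ is the projection. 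Hence $\tilde{\pi}^*\mathcal{L}_{j,\alpha}=\widehat{f}_{N+r_j}^*\widehat{\pi}_{N+r_j}^*\mathcal{L}_{G/B}(\omega_\alpha)$, because $\mathcal{O}_{G/P^{w_j}}(\omega_\alpha)$ pulls back to $\mathcal{L}_{G/B}(\omega_\alpha)$.

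\textbf{Step 2: moving from position $r_j$ back to position $m_j(d)$.} Set $q_0=m_j(d)$. I claim that for $q_0<q\le r_j$ the map $x\mapsto \widehat{\pi}_{N+q}(x)P^{\alpha}$, with $P^{\alpha}$ the maximal parabolic, does not depend on the last coordinate $p_{q,j}$. Indeed $s_{q,j}\neq s_\alpha$ for such $q$ by the maximality in Eq.\eqref{eq:m_j(d)}, so $P_{\alpha_{q,j}}\subseteq P^{\alpha}$. Therefore
\[
\widehat{\pi}_{N+q}(x)P^{\alpha}=\widehat{\pi}_{N+q-1}(x)P^{\alpha}
\]
as maps on $Z(\tilde w)$. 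Iterating down to $q_0$ shows that
\[
\widehat{f}_{N+r_j}^*\widehat{\pi}_{N+r_j}^*\mathcal{L}_{G/B}(\omega_\alpha)
=\widehat{f}_{N+q_0}^*\widehat{\pi}_{N+q_0}^*\mathcal{L}_{G/B}(\omega_\alpha).
\]
Since $s_{q_0,j}=s_\alpha$, i.e.\ $\omega_{i_{q_0,j}}=\omega_\alpha$, this is exactly the Lauritzen--Thomsen bundle $\mathcal{L}_{N+q_0}$. All other exponents are $0$, which gives the stated $\mathbf n$.

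\textbf{Main obstacle.} The delicate point is making Step 2 rigorous. Equality of the morphisms to $G/P^{\alpha}$ implies equality of the pulled-back line bundles, but one must check that the morphisms really agree: the map is well defined on the $B^r$-quotient, and right multiplication by $p_{q,j}\in P_{\alpha_{q,j}}\subseteq P^\alpha$ fixes the coset. The setup also has to use the fact that $\omega_\alpha$ extends to a character of $P^{\alpha}$, so that $\mathcal{L}_{G/B}(\omega_\alpha)$ is pulled back from $G/P^{\alpha}$ and the factorization through $G/P^{\alpha}$ is legitimate. Finally, the hypothesis $\alpha\in S\setminus I^{w_j}$ guarantees, via Lemma \ref{lem: Supp(w)}, that $\alpha$ occurs in $\tilde w_j$, so $m_j(d)$ is defined.
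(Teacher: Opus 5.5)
Your proof is correct and follows the paper's argument. Both reduce to the equality of the two morphisms $Z(\tilde w)\to G/P^{\alpha}$ given by $[p_{1,1},\dots,p_{r_m,m}]\mapsto (p_{1,1}\cdots p_{r_j,j})P^{\alpha}$ and $[p_{1,1},\dots,p_{r_m,m}]\mapsto (p_{1,1}\cdots p_{m_j(d),j})P^{\alpha}$, followed by pulling back $\mathcal{O}(\omega_\alpha)$. The paper phrases this through Magyar's embedding $\Phi_{\tilde w}$ and Remark~\ref{remark3}, whereas you work directly with $\widehat f_k$ and $\widehat\pi_k$. Your Step 2, which uses $P_{\alpha_{q,j}}\subseteq P^{\alpha}$ for $q>m_j(d)$, supplies exactly the justification the paper leaves implicit when it asserts that its diagram commutes.
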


\begin{proof}
   Since $S\setminus I^{w_j}\subseteq \operatorname{Supp}(w_j)$, by Eq.\eqref{eq:m_j(d)}, we get $\alpha= \alpha_{d,j}=\alpha_{m_j(d),j}$. For any
   $m_j(d)\in m(\widehat{w})$, we consider a natural morphism $\pi^{m_j(d),j}: G/P^{w_j}\to G/P^{s_{m_j(d),j}}$, and note that the following diagram commutes:
   \begin{center}
\begin{tikzpicture}[scale=1.5, >=Stealth, every node/.style={font=\small}]

  \node (P1) at (-1,6) {$Z(\tilde{w})$};
  \node (P2) at (-1,4.5) {$\widehat{X}(\widehat{w})$};
  \node (P3) at (1.8,4.5) {$\prod_{k=1}^{m}G/P^{w_k}$};
  \node (P4) at (2.5,6) {$\tilde{X}(\tilde{w})\subseteq \prod_{k=1}^{m}\prod_{i=1}^{r_k} G/P^{s_{i,k}}$};
  \node (P5) at (6,4.5) {$G/P^{s_{m_j(d),j}}$};
  \node (P6) at (4,4.5) {$G/P^{w_j}$};

  \draw[->] (P1) -- node[left] {$\tilde{\pi}$} (P2);
  \draw[->] (P2) -- node[below] {$\Phi_{\widehat{w}}$}(P3);
  \draw[->] (P1) -- node[above] {$\Phi_{\tilde{w}}$}(P4);
  \draw[->] (P4) -- node[right=8pt] {$pr_{m_j(d),j}$}(P5);
  \draw[->] (P6) -- node[below] {$\pi^{m_j(d),j}$}(P5);
  \draw[->] (P3) -- node[below] {$pr_j$}(P6);
\end{tikzpicture}
 \end{center}
   where $pr_{i,j}:\prod_{1\leq p\leq m}\prod_{1\leq q\leq r_p}G/P^{s_{q,p}}\to G/P^{s_{i,j}}$ is the projection. Then, we have \begin{equation}\label{eq:equality of maps}
   pr_{m_j(d),j}\circ \Phi_{\tilde{w}}= \pi^{m_j(d),j}\circ pr_j \circ \Phi_{\widehat{w}}\circ \tilde{\pi}.
   \end{equation}
   Therefore, by Remark \ref{remark3} and the above equality of maps, we obtain:\[
   \mathcal{L}_{m_j(d),j}= \tilde{\pi}^*\mathcal{L}_{j,\alpha}.
   \]  
   Hence, by the definition of $\mathcal L_{{\bf i}, n}$, the result follows.
\end{proof}  

\begin{example}\label{eg1} Notation is as in Example \ref{ex:indexing sets}. Then by Proposition \ref{pull back}, we have
\[\tilde{\pi}^*\mathcal{L}^{a_{1,\alpha_1}}_{1,\alpha_1}=\mathcal{L}_{\textbf{i},(0,0,0, a_{1,\alpha_1}, 0)}, ~
\tilde{\pi}^*\mathcal{L}^{a_{1,\alpha_3}}_{1,\alpha_3}=\mathcal{L}_{\textbf{i},(0,0,a_{1,\alpha_3},0,0)}
\]
and \[
\tilde{\pi}^*\mathcal{L}^{a_{2,\alpha_2}}_{2,\alpha_2}=\mathcal{L}_{\textbf{i},(0,0,0,0,a_{2,\alpha_2})}.
\]\end{example}

\begin{proof}[Proof of Lemma \ref{lem: rank inequality}]
    By Proposition \ref{pull back}, the set \[
    \{\tilde{\pi}^*\mathcal{L}_{j,\alpha}\mid 1\leq j\leq m, \alpha\in S\setminus I^{w_j} \}\] is a subset of the set $$\{\mathcal{L}_{k,i}\mid 1\leq i\leq m, 1\leq k\leq r_i\}.$$  The latter set forms a basis of $\operatorname{Pic}Z(\tilde{w})$, thanks to Theorem \ref{thm:LT line bundles}. 
  Suppose that \[\sum_{j=1}^m \sum_{\alpha\in S\setminus I^{w_j}}b_{j, \alpha}\mathcal L_{j, \alpha} =0 \quad \text{for some}\quad b_{j, \alpha} \in \mathbb Z\]
  By applying $\tilde{\pi}^*$, we obtain $\sum_{j=1}^m \sum_{\alpha\in S\setminus I^{w_j}}b_{j, \alpha} \tilde{\pi}^*\mathcal L_{j, \alpha} =0$. Hence, $b_{j,\alpha}=0$ for all $1\leq j\leq m$ and $\alpha\in S\setminus I^{w_j}$. This proves the lemma.     
\end{proof}

\smallskip

\subsubsection{Picard group of \texorpdfstring{$\widehat{X}(\widehat{w})$}{X(w-hat)}}  

To compute $\operatorname{Pic}\widehat{X}(\widehat{w})$, we proceed in two steps. 
First, we show that generalized Bott-Samelson varieties have rational singularities. 
Next, we prove the vanishing of the higher cohomologies of the structure sheaf on 
$\widehat{X}(\widehat{w})$, namely,
\[
H^i\big(\widehat{X}(\widehat{w}), \mathcal{O}_{\widehat{X}(\widehat{w})}\big) = 0 
\quad \text{for all } i > 0.
\]

\begin{definition}
An algebraic variety $Y$ is said to have \emph{rational singularities} if there exists 
a desingularization $f \colon X \to Y$ such that 
$f_* \mathcal{O}_X = \mathcal{O}_Y$ and 
$R^i f_* \mathcal{O}_X = 0$ for all $i \geq 1$.
\end{definition}

We recall the following theorem of Ren\'ee Elkik from \cite{Elk78}.

\begin{theorem}[\cite{Elk78},~Theorem~5]\label{thm: rational singularities for gBS}
    Let $f: X\to Y $ be a flat
morphism of complex algebraic varieties. Then $X$ has rational singularities if $Y$ and every fibre have rational singularities.
\end{theorem}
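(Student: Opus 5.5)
The plan is to reduce to the case of a smooth base by a base change along a resolution of $Y$. Over a smooth base, the fibre is cut out of the total space by a regular sequence of Cartier divisors, so it suffices to know that rational singularities propagate from a Cartier divisor to a neighbourhood of it; this propagation is the genuinely hard input. I would use throughout Kempf's criterion: a variety $V$ has rational singularities if and only if $V$ is Cohen--Macaulay and, for one (equivalently every) resolution $g\colon \tilde V\to V$, the trace map $g_*\omega_{\tilde V}\to\omega_V$ is an isomorphism.

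\textbf{Step 1 (base change).} The question is local on $X$, so fix $x\in X$ and $y=f(x)$. Choose a resolution $h\colon Y'\to Y$. Since $Y$ has rational singularities, $Rh_*\mathcal O_{Y'}=\mathcal O_Y$. Form $X'=X\times_Y Y'$ with projections $g\colon X'\to X$ and $f'\colon X'\to Y'$.
\begin{itemize}
\item The map $f'$ is flat, and its fibres are fibres of $f$, hence have rational singularities.
\item By flat base change, $Rg_*\mathcal O_{X'}=f^*Rh_*\mathcal O_{Y'}=\mathcal O_X$.
\item The map $g$ is an isomorphism over $f^{-1}$ of the locus where $h$ is an isomorphism. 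That locus is dense in $X$ because $f$ is flat, hence open. So $g$ is birational onto $X$.
\end{itemize}
If $X'$ has rational singularities, take a resolution $r\colon Z\to X'$. Then $Rr_*\mathcal O_Z=\mathcal O_{X'}$, so $R(g\circ r)_*\mathcal O_Z=\mathcal O_X$, and $g\circ r$ is a resolution of $X$. Hence $X$ has rational singularities. We are reduced to the case where $Y$ is smooth.

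\textbf{Step 2 (smooth base).} Let $Y$ be smooth at $y$, and choose a regular system of parameters $t_1,\dots,t_d$ at $y$. Since $f$ is flat, the pullbacks $f^*t_1,\dots,f^*t_d$ form a regular sequence at $x$. Set $X_k=V(f^*t_1,\dots,f^*t_k)$. Then each $X_{k}$ is a Cartier divisor in $X_{k-1}$, and $X_d$ is the fibre $f^{-1}(y)$, which has rational singularities. Descending induction on $k$ reduces everything to the following key lemma.

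\emph{Key lemma.} If $V$ is a variety and $D\subset V$ is an effective Cartier divisor with rational singularities, then $V$ has rational singularities in a neighbourhood of $D$.

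Both $X$ and the intermediate $X_k$ are Cohen--Macaulay at $x$, since the base and the fibres are Cohen--Macaulay.

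\textbf{Step 3 (the key lemma; main obstacle).} Since $D$ is Cohen--Macaulay and Cartier, $V$ is Cohen--Macaulay near $D$. Take a resolution $\pi\colon\tilde V\to V$ that is an embedded resolution of $D$, with strict transform $\tilde D$. Let $Q$ be the cokernel of the injective trace map $\pi_*\omega_{\tilde V}\to\omega_V$; by Kempf's criterion we must show $Q=0$ near $D$.
\begin{itemize}
\item Twist by $\mathcal O(D)$ and use adjunction on both sides: $\omega_V(D)|_D=\omega_D$ and $\omega_{\tilde V}(\tilde D)|_{\tilde D}=\omega_{\tilde D}$.
\item Grauert--Riemenschneider vanishing gives $R^1\pi_*\omega_{\tilde V}=0$. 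Together with $\pi^*D\ge\tilde D$, this yields a commutative diagram whose restriction to $D$ maps onto the trace map $\pi_*\omega_{\tilde D}\to\omega_D$.
\item That trace map is surjective because $D$ has rational singularities.
\item It follows that $Q\otimes\mathcal O_D=0$, and Nakayama's lemma gives $Q=0$ near $D$.
\end{itemize}
The delicate point is controlling the discrepancy between $\pi^*D$ and $\tilde D$, i.e.\ the exceptional components lying over $D$. I would handle it by comparing the sheaves $\pi_*\omega_{\tilde V}(\pi^*D)$ and $\pi_*\omega_{\tilde V}(\tilde D)$, using that their quotient maps to zero in $\omega_D$. If this direct route fails, I would fall back on Elkik's original argument, which proves the key lemma via the deformation theory of rational singularities.
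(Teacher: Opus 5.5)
The paper gives no proof of this statement. It is quoted from Elkik and used only for Zariski-locally trivial fibrations, in the proof that generalized Bott--Samelson varieties have rational singularities.

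Your outline is a correct proof. Its decomposition is the natural way to get Elkik's Theorem~5 from the statement that rational singularities deform across Cartier divisors: flat base change along a resolution $h\colon Y'\to Y$ with $Rh_*\mathcal O_{Y'}=\mathcal O_Y$, then cutting the fibre out by the regular sequence $f^*t_1,\dots,f^*t_d$, then propagating rationality from a Cartier divisor to a neighbourhood of it. The route to your key lemma via Kempf's criterion and Grauert--Riemenschneider vanishing is the standard one. The citation gives the paper full flat generality at no cost, while your argument is self-contained modulo Hironaka, Grauert--Riemenschneider and Kempf. For the paper's actual use even less is needed. Locally the total space is $U\times F$, and if $\tilde U\to U$ and $\tilde F\to F$ are resolutions with $R\pi_*\mathcal O=\mathcal O$, the K\"unneth formula gives the same for $\tilde U\times\tilde F\to U\times F$.

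The step you flag as delicate is harmless, so no fallback to Elkik is needed.
\begin{itemize}
\item Locally write $D=(t)$, so $\omega_V(D)=t^{-1}\omega_V$. Let $a\colon\pi_*\omega_{\tilde V}\to\omega_V$ and $b\colon\pi_*\omega_{\tilde V}(\tilde D)\to\omega_V(D)$ be the trace maps.
\item Since $\pi^*D-\tilde D$ is effective, $\omega_{\tilde V}(\tilde D)\subseteq\omega_{\tilde V}(\pi^*D)$, hence $\operatorname{im} b\subseteq t^{-1}\operatorname{im} a$. Nothing else about the exceptional components over $D$ is used.
\item The composite $\pi_*\omega_{\tilde V}(\tilde D)\to\pi_*\omega_{\tilde D}\to\omega_D$ is surjective: the first map by Grauert--Riemenschneider, the second by rationality of $D$.
\item Exactness of $0\to\omega_V\to\omega_V(D)\to\omega_D\to 0$ then gives $t^{-1}\omega_V=\operatorname{im} b+\omega_V\subseteq t^{-1}\operatorname{im} a+\omega_V$.
\item Multiplying by $t$ yields $\omega_V=\operatorname{im} a+t\,\omega_V$, i.e.\ $Q=tQ$, and Nakayama gives $Q=0$ near $D$.
\end{itemize}

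Three points you should still write out:
\begin{itemize}
\item The square relating the two residue maps to the two trace maps commutes. Both composites $\pi_*\omega_{\tilde V}(\tilde D)\to\omega_D$ agree over $D_{\mathrm{reg}}\subseteq V_{\mathrm{reg}}$, where the embedded resolution can be taken to be an isomorphism. Also, $\omega_D$ has no nonzero sections supported on a nowhere dense subset of the reduced Cohen--Macaulay scheme $D$.
\item $X'$ is integral, as Kempf's criterion requires. It is irreducible because the open map $f'$ makes $f'^{-1}h^{-1}(U)\cong f^{-1}(U)$ dense. Normality near each point comes from the next item.
\item The $X_k$ are integral near $x$. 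A Noetherian local ring $B$ with a nonzerodivisor $t$ such that $B/tB$ is normal is itself normal.
\end{itemize}
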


\begin{proposition}\label{prop:rational-sing}
The generalized Bott-Samelson varieties have rational singularities.
\end{proposition}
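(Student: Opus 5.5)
We argue by induction on $m$, using Elkik's theorem (Theorem~\ref{thm: rational singularities for gBS}) applied to the fibration
\[
\widehat{f} : \widehat{X}(\widehat{w}) \to \widehat{X}(w_1), \qquad [p_1,\dots,p_m]\mapsto [p_1],
\]
whose fibres are all isomorphic to $\widehat{X}(w_2,\dots,w_m)$.

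For the base case $m=1$, we have $\widehat{X}(\widehat{w}) = \widehat{X}(w_1)\simeq X(w_1)$. This is a Schubert variety, and Schubert varieties have rational singularities \cite{Ram85, And85, LSe86}.

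For the inductive step, we first note that $(w_2,\dots,w_m)$ is again an admissible generalized decomposition, by Remark~\ref{rmk: Parabolic subset}(2). The induction hypothesis therefore says that the fibre $\widehat{X}(w_2,\dots,w_m)$ has rational singularities, and the base $\widehat{X}(w_1)\simeq X(w_1)$ has them as well. It remains to check the hypotheses of Elkik's theorem, and the only nontrivial one is flatness of $\widehat{f}$.

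Flatness follows from Zariski local triviality. Concretely, $\widehat{X}(\widehat{w})$ is the twisted product
\[
\overline{(P_{w_1}\cap G_{w_1})w_1(P^{w_1}\cap G_{w_1})}\times^{P^{w_1}\cap G_{w_1}}\widehat{X}(w_2,\dots,w_m),
\]
so we must justify that the quotient map from the closure in $G_{w_1}$ to $\widehat{X}(w_1)\subseteq G_{w_1}/(P^{w_1}\cap G_{w_1})$ admits local sections. We would do this with the standard big-cell charts: the map $G_{w_1}\to G_{w_1}/(P^{w_1}\cap G_{w_1})$ has sections over the translates $gU^-_{P}\,x$ of the opposite big cell, and these restrict to sections over the closed subvariety. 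Over each such open set $V\subseteq\widehat{X}(w_1)$, this gives
\[
\widehat{f}^{-1}(V)\simeq V\times \widehat{X}(w_2,\dots,w_m).
\]
A projection $V\times F\to V$ is flat, and flatness is local on the base, so $\widehat{f}$ is flat. Elkik's theorem then gives that $\widehat{X}(\widehat{w})$ has rational singularities, completing the induction.

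The main point requiring care is the local triviality, and hence flatness, of $\widehat{f}$. The paper asserts local triviality when it defines $\widehat{X}(\widehat{w})$, so we may cite it, but it rests on the big-cell section argument just described. A secondary check is that the varieties involved are irreducible, as Elkik's statement requires. This holds inductively: locally $\widehat{X}(\widehat{w})$ is a product of irreducible varieties, it is connected as the image of the irreducible Bott--Samelson variety $Z(\tilde w)$ via $\tilde\pi$ (Corollary~\ref{cor:birational}), and it is normal by Lemma~\ref{rmk:smoothness}.

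An alternative route, if one wanted to avoid Elkik, would use the birational map $\tilde\pi:Z(\tilde w)\to\widehat{X}(\widehat{w})$ of Corollary~\ref{cor:birational}. One would prove $\tilde\pi_*\mathcal O=\mathcal O$ and $R^i\tilde\pi_*\mathcal O=0$ by factoring $\tilde\pi$ fibrewise into the Bott--Samelson resolutions $Z(\tilde w_j)\to X(w_j)$. This is more work, so the Elkik argument above is the preferred approach.
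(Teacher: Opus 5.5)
Your proposal is correct and follows essentially the same route as the paper: Schubert varieties have rational singularities (base case), the locally trivial fibration $\widehat{f}$ is flat, and Elkik's theorem is applied inductively on $m$. The extra details you supply are sound and are left implicit in the paper; these are the admissibility of the tail decomposition, the big-cell local sections giving Zariski local triviality, and irreducibility.
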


\begin{proof}
Recall that Schubert varieties have rational singularities (see \cite{Bri05}, Theorem~2.2.3). 
Moreover, locally trivial fibrations are flat. 
Applying Theorem~\ref{thm: rational singularities for gBS} and by induction on $m$, we obtain the desired result.
\end{proof}

We have the following result (see \cite{Kum02}, Lemma A.25). For the reader's convenience, we include a proof here for completeness.
\begin{proposition}\label{prop:cohoiso}
Let $Y$ be a projective variety with rational singularities, and let 
$f : X \to Y$ be a desingularization. Then, for any vector bundle $V$ on $Y$, 
the natural pullback map
\[
H^i(Y, V) \longrightarrow H^i(X, f^*V)
\]
is an isomorphism for all $i \geq 0$.
\end{proposition}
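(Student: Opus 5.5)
The plan is to combine the projection formula with the Leray spectral sequence for $f$. Rational singularities are used through the condition $R^if_*\mathcal{O}_X=0$ for $i\geq 1$ and $f_*\mathcal{O}_X=\mathcal{O}_Y$. We will use this condition for the given desingularization $f$. Rational singularities do not depend on the chosen resolution, and for the application in the paper $f$ is in any case the resolution in the definition. Throughout, $V$ is locally free of finite rank on $Y$.

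First I will compute the higher direct images of $f^*V$. By the projection formula for locally free sheaves,
\[
R^if_*(f^*V)\simeq R^if_*(\mathcal{O}_X)\otimes_{\mathcal{O}_Y} V \quad\text{for all } i\geq 0 .
\]
Since $V$ is locally free, this can be checked locally where $V$ is trivial, where it reduces to the additivity of $R^if_*$ on finite direct sums. By the rational singularities hypothesis, the right-hand side equals $V$ for $i=0$ and vanishes for $i\geq 1$.

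Next I will feed this into the Leray spectral sequence
\[
E_2^{p,q}=H^p(Y,R^qf_*f^*V)\Longrightarrow H^{p+q}(X,f^*V).
\]
All rows with $q\geq 1$ vanish, so the sequence degenerates at $E_2$. The resulting edge maps $H^p(Y,f_*f^*V)=H^p(Y,V)\to H^p(X,f^*V)$ are therefore isomorphisms for all $p$.

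Finally I need to check that this edge map is the natural pullback map. The edge homomorphism $H^p(Y,f_*\mathcal{F})\to H^p(X,\mathcal{F})$ is the composite of $H^p(Y,-)$ applied to the adjunction unit $V\to f_*f^*V$ with the standard map $H^p(Y,f_*\mathcal{F})\to H^p(X,\mathcal{F})$, and this composite is by definition the pullback $f^*$ on cohomology. The unit $V\to f_*f^*V$ is an isomorphism because it is identified with $V\otimes(\mathcal{O}_Y\to f_*\mathcal{O}_X)$. The only point needing care is this compatibility of the edge map with pullback, which is standard, so I expect no real obstacle. Projectivity of $Y$ is not needed beyond ensuring that the cohomology groups are finite-dimensional.
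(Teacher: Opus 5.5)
Your proposal is correct and follows the same route as the paper's proof. Both use the projection formula $R^qf_*f^*V\simeq R^qf_*\mathcal{O}_X\otimes V$, the vanishing of these sheaves for $q\geq 1$, and the resulting degeneration of the Leray spectral sequence. You are somewhat more careful than the paper, which leaves two points implicit: the use of $f_*\mathcal{O}_X=\mathcal{O}_Y$ to identify $E_2^{p,0}$ with $H^p(Y,V)$, and the identification of the edge map with the pullback.
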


\begin{proof}
    The Leray-Serre spectral sequence yields 
\[
E_2^{p,q}=H^p(Y,R^q f_* f^*V)\Longrightarrow H^{p+q}(X,f^*V).
\]
By the projection formula (see \cite{Har77}, Exercise~8.3 of Chap. $\mathrm{III}$),
\[
R^q f_*(\mathcal{O}_X\otimes f^*V)\simeq (R^q f_*\mathcal{O}_X)\otimes V.
\]
Since $Y$ has rational singularities, $R^q f_*\mathcal{O}_X=0$ for $q>0$. Thus, $E_2^{p,q}=0$ for $q>0$, and hence $H^p(Y,V)\simeq E_2^{p,0}$ for all $p$, and the result follows.\end{proof}

 Recall that the generalized Bott-Samelson varieties are normal; see Lemma~\ref{rmk:smoothness}. 
 By Zariski's Main Theorem (see \cite{Har77}, Chap.~III, Cor.~11.4), the normality of $\widehat{X}(\widehat{w})$ implies that
\begin{equation}\label{Eq:rational singular}    
\tilde{\pi}_* \mathcal{O}_{Z(\tilde{w})}
= \mathcal{O}_{\widehat{X}(\widehat{w})},
\end{equation}
where $\tilde{\pi} \colon Z(\tilde{w}) \to \widehat{X}(\widehat{w})$ is the desingularization described in Corollary~\ref{cor:birational}.

\begin{proposition}\label{prop:Vanishing}
We have
\begin{equation}\label{eq:cohomology vanished for gBS}
H^i\big(\widehat{X}(\widehat{w}), \mathcal{O}_{\widehat{X}(\widehat{w})}\big) = 0
\quad \text{for all } i > 0.
\end{equation}
\end{proposition}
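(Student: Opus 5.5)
The vanishing will be transferred from the Bott--Samelson resolution. By Corollary~\ref{cor:birational}, $\tilde{\pi}\colon Z(\tilde w)\to \widehat{X}(\widehat{w})$ is a birational morphism from the smooth projective variety $Z(\tilde w)$, so it is a desingularization. By Proposition~\ref{prop:rational-sing}, $\widehat{X}(\widehat{w})$ has rational singularities. Applying Proposition~\ref{prop:cohoiso} with $V=\mathcal{O}_{\widehat{X}(\widehat{w})}$ then gives isomorphisms
\[
H^i\big(\widehat{X}(\widehat{w}),\mathcal{O}_{\widehat{X}(\widehat{w})}\big)\simeq H^i\big(Z(\tilde w),\mathcal{O}_{Z(\tilde w)}\big)\quad\text{for all } i\ge 0.
\]
Alternatively, one can argue directly: $R^q\tilde\pi_*\mathcal{O}_{Z(\tilde w)}=0$ for $q>0$ (rationality is independent of the chosen resolution), and by \eqref{Eq:rational singular} we have $\tilde\pi_*\mathcal{O}_{Z(\tilde w)}=\mathcal{O}_{\widehat{X}(\widehat{w})}$; the Leray spectral sequence then collapses.

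It therefore suffices to show $H^i(Z(\tilde w),\mathcal{O}_{Z(\tilde w)})=0$ for $i>0$. Here $\tilde w$ is a sequence of simple reflections, possibly not reduced, so we must not appeal to the Schubert variety $X(w)$. Instead we use the iterated $\mathbb{P}^1$-bundle structure. The forgetful map $Z(s_{i_1},\dots,s_{i_r})\to Z(s_{i_1},\dots,s_{i_{r-1}})$, $[p_1,\dots,p_r]\mapsto[p_1,\dots,p_{r-1}]$, is a locally trivial fibration with fibre $P_{\alpha_{i_r}}/B\simeq\mathbb{P}^1$. Since $H^0(\mathbb{P}^1,\mathcal{O})=\mathbb{C}$ and $H^1(\mathbb{P}^1,\mathcal{O})=0$, cohomology and base change give $g_*\mathcal{O}=\mathcal{O}$ and $R^qg_*\mathcal{O}=0$ for $q>0$. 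Leray then yields $H^i(Z(\tilde w),\mathcal{O})\simeq H^i(Z(\tilde w'),\mathcal{O})$, where $\tilde w'$ is the truncated sequence. Induction on $r$ reduces everything to a point, where higher cohomology vanishes. Alternatively, one can simply cite that Bott--Samelson varieties are smooth projective rational varieties, so $H^i(\mathcal{O})=H^{0,i}=0$ by Hodge theory and birational invariance of $h^{0,i}$.

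The only delicate step is checking that the hypotheses of Proposition~\ref{prop:cohoiso} hold for this particular $\tilde\pi$. This requires that $\tilde\pi$ is proper and birational with smooth source, which is Corollary~\ref{cor:birational}, and that rational singularities give the vanishing $R^q\tilde\pi_*\mathcal{O}=0$ for this resolution and not just for some resolution. The latter is standard, because rationality is independent of the chosen desingularization.
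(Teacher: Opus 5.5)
Your proposal is correct and follows the paper's route. The paper also transfers the vanishing to $Z(\tilde w)$ via Proposition~\ref{prop:rational-sing} and Proposition~\ref{prop:cohoiso}. It then simply cites Kumar's Theorem~8.1.8 for $H^i(Z(\tilde w),\mathcal{O}_{Z(\tilde w)})=0$, whereas you prove this directly by the iterated $\mathbb{P}^1$-bundle and Leray argument. You also rightly flag that Proposition~\ref{prop:cohoiso} implicitly uses the independence of rational singularities from the chosen resolution.
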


\begin{proof} By Propoistion~\ref{prop:rational-sing} and Proposition~\ref{prop:cohoiso}, we have $$H^i(\widehat{X}(\widehat{w}),\mathcal{O}_{\widehat{X}(\widehat{w})})\simeq H^i(Z(\tilde{w}),\mathcal{O}_{Z(\tilde{w})})$$ for all $i\geq 0$.
Recall that
\[
H^i\big(Z(\tilde{w}), \mathcal{O}_{Z(\tilde{w})}\big) = 0
\quad \text{for all } i > 0,
\]
see \cite[Theorem~8.1.8]{Kum02}. 
Thus, the result follows.
\end{proof}

\smallskip

By Proposition \ref{line}, we have the restriction map:
\begin{align*}
       res: \text{Pic}(\prod_{i=1}^{m}G/P^{w_i}) &\to \text{Pic}(\widehat{X}(\widehat{w})).
       \end{align*}

\begin{theorem}\label{thm:linebundles} We have:
\begin{enumerate} 
    \item The restriction map $res$ is an isomorpshim.  
    \item  The Picard group of $\widehat{X}(\widehat{w})$ is isomorphic to $\bigoplus_{j=1}^{m}\mathbb Z^{\#(S\setminus I^{w_j})}$. 
    \item  Any line bundle $\mathcal L$ on $\widehat{X}(\widehat{w})$ is of the form \begin{equation}\label{eq:linebundles}
\mathcal{L} = \bigotimes_{j=1}^m \bigotimes_{\alpha \in S \setminus I^{w_j}} \mathcal{L}_{j, \alpha}^{\otimes a_{j,\alpha}}, \quad\text{for some}\quad a_{j,\alpha}\in \mathbb Z.
\end{equation}
\end{enumerate} 
 \end{theorem}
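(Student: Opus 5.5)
Since Lemma~\ref{lem: rank inequality} already shows that the $\mathcal{L}_{j,\alpha}$ are linearly independent, and these are precisely the images under $res$ of the standard basis $pr_j^*\mathcal{O}_{G/P^{w_j}}(\omega_\alpha)$ of $\operatorname{Pic}(\prod_j G/P^{w_j})\cong\bigoplus_j\mathbb{Z}^{\#(S\setminus I^{w_j})}$, the map $res$ is injective. Statements (2) and (3) then follow from (1). So the whole proof reduces to showing that $res$ is surjective, i.e.\ that every line bundle on $\widehat{X}(\widehat{w})$ is a tensor product of the $\mathcal{L}_{j,\alpha}$. I will prove this by pulling back to the Bott--Samelson resolution $\tilde\pi\colon Z(\tilde w)\to\widehat{X}(\widehat{w})$ of Corollary~\ref{cor:birational}.

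\emph{Step 1: $\tilde\pi^*$ is injective on $\operatorname{Pic}$.} Let $\mathcal{L}$ be a line bundle on $\widehat{X}(\widehat{w})$ with $\tilde\pi^*\mathcal{L}$ trivial. By the projection formula and Eq.~\eqref{Eq:rational singular},
\[
\tilde\pi_*\tilde\pi^*\mathcal{L}=\mathcal{L}\otimes\tilde\pi_*\mathcal{O}_{Z(\tilde w)}=\mathcal{L}.
\]
Since also $\tilde\pi_*\tilde\pi^*\mathcal{L}=\tilde\pi_*\mathcal{O}_{Z(\tilde w)}=\mathcal{O}$, we get $\mathcal{L}\cong\mathcal{O}$.

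\emph{Step 2: identify the image of $\tilde\pi^*$.} By Theorem~\ref{thm:LT line bundles}, write $\tilde\pi^*\mathcal{L}=\mathcal{L}_{\mathbf{i},\mathbf{n}}$. I want to show that $n_{q,t}=0$ unless $q=m_t(d)$ for some $d$ with $\alpha_{d,t}\in S\setminus I^{w_t}$. Given this, Proposition~\ref{pull back} produces $a_{j,\alpha}$ with
\[
\tilde\pi^*\mathcal{L}=\tilde\pi^*\Big(\bigotimes_{j,\alpha}\mathcal{L}_{j,\alpha}^{a_{j,\alpha}}\Big),
\]
and Step 1 then gives surjectivity.

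To detect the coefficients $n_{q,t}$, I use curves. For each index $(q,t)$, let $C_{q,t}\subset Z(\tilde w)$ be the $T$-stable $\mathbb{P}^1$ obtained by varying only the $(q,t)$ coordinate at the base point, everything else being fixed. The intersection matrix of the $\mathcal{L}_{\mathbf{i}}$ with these curves is unitriangular (this is the standard computation from \cite{LT04}), so the degrees $\deg(\tilde\pi^*\mathcal{L}|_{C_{q,t}})$ determine $\mathbf{n}$. If $C_{q,t}$ is contracted by $\tilde\pi$, then this degree is $0$. Within the block $j=t$, the map $Z(\tilde w_t)\to X(w_t)\subseteq G/P^{w_t}$ is a fibration piece, and one should check that the curves contracted there are exactly those not indexed by some $m_t(d)$ with $\alpha_{d,t}\notin I^{w_t}$, working in the appropriate order (from the last index backward). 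Unitriangularity then forces the corresponding $n_{q,t}$ to vanish.

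\emph{Main obstacle and fallback.} The main obstacle is this contraction bookkeeping, because the triangular structure mixes degrees of different curves. If it becomes messy, I will instead use a fibration induction. Since $H^1(\mathcal{O})=0$ for $\widehat{X}(\widehat{w})$ and for all fibers (Proposition~\ref{prop:Vanishing}), and since $\widehat{f}\colon\widehat{X}(\widehat{w})\to\widehat{X}(w_1)$ is locally trivial with fiber $\widehat{X}(w_2,\dots,w_m)$, there is an exact sequence
\[
0\to\operatorname{Pic}\widehat{X}(w_1)\to\operatorname{Pic}\widehat{X}(\widehat{w})\to\operatorname{Pic}\widehat{X}(w_2,\dots,w_m).
\]
Here $\operatorname{Pic}X(w_1)$ is generated by the $\omega_\alpha$ with $\alpha\in S\setminus I^{w_1}$ (the known result for Schubert varieties, which also follows from the $m=1$ case of Step 2). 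By induction on $m$, every class on the fiber is a restriction of the $\mathcal{L}_{j,\alpha}$ with $j\ge2$, which are globally defined on $\widehat{X}(\widehat{w})$. This gives surjectivity and proves the rank equality.
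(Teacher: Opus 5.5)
Your opening reduction is sound. Since $res$ sends the standard basis $pr_j^*\mathcal{O}_{G/P^{w_j}}(\omega_\alpha)$ to the $\mathcal{L}_{j,\alpha}$, Lemma~\ref{lem: rank inequality} gives injectivity, and Step~1 is correct.

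\textbf{Step~2 does not go through as written.} The base-point curves $C_{q,t}$ cannot detect the vanishing you need. Take $m=1$, $G=SL_3$ and $\widehat{w}=(w_0)$ with word $(s_1,s_2,s_1)$. Then $I^{w_0}=\emptyset$ and the indices $m(d)$ are $\{2,3\}$, so you must prove $n_1=0$. But no $C_k$ is contracted: their images are the lines $P_{\alpha_1}/B$, $P_{\alpha_2}/B$, $P_{\alpha_1}/B$ in $G/B$. The degrees on $C_1,C_2,C_3$ are $n_1+n_3$, $n_2$, $n_3$, which impose no condition on $n_1$.

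More generally, your description of the contracted curves fails as soon as some root of $S\setminus I^{w_t}$ occurs twice in $\tilde w_t$. It is not even block-local: looking at $\Phi_{\widehat{w}}\circ\tilde\pi$, the curve $C_{q,t}$ is contracted if and only if $\alpha_{q,t}\in I^{w_s}$ for all $s\ge t$. Repairing the argument requires contracted curves through other $T$-fixed points, together with a general recipe for producing them, and that recipe is what is missing. In the example, the curve $\{[p,e,p^{-1}s_1]\}$ lies over $s_1B$ and $\mathcal{L}_{\mathbf{i},\mathbf{n}}$ has degree $n_1$ on it. For the same reason, drop the remark that the $m=1$ case follows from Step~2, and use the classical description of $\operatorname{Pic}X(w)$ instead.

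\textbf{Your fallback is a complete proof and is essentially the paper's argument.} Both proceed by induction on $m$ through $\widehat{f}\colon\widehat{X}(\widehat{w})\to X(w_1)$ and the sequence $0\to\operatorname{Pic}X(w_1)\to\operatorname{Pic}\widehat{X}(\widehat{w})\to\operatorname{Pic}(\text{fibre})$. The difference is how you get surjectivity onto the fibre's Picard group.
\begin{itemize}
\item The paper extends the sequence to $H^2(X(w_1),\mathcal{O}^*_{X(w_1)})$ and kills that group using the exponential sequence and the cohomology of $X(w_1)$.
\item You note that $\Phi_{\widehat{w}}([e,p_2,\dots,p_m])=(P^{w_1},p_2P^{w_2},p_2p_3P^{w_3},\dots)$. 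Hence for $j\ge 2$, $\mathcal{L}_{j,\alpha}$ restricts on the fibre over the base point to the bundle $\mathcal{L}_{j-1,\alpha}$ of the admissible tail $(w_2,\dots,w_m)$. Also $\mathcal{L}_{1,\alpha}=\widehat{f}^*\mathcal{O}_{X(w_1)}(\omega_\alpha)$.
\end{itemize}
Your route bypasses the $H^2(\mathcal{O}^*)$ computation. It also shows directly that the $\mathcal{L}_{j,\alpha}$ form a $\mathbb{Z}$-basis; linear independence plus a rank count alone would only give a finite-index sublattice.

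The one step you still need to write out is exactness in the middle. Use $\widehat{f}_*\mathcal{O}=\mathcal{O}$ and the vanishing of $H^1$ of the fibre (Proposition~\ref{prop:Vanishing} applied to the tail). Then the seesaw argument (Hartshorne, Ch.~III, Ex.~12.4 and 12.6) on a trivializing cover shows that a bundle trivial on one fibre is trivial on every fibre, and hence is pulled back from $X(w_1)$.
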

\begin{proof}

We first prove $(2)$ and $(3)$. We proceed by induction on $m$. For $m=1$, \[
    \widehat{X}(\widehat{w})\simeq X(w_1)\subseteq G/P^{w_1},
    \]
    and for any $w\in W^{P^{w}}$, we have \[
    \operatorname{Pic}X(w)\simeq \bigoplus_{\alpha\in (S\setminus I^{w})\cap \operatorname{Supp}(w)}\mathbb{Z}\omega_{\alpha}.
    \]
    By Lemma \ref{lem: Supp(w)}, $S\setminus I^{w}\subseteq \operatorname{Supp}(w)$. Hence, \[
    \operatorname{Pic}X(w_1)\simeq \bigoplus_{\alpha\in S\setminus I^{w_1}}\mathbb{Z}\omega_{\alpha}\simeq  \mathbb{Z}^{\#(S\setminus I^{w_1})}.
    \]
    Together with Lemma~\ref{lem: rank inequality}, it follows that the set $\{\mathcal{L}_{1,\alpha}\mid \alpha\in S\setminus I^{w_1}\}$ forms a bases of $\operatorname{Pic}\widehat{X}(\widehat{w})$. 
    
    Suppose $m=2$. The variety $\widehat{X}(w_1,w_2)$ admits the Zariski locally trivial fibration \[
\widehat{f}_1:\widehat{X}(w_1,w_2)\to X(w_1)
\]
with fibers are isomorphic to $X(w_2)$. Recall that any Schubert variety $X(w)$ is normal and has rational singularities. We have \begin{equation}\label{eq:vanishes cohomology for Xw}
  H^i(X(w),\mathcal{O}_{X(w)})=0  
\end{equation}
for all $i>0$. By \cite[$\mathrm{III}$, Exercise~12.6(b)]{Har77}, for any open subset $U\subseteq X(w_1)$, we have \[
\operatorname{Pic}\big(X(w_2)\times U\big)\simeq \operatorname{Pic}X(w_2)\oplus \operatorname{Pic}(U).
\]
 Now by \cite[Theorem 5]{And75}, we have the following exact sequence: \begin{equation}\label{eq:exact seq}
     0\to \operatorname{Pic}X(w_1)\to \operatorname{Pic}\widehat{X}(w_1,w_2)\to \operatorname{Pic}X(w_2)\to H^2(X(w_1),\mathcal{O}_{X(w_1)}^*).
\end{equation}

$\textbf{Claim:}$ $H^2(X(w_1),\mathcal{O}^*_{X(w_1)})=0$.

 To prove this, we consider the exact sequence
 \[
0\to \mathcal{O}_{\mathbb{Z}}\to \mathcal{O}_{X(w_1)}\to \mathcal{O}^*_{X(w_1)}\to 0.
\]

We then get a long exact sequence:  \begin{equation}\label{eq: exact chain for Hi}
 \cdots\to H^2(X(w_1),\mathcal{O}_{X(w_1)})\to H^2(X(w_1),\mathcal{O}^*_{X(w_1)})\to H^3(X(w_1),\mathcal{O}_{\mathbb{Z}})\to\cdots.  \end{equation}
 
Since the homology group $H_*(X(w_1),\mathbb{Z})$ is free  
(see \cite[Lemma~3]{Kum98}), it follows that each group 
$H_n(X(w_1),\mathbb{Z})$ is also free abelian group. 
Then, we have
\[
\operatorname{Ext}_{\mathbb{Z}}\big(H_n(X(w_1),\mathbb{Z}),\mathbb{Z}\big)
= 0 
\quad \text{for all } n \geq 0.
\]

Since every Schubert variety admits a cell decomposition, we have 
\[
H_{\text{odd}}(X(w_1),\mathbb{Z})=0.
\]
Now, by the Universal Coefficient Theorem, we obtain
\begin{align*}
H^3(X(w_1),\mathbb{Z})
&\simeq 
H_3(X(w_1),\mathbb{Z})
\oplus 
\operatorname{Ext}_{\mathbb{Z}}\big(H_2(X(w_1),\mathbb{Z}),\mathbb{Z}\big) \\
&= 0.
\end{align*}

Moreover, by Eq.\eqref{eq:vanishes cohomology for Xw} and Eq.\eqref{eq: exact chain for Hi}, we obtain
\[
H^2(X(w_1),\mathcal{O}_{X(w_1)})=0 \quad \text{and so} \quad
H^2(X(w_1),\mathcal{O}^*_{X(w_1)})=0.
\]

Since $\operatorname{Pic} X(w_2)$ is a free $\mathbb{Z}$-module, the exact sequence in Eq.\eqref{eq:exact seq} splits. Hence,
\[
\operatorname{Pic}\widehat{X}(w_1,w_2)
=
\operatorname{Pic}X(w_1)
\oplus
\operatorname{Pic}X(w_2)
\simeq
\bigoplus_{i=1}^{2}
\mathbb{Z}^{\#(S\setminus I^{w_i})}.
\]

Applying Lemma~\ref{lem: rank inequality} with $m=2$,  the set
$
\{\mathcal{L}_{j,\alpha}\mid 1\leq j\leq 2,\ \alpha\in S\setminus I^{w_j}\}
$
forms a basis of $\operatorname{Pic}\widehat{X}(w_1,w_2)$.

Now, assume that the result holds for $m-1$. Consider the Zarisiki locally trivial fibration \[
\widehat{f}_{1}:\widehat{X}(\widehat{w})\to \widehat{X}({w_1})
\] with fibers are isomorphic to $Z:=\widehat{X}(w_2,\dots,w_m)$. By Proposition \ref{prop:Vanishing}, we have \begin{equation}\label{eq:vanishes}
  H^i(\widehat{X}(\widehat{w}),\mathcal{O}_{\widehat{X}(\widehat{w})})=0  
\end{equation}
for all $i>0$. By an argument similar to above, we obtain the following exact sequence: \[
0\to \operatorname{Pic}{X}({w_1})\to \operatorname{Pic}\widehat{X}(\widehat{w})\to \operatorname{Pic}(Z)\to H^2({X}({w_1}),\mathcal{O}_{X(w_1)}^*).
\]
 By induction $\operatorname{Pic} Z$ is a free $\mathbb{Z}$-module.
Since $H^2(X(w_1),\mathcal{O}^*_{X(w_1)})=0$, the above exact sequence splits. Hence, we have
\begin{align*}
\operatorname{Pic}\widehat{X}(\widehat{w})
&= \operatorname{Pic}(X(w_1)) \oplus \operatorname{Pic} Z 
\simeq \bigoplus_{i=1}^{m}\mathbb{Z}^{\#(S\setminus I^{w_i})}.
\end{align*}
Now, by Lemma~\ref{lem: rank inequality}, the set 
\[
\{\mathcal{L}_{j,\alpha} \mid 1\leq j\leq m,\ \alpha\in S\setminus I^{w_j}\}
\]
forms a basis for $\operatorname{Pic}\widehat{X}(\widehat{w})$. This proves $(2)$ and $(3)$.

We now prove $(1)$. By \cite[Ch.~III, Ex.~12.6(b)]{Har77}, we have
\[
\operatorname{Pic}\!\left(\prod_{j=1}^{m} G/P^{w_j}\right)
= \bigoplus_{j=1}^{m} \operatorname{Pic}(G/P^{w_j}).
\]
We also have,
\[
\operatorname{Pic}(G/P^{w_j})
\simeq \bigoplus_{\alpha \in (S\setminus I^{w_j}) \cap \operatorname{Supp}(w_j)} \mathbb{Z}\omega_\alpha.
\]
By Lemma~\ref{lem: Supp(w)}, we get
\[
\operatorname{Pic}(G/P^{w_j})
\simeq \mathbb{Z}^{\#(S\setminus I^{w_j})}.
\]
Thus,
\[
\operatorname{Pic}\!\left(\prod_{j=1}^{m} G/P^{w_j}\right)
\simeq \bigoplus_{j=1}^{m} \mathbb{Z}^{\#(S\setminus I^{w_j})}.
\]
Hence, the map $res$ is an isomorphism. This completes the proof.
\end{proof}

The line bundles $\mathcal{L}$ on $\widehat{X}(\widehat{w})$ obtained in Eq.\eqref{eq:linebundles} of Theorem~\ref{thm:linebundles} are referred to as the line bundles associated with the $a_{j,\alpha}$'s. By abuse of notation, we may also use additive notation in place of tensor product notation for the line bundles on $\widehat{X}(\widehat{w})$.
Using arguments similar to those in the proof of \cite[Theorem 4.18]{BS26}, we obtain a characterization of (very) ample and nef line bundles on $\widehat{X}(\widehat{w})$.

\begin{theorem} \label{thm:very ample}
    We keep the notation as in Theorem~\ref{thm:linebundles}. Let $\mathcal{L}$ be a line bundle on $\widehat{X}(\widehat{w})$. Then $\mathcal{L}$ is very ample (nef) if and only if 
    \[
    a_{j,\alpha} > 0 ~(\text{resp.} \geq 0) \quad \text{for all } 1 \leq j \leq m \text{ and for all } \alpha \in S \setminus I^{w_j}.
    \]
        \end{theorem}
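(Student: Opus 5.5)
The plan is to pull everything back to the Bott--Samelson desingularization $\tilde{\pi}\colon Z(\tilde w)\to\widehat{X}(\widehat{w})$ of Corollary~\ref{cor:birational} and use the criterion of Lauritzen--Thomsen (Theorem~\ref{thm:LT line bundles}). Write $\mathcal{L}=\bigotimes_{j,\alpha}\mathcal{L}_{j,\alpha}^{\otimes a_{j,\alpha}}$ as in Theorem~\ref{thm:linebundles}. By Proposition~\ref{pull back}, $\tilde{\pi}^*\mathcal{L}=\mathcal{L}_{\mathbf{i},\mathbf{n}}$, where $n_{m_j(d),j}=a_{j,\alpha_{d,j}}$ for $\alpha_{d,j}\in S\setminus I^{w_j}$ and all other $n_{q,t}$ vanish.

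\emph{Sufficiency.} Suppose $a_{j,\alpha}>0$ for all $j$ and $\alpha$. Since $\alpha$ runs through $S\setminus I^{w_j}$, the weight $\lambda_j:=\sum_{\alpha\in S\setminus I^{w_j}}a_{j,\alpha}\omega_\alpha$ is regular dominant for $P^{w_j}$, so $\mathcal{O}_{G/P^{w_j}}(\lambda_j)$ is very ample on $G/P^{w_j}$. Hence the external tensor product $\boxtimes_j\mathcal{O}_{G/P^{w_j}}(\lambda_j)$ is very ample on $\prod_j G/P^{w_j}$. By Proposition~\ref{line}, $\Phi_{\widehat{w}}$ is a closed embedding, so its restriction $\mathcal{L}$ is very ample. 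If instead $a_{j,\alpha}\ge 0$, each $\mathcal{O}(\lambda_j)$ is globally generated, so $\mathcal{L}$ is a pullback of a globally generated bundle and is therefore globally generated, hence nef.

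\emph{Necessity.} This is the main step. For nefness I would exhibit, for each pair $(j,\alpha)$, a curve $C_{j,\alpha}\subseteq\widehat{X}(\widehat{w})$ with $\deg(\mathcal{L}_{k,\beta}|_{C_{j,\alpha}})=\delta_{(k,\beta),(j,\alpha)}$. The natural choice is the image under $\tilde{\pi}$ of the $T$-stable curve in $Z(\tilde w)$ obtained by varying only the factor at position $(m_j(d),j)$, with $\alpha=\alpha_{d,j}$, while keeping the other coordinates at the $T$-fixed points $s_{q,t}$ or $e$. Equivalently one can take the curve $\{[e,\dots,e,p,e,\dots,e]\}$ and check its degrees via Proposition~\ref{pull back}.

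The obstacle is that on $Z(\tilde w)$ the degrees of the basis $\mathcal{L}_{\mathbf i,\mathbf n}$ on such curves form an upper triangular matrix, not the identity. So I would instead choose curves adapted to the positions $m_j(d)$. For these, the degree of $\mathcal{L}_{k,\beta}$ with $(k,\beta)$ strictly after $(j,\alpha)$ is $0$, since the later coordinates are constant when the tail factors are set to $e$. For earlier $(k,\beta)$ the degree is also $0$, since those projections are constant. The diagonal degree is $\langle\omega_\alpha,\alpha^\vee\rangle=1$; the choice of the last occurrence $m_j(d)$ ensures the $P_\alpha$-orbit maps to an honest line in $G/P^{w_j}$.

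With these curves in hand, $\deg\mathcal{L}|_{C_{j,\alpha}}=a_{j,\alpha}$, so nefness forces $a_{j,\alpha}\ge 0$ and ampleness forces $a_{j,\alpha}>0$. Since very ample implies ample, this completes the very ample case as well. I would follow the argument of \cite[Theorem 4.18]{BS26} for the verification that these curves lie in $\widehat{X}(\widehat{w})$ and have the stated degrees.
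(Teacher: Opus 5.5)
Your sufficiency direction is fine: $\mathcal{L}$ is the restriction, along the closed embedding $\Phi_{\widehat{w}}$, of $\boxtimes_j\mathcal{O}_{G/P^{w_j}}(\lambda_j)$. This bundle is very ample when all $a_{j,\alpha}>0$ and globally generated when all $a_{j,\alpha}\geq 0$.

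The gap is in necessity: your curves do not have the degrees you claim. Since $\Phi_{\widehat{w}}$ records partial products, the curve $C=\{[e,\dots,e,p,e,\dots,e]\}$ with $p\in P_\alpha$ in the $j$-th slot has $t$-th coordinate $pP^{w_t}$ for every $t\geq j$. Putting $e$ in the tail does not freeze the later coordinates; it makes them move with $p$. The coordinate $pP^{w_t}$ is constant exactly when $P_\alpha\subseteq P^{w_t}$, i.e.\ when $\alpha\in I^{w_t}$. If $\alpha\in S\setminus I^{w_t}$ for some $t>j$, then $\deg(\mathcal{L}_{t,\alpha}|_C)=\langle\omega_\alpha,\alpha^\vee\rangle=1$, so
\[
\deg(\mathcal{L}|_C)=a_{j,\alpha}+\sum_{t>j,\ \alpha\in S\setminus I^{w_t}}a_{t,\alpha}.
\]
This is exactly the unitriangular pattern you wanted to avoid. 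Choosing the last occurrence $m_j(d)$ \emph{inside} $\tilde w_j$ does not help, because all occurrences of $s_\alpha$ in $w_j$ give the same curve in $\widehat{X}(\widehat{w})$. Concretely, take $\widehat{w}=(s_2s_3s_1,s_2s_4,s_1s_3)$ in $SL_5(\mathbb{C})$ (Example~\ref{ex: not Fano}) with $\alpha=\alpha_1$ and $j=1$. The curve $[p,e,e]$ maps to $(pP^{w_1},P^{w_2},pP^{w_3})$, and $\mathcal{L}$ has degree $a_{1,\alpha_1}+a_{3,\alpha_1}$ on it. Nefness then only bounds such sums and does not force $a_{1,\alpha_1}\geq 0$.

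What is missing is a cancellation at the next occurrence of $s_\alpha$ in the whole word $\tilde w$. Let $t>j$ be the first index with $\alpha\in\operatorname{Supp}(w_t)$; if no such $t$ exists, your curve already works.

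\textbf{Curve repair.} Take the image under $\tilde\pi$ of the curve in $Z(\tilde w)$ with $p$ at position $(m_j(d),j)$, $p^{-1}$ at the first occurrence of $s_\alpha$ in $\tilde w_t$, and $e$ elsewhere. This is well defined because $B$ passes through the intermediate $e$'s. The partial products are $p$ for blocks $j\leq s<t$ and $e$ from block $t$ on. For $j<s<t$ one has $\alpha\notin\operatorname{Supp}(w_s)$, hence $\alpha\in I^{w_s}$ by Lemma~\ref{lem: Supp(w)}, so those coordinates are constant. The degrees are now genuinely $\delta_{(k,\beta),(j,\alpha)}$.

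\textbf{Curve-free alternative.} You already know $\tilde\pi^*\mathcal{L}=\mathcal{L}_{\mathbf{i},\mathbf{n}}$ with $n_{m_j(d),j}=a_{j,\alpha}$. If $\mathcal{L}$ is nef, so is $\tilde\pi^*\mathcal{L}$. Hence $(\tilde\pi^*\mathcal{L})^{N}\otimes\mathcal{L}_{\mathbf{i},(1,\dots,1)}$ is ample for every $N$, and Theorem~\ref{thm:LT line bundles} gives $Na_{j,\alpha}+1>0$ for all $N$, i.e.\ $a_{j,\alpha}\geq 0$. If $\mathcal{L}$ is ample, then $\mathcal{L}^{N}\otimes\bigl(\bigotimes_{j,\alpha}\mathcal{L}_{j,\alpha}\bigr)^{-1}$ is ample for $N\gg 0$, hence nef. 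Applying the nef case gives $Na_{j,\alpha}\geq 1$, so $a_{j,\alpha}>0$.
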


As an immediate consequence, we obtain the following two corollaries.

\begin{corollary}\label{cor:ample}
   Any ample line bundle on $\widehat{X}(\widehat{w})$ is very ample.
\end{corollary}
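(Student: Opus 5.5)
The claim follows by combining Theorem~\ref{thm:linebundles} with Theorem~\ref{thm:very ample}. Let $\mathcal{L}$ be an ample line bundle on $\widehat{X}(\widehat{w})$. By Theorem~\ref{thm:linebundles}(3) we write
\[
\mathcal{L}=\bigotimes_{j=1}^m\bigotimes_{\alpha\in S\setminus I^{w_j}}\mathcal{L}_{j,\alpha}^{\otimes a_{j,\alpha}}
\]
with uniquely determined integers $a_{j,\alpha}$. The goal is to show $a_{j,\alpha}>0$ for every $j$ and every $\alpha\in S\setminus I^{w_j}$. Theorem~\ref{thm:very ample} then gives that $\mathcal{L}$ is very ample.

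To get positivity, I will test $\mathcal{L}$ against suitable curves. Fix $j$ and $\alpha\in S\setminus I^{w_j}$, and choose $d$ with $\alpha=\alpha_{d,j}$. Put $q=m_j(d)$ as in Eq.\eqref{eq:m_j(d)}.
\begin{itemize}
\item \emph{Choice of curve.} In $Z(\tilde w)$, let $C_{q,j}$ be the $\mathbb{P}^1$ obtained by letting only the $(q,j)$-th factor $p_{q,j}$ vary in $P_{\alpha_{q,j}}$ through the $B$-fixed point $[e,\dots,e]$. This is the fibre of the $\mathbb{P}^1$-fibration forgetting that factor.
\item \emph{Degree of the basis bundles.} By the Lauritzen--Thomsen description (Theorem~\ref{thm:LT line bundles}), one has $\deg(\mathcal{L}_{(q',t)}|_{C_{q,j}})=\delta_{(q',t),(q,j)}$ for indices at or after $(q,j)$. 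For earlier indices the degree is also $0$, because those components of the point do not move.
\item \emph{Computing $\deg\tilde\pi^*\mathcal{L}$.} Proposition~\ref{pull back} identifies $\tilde{\pi}^*\mathcal{L}_{j',\beta}$ with $\mathcal{L}_{m_{j'}(d'),j'}$. Summing, $\deg(\tilde\pi^*\mathcal{L}|_{C_{q,j}})=a_{j,\alpha}$.
\item \emph{Transfer to $\widehat{X}(\widehat{w})$.} The curve $\tilde\pi(C_{q,j})$ is not a point. Indeed, $q$ is the last occurrence of $s_\alpha$ in $\tilde w_j$, so the $j$-th coordinate $p_{1,j}\cdots p_{r_j,j}P^{w_j}$ in $G/P^{w_j}$ moves nontrivially along it. This uses $\alpha\notin I^{w_j}$ and the fact that the reflections after position $q$ do not return it to $P^{w_j}$.
\item \emph{Conclusion.} By the projection formula, $a_{j,\alpha}=\deg(\tilde\pi^*\mathcal{L}|_{C})=\deg(\tilde\pi|_C)\cdot\deg(\mathcal{L}|_{\tilde\pi(C)})>0$, since $\mathcal{L}$ is ample.
\end{itemize}

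I expect the hardest step to be showing that $\tilde\pi$ does not contract $C_{q,j}$. A more robust route avoids it. Since $\tilde\pi^*\mathcal{L}$ is nef, the nef criterion of Theorem~\ref{thm:very ample} (equivalently Theorem~\ref{thm:LT line bundles}) already gives $a_{j,\alpha}\ge 0$. Suppose some $a_{j,\alpha}=0$. Then $\mathcal{L}$ is pulled back from $\prod_{(k,\beta)\neq(j,\alpha)}$ via $\Phi_{\widehat w}$ and the restriction isomorphism of Theorem~\ref{thm:linebundles}(1). So it suffices to exhibit a curve in $\widehat{X}(\widehat{w})$ along which only the $G/P^{w_j}$ coordinate varies, within a $P_\alpha/B$-type line. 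Such a curve is $[e,\dots,e,p_j,e,\dots,e]$ with $p_j$ ranging in $\overline{U_{-\alpha}}\,w_j$-translates. An ample bundle would then have degree $0$ on it, a contradiction.

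Hence every $a_{j,\alpha}>0$, and Theorem~\ref{thm:very ample} yields very ampleness.
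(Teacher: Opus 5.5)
\textbf{The positivity step has a genuine gap: neither of your test curves isolates the coefficient $a_{j,\alpha}$.}

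In your first route, the claim $\deg(\mathcal{L}_{(q',t)}|_{C_{q,j}})=\delta_{(q',t),(q,j)}$ for indices after $(q,j)$ is false. Along $C_{q,j}$ every factor other than $p\in P_\alpha$ is $e$, so every partial product ending at or after position $(q,j)$ equals $p$. Hence $\mathcal{L}_{(q',t)}$ has degree $\langle\omega_{\alpha_{q',t}},\alpha\rangle$ on $C_{q,j}$, which equals $1$ whenever $s_{q',t}=s_\alpha$. Choosing $q=m_j(d)$ excludes this only inside the block $\tilde w_j$, not in later blocks. What you actually get is
\[
\deg\bigl(\tilde\pi^*\mathcal{L}|_{C_{q,j}}\bigr)=a_{j,\alpha}+\sum_{t>j,\ \alpha\in S\setminus I^{w_t}}a_{t,\alpha}.
\]
Already for the Bott--Samelson variety $Z(s_1,s_2,s_1)$ your first curve has degree $a_1+a_3$. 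Positivity of such a sum gives no information about $a_{j,\alpha}$ alone.

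Your second route fails for the same reason. At $[e,\dots,e,p_j,e,\dots,e]$ the coordinates in $G/P^{w_k}$ for $k>j$ are $p_jP^{w_k}$, and these move as soon as $\alpha\in S\setminus I^{w_k}$. For example, take $\widehat w=(s_1,s_1)$ in $SL_2$. Then $\widehat X(\widehat w)\cong\mathbb{P}^1\times\mathbb{P}^1$ and your curve is the diagonal. The non-ample bundle $\mathcal{O}(0,1)$ has degree $1$ on it, so the contradiction you want never appears. Curves genuinely dual to the basis do exist; in $Z(s_1,s_2,s_1)$ one can take $\{[p,e,p^{-1}]\}$. But they need compensating factors in later blocks, and building them for a general admissible $\widehat w$ amounts to reproving part of Theorem~\ref{thm:very ample}.

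\textbf{The missing idea is to pass to a very ample power.} No curves are needed; the paper treats the corollary as an immediate consequence of Theorem~\ref{thm:very ample}:
\begin{itemize}
\item If $\mathcal{L}$ is ample, then $\mathcal{L}^{\otimes N}$ is very ample for some $N\ge 1$.
\item Since $\operatorname{Pic}\widehat{X}(\widehat{w})$ is free with basis $\{\mathcal{L}_{j,\alpha}\}$ (Theorem~\ref{thm:linebundles}), the coefficients of $\mathcal{L}^{\otimes N}$ are $Na_{j,\alpha}$.
\item Theorem~\ref{thm:very ample} then gives $Na_{j,\alpha}>0$, hence $a_{j,\alpha}>0$ for all $j$ and $\alpha$.
\item The same theorem now shows that $\mathcal{L}$ itself is very ample.
\end{itemize}
If you replace your curve computations with this argument, the rest of your outline goes through unchanged: you write $\mathcal{L}$ in the basis, note $a_{j,\alpha}\ge 0$ from nefness, and conclude with Theorem~\ref{thm:very ample}.
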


\begin{corollary}\label{cor:globally}
    Let $\mathcal{L}$ be a line bundle on $\widehat{X}(\widehat{w})$. Then,
        $\mathcal{L}$ is globally generated if and only if 
        $\mathcal{L}$ is nef.
   
\end{corollary}

\begin{proof}  
   Assume that there exists a globally generated line bundle $\mathcal{L}$ such that 
$a_{j,\alpha} < 0$ for some $1 \leq j \leq m$ and $\alpha \in S \setminus I^{w_j}$. 
Since the tensor product of an ample line bundle with a globally generated line bundle is ample, that is very ample by Corollary \ref{cor:ample}. This leads to a contradiction to Theorem~\ref{thm:very ample}. Hence, $\mathcal L$ is nef.

Suppose $\mathcal L$ is nef. Then, by Theorem \ref{thm:very ample}, we have $a_{j,\alpha} \geq 0$. 
Now note that each $\mathcal{L}_{j,\alpha}$ is globally generated, as it is the pullback of the globally generated line bundle 
$\mathcal{L}_{G/P^{w_j}}(\omega_\alpha)$ (see Eq.\eqref{eq: Line bundles on gBS}). 
Therefore, $\mathcal{L}$ is globally generated, being a tensor product of globally generated line bundles. 
\end{proof}

\section{Fanoness of the Schubert variety \texorpdfstring{$X(w)\subseteq G/P^{w}$}{X(w)}}\label{sec: Fano Xw}

In this section, we give a criterion for the Schubert variety $X(w)$ in $G/P^{w}$ to be Gorenstein. Moreover, we observe that such Gorenstein Schubert varieties are Fano.

We first recall the divisor classes and the anticanonical sheaf of the Bott-Samelson variety $Z(\tilde{w})$. Let $\tilde{w}= s_{i_1}\cdots s_{i_r}$ be a reduced expression of $w\in W$. For $1\leq j\leq r$, we define \begin{align*}
f_j: Z(\tilde{w}(j)) &\to Z(\tilde{w}(j-1)), \quad 
[p_1,\dots,p_j] \to [p_1,\dots,p_{j-1}]
\end{align*}
where $\tilde{w}(j)=s_{i_1}\cdots s_{i_j}$. This morphism admits a section $\sigma_j $ defined by 
\begin{align*}
\sigma_j: Z(\tilde{w}(j-1))&\to Z(\tilde{w}(j)), \quad
[p_1,\dots,p_{j-1}] \to [p_1,\dots,p_{j-1},1].
\end{align*}
Consider $Z_j:= Z(s_{i_1}\cdots \widehat{s}_{i_j}\cdots s_{i_r})$, where $\widehat{a}$ means that $a$ is omitted. Then $$Z_j= f_{r}^{-1}\cdots f_{j+1}^{-1}(Im(\sigma_{j})),$$ and so each $Z_j$ is a divisor in $Z(\tilde{w})$.
Recall the morphism $\tilde{\pi}$ from Eq.\eqref{eq:tilde pi} for the case $m=1$, we have 
\begin{equation}\label{eq: pi map}
   \tilde \pi: Z(\tilde{w})\to G/P^w, \quad [p_1,\dots,p_r] \mapsto p_1\cdots p_rP^w.
\end{equation}
\begin{remark}\label{rmk:image1}
Note that the image of $\tilde \pi$ of $Z_j$ is the Schubert subvariety $X(y)$ where $y$ is the longest element that can be written as a subword of $w$ without the term $s_{i_j}$.      
\end{remark}

Denote by $\xi_{j}$ the class of $Z_j$ in the Chow ring $A^*(Z(\tilde{w}))$. We recall the following result of Lauritzen and Thomsen from \cite{LT04}.

\begin{proposition}[\cite{LT04},~Proposition~$3.5$]
    The divisors $(\xi_j)_{j\in [1,r]}$ form a basis of the class group of $Z(\tilde{w})$ .
\end{proposition}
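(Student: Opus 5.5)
The proof goes by induction on $r=\ell(w)$, using the iterated $\mathbb{P}^1$-bundle structure $f_r\colon Z(\tilde w)\to Z(\tilde w(r-1))$ together with the sections $\sigma_j$. The goal is to show that $\operatorname{Cl}(Z(\tilde w))=\operatorname{Pic}(Z(\tilde w))$ is freely generated by the classes $\xi_1,\dots,\xi_r$. Since $Z(\tilde w)$ is smooth, Weil and Cartier divisors agree. By Theorem~\ref{thm:LT line bundles}(1), $\operatorname{Pic}Z(\tilde w)\simeq\mathbb{Z}^r$. It therefore suffices to show that the $\xi_j$ generate, or equivalently that they are related to the basis $\mathcal{L}_1,\dots,\mathcal{L}_r$ by a unimodular matrix.

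For the inductive step, $f_r$ is the $\mathbb{P}^1$-bundle $Z(\tilde w(r-1))\times^B P_{\alpha_{i_r}}/B \to Z(\tilde w(r-1))$, and $\sigma_r$ gives a section whose image is exactly $Z_r$. For a $\mathbb{P}^1$-bundle $\mathbb{P}(E)\to Y$ with a section $D$, the standard projective bundle formula (Hartshorne, Ch.~II, Ex.~7.9) gives
\[
\operatorname{Pic}\mathbb{P}(E)=f^*\operatorname{Pic}Y\oplus\mathbb{Z}[D],
\]
because $\mathcal{O}(D)$ restricts to $\mathcal{O}(1)$ on each fiber. By induction, $\operatorname{Pic}Z(\tilde w(r-1))$ has basis given by the classes of the divisors $Z_j'=Z(s_{i_1}\cdots\widehat{s}_{i_j}\cdots s_{i_{r-1}})$ for $j<r$.

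The next step is to check that $f_r^{-1}(Z_j')=Z_j$ scheme-theoretically for $j<r$. This follows from the description $Z_j=f_r^{-1}\cdots f_{j+1}^{-1}(\operatorname{Im}\sigma_j)$, since preimages compose. Pullbacks along the smooth morphism $f_r$ take the reduced divisor $Z_j'$ to the reduced divisor $Z_j$, so $f_r^*[Z_j']=\xi_j$. Combined with $[Z_r]=\xi_r$, this gives a basis $\xi_1,\dots,\xi_r$ of $\operatorname{Pic}Z(\tilde w)$ and completes the induction.

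The base case $r=1$ is $Z(s_{i_1})=P_{\alpha_{i_1}}/B\simeq\mathbb{P}^1$, where $Z_1$ is a point and its class generates $\operatorname{Pic}$. The only point requiring care is the projective bundle formula in this setting. Local triviality of $f_r$ in the Zariski topology holds because $P_{\alpha}\to P_\alpha/B$ is Zariski locally trivial, via the big cell $U_{-\alpha}B$. With that, the formula applies, and what remains is the bookkeeping identifying $f_r^*$ of the inductive basis with $\xi_1,\dots,\xi_{r-1}$.
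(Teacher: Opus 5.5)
Your proof is correct. The paper gives no proof of this statement; it simply quotes it from Lauritzen--Thomsen, so the comparison below is with the standard alternative rather than with an argument in the paper.

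Your induction is self-contained. The section $\sigma_r$ gives $\mathcal{O}(Z_r)$ degree one on every fibre of the Zariski-locally trivial $\mathbb{P}^1$-bundle $f_r$ over the smooth base $Z(\tilde w(r-1))$. Hence $\operatorname{Pic}Z(\tilde w)=f_r^*\operatorname{Pic}Z(\tilde w(r-1))\oplus\mathbb{Z}\xi_r$, with $f_r^*$ injective since $\sigma_r^*f_r^*=\mathrm{id}$. Moreover $f_r^*[Z'_j]=\xi_j$, because $Z_j=f_r^{-1}(Z'_j)$ and $f_r$ is smooth. This yields generation and linear independence at once, so your opening appeal to Theorem~\ref{thm:LT line bundles}(1) is not needed.

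A different standard route is the open-cell argument, which parallels how the paper handles $\operatorname{Cl}(X(w))$ via Schubert divisors in Proposition~\ref{pro:bases for cl(X(w))}. One has $Z_j=\{[p_1,\dots,p_r]\mid p_j\in B\}$ and $P_{\alpha}\setminus B=U_{\alpha}\dot s_{\alpha}B$, where $\dot s_\alpha\in N_G(T)$ represents $s_\alpha$. So the complement $Z(\tilde w)\setminus\bigcup_j Z_j$ consists of the points $[u_1\dot s_{i_1},\dots,u_r\dot s_{i_r}]$ with $u_j\in U_{\alpha_{i_j}}$, and is therefore isomorphic to $\mathbb{A}^r$. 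The exact sequence $\bigoplus_j\mathbb{Z}[Z_j]\to\operatorname{Cl}(Z(\tilde w))\to\operatorname{Cl}(\mathbb{A}^r)=0$ gives generation. For independence, a relation $\sum_j a_j[Z_j]=\operatorname{div}(f)$ makes $f$ a unit on $\mathbb{A}^r$, hence constant, so all $a_j=0$.

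This route works directly with Weil divisors, finishes in one step, and avoids the projective bundle formula. In exchange, your induction shows that the basis is compatible with the tower of $\mathbb{P}^1$-fibrations: $\xi_1,\dots,\xi_{r-1}$ are pulled back from $Z(\tilde w(r-1))$. Neither argument uses that $\tilde w$ is reduced. That is useful, since the paper later uses this basis for Bott--Samelson varieties of non-reduced expressions.
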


\subsection{Anticanonical divisor for \texorpdfstring{$Z(\tilde{w})$}{}}
We define a sequence of roots $(\beta_i)_{1\leq i\leq r}$ associated to the expression  $\tilde w=s_{i_1}\cdots s_{i_r}$ by
\[
\beta_1:=\alpha_{i_1}, \qquad 
\beta_j:=s_{i_1}\cdots s_{i_{j-1}}(\alpha_{i_j}) \ \text{for } 2\le j\le r.
\]
Note that these roots are positive if the expression $\tilde{w}$ is reduced; otherwise, they need not be positive.

Denote by $T_j$ the pullback on $Z(\tilde{w})$ of the relative tangent sheaf of the fibration $ f_j$. Then, by \cite[Proposition~2.11]{Per07}, we have the following equality: $$T_j= \sum_{k=1}^{j}\langle \beta_j ,\beta_{k}\rangle \xi_{k}.$$ As $Z(\tilde{w})$ is a sequence of $\mathbb{P}^1$-fibrations with $T_j$ as a relative tangent bundle, we get $$-K_{Z(\tilde{w})}= \sum_{i=1}^{r}T_i.$$ 
Together with the preceding formula of $T_j$, we obtain the equality:

\begin{equation}\label{eq:ACS for BSv}
[-K_{Z(\tilde{w})}]= \sum_{k=1}^{r}\Big(\sum_{j=k}^{r}\langle \beta_{j},\beta_k\rangle \Big)\xi_{k}.   
\end{equation}

Recall the definition of the line bundles $\mathcal L_j, 1\leq j \leq m$ on $Z(\tilde{w})$ from Eq.\eqref{eq: lines on BSvs}. 
We now have two bases of $\operatorname{Pic}(Z(\tilde w))$. These two bases are related by the following change of basis formula due to Demazure \cite[Section~4.2, Proposition~1]{Dem74}. 
In particular, we can write the divisor classes $[\mathcal{L}_j]$ in terms of the basis $(\xi_k)_{k\in[1,r]}$. For a character $\lambda$ of  $T$ and $1\le k\le r$, set 
\[\mathcal O_k(\lambda):=\hat{f}^*_k{\hat\pi_k}^*\mathcal L_{G/B}(\lambda)\]

\begin{proposition}\label{Demazure}
     There is an isomorphism of line bundles on $Z(\tilde{w})$ 
    \[
\mathcal O_k(\lambda) \cong
\mathcal{O}_{Z(\tilde{w})}
\left(
\sum_{l=1}^{k}
\left\langle
\lambda,\,
s_{i_k}\cdots s_{i_{l+1}}(\alpha_{i_l})
\right\rangle
\xi_l
\right).
\]
\end{proposition}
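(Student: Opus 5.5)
We will prove the formula by induction on $k$. The approach is to restrict to the last $\mathbb{P}^1$-fibration $f_k$ and compare degrees on its fibres. We freely identify $\mathcal O_k(\lambda)$ with its pullback under $\widehat f_k$, so that it suffices to work on $Z(\tilde w(k))$. Two facts will be used repeatedly:
\begin{itemize}
\item the map $\lambda\mapsto\mathcal O_k(\lambda)$ is a group homomorphism $X(T)\to\operatorname{Pic}Z(\tilde w(k))$;
\item the classes $\xi_1,\dots,\xi_k$ form a basis of $\operatorname{Pic}Z(\tilde w(k))$ (it is smooth, so $\operatorname{Cl}=\operatorname{Pic}$).
\end{itemize}
Consequently we can write $\mathcal O_k(\lambda)\cong \mathcal O\big(\sum_{l\le k}c_l(\lambda)\xi_l\big)$ with each $c_l$ linear in $\lambda$. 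The task is to show $c_l(\lambda)=\langle\lambda, s_{i_k}\cdots s_{i_{l+1}}(\alpha_{i_l})\rangle$.

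\textbf{Step 1 (the coefficient of $\xi_k$).} The divisor $\xi_k=\operatorname{Im}\sigma_k$ is a section of the $\mathbb{P}^1$-bundle $f_k$. The divisors $\xi_l$ with $l<k$ are pullbacks $f_k^*$ of divisors on $Z(\tilde w(k-1))$, so they have degree $0$ on a fibre $F$ of $f_k$. Hence $c_k(\lambda)=\deg(\mathcal O_k(\lambda)|_F)$. The fibre $F$ maps isomorphically under $\widehat\pi_k$ onto a translate $gP_{\alpha_{i_k}}/B\subset G/B$. The line bundle $\mathcal L_{G/B}(\lambda)$ restricted to such a line has degree $\langle\lambda,\alpha_{i_k}\rangle$ (with the paper's sign convention for $\mathcal L(\lambda)$), and this matches the $l=k$ term, where the product of reflections is empty.

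\textbf{Step 2 (reduction to $k-1$).} Consider $\mathcal N:=\mathcal O_k(\lambda)\otimes\mathcal O(-\langle\lambda,\alpha_{i_k}\rangle\xi_k)$. It has degree $0$ on the fibres of $f_k$, hence is $f_k^*$ of a line bundle on $Z(\tilde w(k-1))$. To identify that line bundle, pull back along the section: $\mathcal N\cong f_k^*\sigma_k^*\mathcal N$. Now $\widehat\pi_k\circ\sigma_k=\widehat\pi_{k-1}$, since $p_1\cdots p_{k-1}\cdot 1\,B$. Therefore $\sigma_k^*\mathcal O_k(\lambda)=\mathcal O_{k-1}(\lambda)$, and so
\[\mathcal O_k(\lambda)\cong f_k^*\mathcal O_{k-1}(\lambda)\otimes f_k^*\sigma_k^*\mathcal O(-\xi_k)^{\otimes(-\langle\lambda,\alpha_{i_k}\rangle)}\otimes\mathcal O(\langle\lambda,\alpha_{i_k}\rangle\xi_k).\]

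\textbf{Step 3 (the self-intersection term; the main obstacle).} We must compute $\sigma_k^*\mathcal O(\xi_k)$, the normal bundle of the section. This equals the restriction of the relative tangent bundle $T_{f_k}$ to the section. By the formula $T_j=\sum_{l\le j}\langle\beta_j,\beta_l\rangle\xi_l$ quoted from Perrin, it is more intrinsic to realise $T_{f_k}$ as the pullback of the relative tangent bundle of $G/B\to G/P_{\alpha_{i_k}}$, namely $\widehat\pi_k^*\mathcal L_{G/B}(\alpha_{i_k})$ (up to sign convention). This gives
\[\sigma_k^*\mathcal O(\xi_k)\cong\sigma_k^*\mathcal O_k(\alpha_{i_k})=\mathcal O_{k-1}(\alpha_{i_k}).\]
Substituting, and using linearity in $\lambda$, we obtain
\[\mathcal O_k(\lambda)\cong f_k^*\mathcal O_{k-1}\big(\lambda-\langle\lambda,\alpha_{i_k}\rangle\alpha_{i_k}\big)\otimes\mathcal O(\langle\lambda,\alpha_{i_k}\rangle\xi_k)=f_k^*\mathcal O_{k-1}(s_{i_k}\lambda)\otimes\mathcal O(\langle\lambda,\alpha_{i_k}\rangle\xi_k).\]
The delicate points here are the precise identification of the relative tangent bundle and keeping the sign conventions consistent.

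\textbf{Step 4 (induction).} Apply the induction hypothesis to $\mathcal O_{k-1}(s_{i_k}\lambda)$. By $W$-invariance of the pairing,
\[\langle s_{i_k}\lambda,\, s_{i_{k-1}}\cdots s_{i_{l+1}}\alpha_{i_l}\rangle=\langle\lambda,\, s_{i_k}\cdots s_{i_{l+1}}\alpha_{i_l}\rangle,\]
which yields the coefficient of $\xi_l$ for every $l<k$. The base case $k=1$ is Step 1 alone, since $Z(\tilde w(1))=\mathbb{P}^1$.
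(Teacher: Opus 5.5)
The paper gives no proof of this proposition; it quotes it from Demazure \cite[Section~4.2, Proposition~1]{Dem74}. Your induction on $k$ is a correct, self-contained proof, so this is a different route from a bare citation. The structure is sound:
\begin{itemize}
\item you read off the coefficient of $\xi_k$ as the degree on a fibre of $f_k$;
\item you pull the remainder back along $\sigma_k$, using $\widehat\pi_k\circ\sigma_k=\widehat\pi_{k-1}$;
\item you identify $\sigma_k^*\mathcal O(\xi_k)$ with the normal bundle $\sigma_k^*T_{f_k}\cong\mathcal O_{k-1}(\alpha_{i_k})$, which gives the recursion $\mathcal O_k(\lambda)\cong f_k^*\mathcal O_{k-1}(s_{i_k}\lambda)\otimes\mathcal O(\langle\lambda,\alpha_{i_k}\rangle\xi_k)$;
\item $W$-invariance of the pairing then closes the induction.
\end{itemize}

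Compared with the citation, your argument buys two things. First, it makes the sign convention for $\mathcal L_{G/B}(\lambda)$ explicit: degree $\langle\lambda,\alpha\rangle$ on the lines $gP_\alpha/B$, which matches the paper's later use of $\mathcal L(\omega_\alpha)$ as globally generated. Second, it visibly works for arbitrary, not necessarily reduced, words. That is the generality in which the paper actually uses the formula, namely for the concatenated word $\tilde w$ attached to a non-reduced $\widehat w$ in Proposition~\ref{prop: divisor classes for line bundles on gBS}.

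Two small repairs are needed.
\begin{itemize}
\item The last display of Step 2 has a doubled sign. It should read $f_k^*\sigma_k^*\mathcal O(\xi_k)^{\otimes(-\langle\lambda,\alpha_{i_k}\rangle)}$. As written, inserting Step 3 would give $\lambda+\langle\lambda,\alpha_{i_k}\rangle\alpha_{i_k}$ instead of $s_{i_k}\lambda$. The formula you actually state at the end of Step 3 is the correct one.
\item In Step 3 you can drop the hedge ``up to sign convention.'' We have $Z(\tilde w(k))\cong Z(\tilde w(k-1))\times_{G/P_{\alpha_{i_k}}}G/B$, so $T_{f_k}$ is $\widehat\pi_k^*$ of the homogeneous line bundle $G\times^B\bigl(\operatorname{Lie}(P_{\alpha_{i_k}})/\operatorname{Lie}(B)\bigr)$ on $G/B$. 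That bundle is $\mathcal L_{G/B}(\pm\alpha_{i_k})$, and the sign is forced to be $+$ because the relative tangent bundle of a $\mathbb P^1$-bundle has degree $2$ on the fibres.
\end{itemize}
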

The anticanonical line bundle $K^{-1}_{Z(\tilde{w})}$ on $Z(\tilde{w})$ is isomorphic to
\[
K^{-1}_{Z(\tilde{w})}
\cong
\mathcal{O}_{Z(\tilde{w})}
\left(\sum_{j=1}^{r}\xi_j\right)
\otimes
\mathcal{O}_{r}(\rho)
\]
(see \cite[Proof of Proposition~10]{MR85} and \cite[p.~67, Proposition~2.2.2]{BK07}).
Therefore, 
\begin{corollary}\label{cor:antican} We have
\[
K^{-1}_{Z(\widetilde{w})}
\cong
\mathcal{O}_{Z(\widetilde{w})}
\left(
\sum_{l=1}^{r}
\bigl(1+\langle\rho,s_{i_r}\cdots s_{i_{l+1}}(\alpha_{i_l})\rangle\bigr)\xi_l
\right).
\]
In particular, 
\[
K^{-1}_{Z(\widetilde{w})}
\cong  \mathcal{O}_{Z(\widetilde{w})}
\left(
\sum_{l=1}^{r}
\bigl(1+\operatorname{ht}((s_{i_r}\cdots s_{i_{l+1}}(\alpha_{i_l}))^{\vee}\bigr)\xi_l
\right).
\]
    
\end{corollary}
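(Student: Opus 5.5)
The plan is to substitute the formula of Proposition~\ref{Demazure} with $k=r$ and $\lambda=\rho$ into the stated expression for $K^{-1}_{Z(\widetilde{w})}$. This expression is
\[
K^{-1}_{Z(\widetilde{w})}\cong \mathcal{O}_{Z(\widetilde{w})}\Big(\sum_{j=1}^{r}\xi_j\Big)\otimes \mathcal{O}_r(\rho),
\]
which comes from \cite{MR85} and \cite{BK07}. Proposition~\ref{Demazure} gives
\[
\mathcal{O}_r(\rho)\cong \mathcal{O}_{Z(\widetilde{w})}\Big(\sum_{l=1}^{r}\big\langle \rho,\, s_{i_r}\cdots s_{i_{l+1}}(\alpha_{i_l})\big\rangle\, \xi_l\Big).
\]
Since $\operatorname{Pic} Z(\widetilde{w})$ is written additively in the divisor basis $(\xi_l)_{l\in[1,r]}$, tensoring the two line bundles adds their divisors. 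Collecting the coefficient of each $\xi_l$ gives $1+\langle\rho, s_{i_r}\cdots s_{i_{l+1}}(\alpha_{i_l})\rangle$, which is the first isomorphism.

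For the ``in particular'' statement, I would use the definition of height from Section~\ref{sec:basics}: for a root $\gamma$, one has $\operatorname{ht}(\gamma^\vee)=\langle\rho,\gamma\rangle$. I apply this with $\gamma=s_{i_r}\cdots s_{i_{l+1}}(\alpha_{i_l})$. This $\gamma$ is a root because it is a Weyl group translate of the simple root $\alpha_{i_l}$. Moreover $\langle\rho,\gamma\rangle=(\rho,\gamma^\vee)$, and expanding $\gamma^\vee$ in simple coroots and pairing with $\rho=\sum\omega_i$ gives the coefficient sum, which by definition is the height of $\gamma^\vee$.

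One point needs checking. When $\widetilde{w}$ is reduced, $\gamma$ is actually positive. The reason is that the word $s_{i_r}\cdots s_{i_l}$ is the reversal of a subword of a reduced word, hence reduced, so $s_{i_r}\cdots s_{i_{l+1}}$ sends $\alpha_{i_l}$ to a positive root. The height formula does not need positivity, however. The only real obstacle is a bookkeeping check: the index conventions in Demazure's formula (the $\xi_l$ indexing and the order $s_{i_k}\cdots s_{i_{l+1}}$) must agree with those of the anticanonical formula, so that the two sums are over the same basis.
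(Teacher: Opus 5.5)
Your proposal is correct and is essentially the paper's argument: substitute Proposition~\ref{Demazure} with $k=r$ and $\lambda=\rho$ into the cited isomorphism $K^{-1}_{Z(\widetilde{w})}\cong\mathcal{O}_{Z(\widetilde{w})}\bigl(\sum_{j=1}^{r}\xi_j\bigr)\otimes\mathcal{O}_r(\rho)$, then rewrite $\langle\rho,\gamma\rangle=\operatorname{ht}(\gamma^\vee)$ using the definition of height. Your positivity remark is not needed for the corollary; where it is used, the reduced word in question is the consecutive tail $s_{i_l}\cdots s_{i_r}$ rather than an arbitrary subword.
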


 \subsection{Divisors and Peaks for \texorpdfstring{$X(w)$}{X(w)}}\label{subsec: peaks for Xw}
 In this subsection, we introduce the notion of peaks and establish a bijective correspondence between the set of peaks and the divisors of $X(w)$.

Given a reduced expression $\tilde w = s_{i_1} \cdots s_{i_r}$ of $w \in W$, we define
\[
R^+(w) := \{ \alpha \in R^+ \mid w(\alpha) < 0 \} 
= \{ s_{i_r} s_{i_{r-1}} \cdots s_{i_2}(\alpha_{i_1}), \dots, s_{i_r}(\alpha_{i_{r-1}}), \alpha_{i_r} \}.
\]
The set $R^+(w)$ is well-defined; that is, it does not depend on the choice of reduced expression for $w$. Hence, the following definitions are also well-defined.

\begin{definition} \label{def:Peaks Xw}\ 

\begin{enumerate}
    \item For $w \in W$, we define the \emph{cover inversion set} of roots for $w$ as
    \[
        R^+_{w,B} := \{ \eta \in R^+(w) \mid \ell(ws_\eta) = \ell(w) - 1 \}.
    \]
    \item We define a subset of $R^+_{w,B}$, denoted by $\operatorname{Peaks}X(w)$, as
    \[
        \operatorname{Peaks}X(w) := \{ \eta \in R^+(w) \mid \ell(ws_\eta) = \ell(w) - 1, \; ws_\eta \in W^{P^w} \}.
    \]
\end{enumerate}
\end{definition}
\begin{example}
   Let $G= SL_4(\mathbb C)$ and $w=s_2s_1s_3$, then $S\setminus I^w =\{\alpha_1, \alpha_3\}$. Then, we have
   \[
   \operatorname{Peaks}X(w)= \{s_3s_1(\alpha_2), s_3(\alpha_1),\alpha_3\}=\{\alpha_1+ \alpha_2+ \alpha_3,\alpha_1,\alpha_3\}.\]
   \end{example}

    \begin{example}
    In $G=\operatorname{Spin}_8(\mathbb{C})$, consider $w=s_2s_3s_1s_2s_4s_1s_3\in W$. Then $S\setminus I^w=\{\alpha_1,\,\alpha_3,\,\alpha_4\}$ and \[
    \operatorname{Peaks}X(w)=\{s_3s_1s_4s_2s_1s_3(\alpha_2),\,s_3s_1(\alpha_4),\,s_3(\alpha_1),\,\alpha_3\}=\{\alpha_2+\alpha_4,\,\alpha_4,\,\alpha_1,\,\alpha_3\}.\]
\end{example}

For a given $\eta\in \operatorname{Peaks}X(w)$, there exists $k \in \{1,\cdots ,r\}$ such that
$$
\eta = s_{i_r} \cdots s_{i_{k+1}}(\alpha_{i_k}) \in R^+(w).
$$
Then
\[
ws_{\eta} = s_{i_1} \cdots s_{i_{k-1}} \, \hat{s}_{i_k} \, s_{i_{k+1}} \cdots s_{i_r},
\] 
i.e., the simple reflection in the position $i_k^\text{th}$ is omitted from the expression $\tilde w$. We denote such a reduced expression by $\widetilde{ws_{\eta}}$. Recall the morphism $\tilde \pi: Z(\tilde w) \to X(w)\subseteq G/P^w$ from Eq.\eqref{eq: pi map}. 
For each $\eta \in \operatorname{Peaks}X(w)$, $\xi_k$ is the divisor class of
\[
Z_k = Z(s_{i_1} \cdots \widehat{s}_{i_k} \cdots s_{i_r}) 
    = Z(\widetilde {ws}_{{\eta}}), \quad \text{where}\quad \eta= s_{i_r} \cdots s_{i_{k+1}}(\alpha_{i_k}).
    \]

Define 
\[
D_{\eta}:=\tilde \pi_{*}(\xi_k).
\]
We have the following.
\begin{proposition}\label{pro:bases for cl(X(w))}
    The set $\{D_{\eta}\mid \eta\in \operatorname{Peaks}X(w)\}$ forms a basis for the divisor class group $Cl(X(w))$ of $X(w)$.
\end{proposition}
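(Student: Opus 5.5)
We prove that $\{D_\eta\}_{\eta \in \operatorname{Peaks}X(w)}$ is a basis by identifying both it and $\operatorname{Cl}(X(w))$ with the $B$-stable prime divisors of $X(w)$. Since $X(w)$ is normal with a dense open $B$-orbit $Bwx\simeq \mathbb{A}^{\ell(w)}$, and $\operatorname{Cl}(\mathbb{A}^{\ell(w)})=0$ with only constant units, the localization sequence
\[
\bigoplus_{E\subset X(w)\setminus Bwx}\mathbb{Z}[E]\longrightarrow \operatorname{Cl}(X(w))\longrightarrow \operatorname{Cl}(Bwx)=0
\]
is exact. The left map is injective: a rational function whose divisor is supported on the boundary is an invertible regular function on $\mathbb{A}^{\ell(w)}$, hence constant. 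So $\operatorname{Cl}(X(w))$ is free on the irreducible components of codimension one of $X(w)\setminus Bwx$.

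These components are the Schubert divisors $X(v)$ with $v\in W^{P^w}$, $v<w$ and $\ell(v)=\ell(w)-1$. We match them with peaks. If $\eta\in\operatorname{Peaks}X(w)$, then $ws_\eta\in W^{P^w}$ has length $\ell(w)-1$, so $X(ws_\eta)$ is such a divisor. Conversely, take a Schubert divisor $X(v)$ with $v\in W^{P^w}$. By the standard description of Bruhat covers in $W$, $v=ws_\eta$ for a unique positive root $\eta$, and $\eta\in R^+(w)$ because $\ell(ws_\eta)<\ell(w)$. The map $\eta\mapsto ws_\eta$ is injective. Hence $\eta\mapsto X(ws_\eta)$ is a bijection from $\operatorname{Peaks}X(w)$ to the Schubert divisors of $X(w)$.

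It remains to show $D_\eta=\tilde\pi_*(\xi_k)=[X(ws_\eta)]$. Write $\eta=s_{i_r}\cdots s_{i_{k+1}}(\alpha_{i_k})$. By the computation preceding the statement, $ws_\eta$ is obtained from $\tilde w$ by deleting $s_{i_k}$, and this expression is reduced of length $\ell(w)-1$. By Remark~\ref{rmk:image1}, $\tilde\pi(Z_k)=X(ws_\eta)$, which has the same dimension as $Z_k$. Moreover $Z_k=Z(\widetilde{ws_\eta})$, so $\tilde\pi|_{Z_k}$ is the Bott--Samelson map $Z(\widetilde{ws_\eta})\to X(ws_\eta)\subseteq G/P^w$. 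This map is birational: the argument of Corollary~\ref{cor:birational} applies because $ws_\eta\in W^{P^w}$. Therefore $\tilde\pi_*\xi_k=[X(ws_\eta)]$ with multiplicity one, and the set $\{D_\eta\}$ is exactly the set of Schubert divisor classes. By the first paragraph, it is a basis.

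The main obstacle is the birationality and degree-one claim, together with checking that divisors $Z_j$ not indexed by peaks push forward to zero. The second point is not actually needed for the basis statement, but it should be kept in mind for later use. The degree-one claim reduces to $ws_\eta\in W^{P^w}$: this condition ensures the big cell of $Z(\widetilde{ws_\eta})$ maps isomorphically onto $B\,ws_\eta\,P^w/P^w$, since $\dim$ of that orbit equals $\ell(ws_\eta)$. I would check this carefully by comparing the orbit dimension with the length of the minimal coset representative.
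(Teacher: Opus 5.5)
Your proof is correct and follows the same route as the paper: you identify each $D_\eta$ with the Schubert divisor $X(ws_\eta)$, show that every Schubert divisor of $X(w)\subseteq G/P^w$ arises this way, and conclude because Schubert divisor classes form a basis of $\operatorname{Cl}(X(w))$. The paper cites Brion's lecture notes for that basis fact and leaves the multiplicity-one identification $\tilde\pi_*(\xi_k)=[X(ws_\eta)]$ implicit; you instead prove the basis fact via the localization sequence on the open cell $Bwx\simeq\mathbb{A}^{\ell(w)}$, and you justify multiplicity one through the birationality of $Z(\widetilde{ws_\eta})\to X(ws_\eta)\subseteq G/P^w$, which indeed holds because $ws_\eta\in W^{P^w}$.
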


\begin{proof} For each $\eta \in \operatorname{Peaks}X(w)$, the image $\tilde \pi\bigl(Z(\widetilde{ws}_{{\eta}})\bigl)$ of $\tilde{\pi}$ is a Schubert divisor $X(ws_{\eta})$ of $X(w)\subseteq G/P^w$. 
Moreover, by Remark~\ref{rmk:image1}, every Schubert divisor of $X(w)$ arises in this way.  It is well-known that the classes of Schubert divisors in $X(w)$ form a basis of the divisor class group of $X(w)$; see \cite[Proposition~2.2.8(i)]{Bri05}. Hence, the result follows.
\end{proof}

Now we consider the generalized decomposition $\widehat{w}$ of $w$ with $m=1$ to apply the results of Section \ref{map:pi}. 
With this we have $\widehat{w} = (w)$ i.e., $w_1 = w$, we have $\widehat{X}(\widehat{w}) = X(w)$. 
Then, for $\alpha \in S \setminus I^w$, we define the line bundle
\[
\mathcal{L}_{\omega_\alpha} := \mathcal{L}_{1,\alpha},
\] 
where $\mathcal{L}_{1,\alpha}$ is as in Eq.\eqref{eq: Line bundles on gBS}. 
Let $\tilde w = s_{i_1} \cdots s_{i_r}$ be a reduced expression of $w \in W$. 
For $1 \leq j \leq r$ with $\alpha_{i_j} \in \operatorname{Supp}(w)$, define
\begin{equation}\label{eq:m(j) for Xw}
m(j) := \max \{ q \mid 1\leq q \leq r~,~ s_{i_j} = s_{i_q} \}. 
\end{equation}

\begin{example}\label{ex:index}
In $SL_4(\mathbb{C})$, consider the maximal element $w\in W$, and fix a reduced expression 
\[\tilde w=s_1s_2s_3s_1s_2s_1 = s_{i_1}s_{i_2}s_{i_3}s_{i_4}s_{i_5}s_{i_6}.\] Also, note that
\[S\setminus I^w = \text{Supp}(w)= \{\alpha_1= \alpha_{i_1}=\alpha_{i_4}=\alpha_{i_6}, \alpha_2=\alpha_{i_2}=\alpha_{i_5},\alpha_3=\alpha_{i_3}\}.\] Then, by Eq.\eqref{eq:m(j) for Xw}, we get \[ m(1)=m(4)=m(6)=6,\quad m(2)=m(5)=5,\quad \text{and}\quad m(3)=3.\]
\end{example}

\begin{remark}\label{lem: pull-back from Xw}
    By Proposition~\ref{pull back} in the case $m=1$, for each $1\leq j \leq r$ we have
$
\tilde \pi^*\mathcal{L}_{\omega_\alpha} = \mathcal{L}_{m(j)}.
$

\end{remark}
Keeping the same notation as above, we now compute the divisor classes of the line bundles $\mathcal{L}_{\omega_\alpha}$ in terms of the peaks $\eta \in \operatorname{Peaks}X(w)$ and the anticanonical divisor of $X(w)$.

\begin{proposition}
    \label{lem:divisor classes} For $\alpha\in S\setminus I^w$, the divisor class of the line bundle $\mathcal{L}_{\omega_{\alpha}}$ is given by \[
[\mathcal{L}_{\omega_{\alpha}}]=\sum_{\eta\in \operatorname{Peaks}X(w)}\langle\omega_{\alpha},{\eta}\rangle D_\eta.
    \]

\end{proposition}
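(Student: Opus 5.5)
Pull back along the resolution $\tilde\pi\colon Z(\tilde w)\to X(w)$, compute there with Demazure's formula, and push forward again.

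\emph{Step 1 (reduction to $Z(\tilde w)$).} Since $\tilde\pi$ is birational and $X(w)$ is normal, we have $\tilde\pi_*\mathcal O_{Z(\tilde w)}=\mathcal O_{X(w)}$. For a Cartier divisor $D$ on $X(w)$ the projection formula then gives
\[
\tilde\pi_*\tilde\pi^*[D]=[D]
\]
in $Cl(X(w))$, because $\tilde\pi$ is an isomorphism over the open $B$-orbit and over codimension-one points. Hence
\[
[\mathcal L_{\omega_\alpha}]=\tilde\pi_*[\tilde\pi^*\mathcal L_{\omega_\alpha}].
\]

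\emph{Step 2 (pullback).} By Remark~\ref{lem: pull-back from Xw}, $\tilde\pi^*\mathcal L_{\omega_\alpha}=\mathcal L_{q}$, where $q$ is the last position with $\alpha_{i_q}=\alpha$, so $\mathcal L_q=\mathcal O_q(\omega_\alpha)$. Proposition~\ref{Demazure} gives
\[
[\mathcal L_q]=\sum_{l=1}^{q}\langle\omega_\alpha, s_{i_q}\cdots s_{i_{l+1}}(\alpha_{i_l})\rangle\,\xi_l .
\]
We want to replace $s_{i_q}\cdots s_{i_{l+1}}$ by the full product $s_{i_r}\cdots s_{i_{l+1}}$. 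For $k>q$ we have $i_k\ne$ the index of $\alpha$, so $s_{i_k}$ fixes $\omega_\alpha$, and $W$-invariance gives
\[
\langle\omega_\alpha,\gamma\rangle=\langle s_{i_r}\cdots s_{i_{q+1}}\omega_\alpha,\, s_{i_r}\cdots s_{i_{q+1}}\gamma\rangle=\langle \omega_\alpha,\, s_{i_r}\cdots s_{i_{q+1}}\gamma\rangle .
\]
Also, for $l>q$ we have $\langle\omega_\alpha,s_{i_r}\cdots s_{i_{l+1}}(\alpha_{i_l})\rangle=\langle\omega_\alpha,\alpha_{i_l}\rangle=0$. So the sum may be extended to all $l$:
\[
\tilde\pi^*[\mathcal L_{\omega_\alpha}]=\sum_{l=1}^{r}\langle\omega_\alpha,\eta_l\rangle\,\xi_l,\qquad \eta_l:=s_{i_r}\cdots s_{i_{l+1}}(\alpha_{i_l})\in R^+(w).
\]

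\emph{Step 3 (pushforward).} By Remark~\ref{rmk:image1}, $\tilde\pi(Z_l)$ is the Schubert variety of the Demazure product of the word with $s_{i_l}$ deleted. If this element is $ws_{\eta_l}$, is reduced, and lies in $W^{P^w}$, then $\eta_l$ is a peak, $\tilde\pi|_{Z_l}$ is birational onto $X(ws_{\eta_l})$, and $\tilde\pi_*\xi_l=D_{\eta_l}$. In all other cases $\tilde\pi(Z_l)$ has codimension $\ge 2$ in $X(w)$, so $\tilde\pi_*\xi_l=0$. This happens when the deleted word is nonreduced, or when its image in $G/P^w$ drops dimension because $ws_{\eta_l}\notin W^{P^w}$. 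Distinct $l$ give distinct roots $\eta_l$, since these are the inversions of $w$. Therefore each peak appears exactly once, and
\[
[\mathcal L_{\omega_\alpha}]=\sum_{\eta\in\operatorname{Peaks}X(w)}\langle\omega_\alpha,\eta\rangle D_\eta .
\]

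\emph{Main obstacle.} The main difficulty is Step 3: showing that $\tilde\pi(Z_l)$ is not a divisor whenever $\eta_l$ is not a peak. When the deleted word is reduced but $ws_{\eta_l}\notin W^{P^w}$, one has $\dim X(ws_{\eta_l}W_{P^w})<\ell(w)-1$, so the image has small dimension. When the deleted word is nonreduced, its Demazure product has length $<\ell(w)-1$, and the same conclusion follows. I would argue this directly from Remark~\ref{rmk:image1} together with the identification of Schubert divisors used in Proposition~\ref{pro:bases for cl(X(w))}.
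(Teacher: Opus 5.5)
Your proposal is correct and takes the same route as the paper: pull back along $\tilde\pi$ via Remark~\ref{lem: pull-back from Xw}, expand with Demazure's formula (Proposition~\ref{Demazure}), extend the sum to all $r$ positions using that $\omega_\alpha$ is fixed by the later simple reflections, and push forward. Your Step~3 spells out a point the paper leaves inside the phrase ``combining this with the definition of $\operatorname{Peaks}X(w)$'': if $\eta_l$ is not a peak, then $\tilde\pi(Z_l)$ has codimension at least two in $X(w)$, so $\tilde\pi_*\xi_l=0$.
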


\begin{proof}
By Proposition \ref{Demazure}, we can rewrite the divisor classes of the line bundles $\mathcal{L}_j \in \operatorname{Pic} Z(\tilde w)$ as
\[
[\mathcal{L}_j] = \sum_{k=1}^j \big\langle \omega_{i_j}, s_{i_j} \cdots s_{i_{k+1}}(\alpha_k) \big\rangle \, \xi_k.
\]

Let $\alpha = \alpha_{i_j} \in S \setminus I^w$. Since $\tilde{\pi}_{*}\mathcal O_{Z(\tilde w)}=\mathcal O_{X(w)}$, by the projection formula, we get
\begin{align*}
[\mathcal{L}_{\omega_{i_j}}] 
&= \tilde \pi_*[\tilde \pi^* \mathcal{L}_{\omega_{i_j}}] \\
&= \tilde \pi_*[\mathcal{L}_{m(j)}] \qquad \text{(by Remark~\ref{lem: pull-back from Xw})} \\
&= \sum_{k=1}^{m(j)} \big\langle \omega_{i_{m(j)}}, s_{i_{m(j)}} \cdots s_{i_{k+1}}(\alpha_k) \big\rangle \, \tilde \pi_*(\xi_k).
\end{align*}
Note that if $m(j)<r$, then $\alpha_{i_\ell}\neq \alpha_{i_j}$ for all $m(j)<\ell\leq r$ (see Eq.\eqref{eq:m(j) for Xw}). Thus, for each $m(j)<\ell\leq r$, we have
\[
\left\langle
\omega_{i_{m(j)}},
s_{i_r}\cdots s_{i_{\ell+1}}(\alpha_{i_\ell})
\right\rangle
=0.
\] Hence, 
\[
[\mathcal{L}_{\omega_{i_j}}]
=
\sum_{k=1}^{r}
\bigl\langle
\omega_{i_{m(j)}},
s_{i_r}\cdots s_{i_{k+1}}(\alpha_{i_k})
\bigr\rangle\,
\tilde{\pi}_*(\xi_k).
\]
Combining this with the definition of $\operatorname{Peaks}X(w)$ and Proposition~\ref{pro:bases for cl(X(w))}, we obtain
\[
[\mathcal{L}_{\omega_{\alpha}}] = \sum_{\eta \in \operatorname{Peaks}X(w)} \langle \omega_{\alpha}, {\eta} \rangle \, D_\eta.
\]
\end{proof}

\subsubsection{Anticanonical divisor for \texorpdfstring{$X(w)$}{X(w)}}

Since Schubert varieties are normal and Cohen-Macaulay with rational singularities, the Bott-Samelson desingularization
\[
\tilde \pi: Z(\tilde w) \to X(w)
\] 
allows us to compute the anticanonical class of $X(w)$ via
\[
[-K_{X(w)}] =\tilde \pi_*([-K_{Z(\tilde w)}]),
\] 
see \cite[Proposition~2.2.5]{Bri05}. By Corollary \ref{cor:antican}, we have \begin{align*}
    [-K_{X(w)}]&=\tilde \pi_*\big([-K_{Z(\tilde w)}]\big)
    =\sum_{k=1}^{r}\big(\operatorname{ht}(s_{i_r}\cdots s_{i_{k+1}}(\alpha_{i_k})^{\vee})+1\big)\tilde \pi_*(\xi_k).
\end{align*}
   
Now, by Proposition \ref{pro:bases for cl(X(w))}, we obtain the following. \begin{equation}\label{eq:ACS for X(w)}
    [-K_{X(w)}]= \sum_{\eta\in \operatorname{Peaks}X(w)}(\operatorname{ht}(\eta^{\vee})+1) D_{\eta}.
\end{equation}

\subsection{Fano criterion for \texorpdfstring{$X(w)\subseteq G/P^w$}{X(w)}}\label{subsub: Fano for Xw}
\subsubsection{}
In this subsection, we assume that $G$
is of simply laced type. We review the notions from \cite{CKM25} and reprove the results using our notation.  
We start by recalling the following:
\begin{definition} Let $w\in W$.
  An element $\eta \in R^+(w)$ is called \emph{decomposable} if $\eta = \eta_1 + \eta_2$ for some $\eta_1, \eta_2 \in R^+(w)$. It is called \emph{indecomposable} if it is not decomposable in $R^+(w)$.
\end{definition}
We now recall the following characterization of indecomposable elements.
\begin{proposition}[\cite{CKM25},~Proposition~3.4]\label{pro: not decomposable} Assume that $G$ is of simply laced type. 
    Fix $w\in W$ and suppose $\eta\in R^+(w)$. Then, $\eta$ is indecomposable if and only if $ \eta \in R^+_{w,B}$ (i.e., $\ell(ws_{\eta})=\ell(w)-1$).
    \end{proposition}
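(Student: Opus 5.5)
The plan is to compute $\ell(w)-\ell(ws_\eta)$ directly by comparing the inversion sets $R^+(w)$ and $R^+(ws_\eta)$ root by root. I expect to obtain
\[
\ell(w)-\ell(ws_\eta)=1+2k_\eta,\qquad k_\eta:=\#\bigl\{\{\gamma_1,\gamma_2\}\mid \gamma_1,\gamma_2\in R^+(w),\ \gamma_1+\gamma_2=\eta\bigr\}.
\]
The equivalence then follows at once: $\ell(ws_\eta)=\ell(w)-1$ if and only if $k_\eta=0$, that is, if and only if $\eta$ is indecomposable in $R^+(w)$. The identity rests on two inputs.

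\textbf{Input 1 (simply laced geometry).} Since $G$ is simply laced, for a positive root $\gamma\neq\eta$ we have $\langle\gamma,\eta^\vee\rangle\in\{-1,0,1\}$. Moreover, $\langle\gamma,\eta^\vee\rangle=1$ exactly when $\eta-\gamma$ is a root, and then $\eta-\gamma$ is a positive root. Hence $R^+\setminus\{\eta\}$ splits into three kinds of subsets:
\begin{itemize}
\item[(a)] pairs $\{\gamma,\eta-\gamma\}$ with $\langle\gamma,\eta^\vee\rangle=1$; here $s_\eta$ swaps $\gamma\mapsto-(\eta-\gamma)$ and $\eta-\gamma\mapsto-\gamma$;
\item[(b)] pairs $\{\gamma,\gamma+\eta\}$ with $\langle\gamma,\eta^\vee\rangle=-1$; here $s_\eta$ permutes the two positive roots;
\item[(c)] roots orthogonal to $\eta$, which $s_\eta$ fixes.
\end{itemize}
In particular the decompositions $\eta=\gamma_1+\gamma_2$ into positive roots are exactly the pairs of type (a).

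\textbf{Input 2 (co-closedness of $R^+(w)$).} If $\gamma_1,\gamma_2\in R^+\setminus R^+(w)$ and $\gamma_1+\gamma_2$ is a root, then $w(\gamma_1+\gamma_2)=w\gamma_1+w\gamma_2>0$. So $\gamma_1+\gamma_2\notin R^+(w)$.

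Now count $\ell(ws_\eta)=\#\{\gamma>0\mid ws_\eta(\gamma)<0\}$ over the pieces of the partition.
\begin{itemize}
\item The root $\eta$ lies in $R^+(w)$ but not in $R^+(ws_\eta)$, because $ws_\eta(\eta)=-w\eta>0$. This accounts for the leading $1$.
\item Type (c) roots contribute equally to both lengths.
\item A type (b) pair contributes $[\gamma\in R^+(w)]+[\gamma+\eta\in R^+(w)]$ to both lengths, since $s_\eta$ just swaps the two roots.
\item For a type (a) pair $\{\gamma,\beta\}$ with $\beta=\eta-\gamma$, we have $ws_\eta(\gamma)=-w\beta$. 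So the pair contributes $[\beta\notin R^+(w)]+[\gamma\notin R^+(w)]$ to $\ell(ws_\eta)$, versus $[\gamma\in R^+(w)]+[\beta\in R^+(w)]$ to $\ell(w)$. Since $\eta\in R^+(w)$, Input 2 rules out $\gamma,\beta$ both lying outside $R^+(w)$. Hence the pair contributes the same amount to both lengths when exactly one of $\gamma,\beta$ lies in $R^+(w)$. It contributes $2$ to $\ell(w)$ and $0$ to $\ell(ws_\eta)$ when both lie in $R^+(w)$.
\end{itemize}
Summing over the partition gives the displayed identity. Note that every $\gamma_1,\gamma_2\in R^+(w)$ with $\gamma_1+\gamma_2=\eta$ gives a type (a) pair by Input 1, so $k_\eta$ counts exactly the decompositions in the definition of decomposability.

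The only delicate step is Input 1, namely the root-string facts. In the simply laced case all roots have the same length, so $\langle\gamma,\eta^\vee\rangle=(\gamma,\eta)\in\{0,\pm1\}$ for $\gamma\neq\pm\eta$. The $\eta$-string through $\gamma$ then has length at most $2$, which yields both the classification (a)/(b)/(c) and the positivity of $\eta-\gamma$. The positivity holds because $\eta-\gamma=s_\eta(\gamma)\cdot(-1)$ has height $\operatorname{ht}(\eta)-\operatorname{ht}(\gamma)$, and this cannot be negative: otherwise $\gamma=\eta+\delta$ with $\delta>0$ and $\langle\gamma,\eta^\vee\rangle=2+\langle\delta,\eta^\vee\rangle\le1$, which forces $\langle\delta,\eta^\vee\rangle=-1$ and then $\gamma=s_\eta(\delta)$ with $\langle\gamma,\eta^\vee\rangle=1$, a consistent possibility. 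So this step needs care. I would instead argue directly that if $\langle\gamma,\eta^\vee\rangle=1$ then exactly one of $\eta-\gamma$ and $\gamma-\eta$ is positive. I would then pair $\gamma$ with whichever of $\pm(\eta-\gamma)$ is positive, adjusting the bookkeeping in (a)/(b) accordingly: the case $\gamma-\eta>0$ behaves like type (b) with the roles reversed.
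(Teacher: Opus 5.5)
The paper gives no proof of this statement; it quotes it from \cite{CKM25}. So there is no in-text argument to compare with, and your proposal stands as a self-contained proof. It is correct, and it proves more than the equivalence. The identity $\ell(w)-\ell(ws_\eta)=1+2k_\eta$ shows that only two inputs are needed: $\langle\gamma,\eta^\vee\rangle\in\{0,\pm1\}$ for $\gamma\in R^+\setminus\{\eta\}$, and closure of $R^+\setminus R^+(w)$ under addition.

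It also shows why the simply laced hypothesis is needed. When $\langle\gamma,\eta^\vee\rangle=2$, the reflection $s_\eta$ can pair $\gamma$ with a positive root whose sum with $\gamma$ is $2\eta$ rather than $\eta$. Such a pair inside $R^+(w)$ still costs $2$ in length without decomposing $\eta$. For example, take type $B_2$ with $\alpha_2$ short and $w=s_1s_2s_1=s_{\alpha_1+\alpha_2}$. Then $R^+(w)=\{\alpha_1,\alpha_1+\alpha_2,\alpha_1+2\alpha_2\}$, and $\eta=\alpha_1+\alpha_2$ is indecomposable in it. Yet $\ell(ws_\eta)=0=\ell(w)-3$, the extra drop coming from the pair $\{\alpha_1,\alpha_1+2\alpha_2\}$.

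One statement in your Input 1 must be removed. It is not true that $\eta-\gamma$ is positive whenever $\langle\gamma,\eta^\vee\rangle=1$, as your final paragraph itself discovers, so the attempted positivity argument there should go. No new case is needed, however. Define type (a) as the pairs $\{\gamma,\eta-\gamma\}$ with both members positive. If instead $\langle\gamma,\eta^\vee\rangle=1$ and $\gamma-\eta>0$, then $\langle\gamma-\eta,\eta^\vee\rangle=-1$. So $\gamma$ is the upper member of the type (b) pair $\{\gamma-\eta,\gamma\}$. Your list (a), (b), (c) is therefore already a partition of $R^+\setminus\{\eta\}$, and the count goes through unchanged.
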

    
\begin{definition}
    Let $G$ be of simply laced type and let $w\in W$. A subset $\mathcal{M}\subseteq R^+_{w,B}$ is called \emph{$P^w$-adaptation} of $S\setminus I^w$ if there exists a bijection $$\mu_w: S\setminus I^w\to \mathcal{M}$$ such that
    $$\mu_w(\alpha)\in \alpha+ \bigoplus_{\beta\in I^w}\mathbb Z_{\geq0}\beta \quad\quad \text{for all} ~\alpha\in S\setminus I^w.$$
\end{definition}
\begin{lemma}\label{lem: M' set}
    Let $G$ be of simply laced type, and let $\mathcal{M}$ be a $P^w$-adaptation of $S\setminus I^w$. Suppose that there exist $\alpha\in S\setminus I^w$ and $\beta\in I^w$ such that $ws_{\mu_{w}(\alpha)}(\beta)<0$. Then $\mu'_{w}(\alpha):=s_{\mu_{w}(\alpha)}(\beta)$ satisfies the following conditions:
    \begin{enumerate}
        \item $\mu_{w}'(\alpha)\in R^+_{w,B}$;
        \item $\mu'_w(\alpha)\in \alpha+ \bigoplus_{\beta\in I^w}\mathbb Z_{\geq0}\beta$;
        \item $\operatorname{ht}(\mu'_w(\alpha)^\vee)> \operatorname{ht}(\mu_w(\alpha)^\vee)$.
    \end{enumerate}
   Furthermore, for $\gamma\in S\setminus I^w$ such that $\gamma \neq \alpha$, define $\mu'_w(\gamma) := \mu_w(\gamma)$. 
Then, we obtain another $P^w$-adapted subset $\mathcal{M}'$ of $S \setminus I^w$ defined by
\[
\mathcal{M}' = \{ \mu'_w(\gamma) \mid \gamma \in S \setminus I^w \}.
\]
\end{lemma}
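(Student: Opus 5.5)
The plan is to work directly with the root $\eta:=\mu_w(\alpha)$ and the reflection formula
\[
s_\eta(\beta)=\beta-\langle\beta,\eta\rangle\,\eta .
\]
Since $G$ is simply laced, $\langle \beta,\eta\rangle\in\{-1,0,1\}$ whenever $\beta\neq\pm\eta$. We have $\beta\neq\pm\eta$ because $\eta$ has coefficient $1$ at $\alpha\in S\setminus I^w$, while $\beta\in I^w$.

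\emph{Step 1: determine the sign of the pairing.} Because $\beta\in I^w$, we have $w(\beta)>0$. Because $\eta\in R^+(w)$, we have $w(\eta)<0$. Now
\[
ws_\eta(\beta)=w(\beta)-\langle\beta,\eta\rangle\,w(\eta).
\]
If $\langle\beta,\eta\rangle\ge 0$, the right-hand side is a nonnegative combination of the positive roots $w(\beta)$ and $-w(\eta)$, so it is positive. This contradicts the hypothesis $ws_\eta(\beta)<0$. Hence $\langle\beta,\eta\rangle=-1$, and therefore
\[
\mu'_w(\alpha)=s_\eta(\beta)=\eta+\beta,
\]
which is a positive root.

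\emph{Step 2: conditions (2) and (3).} The root $\eta+\beta$ still has coefficient $1$ at $\alpha$. All its other coefficients are nonnegative and supported on $I^w$, since $\beta\in I^w$. This gives (2). In the simply laced case, heights of coroots agree with heights of the corresponding roots. So $\operatorname{ht}(\mu'_w(\alpha)^\vee)=\operatorname{ht}(\mu_w(\alpha)^\vee)+1$, which gives (3).

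\emph{Step 3: condition (1).} This is the main point, and I would use Proposition~\ref{pro: not decomposable}. First, $\eta':=\mu'_w(\alpha)$ lies in $R^+(w)$, because $w(\eta')=ws_\eta(\beta)<0$ by hypothesis. It then suffices to show that $\eta'$ is indecomposable in $R^+(w)$.

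Suppose instead that $\eta'=\gamma_1+\gamma_2$ with $\gamma_1,\gamma_2\in R^+(w)$. Both $\gamma_i$ are positive, so each has support contained in $\{\alpha\}\cup I^w$. Their $\alpha$-coefficients add up to $1$, so one of them, say $\gamma_2$, has $\alpha$-coefficient $0$. Then $\gamma_2$ is a nonnegative combination of roots in $I^w$. Since $w$ maps every element of $I^w$ to a positive root, $w(\gamma_2)$ is positive. This contradicts $\gamma_2\in R^+(w)$. Hence $\eta'$ is indecomposable, so $\eta'\in R^+_{w,B}$.

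This argument actually proves something more general: every root in $R^+(w)\cap\bigl(\alpha+\bigoplus_{\beta\in I^w}\mathbb Z_{\ge0}\beta\bigr)$ is indecomposable. Getting this general statement cleanly is where I expect the only real care to be needed.

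\emph{Step 4: $\mathcal M'$ is a $P^w$-adaptation.} Every element $\mu'_w(\gamma)$ lies in $R^+_{w,B}$: for $\gamma\neq\alpha$ this holds because $\mu'_w(\gamma)=\mu_w(\gamma)$, and for $\gamma=\alpha$ it is Step 3. Each $\mu'_w(\gamma)$ lies in $\gamma+\bigoplus_{\beta\in I^w}\mathbb Z_{\ge0}\beta$, again either because it is unchanged or by (2).

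It remains to check injectivity. The element $\mu'_w(\gamma)$ is characterized among the image by its coefficients on $S\setminus I^w$: it has coefficient $1$ at $\gamma$ and $0$ at every other element of $S\setminus I^w$. So $\mu'_w$ is injective, hence a bijection from $S\setminus I^w$ onto $\mathcal M'$. Therefore $\mathcal M'$ is a $P^w$-adaptation of $S\setminus I^w$.
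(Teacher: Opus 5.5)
Your proposal is correct. Steps 1, 2 and 4 follow the paper's outline. You expand $s_{\mu_w(\alpha)}(\beta)$, show the pairing is $-1$ so that $\mu'_w(\alpha)=\beta+\mu_w(\alpha)$, and then read off (2), (3) and the adaptation property. Your sign argument in Step 1 is more complete than the paper's: if $\langle\beta,\eta\rangle\ge 0$, then $w s_\eta(\beta)=w(\beta)-\langle\beta,\eta\rangle w(\eta)$ is positive. The paper instead passes from $\mu_w(\alpha)\neq\beta$ directly to $-\langle\beta,\mu_w(\alpha)\rangle>0$.

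The genuine difference is in (1). Both arguments reduce to indecomposability via Proposition~\ref{pro: not decomposable}, but they prove it differently.

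The paper argues relative to $\beta$. From a decomposition $\mu'_w(\alpha)=\mu_1+\mu_2$ in $R^+(w)$ and $\langle\mu'_w(\alpha),\beta\rangle=1$, it arranges $\langle\mu_1,\beta\rangle=0$ and $\langle\mu_2,\beta\rangle=1$. It then writes $\mu_w(\alpha)=\mu_1+s_\beta(\mu_2)$ with $s_\beta(\mu_2)\in R^+(w)$. This contradicts the indecomposability of the old root $\mu_w(\alpha)$. So the paper genuinely uses the hypothesis $\mu_w(\alpha)\in R^+_{w,B}$ and propagates it one step along the construction of $\mathcal B_{P^w}$.

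Your coefficient argument never uses that hypothesis. One summand has $\alpha$-coefficient $0$, so it lies in $\bigoplus_{\beta\in I^w}\mathbb Z_{\ge0}\beta$ and $w$ cannot send it to a negative root. This proves a stronger fact: every root of $R^+(w)$ lying in $\alpha+\bigoplus_{\beta\in I^w}\mathbb Z_{\ge0}\beta$ is indecomposable, hence in $R^+_{w,B}$. Consequently, in the simply laced case, the requirement $\mathcal M\subseteq R^+_{w,B}$ in the definition of a $P^w$-adaptation is automatic once $\mathcal M\subseteq R^+(w)$ and the cone condition hold. Your route is shorter and more general; the paper's route is more local but needs that inductive input.
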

\begin{proof}
    Consider
\[
\mu'_w(\alpha) = \beta - \langle \beta, \mu_w(\alpha) \rangle \mu_w(\alpha).
\]
Since $w s_{\mu_w(\alpha)}(\beta) < 0$ and $w(\beta) \in R^+$, it follows that 
$\langle \beta, \mu_w(\alpha) \rangle \neq 0$. 
Moreover, by the definition of $\mu_w$, we have $\mu_w(\alpha) \neq \beta$. 
Hence, 
\[
- \langle \beta, \mu_w(\alpha) \rangle > 0.
\]
As $G$ is of simply laced type, we obtain
\[
- \langle \beta, \mu_w(\alpha) \rangle = 1.
\]
This, together with $w s_{\mu_w(\alpha)}(\beta) < 0$, implies that
\begin{equation}\label{eq: existence}
\mu'_w(\alpha) = \beta + \mu_w(\alpha)
\end{equation}
belongs to $R^+(w)$. This proves $(2)$.    

    To prove $(1)$, suppose that $\mu'_w(\alpha) \notin R^+_{w,B}$. 
Then, by Proposition \ref{pro: not decomposable}, $\mu'_w(\alpha)$ is decomposable in $R^+(w)$. 
Hence, there exist $\mu_1, \mu_2 \in R^+(w)$ such that
\[
\mu'_w(\alpha) = \mu_1 + \mu_2.
\]
Now consider
\begin{align*}
s_\beta(\mu'_w(\alpha)) 
&= -\beta + s_\beta(\mu_w(\alpha)) 
= -\beta + s_{\mu_w(\alpha)}(\beta), \\
s_\beta(\mu_1) + s_\beta(\mu_2) 
&= -\beta + \mu'_w(\alpha) 
= -\beta + \mu_1 + \mu_2.
\end{align*}

Note that $\langle \mu'_w(\alpha), \beta \rangle = 1$. 
Hence, either $\langle \mu_1, \beta \rangle = 0$ or 
$\langle \mu_2, \beta \rangle = 0$. 
Without loss of generality, assume that $\langle \mu_1, \beta \rangle = 0$. 
Then $\langle \mu_2, \beta \rangle = 1$. Thus,
\[
s_\beta(\mu_1) = \mu_1, 
\qquad 
s_\beta(\mu_2) = \mu_2 - \beta.
\]

Since 
\[
w(\mu_2) = w s_\beta(\mu_2) + w(\beta) < 0
\]
and $w(\beta) > 0$ (as $\beta \in I^w$), it follows that 
$w s_\beta(\mu_2) < 0$. Thus, $s_\beta(\mu_2) \in R^+(w)$.

Using these observations and Eq.\eqref{eq: existence}, we obtain
\[
\mu_w(\alpha) = \mu_1 + s_\beta(\mu_2),
\]
which contradicts the assumption that $\mu_w(\alpha)$ is indecomposable in $R^+(w)$ 
(see Proposition~\ref{pro: not decomposable}). 

The proof of $(3)$ follows from Eq.\eqref{eq: existence} and the definition of $\mu_w$. 

 From $(1)$ and $(2)$, it follows that 
$\mathcal{M}'$ is a $P^w$-adapted subset of $S \setminus I^w$. This completes the proof.
\end{proof}

By Lemma~\ref{lem: M' set}, we inductively construct a subset 
$\mathcal{B}_{P^w} \subseteq \operatorname{Peaks} X(w)$ 
that is in bijection with $S \setminus I^w$. 

We begin with
\[
\mathcal{M}_0 := S \setminus I^w,
\]
which is clearly a $P^w$-adapted subset of $S \setminus I^w$. 
Suppose that, for some $d \in \mathbb{Z}_{\ge 0}$, we have already constructed a $P^w$-adapted subset
\[
\mathcal{M}_d = \{ \mu_w(\gamma) \mid \gamma \in S \setminus I^w \}.
\]
If $w s_{\mu_w(\gamma)} \in W^{P^w}$ for all $\gamma \in S \setminus I^w$, then we define
\[
\mathcal{B}_{P^w} := \mathcal{M}_d.
\]
Otherwise, choose $\alpha \in S \setminus I^w$ and $\beta \in I^w$ such that 
$w s_{\mu_w(\alpha)}(\beta) < 0$. 
Using Lemma~\ref{lem: M' set}, define
\[
\mathcal{M}_{d+1} := \mathcal{M}_d'.
\]
Now, we repeat the construction for $\mathcal B_{P^w}$. 
In this construction, at each step there is an element whose height increases strictly (see Lemma \ref{lem: M' set}(3)). Since 
the height function is bounded, the process terminates after finitely many steps.

\begin{lemma}\label{lem:B_w,P^w}
Suppose that $G$ is of simply laced type and $w \in W^{P^w}$. 
The set $\mathcal{B}_{P^w}$ constructed above satisfies the following properties:
\begin{enumerate}
    \item $\mathcal{B}_{P^w} \subseteq \operatorname{Peaks} X(w)$.
    \item $\mathcal{B}_{P^w} = \{ \mu_w(\gamma) \mid \gamma \in S \setminus I^w \}$, where
    \[
    \mu_w(\gamma) \in \gamma + \bigoplus_{\beta \in I^w} \mathbb{Z}_{\ge 0} \beta,
    \]such that $\left\langle \omega_{\alpha},\mu_w(\gamma)\right\rangle=\delta_{\alpha\gamma}$, for $\alpha\in S\setminus I^{w}$.
\end{enumerate}
\end{lemma}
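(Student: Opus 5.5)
The plan is to read off both properties from the inductive construction, using Lemma~\ref{lem: M' set} as the induction step and the stopping rule for the containment in $\operatorname{Peaks}X(w)$.

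\emph{Invariant.} First I will show, by induction on $d$, that every $\mathcal{M}_d$ is a $P^w$-adapted subset of $S\setminus I^w$, i.e.\ a subset of $R^+_{w,B}$ with a bijection $\mu_w\colon S\setminus I^w\to\mathcal{M}_d$ satisfying $\mu_w(\gamma)\in\gamma+\bigoplus_{\beta\in I^w}\mathbb{Z}_{\ge0}\beta$.
\begin{itemize}
\item For the base case $\mathcal{M}_0=S\setminus I^w$, take $\mu_w(\gamma)=\gamma$.
\item I must check $\gamma\in R^+_{w,B}$. Since $\gamma\notin I^w$, we have $w(\gamma)<0$, so $\gamma\in R^+(w)$. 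Moreover $\ell(ws_\gamma)=\ell(w)-1$ for a simple root with $w(\gamma)<0$.
\item The inductive step is exactly the last assertion of Lemma~\ref{lem: M' set}. Parts (1) and (2) there give $\mu'_w(\alpha)\in R^+_{w,B}$ with the correct shape; the other values are unchanged.
\item The map is still a bijection. The new value lies in $\alpha+\bigoplus_{\beta\in I^w}\mathbb{Z}_{\ge0}\beta$, so it is distinct from every $\mu_w(\gamma)$ with $\gamma\ne\alpha$: these have different $S\setminus I^w$-components.
\end{itemize}

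\emph{Termination.} By Lemma~\ref{lem: M' set}(3), each step strictly increases $\sum_\gamma \operatorname{ht}(\mu_w(\gamma)^\vee)$. This sum is bounded, since all roots lie in the finite set $R^+$, so the process stops at some $\mathcal{M}_d=\mathcal{B}_{P^w}$.

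\emph{Property (1).} The stopping condition says $ws_{\mu_w(\gamma)}(\beta)>0$ for all $\beta\in I^w$ and all $\gamma$, i.e.\ $ws_{\mu_w(\gamma)}\in W^{P^w}$. Together with $\mu_w(\gamma)\in R^+_{w,B}$, which gives $\ell(ws_{\mu_w(\gamma)})=\ell(w)-1$, this is precisely the defining condition of $\operatorname{Peaks}X(w)$ in Definition~\ref{def:Peaks Xw}. Hence $\mathcal{B}_{P^w}\subseteq\operatorname{Peaks}X(w)$.

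\emph{Property (2).} The shape condition on $\mu_w(\gamma)$ holds by the invariant. In the simply laced case $\langle\omega_\alpha,\eta\rangle$ is the coefficient of $\alpha$ in $\eta$ (since $\eta^\vee$ and $\eta$ have the same expansion). For $\eta=\mu_w(\gamma)$ this coefficient is $1$ if $\alpha=\gamma$, and $0$ if $\alpha\in S\setminus I^w$ with $\alpha\ne\gamma$. This gives $\langle\omega_\alpha,\mu_w(\gamma)\rangle=\delta_{\alpha\gamma}$.

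The argument is essentially bookkeeping; the only point needing care is checking that the base set $\mathcal{M}_0$ lies in $R^+_{w,B}$ and that injectivity is preserved at each step. Both are handled by the support/coefficient argument above.
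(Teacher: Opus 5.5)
Your proposal is correct and follows the same route as the paper, whose proof simply says the lemma follows from the construction of $\mathcal{B}_{P^w}$ together with Lemma~\ref{lem: M' set}. You spell out the bookkeeping the paper leaves implicit, and each check is right: that $\mathcal{M}_0=S\setminus I^w$ lies in $R^+_{w,B}$, that injectivity is preserved, termination via the total height, and the simply laced identification of $\langle\omega_\alpha,\eta\rangle$ with the coefficient of $\alpha$ in $\eta$.
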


\begin{proof}
The statement follows directly from the construction of the set $\mathcal{B}_{P^w}$ together with Lemma~\ref{lem: M' set}.
\end{proof}

\begin{remark} \label{B5}\ 

\begin{enumerate}
    \item 
    The assumption that $G$ is simply laced plays a crucial role in the construction of the set $\mathcal{B}_{P^w}$. In the non simply laced case, the above method of constructing the set $\mathcal{B}_{P^w}$ may not work.    
    For example, suppose that $G$ is of type $B_5$ and $w=s_4s_5s_4\in W^{P^w}$. Then 
    \[I^w=\{\alpha_1,\alpha_2,\alpha_3,\alpha_5\}, \quad S\setminus I^w=\{\alpha_4\} \quad \text{and} \quad \operatorname{Peaks}X(w)=\{\alpha_4+2\alpha_5\}.\]
    Note that $ws_{\alpha_{4}}(\alpha_5)<0$, where $\alpha_5\in I^w$ and $\alpha_4\in S\setminus I^w$. In this case, Lemma \ref{lem: M' set}$(1)$ does not hold, since $ws_{s_4(\alpha_5)}=1$, which implies that ${s_4(\alpha_5)}\notin R^+_{w,B}$ (as we would need $\ell(ws_{s_4(\alpha_5)})=\ell(w)-1$). 
    \item By (1), we observe that $S\setminus I^w$ does not always embed in $\operatorname{Peaks}X(w)$.   
    \item Since our setting involves only the Schubert variety $X(w)\subseteq G/P^w$, the function $\mu_w$ is much simpler than the one considered in \cite[Section 5.2]{CKM25}.
    \end{enumerate}
\end{remark}

\subsubsection{}
Suppose that the Schubert variety $X(w)$ is Gorenstein, then there exist some $c_\alpha\in \mathbb{Z}$ such that \begin{align*}
[-K_{X(w)}]&=\sum_{\alpha\in S\setminus I^w}c_\alpha [\mathcal{L}_{\omega_\alpha}]\\
\sum_{\eta\in \operatorname{Peaks}X(w)}(\operatorname{ht}(\eta^\vee)+1)D_{\eta}&=\sum_{\alpha\in S\setminus I^w}c_\alpha \Big(\sum_{\eta\in \operatorname{Peaks}X(w)}\left\langle\omega_\alpha,\eta\right\rangle D_\eta\Big). \end{align*} Now, by using Proposition \ref{pro:bases for cl(X(w))}, we obtain the following linear system \begin{equation}\label{eq:linear system}
\operatorname{ht}(\eta^{\vee})+1=\sum_{\alpha\in S \setminus I^w}c_\alpha \left\langle\omega_\alpha,\eta\right\rangle ~\quad\forall ~\eta\in \operatorname{Peaks}X(w). \end{equation}

In order to solve this linear system, we define a total ordering on the set $S\setminus I^w$, depending on the chosen reduced expression of $w\in W$. Suppose $\tilde w=s_{i_1}\cdots s_{i_m}$ is a reduced expression of $w$, then for all $\alpha_{i_j}, \alpha_{i_k}\in S\setminus I^w$, we define \begin{equation}\label{eq:ordering for lines Xw}
\alpha_{i_j}\preceq \alpha_{i_k} ~~~\text{if}~~~m(j)\leq m(k),
\end{equation}
where $m(j)$'s are defined in Eq.\eqref{eq:m(j) for Xw}. 
\begin{example}\label{eg:oredering}
    In $G=SL_4(\mathbb{C})$, let $w=s_1s_2s_3s_1s_2s_1$ be the longest element of $W$, then $S\setminus I^w= \{\alpha_1,\alpha_2,\alpha_3\}$.
    Then, by the definition of total ordering
    \[
    S\setminus I^w=\{\alpha_3\prec\alpha_2\prec\alpha_1\}.
    \]
\end{example}

\begin{lemma}\label{lem:identity}
With the ordering $\preceq$ defined in Eq.\eqref{eq:ordering for lines Xw}, the matrix \[
M_{P^w}:=\big(\langle\omega_\alpha,\mu_w(\beta)\rangle\big)^T_{\alpha,\beta\in S\setminus I^w}
\]
is the identity matrix.
\end{lemma}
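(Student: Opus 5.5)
The claim is essentially a restatement of Lemma~\ref{lem:B_w,P^w}(2), so the plan is to read off each matrix entry from the shape of $\mu_w(\beta)$ given there. The ordering $\preceq$ from Eq.~\eqref{eq:ordering for lines Xw} only fixes the order of rows and columns; it plays no role in the computation itself.

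\textbf{Well-definedness of the ordering.} First I will check that $\preceq$ is a genuine total order on $S\setminus I^w$.

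Each $\alpha\in S\setminus I^w$ lies in $\operatorname{Supp}(w)$ by Lemma~\ref{lem: Supp(w)}, so it occurs at least once in the fixed reduced expression $\tilde w$. Hence the quantity $m(j)$ from Eq.~\eqref{eq:m(j) for Xw} is defined for it. By construction, $m(j)$ depends only on the simple root $\alpha_{i_j}$, not on the particular occurrence $j$ chosen.

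For distinct simple roots, the last occurrences sit in distinct positions of $\tilde w$. So $\alpha\mapsto m(\alpha)$ is injective on $S\setminus I^w$, and $\preceq$ is a total order. Once this is known, both the rows and the columns of $M_{P^w}$ are indexed by the same ordered list $\gamma_1\prec\gamma_2\prec\cdots\prec\gamma_s$ of the elements of $S\setminus I^w$.

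\textbf{The entries.} Next I compute each entry directly. By Lemma~\ref{lem:B_w,P^w}(2),
\[
\mu_w(\beta)=\beta+\sum_{\gamma\in I^w}n_\gamma\gamma \quad\text{with } n_\gamma\in\mathbb Z_{\ge0}.
\]
Pairing with $\omega_\alpha$ for $\alpha\in S\setminus I^w$, we have $\langle\omega_\alpha,\gamma\rangle=0$ for every simple root $\gamma\neq\alpha$. In particular this vanishes for all $\gamma\in I^w$, since $\alpha\notin I^w$.

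We must also be careful that $\langle\omega_\alpha,\cdot\rangle$ is linear. Because $G$ is simply laced, all roots have the same length, so $\langle\omega_\alpha,\eta\rangle=(\omega_\alpha,\eta)$ up to a common positive normalisation. This makes the pairing linear in $\eta$, and it gives $\langle\omega_\alpha,\mu_w(\beta)\rangle=\langle\omega_\alpha,\beta\rangle=\delta_{\alpha\beta}$.

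\textbf{Conclusion.} The entry of $M_{P^w}$ in the position indexed by $(\alpha,\beta)$ is therefore $\delta_{\alpha\beta}$. Transposition preserves this, and the rows and columns use the same ordering, so $M_{P^w}$ is the identity matrix.

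The only genuine point to check is the linearity of the pairing. This is where the simply laced hypothesis, which is standing in this subsection, is used. Apart from that, the proof is bookkeeping.
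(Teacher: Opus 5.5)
Your proof is correct and follows the paper's argument: you read off $\langle\omega_\alpha,\mu_w(\beta)\rangle=\delta_{\alpha\beta}$ from the form of $\mu_w(\beta)$ given in Lemma~\ref{lem:B_w,P^w}(2), and the ordering serves only to index rows and columns consistently. Your additional checks fill in details the paper leaves implicit: that $\preceq$ is a genuine total order, and that the simply laced hypothesis is what makes $\langle\omega_\alpha,\cdot\rangle$ extract the $\alpha$-coefficient of a root.
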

\begin{proof}
    From Lemma \ref{lem:B_w,P^w}$(2)$, for any $\alpha\in S\setminus I^w$, we have \[\mu_w(\alpha)\in\alpha+\bigoplus_{\beta\in I^w}\mathbb{Z}_{\geq0}\beta.\] From this, we have $\langle \omega_\alpha ,\mu_w(\beta)\rangle=\delta_{\alpha\beta}$. Now by the ordering $\preceq$, we get the result.
\end{proof}
\begin{remark}\label{rem:nonsimplylaced1} \ 

\begin{enumerate}
    \item  In general,
Lemma \ref{lem:identity} does not hold for non-simply laced cases. For example, in type $B_5$, consider $w=s_1s_5s_4$. Then, we have 
\[
 S\setminus I^w=\{\alpha_1,\alpha_4\},~\operatorname{Peaks}X(w)=\{\alpha_1,\alpha_4+\alpha_5\}\quad\text{and}\quad \mathcal{B}_{P^w}=\{\mu_w(\alpha_1)=\alpha_1,~ \mu_w(\alpha_4)={\alpha_4+\alpha_5}\}.
 \]
Thus, \[M_{P^w}=\begin{pmatrix}
    1&0\\0&2
\end{pmatrix},
 \]which is not an identity matrix (not even invertible over $\mathbb{Z}$).
 \item For non-simply laced groups $G$, if there is a $\mu_w$ function such that $M_{P^w}$ is unipotent upper triangular, then our method works. 
 \end{enumerate}
  \end{remark}
\begin{theorem}\label{thm: Gorenstein Xw}
The Schubert variety $X(w)\subseteq G/P^w$ is Gorenstein if and only if for any $\eta\in \operatorname{Peaks}X(w)\setminus \mathcal{B}_{P^w}$, we have
\begin{equation}\label{eq:height relations for Xw}
\operatorname{ht}(\eta^\vee)+1=\sum_{\alpha\in S \setminus I^w}d_\alpha\Big(\operatorname{ht}\big(\mu_w(\alpha)^{\vee}\big)+1\Big) ,
\end{equation}
with $d_\alpha:=\langle\omega_\alpha,\eta\rangle\in \mathbb{Z}_{\geq0}$.
\end{theorem}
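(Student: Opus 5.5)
Since $X(w)$ is normal and Cohen--Macaulay, it is Gorenstein exactly when $-K_{X(w)}$ is Cartier. By Theorem~\ref{thm:linebundles} (case $m=1$), this happens exactly when the class $[-K_{X(w)}]\in \operatorname{Cl}(X(w))$ is an integral combination $\sum_{\alpha\in S\setminus I^w}c_\alpha[\mathcal L_{\omega_\alpha}]$. The Picard group injects into $\operatorname{Cl}(X(w))$ by normality, and the $\mathcal L_{\omega_\alpha}$ form a basis of $\operatorname{Pic}X(w)$. So Gorensteinness is equivalent to the solvability in integers $c_\alpha$ of the linear system \eqref{eq:linear system}. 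Here I use Eq.~\eqref{eq:ACS for X(w)} and Proposition~\ref{lem:divisor classes} to write both sides in the basis $\{D_\eta\}$ of Proposition~\ref{pro:bases for cl(X(w))}. The plan is to solve this system explicitly using the distinguished subset $\mathcal B_{P^w}\subseteq\operatorname{Peaks}X(w)$ of Lemma~\ref{lem:B_w,P^w}.

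First restrict \eqref{eq:linear system} to the equations indexed by $\eta=\mu_w(\beta)\in\mathcal B_{P^w}$, for $\beta\in S\setminus I^w$. By Lemma~\ref{lem:B_w,P^w}(2) (equivalently Lemma~\ref{lem:identity}), $\langle\omega_\alpha,\mu_w(\beta)\rangle=\delta_{\alpha\beta}$. Hence this subsystem reads $c_\beta=\operatorname{ht}(\mu_w(\beta)^\vee)+1$. So it has the unique solution
\[
c_\beta=\operatorname{ht}(\mu_w(\beta)^\vee)+1,\qquad \beta\in S\setminus I^w ,
\]
which is integral and in fact positive. Thus any solution of the full system is forced to be this one. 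The full system is solvable if and only if this forced solution also satisfies the remaining equations, namely those indexed by $\eta\in\operatorname{Peaks}X(w)\setminus\mathcal B_{P^w}$. Substituting, those equations read $\operatorname{ht}(\eta^\vee)+1=\sum_\alpha\langle\omega_\alpha,\eta\rangle(\operatorname{ht}(\mu_w(\alpha)^\vee)+1)$, which is exactly \eqref{eq:height relations for Xw}. The coefficients $d_\alpha=\langle\omega_\alpha,\eta\rangle$ are nonnegative integers because $\eta$ is a positive root and $\omega_\alpha$ is dominant.

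For the two directions: if $X(w)$ is Gorenstein, the $c_\alpha$ exist and solve the whole system, so the relations hold. Conversely, if the relations hold, the line bundle $\bigotimes_\alpha\mathcal L_{\omega_\alpha}^{c_\alpha}$ with the $c_\alpha$ above has class equal to $[-K_{X(w)}]$. Hence $-K_{X(w)}$ is Cartier and $X(w)$ is Gorenstein. I expect no serious obstacle in this argument. The one point needing care is the justification that Gorenstein is equivalent to $[-K]$ lying in the image of $\operatorname{Pic}\to\operatorname{Cl}$. This uses Cohen--Macaulayness together with the fact that the anticanonical Weil divisor class is computed by Eq.~\eqref{eq:ACS for X(w)}, so a Cartier representative exists exactly when the class lies in this image.

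Finally, I would note as a byproduct that all $c_\alpha>0$. By Theorem~\ref{thm:very ample}, a Gorenstein $X(w)$ therefore has ample $-K$, which gives the observation that such Schubert varieties are Fano.
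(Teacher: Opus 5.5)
Your proposal is correct and is essentially the paper's proof. Both restrict the system \eqref{eq:linear system} to the equations indexed by $\mathcal{B}_{P^w}$, where the coefficient matrix is the identity (Lemma~\ref{lem:identity}); this forces $c_\alpha=\operatorname{ht}(\mu_w(\alpha)^\vee)+1$, so Gorensteinness becomes exactly the consistency of the remaining equations indexed by $\operatorname{Peaks}X(w)\setminus\mathcal{B}_{P^w}$. You also justify explicitly that, for the normal Cohen--Macaulay variety $X(w)$, being Gorenstein is equivalent to $[-K_{X(w)}]$ lying in the image of $\operatorname{Pic}X(w)=\bigoplus_{\alpha}\mathbb{Z}[\mathcal{L}_{\omega_\alpha}]$. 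The paper leaves this step implicit, and you handle it correctly.
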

\begin{proof}
   By Eq.\eqref{eq:linear system} together with Lemma \ref{lem:identity}, we obtain that for any 
$\alpha \in S \setminus I^w$ such that $\mu_w(\alpha) \in \operatorname{Peaks}X(w)$, we have
\[
c_\alpha = \operatorname{ht}\big(\mu_w(\alpha)^\vee\big) + 1 .
\] 
The set $\{c_\alpha\mid \alpha\in S\setminus I^w\}$ is the solution of the linear system of equations in Eq.\eqref{eq:linear system} if and only if for any $\eta\in \operatorname{Peaks}X(w)\setminus \mathcal{B}_{P^w}$, we have \[
\operatorname{ht}(\eta^\vee)+1=\sum_{\alpha\in S \setminus I^w}\Big(\operatorname{ht}\big(\mu_w(\alpha)^\vee\big)+1\Big) \langle\omega_\alpha,\eta\rangle.
\]
Hence, the result follows.\end{proof}

The following corollary is an immediate consequence of the theorem:
\begin{corollary}\label{cor: Fano for Xw}
    If the Schubert variety $X(w)\subseteq G/P^w$ is Gorenstein, then it is Fano.
\end{corollary}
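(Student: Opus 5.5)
The plan is to show that when $X(w)\subseteq G/P^w$ is Gorenstein, the anticanonical class $-K_{X(w)}$ is a strictly positive combination of the $\mathcal{L}_{\omega_\alpha}$. The criterion of Theorem~\ref{thm:very ample} for $m=1$ then gives that $-K_{X(w)}$ is ample, even very ample.

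First, assume $X(w)$ is Gorenstein. Then $-K_{X(w)}$ is Cartier. By Theorem~\ref{thm:linebundles} applied with $m=1$, we can write
\[
\mathcal{O}_{X(w)}(-K_{X(w)})\cong\bigotimes_{\alpha\in S\setminus I^w}\mathcal{L}_{\omega_\alpha}^{\otimes c_\alpha}
\]
for unique integers $c_\alpha$. Comparing divisor classes via Proposition~\ref{lem:divisor classes} and Eq.\eqref{eq:ACS for X(w)} gives the linear system Eq.\eqref{eq:linear system}. Here we use that $\{D_\eta\}$ is a basis of $\operatorname{Cl}(X(w))$ by Proposition~\ref{pro:bases for cl(X(w))}, and that $\operatorname{Pic}\to\operatorname{Cl}$ is injective on the normal variety $X(w)$.

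Second, specialize the system to the peaks $\eta=\mu_w(\alpha)\in\mathcal{B}_{P^w}$. This is legitimate because $\mathcal{B}_{P^w}\subseteq\operatorname{Peaks}X(w)$ by Lemma~\ref{lem:B_w,P^w}(1). By Lemma~\ref{lem:identity}, $\langle\omega_\beta,\mu_w(\alpha)\rangle=\delta_{\alpha\beta}$, so the system collapses to
\[
c_\alpha=\operatorname{ht}\big(\mu_w(\alpha)^\vee\big)+1 .
\]
This is exactly the computation already done in the proof of Theorem~\ref{thm: Gorenstein Xw}.

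Third, observe that $\mu_w(\alpha)$ is a positive root, so its coroot is a nonnegative, nonzero combination of simple coroots. Hence $\operatorname{ht}(\mu_w(\alpha)^\vee)\geq 1$, and therefore $c_\alpha\geq 2>0$ for every $\alpha\in S\setminus I^w$. Theorem~\ref{thm:very ample} then says $-K_{X(w)}$ is (very) ample. Since $X(w)$ is projective and Gorenstein, it is Fano.

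The only step needing care is the first: identifying the Cartier anticanonical divisor with a combination of the $\mathcal{L}_{\omega_\alpha}$ whose coefficients are read off from Eq.\eqref{eq:linear system}. This uses the simply laced hypothesis, through the existence of $\mathcal{B}_{P^w}$ and Lemma~\ref{lem:identity}. Once that identification is in place, positivity is immediate.
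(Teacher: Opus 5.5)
Your proposal is correct and follows the paper's proof. You restrict the linear system to the peaks in $\mathcal{B}_{P^w}$ to get $c_\alpha=\operatorname{ht}(\mu_w(\alpha)^\vee)+1>0$, then conclude ampleness of $-K_{X(w)}$ by Theorem~\ref{thm:very ample} with $m=1$. One minor correction: the simply laced hypothesis enters in your second step, through $\mathcal{B}_{P^w}$ and Lemma~\ref{lem:identity}, not in writing $-K_{X(w)}$ in the basis $\{\mathcal{L}_{\omega_\alpha}\}$.
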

\begin{proof}
    If $X(w)$ is Gorenstein, then the set $$\{c_\alpha=\operatorname{ht}(\mu_w(\alpha)^\vee)+1\mid \alpha\in S\setminus I^w\}$$ is the solution of the linear system of equations in Eq.\eqref{eq:linear system}. Since $\mu_w(\alpha)$'s are positive roots, it follows that $c_\alpha=\operatorname{ht}(\mu_w(\alpha)^\vee)+1>0$ and this proves the result.\end{proof}

\begin{example}
    Consider $G$ and $w$ the same as in Example \ref{eg:oredering}, then
    \[
    \operatorname{Peaks}X(w)=\{\alpha_3, \alpha_2, \alpha_1\}.
    \] Here $I^w=\emptyset$. Thus, by the definition of $\mu_w$ and $\mathcal{B}_{P^w}$, we have \begin{align*}\mu_w(\alpha_1)=\alpha_1,~~ \mu_w(\alpha_2)=\alpha_2,~~\text{and}~~\mu_w(\alpha_3)&=\alpha_3,
    \end{align*} and \[
    \mathcal{B}_{P^w}=\{ \mu_w(\alpha_1),\mu_w(\alpha_2),\mu_w(\alpha_3)\},
    \] 

Since $\operatorname{Peaks}X(w)=\mathcal{B}_{P^w}$, by Theorem \ref{thm: Gorenstein Xw}, the Schubert variety $X(w_0)$ is Gorenstein and by Corollary \ref{cor: Fano for Xw}, it is Fano.
      \end{example}

\begin{example}
   Let $G= SL_4$ and $w=s_2s_1s_3$, then $S\setminus I^w =\{\alpha_1, \alpha_3\}$. Also, we have
   \[
   \operatorname{Peaks}X(w)= \{\alpha_1+ \alpha_2+ \alpha_3, \alpha_1, \alpha_3\}~~\text{and}~~B_{w,P^w}=\{\mu_w(\alpha_1)=\alpha_1,\mu_w(\alpha_3)=\alpha_3\},
   \]
   Note that $\operatorname{Peaks}X(w)\setminus \mathcal{B}_{P^w}=\{\eta:=\sum_{i=1}^{3}\alpha_i\}$ and $$\operatorname{ht}(\eta^\vee)+1=4,\quad \operatorname{ht}(\mu_w(\alpha_j)^\vee)+1=2\quad \text{for j=1,3}.$$
Thus, by Theorem \ref{thm: Gorenstein Xw} and Corollary \ref{cor: Fano for Xw}, the Schubert variety $X(w)$ is Fano. 
\end{example}

\begin{example}
    In $G=\operatorname{Spin_8}(\mathbb{C})$, consider $w=s_2s_3s_1s_2s_4s_1s_3\in W$. Then $S\setminus I^w=\{\alpha_1,\alpha_3,\alpha_4\}$,  \[
    \operatorname{Peaks}X(w)=\{\alpha_2+\alpha_4,\alpha_4,\alpha_1,\alpha_3\}\]and\[B_{w,P^w}=\{\mu_w(\alpha_1)=\alpha_1,\mu_w(\alpha_3)=\alpha_3,\mu_w(\alpha_4)=\alpha_4\}.
    \] 
    Note that $\operatorname{Peaks}X(w)\setminus \mathcal{B}_{P^w}=\{\eta:=\alpha_2+\alpha_4\}$ and \[
    \operatorname{ht}(\eta^\vee)+1=3,\quad \operatorname{ht}(\mu_w(\alpha_j)^\vee)+1=2\quad \text{for $j=1,3,4$}.
    \]Since $\sum_{\alpha\in S\setminus I^w}d_\alpha(\operatorname{ht}(\mu_w(\alpha)^\vee)+1)=2$, by Theorem \ref{thm: Gorenstein Xw}, we conclude that the Schubert variety $X(w)$ is not Gorenstein.\end{example}

\section{Fano generalized Bott-Samelson varieties}\label{sec: Fano gBS}
In this section, we generalize the notion of peaks to generalized Bott-Samelson varieties. We compute their anticanonical divisor, characterize the Gorenstein cases, and give a description of those generalized Bott-Samelson varieties that are Fano or weak Fano.

Throughout, we fix an element $w \in W$ and choose an admissible generalized (not necessarily reduced) decomposition
\[
\widehat{w} = (w_1, \dots, w_m)
\]
of $w$. We also recall from Lemma~\ref{lem:reduced decomposition}, the associated decomposition in terms of simple reflections
\[\tilde{w} = (s_{1,1}, \dots, s_{r_m,m}),\]

where each factor $w_j$ is given with a reduced expression
\begin{equation}\label{eq:order exp}
    \tilde{w}_j = \prod_{k=1}^{r_j} s_{k,j}
\quad \text{for } 1 \le j \le m.
\end{equation}

\subsection{Peaks and Divisors on \texorpdfstring{$\widehat{X}(\widehat{w})$}{X(w-hat)}}
 We first define the notion of Peaks inductively for generalized Bott-Samelson varieties.   
Given $1\leq j\leq m$, we define  \[\operatorname{Peaks}\widehat{X}(w_1,\dots,w_j):=w_j^{-1}\big(\operatorname{Peaks}\widehat{X}({w_1,\dots,w_{j-1}})\big)\sqcup \operatorname{Peaks}X(w_j).\] 

\begin{remark}\ 

\begin{enumerate}
    \item  $\operatorname{Peaks}\widehat{X}(\widehat{w})$ is a finite sequence of roots in the root system $R$; repetitions are allowed.
    \item 
Note that if the generalized decomposition $\widehat{w}$ is reduced, then for each $1 \leq j \leq m$, the set
$\operatorname{Peaks}\widehat{X}(w_1,\dots,w_j)$
is a subset of the set of positive roots $R^+$. In particular, $\operatorname{Peaks}\widehat{X}(\widehat w)$ is a finite sequence of positive roots without repetitions.  

\end{enumerate}
\end{remark}

For any $\widehat{\eta}\in \operatorname{Peaks}\widehat{X}(\widehat{w})$, note that $Z(\widetilde{ws}_{{\widehat{\eta}}})$ is a divisor of $Z(\tilde w)$. By Eq.\eqref{eq:tilde pi},  $\tilde{\pi}(Z(\widetilde{ws}_{{\widehat{\eta}}}))$ is a divisor in $\widehat{X}(\widehat{w})$.
Therefore, we define \begin{align}\label{eq: divisors for gBS}
\widehat{D}_{\widehat{\eta}}:=\tilde{\pi}_{*}([Z(\widetilde{ws}_{{\widehat{\eta}}})]). 
\end{align}

\begin{proposition}\label{pro: divisor classes for gBS}
    The set $\mathcal A:=\{\widehat{D}_{\widehat{\eta}}\mid \widehat{\eta}\in \operatorname{Peaks}\widehat{X}(\widehat{w})\}$ forms a basis of the divisor class group $\operatorname{Cl}(\widehat{X}(\widehat{w}))$.
    \end{proposition}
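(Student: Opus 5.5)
We will prove Proposition~\ref{pro: divisor classes for gBS} by induction on $m$, using the locally trivial fibration $\widehat{f}:\widehat{X}(\widehat{w})\to \widehat{X}(w_1)$ in the same way the proof of Theorem~\ref{thm:linebundles} used it for Picard groups. For $m=1$ the statement is Proposition~\ref{pro:bases for cl(X(w))}, because $\operatorname{Peaks}\widehat{X}(w_1)=\operatorname{Peaks}X(w_1)$ and $\widehat{D}_{\eta}=D_\eta$.

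For the inductive step, it is more convenient to peel off the last factor. The recursive definition
\[
\operatorname{Peaks}\widehat{X}(w_1,\dots,w_j)=w_j^{-1}\big(\operatorname{Peaks}\widehat{X}(w_1,\dots,w_{j-1})\big)\sqcup\operatorname{Peaks}X(w_j)
\]
suggests this: the second summand should label divisors pulled back from the Schubert divisors of the fibre direction of $w_j$, and the first should label divisors pulled back from the base $\widehat{X}(w_1,\dots,w_{j-1})$. There is a forgetful morphism $\widehat{X}(w_1,\dots,w_m)\to \widehat{X}(w_1,\dots,w_{m-1})$, $[p_1,\dots,p_m]\mapsto[p_1,\dots,p_{m-1}]$. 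It is well defined because $P^{w_{m-1}}\cap G_{w_{m-1}}$ acts on the last factor, and it is a Zariski locally trivial fibration with fibre $X(w_m)$. Each part of the peaks sequence should correspond to a divisor as follows.

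\begin{enumerate}
\item Let $\widehat{\eta}=w_m^{-1}(\widehat{\eta}')$ with $\widehat\eta'$ a peak of the base. Under the Bott--Samelson description via $\tilde\pi$, the omitted simple reflection lies in one of the first $m-1$ blocks. Hence $\tilde\pi(Z(\widetilde{ws}_{\widehat\eta}))$ is the preimage of the divisor $\widehat{D}_{\widehat\eta'}$ of the base. This matches the computation of roots $s_{i_r}\cdots s_{i_{k+1}}(\alpha_{i_k})$: appending the reduced word of $w_m$ applies $w_m^{-1}$.
\item Let $\eta\in\operatorname{Peaks}X(w_m)$. The omitted reflection lies in the last block. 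Since $w_ms_\eta\in W^{P^{w_m}}$ with length drop one, the image is the relative Schubert divisor $\widehat{X}(w_1,\dots,w_{m-1})\times^{P^{w_{m-1}}\cap G_{w_{m-1}}}X(w_ms_\eta)$.
\end{enumerate}

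Then we compute $\operatorname{Cl}$ of the total space. Let $V=\widehat{X}(\widehat w)\setminus\bigcup_{\eta\in\operatorname{Peaks}X(w_m)}\widehat D_\eta$. Then $V$ fibres over the base with fibre the open cell $BwP^{w_m}/P^{w_m}\cap X(w_m)$, an affine space. The fibration is locally trivial and the big cell is $B$-stable, so $V$ is an affine-space bundle over the base. Thus $\operatorname{Cl}(V)\cong\operatorname{Cl}(\widehat{X}(w_1,\dots,w_{m-1}))$ via pullback; this uses homotopy invariance of class groups for affine bundles over normal varieties, which requires local triviality. The excision sequence
\[
\bigoplus_{\eta}\mathbb Z\,\widehat D_\eta\to\operatorname{Cl}(\widehat X(\widehat w))\to\operatorname{Cl}(V)\to 0
\]
together with the induction hypothesis shows that $\mathcal A$ generates $\operatorname{Cl}(\widehat{X}(\widehat{w}))$. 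Here we use that pulling back the base divisors $\widehat D_{\widehat\eta'}$ gives the divisors in item (1). This should be checked via flat pullback and the commutative square relating $\tilde\pi$ for the two Bott--Samelson varieties.

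The main obstacle is linear independence, i.e.\ injectivity of the first map in the excision sequence. We would prove it by a rank count instead of a direct argument. Choose $V'\subset \widehat X(\widehat w)$, the open $B$-orbit $B\widehat x$ of Corollary~\ref{cor:birational}, which is isomorphic to an affine space. Its complement in $\widehat X(\widehat w)$ is a union of $B$-stable prime divisors. Via $\tilde\pi$ these are exactly the images of the boundary divisors $Z_k$ of $Z(\tilde w)$ that are not contracted. These are exactly the ones labelled by peaks, by the same criterion as in the proof of Proposition~\ref{pro:bases for cl(X(w))} applied blockwise; the other $Z_k$ map to codimension $\ge2$. Since $\operatorname{Cl}(\mathbb A^N)=0$ and $\mathcal O(\mathbb A^N)^*=\mathbb C^*$, every rational function with divisor supported on the boundary is constant. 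Hence the boundary divisors are free generators, which gives both generation and independence at once. If this works, it may replace the inductive argument entirely. The inductive part then serves only to identify which $Z_k$ survive with the recursive definition of $\operatorname{Peaks}\widehat X(\widehat w)$. That identification, in particular the check that the roots are transported by $w_m^{-1}$ and that distinct peaks give distinct divisors even when roots repeat, is where the real bookkeeping lies.
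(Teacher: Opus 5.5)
There is a genuine gap, and it is in the choice of the open sets you excise, in both halves of the argument.

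\textbf{The open-orbit step.} You suggest this step could carry the whole proof, and it needs $B\widehat x$ to be open and isomorphic to an affine space. That is true for reduced $\widehat w$. The proposition, however, covers every admissible $\widehat w$ (your remark on repeated roots acknowledges this), and there it fails. The openness asserted in the proof of Corollary~\ref{cor:birational} is justified only in the reduced case. Take $\widehat w=(s_1,s_1)$ in $SL_2$. Then $\Phi$ identifies $\widehat X(\widehat w)$ with $\mathbb{P}^1\times\mathbb{P}^1$ with diagonal $B$-action, and $B\widehat x=\{(bsB,eB): b\in B\}$ is only a curve. The actual open orbit $\{(x,y): x,y\neq eB,\ x\neq y\}\cong\mathbb{A}^1\times\mathbb{G}_m$ has nonconstant units. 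Its boundary also contains the divisor $\{p_1p_2\in B\}$, which is not $\tilde\pi(Z_k)$ for any $k$. For $(s_1,s_1,s_1)$ there is no open $B$-orbit at all.

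\textbf{The inductive step.} The structure group of $\widehat X(w_1,\dots,w_m)\to\widehat X(w_1,\dots,w_{m-1})$ is $H=P^{w_{m-1}}\cap G_{w_{m-1}}$. Admissibility only gives $H\subseteq P_{J_m}=P_{w_m}$, the stabilizer of $X(w_m)$, not of $X(w_ms_\eta)$ or of the open cell. So $\widehat X(w_1,\dots,w_{m-1})\times^{H}X(w_ms_\eta)$ in your item (2) is undefined, and $V$ need not be an affine-space bundle over the base. Take $\widehat w=(s_1s_2,s_1)$ in $SL_3$, which is reduced and admissible. Here $\widehat X(\widehat w)\cong G/B$ is fibred by $(L\subset\Pi)\mapsto\Pi$, with $L$ a line and $\Pi$ a plane in $\mathbb{C}^3$. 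The group $H=P_{\{\alpha_1\}}$ moves the point $X(w_2s_{\alpha_1})=X(e)$. The divisor $\widehat D_{\alpha_1}=\{L\subseteq\langle e_1,e_2\rangle\}$ meets a general fibre in one point but contains the whole fibre over $\Pi=\langle e_1,e_2\rangle$. So $V$ is empty over that point of the base.

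\textbf{The fix.} Both problems disappear if you induct along $\widehat f\colon\widehat X(\widehat w)\to X(w_1)$ and use the fibre's stratification only over the open cell of the base.
\begin{enumerate}
\item Let $U_{w_1}$ be the product of the $U_\beta$ with $\beta\in R^+(w_1^{-1})$. The map $(u,y)\mapsto[u\dot w_1,y]$ identifies $\widehat f^{-1}(U_{w_1}\dot w_1P^{w_1}/P^{w_1})$ with $U_{w_1}\times\widehat X(w_2,\dots,w_m)$, and no stabilizer issue arises.
\item Iterating gives the open set $\widehat C=\{[u_1\dot w_1,\dots,u_m\dot w_m]\}\cong\mathbb{A}^{\sum_j\ell(w_j)}$, reduced or not. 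It is $\tilde\pi$ of the standard cell $\{p_{k,j}\notin B\}$ of $Z(\tilde w)$, and it equals $B\widehat x$ when $\widehat w$ is reduced.
\item The codimension-one components of $\widehat X(\widehat w)\setminus\widehat C$ are the sets $\widehat f^{-1}(X(w_1s_\eta))$ with $\eta\in\operatorname{Peaks}X(w_1)$. They also include the closures of $\{[u\dot w_1,y]: u\in U_{w_1},\ y\in D'\}$ for each boundary divisor $D'$ of the tail.
\item This matches $\operatorname{Peaks}\widehat X(\widehat w)=(w_2\cdots w_m)^{-1}\operatorname{Peaks}X(w_1)\sqcup\operatorname{Peaks}\widehat X(w_2,\dots,w_m)$. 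It indexes the divisors by pairs $(j,\eta)$, which disposes of repeated roots.
\end{enumerate}
Your units-and-$\operatorname{Cl}(\mathbb{A}^N)=0$ argument applied to $\widehat C$ then gives generation and independence at once.

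This route differs from the paper's and is more self-contained. The paper gets independence from the $Z(\widetilde{ws}_{\widehat\eta})$ being part of the basis $(\xi_k)$ of $\operatorname{Cl}(Z(\tilde w))$, and generation from a rank count along $\widehat f$.
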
 
      \begin{proof} Since the set 
\[
\{Z(\widetilde{ws}_{{\widehat{\eta}}}) \mid \widehat{\eta}\in \operatorname{Peaks}\widehat{X}(\widehat{w})\}
\]
is contained in a basis of the divisor class group of $Z(\tilde{w})$, the set $\mathcal A$ is linearly independent. Recall that each $\mathrm{Cl}(X(w_i))$ is a finitely generated free abelian group, see Proposition \ref{pro:bases for cl(X(w))}. 
Since the morphism 
\[
\widehat{f} : \widehat{X}(\widehat{w}) \to \widehat{X}(w_1)
\]
is a locally trivial fibration with fiber $\widehat{X}(w_2, \dots, w_m)$, by induction it follows that 
$\mathrm{Cl}(\widehat{X}(\widehat{w}))$ is a free abelian group of rank 
\[
\sum_{i=1}^m \operatorname{rank}(\mathrm{Cl}(X(w_i))). 
\]
Therefore, the proof follows from the definition of $\operatorname{Peaks}\widehat{X}(\widehat{w})$ together with Proposition \ref{pro:bases for cl(X(w))}.  
\end{proof}

For $1\leq j\leq m, \alpha\in S\setminus I^{w_j}$, recall the construction of the line bundle $\mathcal{L}_{j,\alpha}$ from Eq.\eqref{eq: Line bundles on gBS}.

\begin{proposition}\label{prop: divisor classes for line bundles on gBS}
   The divisor class of the line bundle $\mathcal{L}_{j,\alpha}$ is given by \[
    [\mathcal{L}_{j,\alpha}]=\sum_{\widehat{\eta}\in \operatorname{Peaks}\widehat{X}(w_1,\dots,w_j)}\langle\omega_{\alpha},{\widehat{\eta}}\rangle \widehat{D}_{(w_{j+1}\cdots w_{m})^{-1}(\widehat{\eta})},
    \] where $(w_{j+1}\cdots w_{m})^{-1}(\widehat{\eta})\in \operatorname{Peaks}\widehat{X}(\widehat{w})$.
\end{proposition}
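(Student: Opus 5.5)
Our plan is to reduce the computation to the Bott--Samelson variety $Z(\tilde w)$, exactly as in the proof of Proposition~\ref{lem:divisor classes}, and then push forward along $\tilde\pi$. Fix $j$ and $\alpha\in S\setminus I^{w_j}$, and choose $d$ with $\alpha=\alpha_{d,j}$. By Proposition~\ref{pull back} we have $\tilde\pi^*\mathcal L_{j,\alpha}=\mathcal L_{m_j(d),j}$, the Lauritzen--Thomsen line bundle attached to the position $(m_j(d),j)$ in the concatenated expression $\tilde w$. By Eq.\eqref{Eq:rational singular} we have $\tilde\pi_*\mathcal O_{Z(\tilde w)}=\mathcal O_{\widehat X(\widehat w)}$. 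The projection formula then gives $[\mathcal L_{j,\alpha}]=\tilde\pi_*[\mathcal L_{m_j(d),j}]$, since $\tilde\pi$ is birational by Corollary~\ref{cor:birational}.

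Next we expand $[\mathcal L_{m_j(d),j}]$ with Demazure's formula (Proposition~\ref{Demazure}), taking $\lambda=\omega_\alpha$. Write $N$ for the global index of the position $(m_j(d),j)$ and $\gamma_1\cdots\gamma_N$ for the prefix of $\tilde w$ ending there. The formula gives
\[
[\mathcal L_{m_j(d),j}]=\sum_{l\le N}\langle\omega_\alpha,\gamma_N\cdots\gamma_{l+1}(\alpha_{\gamma_l})\rangle\,\xi_l .
\]
We then extend the sum to all positions up to the end of the block $w_j$. This is harmless because $\alpha$ does not occur after $m_j(d)$ inside $\tilde w_j$, so each further simple reflection fixes $\omega_\alpha$ in the pairing, as in the proof of Proposition~\ref{lem:divisor classes}. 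The coefficient of $\xi_l$ therefore becomes $\langle\omega_\alpha,\theta_l\rangle$, where $\theta_l$ is the root of the prefix $\tilde w_1\cdots\tilde w_j$ read from the right. Using the inductive definition $\operatorname{Peaks}\widehat X(w_1,\dots,w_j)=w_j^{-1}(\cdots)\sqcup\operatorname{Peaks}X(w_j)$, these roots $\theta_l$ are exactly the elements $\widehat\eta$ for the positions $l$ that correspond to peaks of the prefix.

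Third, we push forward. For a position $l$ in block $i\le j$ that is not a peak position of $X(w_i)$, the divisor $Z_l$ lies over a Schubert subvariety of codimension $\ge2$ in the factor $X(w_i)$. Because $\widehat X(\widehat w)$ is locally trivial over each stage, $\tilde\pi(Z_l)$ then has codimension $\ge2$ in $\widehat X(\widehat w)$, so $\tilde\pi_*\xi_l=0$. For a peak position, $\tilde\pi_*\xi_l$ is by definition the divisor $\widehat D$ indexed by the corresponding element of $\operatorname{Peaks}\widehat X(\widehat w)$. This element is the transport of the prefix peak $\widehat\eta$ under $(w_{j+1}\cdots w_m)^{-1}$, since each later block contributes a factor $w_k^{-1}$ in the inductive definition; this accounts for the index $(w_{j+1}\cdots w_m)^{-1}(\widehat\eta)$ in the statement. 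Collecting terms yields the stated formula.

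The main obstacle is the bookkeeping in this last step. We must check that the set of positions $l\le N$ with $\tilde\pi_*\xi_l\neq0$ matches $\operatorname{Peaks}\widehat X(w_1,\dots,w_j)$ bijectively, with the right root attached to each. We must also confirm that the non-peak divisors really contract, including the case of non-reduced $\widehat w$, where the roots can be negative but the geometry is fibrewise unchanged. We would argue this blockwise by induction on $j$, applying Remark~\ref{rmk:image1} to each factor $X(w_i)$ inside the fibration.
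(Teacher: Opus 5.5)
Your proposal is correct and follows the paper's proof essentially step for step: you pull back via Proposition~\ref{pull back}, push forward using $\tilde\pi_*\mathcal O_{Z(\tilde w)}=\mathcal O_{\widehat X(\widehat w)}$ and the projection formula, expand with Demazure's formula, extend the sum to the end of block $j$ (legitimate because $\alpha$ does not recur after position $m_j(d)$), and then regroup the terms by peaks. One addition goes beyond the paper: your explicit argument that $\tilde\pi_*\xi_l=0$ for non-peak positions, namely that the image has codimension at least two by Remark~\ref{rmk:image1} applied blockwise, fills in a step the paper only invokes implicitly through the definition of peaks and Proposition~\ref{pro: divisor classes for gBS}.
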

\begin{proof}
    Let $\alpha=\alpha_{d,j}\in S\setminus I^{w_j}$ for some $1\leq d\leq r_j$. Then, using  Proposition \ref{pull back}, Eq.\eqref{Eq:rational singular} and projection formula, we have \begin{align*}
        [\mathcal{L}_{j,\alpha}]&=\tilde{\pi}_*\big([\tilde{\pi}^*\mathcal{L}_{j,\alpha}]\big)=\tilde{\pi}_*\big([\mathcal{L}_{m_j(d),j}]\big) \\&=\sum_{1\leq k\leq m_j(d)}\left\langle\omega_{m_j(d),j},s_{m_j(d),j}\cdots s_{k+1,j}(\alpha_{k,j})\right\rangle\tilde{\pi}_*(\xi_{k,j}) \quad\quad \\ &\quad \quad \quad \quad \quad  +\sum_{1\leq p< j}\sum_{1\leq k\leq r_p}\left\langle\omega_{m_j(d),j},s_{m_j(d),j}\cdots s_{k+1,p}(\alpha_{k,p})\right\rangle \tilde{\pi}_*(\xi_{k,p}),
    \end{align*}
    the last equality follows by Proposition \ref{Demazure}.
    Note that if $m_j(d)<r_j$, then for any $m_j(d)<\ell\leq r_j$, we have $\alpha_{m_j(d),j}\neq \alpha_{\ell,j}$ (see Eq.\eqref{eq:m_j(d)}). Thus, we get\begin{align*}
       [\mathcal{L}_{j,\alpha}] &=\sum_{1\leq k\leq m_j(d)}\left\langle\omega_{m_j(d),j},\,s_{r_j,j}\cdots s_{m_j(d)+1,j}\,( s_{m_j(d),j}\cdots s_{k+1,j}\,(\alpha_{k,j}))\right\rangle\tilde{\pi}_*(\xi_{k,j}) \quad\quad \\ &\quad \quad \quad \quad \quad  +\sum_{1\leq p< j}\sum_{1\leq k\leq r_p}\left\langle\omega_{m_j(d),j},\,s_{r_j,j}\cdots s_{m_j(d)+1,j}\,( s_{m_j(d),j}\cdots s_{k+1,p}(\alpha_{k,p}))\right\rangle \tilde{\pi}_*(\xi_{k,p}).
    \end{align*} Moreover, for each $m_j(d)<\ell\leq r_j$, we have
\[
\left\langle
\omega_{{m_j(d),j}},\,
s_{r_j,j}\cdots s_{\ell+1,j}(\alpha_{\ell,j})
\right\rangle
=0.
\]Hence, \begin{align*}
       [\mathcal{L}_{j,\alpha}] &=\sum_{1\leq k\leq r_j}\left\langle\omega_{m_j(d),j},\,s_{r_j,j}\cdots s_{k+1,j}\,(\alpha_{k,j})\right\rangle\tilde{\pi}_*(\xi_{k,j}) \quad\quad \\ &\quad \quad \quad \quad \quad  +\sum_{1\leq p< j}\sum_{1\leq k\leq r_p}\left\langle\omega_{m_j(d),j},\,s_{r_j,j}\cdots s_{k+1,p}\,(\alpha_{k,p})\right\rangle \tilde{\pi}_*(\xi_{k,p}).
    \end{align*}
 
    Recall from Section \ref{map:pi} that for any $1\leq j \leq m$ and for any $1\leq d\leq r_j$, $\xi_{d,j}$ is the divisor class of\begin{equation}\label{eq:Z_{k,q}}
    \begin{aligned}    
    Z_{d,j}&=Z(s_{1,1}\cdots s_{r_1,1}\cdots s_{1,j}\cdots \widehat{s}_{d,j}\cdots s_{r_j,j}\cdots s_{1,m}\cdots s_{r_m,m})\\&=Z(\widetilde{ws}_{{\widehat{\eta}}}),\quad \text{where}\quad \widehat{\eta}=(w_{j+1}\cdots w_{m})^{-1}s_{r_j,j}\cdots s_{d,j}.
   \end{aligned} \end{equation}
 Note that $\omega_\alpha=\omega_{m_j(d),j}$. 
    By the definition of $\operatorname{Peaks}\widehat{X}(w_1,\dots,w_j)$ together with Proposition \ref{pro: divisor classes for gBS}, we have
    \begin{align}\label{eq:Line divisors for gBS}
[\mathcal{L}_{j,\alpha}]&=\sum_{t=1}^{j}\sum_{\eta\in \operatorname{Peaks}X(w_t)}\left\langle\omega_{\alpha},(w_{t+1}\cdots w_j)^{-1}\eta\right\rangle\widehat{D}_{(w_{t+1}\cdots w_m)^{-1}\eta} \quad  \\
&=\sum_{\widehat{\eta}\in \operatorname{Peaks}\widehat{X}(w_1,\dots,w_j)}\langle\omega_{\alpha},{\widehat{\eta}}\rangle \widehat{D}_{(w_{j+1}\cdots w_{m})^{-1}(\widehat{\eta})}.\notag
\end{align}  This completes the proof.
    \end{proof}

\begin{proposition}\label{anti-can}
    The anticanonical divisor on $\widehat{X}(\widehat{w})$ is $$[-K_{\widehat{X}(\widehat{w})}]=\sum_{\widehat{\eta}\in \operatorname{Peaks}\widehat{X}(\widehat{w})}\big(\operatorname{ht}(\widehat{\eta}^\vee)+1\big)\widehat{D}_{\widehat{\eta}}.$$
\end{proposition}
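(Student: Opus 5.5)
We push forward the anticanonical class of the Bott--Samelson resolution, exactly as was done for $X(w)$ in Eq.\eqref{eq:ACS for X(w)}. By Proposition~\ref{prop:rational-sing}, $\widehat{X}(\widehat{w})$ is normal, Cohen--Macaulay and has rational singularities. By Corollary~\ref{cor:birational}, $\tilde{\pi}\colon Z(\tilde w)\to \widehat{X}(\widehat{w})$ is a birational desingularization with $\tilde\pi_*\mathcal O_{Z(\tilde w)}=\mathcal O_{\widehat X(\widehat w)}$. Hence, as in \cite[Proposition~2.2.5]{Bri05}, we have
\[
[-K_{\widehat{X}(\widehat{w})}]=\tilde\pi_*[-K_{Z(\tilde w)}].
\]
Indeed, $\tilde\pi$ is an isomorphism over the open set $B\widehat{x}$, whose complement has codimension at least one. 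So $K_{\widehat X}$ and $\tilde\pi_*K_Z$ agree as Weil divisor classes once we check they coincide away from codimension two, and this holds for any proper birational map to a normal variety.

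Next, apply Corollary~\ref{cor:antican} to the full sequence $\tilde w=(s_{1,1},\dots,s_{r_m,m})$. This gives
\[
[-K_{Z(\tilde w)}]=\sum_{(k,p)}\bigl(1+\operatorname{ht}(\gamma_{k,p}^\vee)\bigr)\xi_{k,p},\qquad \gamma_{k,p}:=s_{r_m,m}\cdots s_{k+1,p}(\alpha_{k,p}),
\]
where the product runs over all reflections after position $(k,p)$. Write $\gamma_{k,p}=(w_{p+1}\cdots w_m)^{-1}\eta_{k,p}$ with $\eta_{k,p}:=s_{r_p,p}\cdots s_{k+1,p}(\alpha_{k,p})\in R^+(w_p)$. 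The formula holds for non-reduced expressions as well, since Proposition~\ref{Demazure} and the canonical bundle formula are valid for arbitrary expressions.

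We now apply $\tilde\pi_*$ term by term. The main step is to show two things:
\begin{enumerate}
\item $\tilde\pi_*(\xi_{k,p})=0$ whenever $\tilde\pi(Z_{k,p})$ is not a divisor.
\item When $\tilde\pi(Z_{k,p})$ is a divisor, it is $\widehat D_{\widehat\eta}$ with $\widehat\eta=\gamma_{k,p}$.
\end{enumerate}
Using the fibration structure, $\tilde\pi$ is compatible with the towers $Z(\tilde w)\to Z(\tilde w_1)$ and $\widehat X(\widehat w)\to X(w_1)$, and is fiberwise the product of the resolutions $Z(\tilde w_p)\to X(w_p)$. So $Z_{k,p}$ maps onto the locally trivial family of $X(w_1)\times\cdots\times(\text{image of }Z_{k}(\tilde w_p)\text{ in }X(w_p))\times\cdots$. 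This image is a divisor exactly when $Z_k(\tilde w_p)$ maps onto a Schubert divisor of $X(w_p)$. By Remark~\ref{rmk:image1} and the proof of Proposition~\ref{pro:bases for cl(X(w))}, that happens precisely when $\eta_{k,p}\in\operatorname{Peaks}X(w_p)$; otherwise the image has codimension at least two in $X(w_p)$ and the pushforward vanishes.

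By the inductive definition of $\operatorname{Peaks}\widehat X(\widehat w)$, the surviving indices are in bijection with the sequence $\operatorname{Peaks}\widehat X(\widehat w)$ via $\widehat\eta=(w_{p+1}\cdots w_m)^{-1}\eta_{k,p}=\gamma_{k,p}$, and this matches the labelling in Eq.\eqref{eq:Z_{k,q}}. The coefficient then becomes $1+\operatorname{ht}(\widehat\eta^\vee)$, which gives the formula.

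The expected obstacle is step (1)/(2) in the generalized setting. We must ensure that for a peak $\eta$ of $X(w_p)$ the particular choice of index $k$ is well defined, and that distinct $(k,p)$ with $\eta_{k,p}$ a peak give distinct prime divisors. This holds since distinct Schubert divisors of $X(w_p)$ correspond to distinct $ws_\eta$, and the index $k$ with $\eta_{k,p}=\eta$ is unique in a reduced word. We must also check that non-reducedness of $\widehat w$ does not matter, because the argument is fiberwise in each reduced $\tilde w_p$. This is exactly parallel to the computation in Proposition~\ref{prop: divisor classes for line bundles on gBS}.
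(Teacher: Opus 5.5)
Your proposal is correct and follows the paper's proof: you push $[-K_{Z(\tilde w)}]$ forward along $\tilde\pi$, expand it with Corollary~\ref{cor:antican}, and regroup the surviving terms via $\widehat\eta=(w_{p+1}\cdots w_m)^{-1}\eta_{k,p}$ and the inductive definition of $\operatorname{Peaks}\widehat X(\widehat w)$. Your codimension argument for why $\tilde\pi_*(\xi_{k,p})=0$ when $\eta_{k,p}\notin\operatorname{Peaks}X(w_p)$ fills in a step the paper leaves implicit, with one correction: $\tilde\pi(Z_{k,p})$ is not literally a locally trivial family, because over non-generic points of the base the Bott--Samelson representatives differ by elements of $P^{w_{p-1}}\cap G_{w_{p-1}}$ not in $B$, so the fibres can jump to unions of translates; since the image is irreducible, its dimension is still given by the generic fibre, so your conclusion stands.
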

\begin{proof}
Since the generalized Bott-Samelson varieties are normal and Cohen-Macaulay with rational singularities (see Lemma \ref{rmk:smoothness} and Proposition \ref{prop:rational-sing}), the Bott-Samelson resolution
\[
\tilde{\pi} \colon Z(\tilde{w}) \longrightarrow \widehat{X}(\widehat{w})
\]
allows us to compute the anticanonical class of $\widehat{X}(\widehat{w})$ via pushforward. More precisely,
\begin{equation}\label{eq: push forward to get ACD for gBS}
\tilde{\pi}_*\big([-K_{Z(\tilde{w})}]\big)
=
[-K_{\widehat{X}(\widehat{w})}],
\end{equation}
see for example \cite[Proposition~2.2.5]{Bri05}.
By Corollary \ref{cor:antican}, we have \begin{align}\label{eq:anticanonical}
[-K_{\widehat{X}(\widehat{w})}]
&= \tilde{\pi}_*\big([-K_{Z(\tilde{w})}]\big) \notag\\
&= \sum_{j=1}^{m}\sum_{d=1}^{r_j}
\big(\operatorname{ht}(w_m^{-1}\cdots w_{j+1}^{-1}s_{r_j,j}\cdots s_{d+1,j}(\alpha_{d,j})^\vee)+1\big)
\tilde{\pi}_*(\xi_{d,j}) \notag\\
&= \sum_{j=1}^m\sum_{\eta\in \operatorname{Peaks}X(w_j)}
\Big(\operatorname{ht}\big(((w_{j+1}\cdots w_{m})^{-1}\eta)^\vee\big)+1\Big)
\widehat{D}_{(w_{j+1}\cdots w_m)^{-1}\eta}.
\end{align}

    Now by the definition of $\operatorname{Peaks}\widehat{X}(\widehat{w})$, we have \begin{equation}
        [-K_{\widehat{X}(\widehat{w})}]=\sum_{\widehat{\eta}\in \operatorname{Peaks}\widehat{X}(\widehat{w})}\big(\operatorname{ht}(\widehat{\eta}^\vee)+1\big)\widehat{D}_{\widehat{\eta}}.
    \end{equation}This completes the proof.
\end{proof}

We recall the following theorem from \cite{Mat89}.
\begin{theorem}[\cite{Mat89},~Cor. 23.3, Thm. 23.4]\label{thm:Gorenstein}
Let $\psi: X\to Y $ be a flat
morphism of algebraic varieties. Then $X$ is Cohen-Macaulay (resp. Gorenstein) if and only if $Y$
and every fiber is Cohen-Macaulay (resp. Gorenstein).
\end{theorem}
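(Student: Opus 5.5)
The statement is local, so I would reduce it to flat local homomorphisms. For $x\in X$ with $y=\psi(x)$, put $A=\mathcal O_{Y,y}$ with maximal ideal $\mathfrak m$, $B=\mathcal O_{X,x}$, and let $F=B/\mathfrak m B$ be the local ring of the fibre $\psi^{-1}(y)$ at $x$. Flatness of $\psi$ means $A\to B$ is a flat local homomorphism. The statement then becomes: $B$ is Cohen--Macaulay (resp.\ Gorenstein) if and only if $A$ and $F$ are. Applied at every point $x$, this gives the global claim. For the ``only if'' direction about $Y$, I read ``$Y$'' as the image of $\psi$; this is open since flat morphisms of finite type are open, and it is all of $Y$ when $\psi$ is surjective, as for the locally trivial fibrations $\widehat f$ used in the paper. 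Points of a fibre that are not closed are handled by localizing, since both properties pass to localizations.

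\textbf{Cohen--Macaulay case.} I would use the two standard additivity formulas for a flat local homomorphism:
\[
\dim B=\dim A+\dim F, \qquad \operatorname{depth} B=\operatorname{depth} A+\operatorname{depth} F.
\]
For the depth formula, take a maximal $A$-regular sequence in $\mathfrak m$. By flatness its image is $B$-regular, and $B/(\underline a)B$ is again flat over $A/(\underline a)$. This reduces to the case $\operatorname{depth} A=0$. In that case one lifts an $F$-regular sequence to $B$ and checks, using flatness and the local criterion, that it is $B$-regular with flat quotient. Combining the two formulas with the inequalities $\operatorname{depth}\le\dim$ for $A$ and for $F$, the equality $\operatorname{depth} B=\dim B$ holds exactly when equality holds for both $A$ and $F$.

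\textbf{Gorenstein case.} Here I would use the Cohen--Macaulay type. Once $A$, $B$ and $F$ are Cohen--Macaulay, I would prove
\[
\dim_{k(x)}\operatorname{Ext}^{\operatorname{depth} B}_B\big(k(x),B\big)=\dim_{k(y)}\operatorname{Ext}^{\operatorname{depth} A}_A\big(k(y),A\big)\cdot \dim_{k(x)}\operatorname{Ext}^{\operatorname{depth} F}_F\big(k(x),F\big).
\]
To do this, I would first divide by a maximal $A$-regular sequence, which by flatness is also $B$-regular and does not change the types. Then I would divide by a lift of a maximal $F$-regular sequence, reducing to the Artinian case $\dim A=\dim F=0$. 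In the Artinian case, the socle of $B$ equals $\operatorname{Hom}_B(k(x),B)$, which I would identify with $\operatorname{soc}(A)\otimes_{k(y)}\operatorname{soc}(F)$ using flatness of $B$ over $A$. A Cohen--Macaulay local ring is Gorenstein exactly when its type is $1$. So $B$ is Gorenstein if and only if both $A$ and $F$ are: a product of positive integers equals $1$ only when each factor does, and the Cohen--Macaulay hypothesis is available on both sides by the first part.

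I expect the main obstacle to be the multiplicativity of the type, and specifically the socle identification in the Artinian step. This is where flatness must be used carefully, through the local flatness criterion, to commute $\operatorname{Hom}_B(k(x),-)$ past the base change. The depth additivity is routine by comparison.
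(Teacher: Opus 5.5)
Your argument is correct and is essentially the proof in the source the paper cites; the paper gives no proof of its own. Matsumura obtains the Cohen--Macaulay case from the additivity $\dim B=\dim A+\dim F$ and $\operatorname{depth}B=\operatorname{depth}A+\operatorname{depth}F$ for a flat local homomorphism (Thm.~23.3 and its corollary), and the Gorenstein case (Thm.~23.4) exactly as you propose: cut down by a maximal $A$-regular sequence and a lifted maximal $F$-regular sequence to the Artinian case, then use flatness to show that the type of $B$ is the product of the types of $A$ and $F$. Your caveat that the \emph{only if} direction for $Y$ needs $\psi$ to be surjective is correct (an open immersion of the smooth locus into a non-Cohen--Macaulay $Y$ shows it is necessary), and surjectivity holds for the locally trivial fibrations to which the paper applies the theorem.
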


\begin{corollary}\label{thm: Gorenstein gBS}
    The variety $\widehat{X}(\widehat{w})$ is Gorenstein if and only if each Schubert variety $X(w_i)$ is Gorenstein.
\end{corollary}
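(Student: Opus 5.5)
The plan is to argue by induction on $m$, applying Theorem~\ref{thm:Gorenstein} to the Zariski locally trivial fibration
\[
\widehat{f}:\widehat{X}(\widehat{w})\to \widehat{X}(w_1)\simeq X(w_1),
\]
whose fibers are all isomorphic to $\widehat{X}(w_2,\dots,w_m)$. For $m=1$ there is nothing to prove, since $\widehat{X}(\widehat{w})=X(w_1)$.

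First we check that Theorem~\ref{thm:Gorenstein} applies, i.e. that $\widehat{f}$ is flat. Locally over an open set $U\subseteq X(w_1)$, the map $\widehat{f}$ is the projection
\[
U\times \widehat{X}(w_2,\dots,w_m)\to U .
\]
This is the base change of the structure morphism $\widehat{X}(w_2,\dots,w_m)\to \operatorname{Spec}\mathbb{C}$, which is flat. Flatness is local on the source and target, so $\widehat{f}$ is flat. This is the same observation already used in Proposition~\ref{prop:rational-sing}.

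Theorem~\ref{thm:Gorenstein} then gives: $\widehat{X}(\widehat{w})$ is Gorenstein if and only if $X(w_1)$ is Gorenstein and every fiber is Gorenstein. All fibers are isomorphic to $\widehat{X}(w_2,\dots,w_m)$, so the fiber condition is that this single variety is Gorenstein. By Remark~\ref{rmk: Parabolic subset}(2), the tail $(w_2,\dots,w_m)$ is again an admissible generalized decomposition. The induction hypothesis therefore says that $\widehat{X}(w_2,\dots,w_m)$ is Gorenstein exactly when each $X(w_i)$, $2\le i\le m$, is Gorenstein. Combining the two equivalences gives the statement.

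The only delicate point is the hypotheses of the cited result. Theorem~\ref{thm:Gorenstein} is usually stated for local rings: a flat local homomorphism $A\to B$ has $B$ Gorenstein if and only if $A$ and $B/\mathfrak m_A B$ are. To pass to varieties, one uses that the closed fibers of $\widehat{f}$ are all isomorphic to $\widehat{X}(w_2,\dots,w_m)$. The local fiber rings at closed points are then local rings of this variety, while the Gorenstein locus is open and non-generic points specialize to closed points. The locally trivial product structure makes this transparent: $U\times F$ is Gorenstein if and only if both $U$ and $F$ are, since the tensor product of finitely generated Gorenstein $\mathbb{C}$-algebras is Gorenstein, and conversely by the flat-descent direction. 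So no real obstacle remains beyond invoking the theorem correctly.
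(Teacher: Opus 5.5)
Your proof is correct and follows the same strategy as the paper: induction on $m$, flatness of a Zariski locally trivial fibration, and Matsumura's criterion (Theorem~\ref{thm:Gorenstein}). The only difference is which end of the tower you peel off. The paper uses $f_m:\widehat{X}(\widehat{w})\to\widehat{X}(w_1,\dots,w_{m-1})$ with fiber $X(w_m)$. You use the fibration $\widehat{f}$ over $X(w_1)$ with fiber $\widehat{X}(w_2,\dots,w_m)$, which is the one the recursive construction directly provides and whose fiber is covered by the paper's remark that tails of admissible decompositions are admissible.
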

\begin{proof} We prove the result by induction on $m$.  
For $m=1$, the statement is clear.  
For $m=2$, the morphism
\[
f_1:\widehat{X}(w_1,w_2)\to X(w_1)
\]
is a locally trivial fibration with fiber $X(w_2)$, hence flat.  
By Theorem \ref{thm:Gorenstein}, the result holds for $m=2$.  Assume the result for $m-1$ and consider
\[
f_m:\widehat{X}(\widehat{w})\to \widehat{X}(w_1,\dots,w_{m-1}),
\]
which is a locally trivial fibration with fibers isomorphic to $X(w_m)$ and thus flat.  
Applying Theorem \ref{thm:Gorenstein} again, the result follows for $m$.
\end{proof}

\begin{remark}
    Corollary \ref{thm: Gorenstein gBS} together with Theorem \ref{thm: Gorenstein Xw} gives the characterization of Gorenstein generalized Bott-Samelson varieties.
\end{remark}

It is shown in \cite[Theorem~5.3]{BC18} that for any reduced expression 
$\tilde{w}$ of $w \in W$, the anticanonical divisor 
$[-K_{Z(\tilde{w})}]$ is big. As a consequence, we have the following:
\begin{proposition}\label{prop:big}
Suppose that $\widehat{X}(\widehat{w})$ is Gorenstein. 
Then the anticanonical divisor $[-K_{\widehat{X}(\widehat{w})}]$ 
is big.
\end{proposition}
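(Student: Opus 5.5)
Since $\widehat{X}(\widehat{w})$ is Gorenstein, $-K_{\widehat{X}(\widehat{w})}$ is Cartier. We would then pull it back along the birational morphism $\tilde{\pi}\colon Z(\tilde w)\to \widehat{X}(\widehat{w})$ of Corollary~\ref{cor:birational}. Bigness of a Cartier divisor is preserved under pullback by a birational morphism of projective varieties, because $h^0(Z(\tilde w),\tilde\pi^*L^{\otimes k})=h^0(\widehat{X}(\widehat{w}),L^{\otimes k})$ by the projection formula and $\tilde\pi_*\mathcal O_{Z(\tilde w)}=\mathcal O_{\widehat{X}(\widehat{w})}$ (Eq.~\eqref{Eq:rational singular}). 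Hence it suffices to show that $\tilde\pi^*(-K_{\widehat{X}(\widehat{w})})$ is big on $Z(\tilde w)$.

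The key step is to compare $\tilde\pi^*(-K_{\widehat{X}(\widehat{w})})$ with $-K_{Z(\tilde w)}$. By Eq.~\eqref{eq: push forward to get ACD for gBS}, $\tilde\pi_*(-K_{Z(\tilde w)})=-K_{\widehat{X}(\widehat{w})}$. Therefore $K_{Z(\tilde w)}-\tilde\pi^*K_{\widehat{X}(\widehat{w})}=E$, where $E$ is a divisor supported on the $\tilde\pi$-exceptional locus. Since $\widehat{X}(\widehat{w})$ is Gorenstein with rational singularities (Proposition~\ref{prop:rational-sing}), it has canonical singularities, so $E$ is effective. This is the standard fact that rational Gorenstein singularities are canonical, due to Elkik and Kempf. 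We would then write
\[
\tilde\pi^*(-K_{\widehat{X}(\widehat{w})}) = -K_{Z(\tilde w)} + E,
\]
with $E\ge 0$ and $\tilde\pi$-exceptional.

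The main obstacle is the bigness of $-K_{Z(\tilde w)}$ itself. The cited result \cite[Theorem~5.3]{BC18} requires a reduced expression, but $\tilde w$ is reduced only when the decomposition $\widehat w$ is reduced. We would first handle the reduced case and apply \cite[Theorem~5.3]{BC18} directly; there the sum of a big divisor and an effective divisor is big, which finishes the argument. For a non-reduced admissible $\widehat w$, we would try to bypass $Z(\tilde w)$ and use the fibration structure instead. We would fibre $\widehat{X}(\widehat{w})\to \widehat{X}(w_1,\dots,w_{m-1})$ with fibre $X(w_m)$, which is Gorenstein, hence Fano by Corollary~\ref{cor: Fano for Xw}. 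Then $-K=-K_{\mathrm{rel}}+f^*(-K_{\mathrm{base}})$, where $-K_{\mathrm{rel}}$ is relatively ample and $-K_{\mathrm{base}}$ is big by induction on $m$. We would then try to conclude bigness of $-K$ from "relatively ample plus pullback of big". That implication needs an effectivity or nefness input on $-K_{\mathrm{rel}}$, for instance that it is effective via Proposition~\ref{anti-can}, whose coefficients $\operatorname{ht}(\widehat\eta^\vee)+1$ are positive whenever the $\widehat\eta$ are positive roots. We expect this to be the delicate point, since in the non-reduced case some $\widehat\eta$ may be negative roots.
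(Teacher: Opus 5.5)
Your argument is correct when $\widehat w$ is reduced, and there it differs from the paper only in mechanism. The paper pushes forward: $\tilde\pi_*[-K_{Z(\tilde w)}]=[-K_{\widehat X(\widehat w)}]$, and the push-forward of a big divisor under a birational morphism is big. Indeed, a rational function $f$ with $\operatorname{div}(f)+mD\ge 0$ on $Z(\tilde w)$ also satisfies $\operatorname{div}(f)+m\tilde\pi_*D\ge 0$. That route needs neither the Cartier property nor the Elkik--Kempf input that rational Gorenstein singularities are canonical. Your pullback route needs $E\ge 0$, which you justify correctly.

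The genuine gap is the non-reduced case. The statement covers it, since the standing assumption is that $\widehat w$ is admissible but not necessarily reduced, and your fibration induction does not close it. ``Relatively ample plus pullback of big'' is not big in general. For the first projection $\mathbb P^1\times\mathbb P^1\to\mathbb P^1$, $\mathcal O(-2,1)$ is relatively ample and $\mathcal O(1,0)$ is pulled back from an ample class, yet their sum $\mathcal O(-1,1)$ has no sections in any multiple. The effectivity you hope to extract from Proposition~\ref{anti-can} also fails, as you feared: coefficients $\operatorname{ht}(\widehat\eta^\vee)+1$ equal to $0$ and $-1$ occur in the paper's own $SL_4$ and $B_2$ examples. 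Note that the paper's proof applies \cite[Theorem~5.3]{BC18}, which is stated for reduced words, to the concatenated word $\tilde w$; that word is reduced only when $\widehat w$ is. So the paper's written argument says nothing more than yours about this case.

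The missing ingredient is that $-K_{Z(\tilde w)}$ is big for \emph{every} word, and this is elementary.
\begin{itemize}
\item The isomorphism $K^{-1}_{Z(\tilde w)}\cong\mathcal O_{Z(\tilde w)}(\sum_j\xi_j)\otimes\mathcal O_r(\rho)$ holds for arbitrary words (see \cite[Proposition~2.2.2]{BK07}; reducedness is never used).
\item $\mathcal O_r(\rho)$ is globally generated, hence effective.
\item $\sum_j\xi_j$ is big. The $\xi_j$ are classes of effective divisors and form a basis of $\operatorname{Pic}Z(\tilde w)$ for any word, because $Z(\tilde w)$ is an iterated $\mathbb P^1$-bundle with the sections $\sigma_j$. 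So an ample class can be written $A=\sum_j a_j\xi_j$, and $N\sum_j\xi_j-A=\sum_j(N-a_j)\xi_j$ is effective once $N\ge\max_j a_j$.
\end{itemize}
Hence $-K_{Z(\tilde w)}$ is big plus effective, so big. Either your pullback argument or the paper's push-forward along the birational $\tilde\pi$ then finishes the proof, with no induction on $m$.
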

\begin{proof}
  By Eq.\eqref{eq: push forward to get ACD for gBS}, we have \[\tilde{\pi}_*\big([-K_{Z(\tilde{w})}]\big)
=
[-K_{\widehat{X}(\widehat{w})}].
 \]
 Recall that the pushforward of a big line bundle by a birational morphism between projective varieties is big, see \cite[{Page 67}]{Kol98}. Thus, we conclude the result.
 \end{proof}

\subsection{Fano generalized Bott-Samelson varieties}\label{subsub: Fano}

Suppose that $\widehat{X}(\widehat{w})$ is Gorenstein. Then we have \begin{align}\label{eq: c_{i,alpha} for Fano}
[-K_{\widehat{X}(\widehat{w})}]
&= \sum_{j=1}^m\sum_{\alpha\in S\setminus I^{w_j}}c_{j,\alpha}[\mathcal{L}_{j,\alpha}] ,
\end{align}
for some $c_{j,\alpha}\in \mathbb{Z}$. Thus, by Eq.\eqref{eq:Line divisors for gBS} and Eq.\eqref{eq:anticanonical}, we obtain:
 \begin{align}\label{eq:Peaks}
\sum_{j=1}^m \sum_{\alpha \in S \setminus I^{w_j}} c_{j,\alpha}
&\Bigg(
  \sum_{i=1}^{j} \sum_{\eta \in \operatorname{Peaks}X(w_i)}
  \langle \omega_{\alpha}, (w_{i+1}\cdots w_j)^{-1}\eta \rangle\,
  \widehat{D}_{(w_{i+1}\cdots w_m)^{-1}\eta}
\Bigg)
\notag\\
&= \sum_{j=1}^m \sum_{\eta \in \operatorname{Peaks}X(w_j)}
\Big(
  \operatorname{ht}\big(((w_{j+1}\cdots w_m)^{-1}\eta)^\vee\big)+1
\Big)
\widehat{D}_{(w_{j+1}\cdots w_m)^{-1}\eta}.
\end{align}
Recall that the set
\[
\{\widehat{D}_{\widehat{\eta}} \mid \widehat{\eta} \in \operatorname{Peaks}\widehat{X}(\widehat{w})\}
\]
is linearly independent in the divisor class group of $\widehat{X}(\widehat{w})$ (Proposition~\ref{pro: divisor classes for gBS}).
Therefore, for each $1\leq j \leq m$ and each $\eta \in \operatorname{Peaks} X(w_j)$, equating coefficients in Eq.\eqref{eq:Peaks} yields the following linear system:
\begin{align}\label{eq:linear systems}
\sum_{\alpha \in S \setminus I^{w_j}} 
c_{j,\alpha}\,\langle \omega_{\alpha}, \eta \rangle
&+ \sum_{t=j+1}^{m} \sum_{\beta \in S \setminus I^{w_t}}
c_{t,\beta}\,
\langle \omega_{\beta}, (w_{j+1}\cdots w_t)^{-1}\eta \rangle \notag\\
&= \operatorname{ht}\big(((w_{j+1}\cdots w_m)^{-1}\eta)^\vee\big)+1.
\end{align}
By convention, the second sum on the left-hand side is omitted when $j=m$.
Our next goal is to solve this system of linear equations.  
\subsubsection{}\label{subsection: M1}
In order to solve the system of equations in Eq.\eqref{eq:linear systems}, we derive a sublinear system of full rank corresponding to the sets $\mathcal{B}_{P^{w_j}}$. 
To construct this sublinear system, we first introduce a total order $\preceq_{\widehat{w}}$ on the collection
\[
\bigsqcup_{1 \le j \le m} \bigl(S \setminus I^{w_j}\bigr),
\]
which depends on the chosen expression of $\tilde{w}_j$ of $w_j$ for each $1\leq j\leq m$ in Eq.\eqref{eq:order exp}, and is defined as follows.

{\bf Total order $\preceq_{\widehat{w}}$:} Recall the definition of $m_j(d)$ defined in Eq.\eqref{eq:m_j(d)}.

\noindent \emph{Case {1}:}
For $1 \le i< j \le m$, let us consider $\alpha_{d_i,i} \in S \setminus I^{w_i}$ and $\alpha_{d_j,j} \in S \setminus I^{w_j}$ for some $1 \le d_i \le r_i$ and $1 \le d_j \le r_j$. 
We define a total order $\preceq_{\widehat{w}}$ for these elements by
\[
\alpha_{d_i,i} \prec_{\widehat{w}} \alpha_{d_j,j}.
\]
\emph{Case 2:} For $1\leq j\leq m$, let $\alpha_{d_j,j}\in S\setminus I^{w_{j}}$ and $\alpha_{d'_j,j}\in S\setminus I^{w_{j}}$ for some $1\leq d_j,d_j'\leq r_j$. Then, we define 
\[
\alpha_{d_j,j}\preceq_{\widehat{w}}\alpha_{d'_j,j} \qquad \text{if} \qquad m_j(d_j)\leq m_j(d_j').
\] 
We illustrate this in the following example.
\begin{example}
   Let $G = SL_5(\mathbb{C})$, and consider 
\[
w = (w_1, w_2, w_3) = (s_3s_2 s_3 s_1,\, s_2 s_4,\, s_1 s_3) \quad \text{and}\]
 \[
\tilde{w} = (s_{1,1}, s_{2,1}, s_{3,1},s_{4,1}, s_{1,2}, s_{2,2}, s_{1,3}, s_{2,3}) = (s_3,s_2, s_3, s_1, s_2, s_4, s_1, s_3),
\] 
two admissible generalized decompositions of 
$
w =s_3 s_2 s_3 s_1 s_2 s_4 s_1 s_3.
$
Then we have
\begin{align*}
S \setminus I^{w_1} &= \{\alpha_{1,1} =\alpha_{2,1} = \alpha_3,\; \alpha_{3,1} = \alpha_1\},\\
S \setminus I^{w_2} &= \{\alpha_{1,2} = \alpha_2,\; \alpha_{2,2} = \alpha_4\},\\
S \setminus I^{w_3} &= \{\alpha_{1,3} = \alpha_1,\; \alpha_{2,3} = \alpha_3\}.
\end{align*}
By the definition of the total order $\preceq_{\widehat{w}}$, these elements satisfy
\[
\alpha_{1,1}=_{\widehat{w}}\alpha_{2,1} \prec_{\widehat{w}} \alpha_{3,1} \prec_{\widehat{w}} \alpha_{1,2} \prec_{\widehat{w}} \alpha_{2,2} \prec_{\widehat{w}} \alpha_{1,3} \prec_{\widehat{w}} \alpha_{2,3}.
\]
\end{example}

From now on, we assume that $G$ is of simply laced type. This assumption is needed only for the existence of the functions $\mu_{w_j}$ (see Lemma~\ref{lem:B_w,P^w}). Once these functions are available, the same method works for $G$ of non simply laced type as well.

Given $1\leq j\leq m$, let \[n_j:=\sum_{1\leq i\leq j}\#(S\setminus I^{w_i}).\]

Now, with respect to the total order $\preceq_{\widehat{w}}$, we construct certain matrices associated with the sets $S \setminus I^{w_j}$, defined as follows.  
For $j = 1$, we set
\[
\widehat{M}_{P^{w_1}} := \begin{pmatrix} M_{P^{w_1}} & \vdots & M_1 \end{pmatrix},
\]
where $M_{P^{w_1}}$ is the square matrix of order $n_1$ defined in Lemma~\ref{lem:identity}, and
\[
M_1 := \begin{pmatrix}
\langle \omega_{\alpha}, (w_2 \cdots w_t)^{-1} \mu_{w_1}(\beta) \rangle
\end{pmatrix}^T_{\alpha \in \bigsqcup_{2\leq t\leq m} S \setminus I^{w_t},~ \beta\in S\setminus I^{w_1}}
\]
is a $n_1 \times (n_m - n_1)$ matrix.

For $1 < j \le m-1$, we define
\[
\widehat{M}_{P^{w_j}} := 
\begin{pmatrix} [0] & \vdots & M_{P^{w_j}} & \vdots & M_j \end{pmatrix},
\]
where $M_{P^{w_j}}$ is the square matrix of order $(n_j - n_{j-1})$ defined in Lemma~\ref{lem:identity},  
$[0]$ is a zero matrix of size $(n_j - n_{j-1}) \times n_{j-1}$, and
\[
M_j := \begin{pmatrix}
\langle \omega_{\alpha}, (w_{j+1} \cdots w_t)^{-1} \mu_{w_j}(\beta) \rangle
\end{pmatrix}^T_{\alpha \in \bigsqcup_{j+1\leq t\leq m} S \setminus I^{w_t},~ \beta\in S\setminus I^{w_j}}
\]
is a $(n_j - n_{j-1}) \times (n_m - n_j)$ matrix.

\begin{remark}\label{rmk:restriction to sets}
   Recall the total order $\preceq$ on $S \setminus I^{w}$ defined in Eq.\eqref{eq:ordering for lines Xw}. 
Note that, for each $j$, the restriction of $\preceq_{\widehat{w}}$ to $S \setminus I^{w_j}$ coincides with this order $\preceq$ on $S \setminus I^{w_j}$.
\end{remark}

We now define a square matrix of order $n_m$ associated with the set of matrices 
$\{\widehat{M}_{P^{w_j}} \mid 1 \le j \le m\}$ as
\[
\widehat{M} := 
\begin{pmatrix}
\widehat{M}_{P^{w_1}}\\
\vdots\\
\widehat{M}_{P^{w_j}}\\
\vdots\\
\widehat{M}_{P^{w_m}}
\end{pmatrix}.
\]
We have the following lemma.
\begin{lemma}\label{lem:integrable}
    With respect to the ordering $\preceq_{\widehat{w}}$
    defined above, 
    the square matrix $\widehat{M}$ is a unipotent upper triangular matrix with entries in $\mathbb{Z}$.
\end{lemma}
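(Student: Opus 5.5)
The plan is to read off the block structure of $\widehat{M}$ directly from its definition and then invoke Lemma~\ref{lem:identity} blockwise.

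First, we fix the indexing. The rows and columns of $\widehat{M}$ are both indexed by $\bigsqcup_{1\le j\le m}(S\setminus I^{w_j})$, ordered by $\preceq_{\widehat{w}}$. The row indexed by $\beta\in S\setminus I^{w_j}$ is the row of $\widehat{M}_{P^{w_j}}$ attached to $\mu_{w_j}(\beta)$. Case~1 of the order places all elements of $S\setminus I^{w_i}$ before those of $S\setminus I^{w_j}$ whenever $i<j$. Consequently the row block $\widehat{M}_{P^{w_j}}$ occupies rows $n_{j-1}+1,\dots,n_j$, and its three pieces $[0]$, $M_{P^{w_j}}$ and $M_j$ sit in columns $1,\dots,n_{j-1}$, columns $n_{j-1}+1,\dots,n_j$, and columns $n_j+1,\dots,n_m$ respectively. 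This exhibits $\widehat{M}$ as a block matrix whose $(j,t)$ block is zero for $t<j$, equals $M_{P^{w_j}}$ for $t=j$, and is a sub-block of $M_j$ for $t>j$. So $\widehat{M}$ is block upper triangular.

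Next, we handle the diagonal blocks. By Remark~\ref{rmk:restriction to sets}, the restriction of $\preceq_{\widehat{w}}$ to $S\setminus I^{w_j}$ is exactly the order $\preceq$ of Eq.\eqref{eq:ordering for lines Xw} applied to $w_j$ with the reduced expression $\tilde w_j$. Lemma~\ref{lem:identity}, which uses the simply-laced hypothesis through Lemma~\ref{lem:B_w,P^w}, then says $M_{P^{w_j}}$ is the identity matrix of order $n_j-n_{j-1}$. A block upper triangular matrix with identity diagonal blocks is unipotent upper triangular.

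Finally, we check integrality. The entries of the off-diagonal blocks are $\langle\omega_\alpha,(w_{j+1}\cdots w_t)^{-1}\mu_{w_j}(\beta)\rangle$. Here $(w_{j+1}\cdots w_t)^{-1}\mu_{w_j}(\beta)$ is a root, and $\langle\omega_\alpha,\gamma\rangle=(\omega_\alpha,\gamma^\vee)$ is the coefficient of $\alpha^\vee$ in $\gamma^\vee$. This coefficient is an integer because the coroots form a root system whose simple coroots span the coroot lattice.

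The only delicate point is bookkeeping. Case~2 of $\preceq_{\widehat{w}}$ can identify elements (e.g. $\alpha_{1,1}=_{\widehat{w}}\alpha_{2,1}$), so we must treat $S\setminus I^{w_j}$ as a set of simple roots, not as positions $d$. We must also confirm that Case~1 really forces all of block $j$ to precede block $t$ for $j<t$, so that the $[0]$ block lands strictly below the diagonal. Once this indexing is made precise, the statement follows immediately.
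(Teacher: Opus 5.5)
Your proposal is correct and follows the paper's own proof. The paper likewise uses Remark~\ref{rmk:restriction to sets} together with Lemma~\ref{lem:identity} to identify each diagonal block $M_{P^{w_j}}$ with an identity matrix, and then reads the block upper triangular form of $\widehat{M}$ directly off the definition of the $\widehat{M}_{P^{w_j}}$. Your explicit integrality check and your indexing remarks supply details the paper leaves implicit: the integrality check uses that $\langle\omega_\alpha,\gamma\rangle$ is the coefficient of $\alpha^\vee$ in $\gamma^\vee$.
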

\begin{proof}
By Remark~\ref{rmk:restriction to sets} together with Lemma~\ref{lem:identity}, for $1\leq j\leq m$, we have $M_{P^{w_j}}$ is the identity matrix. Hence, we obtain 
\[
\widehat{M} =
\begin{pmatrix}
I_{n_1} &M_1&  \cdots &  \cdots & \cdots\\
0       & I_{n_2-n_1} & M_2 & \cdots  & \cdots\\
\vdots  & \vdots & \ddots & I_{n_{m-1}-n_{m-2}} & M_{m-1}\\
0       & 0      & \cdots & 0 & I_{n_m-n_{m-1}}
\end{pmatrix},
\]
where $I_r$ denotes the identity matrix of order $r$. 
\end{proof}

The above lemma shows that the linear system in Eq.~\eqref{eq:linear systems} contains a subsystem of full rank. Therefore, to solve the linear system in Eq.~\eqref{eq:linear systems}, it suffices to solve the subsystem of equations corresponding to $\widehat{M}$ (and hence to the sets $\mathcal{B}_{P^{w_j}}$), with respect to the ordering $\preceq_{\widehat{w}}$: That is,
\begin{equation}\label{eq:Mhat matrices}
\widehat{M} 
\begin{pmatrix}
c_{j,\alpha}
\end{pmatrix}_{1\le j\le m,\, \alpha \in S \setminus I^{w_j}} 
=
\begin{pmatrix}
\operatorname{ht}\big(((w_{j+1}\cdots w_m)^{-1} \mu_{w_j}(\alpha))^\vee\big)+1
\end{pmatrix}_{1\le j\le m,\, \alpha \in S \setminus I^{w_j}}. 
\end{equation}
Thus, we obtain the following sublinear system: 
\begin{align}\label{eq:sublinear}
\sum_{\alpha \in S \setminus I^{w_j}} 
c_{j,\alpha}\,\langle \omega_{\alpha}, \mu_{w_j}(\gamma) \rangle
&+ \sum_{t=j+1}^{m} \sum_{\beta \in S \setminus I^{w_j}}
c_{t,\beta}\,
\left\langle \omega_{\beta}, (w_{j+1}\cdots w_t)^{-1}\mu_{w_j}(\gamma) \right\rangle \notag\\
&= \operatorname{ht}\big(((w_{j+1}\cdots w_m)^{-1}\mu_{w_j}(\gamma))^\vee\big)+1.
\end{align}
 By Lemma~\ref{lem:identity}, we solve the system \eqref{eq:sublinear} inductively to compute the coefficients $c_{j,\alpha}$. 
Hence, if $\widehat{X}(\widehat{w})$ is Gorenstein, then the coefficients $c_{j,\alpha}$ give the unique  solution of the system of equations in Eq.~\eqref{eq:linear systems}. In particular, for each $1 \leq j \leq m$ and $\alpha \in S \setminus I^{w_j}$, we have
\begin{align}\label{eq:sublinearsol}
c_{j,\alpha}
& = \big(\operatorname{ht}\big(((w_{j+1}\cdots w_m)^{-1}\mu_{w_j}(\alpha))^\vee\big)+1\big)- \sum_{t=j+1}^{m} \sum_{\beta \in S \setminus I^{w_j}}
c_{t,\beta}\,
\left\langle \omega_{\beta}, (w_{j+1}\cdots w_t)^{-1}\mu_{w_j}(\alpha) \right\rangle .
\end{align}

 We now conclude this section by proving our main theorem.

\begin{theorem}\label{thm: Fano gBS} Suppose $G$ is of simply laced type. The generalized Bott-Samelson variety $\widehat{X}(\widehat{w})$ is Fano (resp. weak Fano) if and only if the following conditions hold:

\begin{enumerate}
    \item For each $1 \le j \le m$ and every 
    \[
    \eta \in \operatorname{Peaks} X(w_j) \setminus \mathcal{B}_{P^{w_j}},
    \]
    we have
    \begin{equation}\label{eq:height relations for Xwi 2}
    \operatorname{ht}(\eta^\vee) + 1
    = \sum_{\alpha \in S \setminus I^{w_j}} 
      d_\alpha \big( \operatorname{ht}(\mu_{w_j}(\alpha)^\vee) + 1 \big),
    \end{equation}
with $d_\alpha :=\langle \omega_{\alpha}, \eta \rangle \in \mathbb{Z}_{\ge 0}$.
   In other words, $d_\alpha$ is the coefficient of $\alpha^\vee$ in the expansion of $\eta^\vee$.
    
    \item For all $1 \le j \le m$ and all $\alpha \in S \setminus I^{w_j}$, we have 
    \[
    c_{j,\alpha} > 0 \quad (\text{resp. } c_{j,\alpha} \ge 0).
    \]
\end{enumerate}
\end{theorem}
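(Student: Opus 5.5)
The argument splits along the definition of (weak) Fano for a projective Gorenstein variety: $-K$ ample (resp. nef and big). I would first show that condition (1) is exactly the Gorenstein condition, then that, given Gorensteinness, condition (2) is exactly ampleness (resp. nefness).

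\emph{Step 1: condition (1) is equivalent to $\widehat{X}(\widehat{w})$ being Gorenstein.} By Corollary~\ref{thm: Gorenstein gBS}, $\widehat{X}(\widehat{w})$ is Gorenstein if and only if each Schubert variety $X(w_j)\subseteq G/P^{w_j}$ is Gorenstein. By Theorem~\ref{thm: Gorenstein Xw} applied to $w=w_j$, the latter holds if and only if the height relation \eqref{eq:height relations for Xwi 2} is satisfied for every $\eta\in\operatorname{Peaks}X(w_j)\setminus\mathcal{B}_{P^{w_j}}$. This is condition (1). Since Fano and weak Fano varieties are by definition Gorenstein, condition (1) is necessary in both directions. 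Moreover, once it holds, $-K_{\widehat{X}(\widehat{w})}$ is Cartier, so the integers $c_{j,\alpha}$ of Eq.~\eqref{eq: c_{i,alpha} for Fano} are defined.

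\emph{Step 2: identify the $c_{j,\alpha}$.} Assuming Gorenstein, write $\mathcal{O}(-K)=\bigotimes\mathcal{L}_{j,\alpha}^{c_{j,\alpha}}$ using Theorem~\ref{thm:linebundles}. These $c_{j,\alpha}$ are unique, since the $\mathcal{L}_{j,\alpha}$ form a basis of $\operatorname{Pic}$. I must check that they coincide with the values produced by Eq.~\eqref{eq:sublinearsol}. Passing to divisor classes via Propositions~\ref{prop: divisor classes for line bundles on gBS} and \ref{anti-can}, and using the linear independence in Proposition~\ref{pro: divisor classes for gBS}, the $c_{j,\alpha}$ satisfy the full system \eqref{eq:linear systems}. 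In particular they satisfy the subsystem indexed by the peaks $\mu_{w_j}(\gamma)\in\mathcal{B}_{P^{w_j}}$; here Lemma~\ref{lem:B_w,P^w} guarantees that these are genuine peaks. By Lemma~\ref{lem:integrable}, $\widehat{M}$ is unipotent upper triangular over $\mathbb{Z}$, so the subsystem has a unique solution. It is obtained by back substitution, from $j=m$ down to $j=1$, and is given by Eq.~\eqref{eq:sublinearsol}.

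\emph{Step 3: ampleness and nefness.} By Theorem~\ref{thm:very ample}, the line bundle $\mathcal{O}(-K)$ is ample (equivalently very ample, by Corollary~\ref{cor:ample}) if and only if all $c_{j,\alpha}>0$. Likewise it is nef if and only if all $c_{j,\alpha}\ge 0$. For the weak Fano case I also need bigness, and Proposition~\ref{prop:big} supplies it automatically under the Gorenstein hypothesis. So nef and big is equivalent to all $c_{j,\alpha}\ge0$. Combining with Step 1 proves both equivalences.

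The main obstacle is Step 2, namely the consistency between the uniquely determined Picard coefficients and the values computed from the reduced system. One must confirm that Lemma~\ref{lem:integrable} gives the right correspondence between rows and the ordered index set $\bigsqcup_j(S\setminus I^{w_j})$, in particular that the blocks below the diagonal really vanish. Those vanishing entries come from $\langle\omega_\beta,\cdot\rangle$ for $\beta\in S\setminus I^{w_t}$ with $t<j$, and such $\beta$ do not appear in the equation for peaks of $X(w_j)$ in \eqref{eq:linear systems}. I would verify this block structure carefully by reading off the index ranges in \eqref{eq:linear systems}. The remaining equations of the system, those for peaks outside $\mathcal{B}_{P^{w_j}}$, impose no extra constraint here, because existence of a solution is already guaranteed by Gorensteinness.
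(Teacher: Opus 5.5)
Your proposal is correct and follows the paper's proof. Condition (1) is identified with Gorensteinness via Corollary~\ref{thm: Gorenstein gBS} and Theorem~\ref{thm: Gorenstein Xw}, and condition (2) with ampleness (resp.\ nefness) of $-K_{\widehat{X}(\widehat{w})}$ via Theorem~\ref{thm:very ample}, with bigness supplied by Proposition~\ref{prop:big}. Your Step 2 matches the Picard coefficients $c_{j,\alpha}$ with the unique solution of the unipotent triangular subsystem; the paper carries this out in Subsection~\ref{subsection: M1}, before the theorem rather than inside its proof.
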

\begin{proof}
Assume that $\widehat{X}(\widehat{w})$ is Fano. Then $\widehat{X}(\widehat{w})$ is Gorenstein, and by Theorem~\ref{thm: Gorenstein gBS}, each $X(w_j)$ is also Gorenstein. Therefore, condition (1) follows from Theorem~\ref{thm: Gorenstein Xw}.  

Moreover, if $\widehat{X}(\widehat{w})$ is Gorenstein, there exist integers 
$c_{j,\alpha} \in \mathbb{Z}$ such that
\begin{align}
\sum_{j=1}^m \sum_{\alpha \in S \setminus I^{w_j}} 
c_{j,\alpha}\,[\mathcal{L}_{j,\alpha}]
= [-K_{\widehat{X}(\widehat{w})}]\notag
\end{align}
(see Eq.\eqref{eq: c_{i,alpha} for Fano}).
Furthermore, by Theorem~\ref{thm:very ample}, the line bundle 
$\mathcal{O}\!\big([-K_{\widehat{X}(\widehat{w})}]\big)$ is very ample (resp. nef) 
if and only if $c_{j,\alpha} > 0$ (resp. $c_{j,\alpha} \ge 0$) for all $1 \le j \le m$ and 
$\alpha \in S \setminus I^{w_j}$. Hence, condition (2) holds.

Conversely, if condition (1) holds, then by Theorem~\ref{thm: Gorenstein gBS}, the variety $\widehat{X}(\widehat{w})$ is Gorenstein. The condition (2) ensures that 
$\mathcal{O}\!\big([-K_{\widehat{X}(\widehat{w})}]\big)$ is very ample, thanks to Theorem~\ref{thm:very ample}. Therefore, we conclude that $\widehat{X}(\widehat{w})$ is Fano.
Similarly, the assertion for weak Fano follows by Theorem \ref{thm:very ample} together with Proposition \ref{prop:big}. 
\end{proof}

\begin{remark}  
    Recall from Corollary~\ref{cor: Fano for Xw} that, in our setting, if $X(w) \subseteq G/P^w$ is Gorenstein, then it is Fano. Therefore, if $\widehat{X}(\widehat{w})$ is Fano, then each $X(w_j)$ must also be Fano.
However, the converse is not true in general; see Example~\ref{ex: not Fano}.
\end{remark}

\section{Some special cases}\label{sec:Recover}
Throughout this section, $G$ is not necessarily of simply laced type.
In this 
section, we give explicit formulas for the coefficients $c_{i, \alpha}$'s for several special cases of generalized Bot-Samelson varieties, namely  Bott-Samelson varieties,  $G$-Bott-Samelson varieties and
minuscule generalized Bott-Samelson varieties. Moreover, we recover the results of \cite{BS24} (and those of \cite{BC18} in the reduced case), as well as the results of \cite{Per07}.

\subsection{\texorpdfstring{$\mu_w-$}~functions for flag Bott-Samelson varieties}
In this subsection, we prove the existence of the functions $\mu_{w_i}$'s (see Section~\ref{subsub: Fano for Xw}) for flag Bott-Samelson varieties. Let $J \subseteq S$, and let $w_{0,J}$ denote the longest element of the subgroup $W_J$ of $W$. Then
\[
I^{w_{0,J}} \cap \operatorname{Supp}(w_{0,J})=\emptyset.
\]
Consequently, by Definition~\ref{def:gBS}, any generalized decomposition of the form
\[
\widehat{w}=(w_{0,J_1},\dots,w_{0,J_m}),
\]
where $J_k\subseteq S$ for each $1\leq k\leq m$, is admissible. Furthermore, in this case, the variety $\widehat{X}(\widehat{w})$ is called the \emph{flag Bott-Samelson variety} associated with $\widehat{w}$.

For each $1 \leq k \leq m$, let $w_k:=w_{0,J_k}$. Since $I^{w_{{k}}}=S\setminus \operatorname{Supp}(w_{k})$, by the definitions of $R^{+}(w)$ and $\operatorname{Peaks}X(w)$ (see Subsection~\ref{subsec: peaks for Xw}), it follows that $ R^+(w_{k})=S\setminus I^{w_{k}} 
$, 
and then \begin{align*}    \operatorname{Peaks}X(w_{k})&=\{\alpha\in S\setminus I^{w_{k}}\mid\, \ell(w_{k}s_\alpha)=\ell(w_{k})-1, \,\,w_{k}s_\alpha\in W^{P^{w_{k}}}\}= S\setminus I^{w_{k}}.\end{align*}

Let \[
\mathcal{M}_0=S\setminus I^{w_{k}}
\subseteq R^+_{w_{k},B}.
\]
Now consider the identity map
\begin{equation}\label{eq:mu map is identity}
\mu_{w_{k}} :
S\setminus I^{w_{k}}
\longrightarrow
\mathcal{M}_0,
\qquad
\alpha \longmapsto \alpha.
\end{equation}
Then, $\mathcal{M}_0$ is a
$P^{w_{k}}$-adaptation of $S\setminus I^{w_{k}}$. Moreover, by the construction of $\mathcal{B}_{P^w}$, we have \[
\mathcal{B}_{P^{w_k}}=\mathcal{M}_0=S\setminus I^{w_k}, \qquad \forall \,\,1\leq k\leq m.
\]

Consequently, we obtain the following.

\begin{proposition}Keep the same notation as above. For the flag Bott-Samelson varieties, we let 
\[
\mu_{w_k}:S\setminus I^{w_k}\longrightarrow \mathcal B_{P^{w_k}}.
\]
be the identity map.
In particular, $ \langle\omega_\alpha,\mu_{w_k}(\beta)\rangle=\delta_{\alpha \beta}$
for every $\alpha, \beta\in S\setminus I^{w_k}$.
\end{proposition}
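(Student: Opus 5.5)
The proposition has two parts. First, the identity map $\mu_{w_k}$ is the bijection $S\setminus I^{w_k}\to\mathcal B_{P^{w_k}}$ produced by the construction of Subsection~\ref{subsub: Fano for Xw}. Second, it satisfies $\langle\omega_\alpha,\mu_{w_k}(\beta)\rangle=\delta_{\alpha\beta}$. I would check everything directly from the description of $w_k=w_{0,J_k}$, without appealing to the simply laced hypothesis.

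\emph{Step 1: the sets $I^{w_k}$ and $R^+(w_k)$.} Since $w_k$ is the longest element of $W_{J_k}$, it sends every positive root of $R_{J_k}$ to a negative root. It also permutes the positive roots outside $R_{J_k}$, because $W_{J_k}$ preserves $R^+\setminus R_{J_k}^+$. Hence $I^{w_k}=S\setminus J_k=S\setminus\operatorname{Supp}(w_k)$ and $R^+(w_k)=R^+_{J_k}$. In particular $S\setminus I^{w_k}=J_k\subseteq R^+(w_k)$.

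\emph{Step 2: the simple roots of $J_k$ lie in $\operatorname{Peaks}X(w_k)$.} For $\alpha\in J_k$ we have $\ell(w_ks_\alpha)=\ell(w_k)-1$, since $w_k(\alpha)<0$. So $\alpha\in R^+_{w_k,B}$. Every element of $W_{J_k}$ is a minimal coset representative in $W/W_{P^{w_k}}$, because $W_{J_k}\cap W_{S\setminus J_k}=\{e\}$ and elements of $W_{J_k}$ send $S\setminus J_k$ to positive roots. In particular $w_ks_\alpha\in W^{P^{w_k}}$, so $\alpha\in\operatorname{Peaks}X(w_k)$. (Conversely, the Schubert divisors of $X(w_k)\cong L_{J_k}/B_{J_k}$ correspond exactly to the simple roots in $J_k$, so $\operatorname{Peaks}X(w_k)=J_k$, matching the paper's claim; only the inclusion is needed here.)

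\emph{Step 3: run the construction.} The set $\mathcal M_0=S\setminus I^{w_k}$ with $\mu_{w_k}=\mathrm{id}$ is trivially a $P^{w_k}$-adaptation: $\alpha\in\alpha+\bigoplus_{\beta\in I^{w_k}}\mathbb Z_{\ge0}\beta$, and $\mathcal M_0\subseteq R^+_{w_k,B}$ by Step 2. The construction stops at $\mathcal M_0$ exactly when $w_ks_{\alpha}\in W^{P^{w_k}}$ for all $\alpha\in S\setminus I^{w_k}$, which Step 2 gives. This stopping condition is the only place the construction could proceed further, and hence the only place Lemma~\ref{lem: M' set} (which needs simply laced type) would enter. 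Since the construction never moves, $\mathcal B_{P^{w_k}}=\mathcal M_0$ and no simply laced hypothesis is required.

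\emph{Step 4: the pairing.} Finally $\langle\omega_\alpha,\beta\rangle=\delta_{\alpha\beta}$ for simple roots by the definition of fundamental weights. This gives the displayed identity, and $M_{P^{w_k}}$ is the identity matrix.

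The only point needing care is the minimal-coset claim in Step 2, that $w_ks_\alpha$ sends $I^{w_k}=S\setminus J_k$ to positive roots. I would argue it directly: an element of $W_{J_k}$ applied to $\gamma\in S\setminus J_k$ equals $\gamma$ plus a combination of roots in $J_k$, which is a positive root since its $\gamma$-coefficient is $1$.
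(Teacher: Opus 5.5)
Your proposal is correct and follows the paper's route. Both take $\mathcal M_0=S\setminus I^{w_k}$ with $\mu_{w_k}=\mathrm{id}$ as a $P^{w_k}$-adaptation, note that $w_ks_\alpha\in W^{P^{w_k}}$ so the construction of $\mathcal B_{P^{w_k}}$ stops immediately, and read off $\langle\omega_\alpha,\beta\rangle=\delta_{\alpha\beta}$. Your Step~1 is more accurate than the paper's text, which writes $R^+(w_k)=S\setminus I^{w_k}$, although in general $R^+(w_k)=R^+_{J_k}$ and only $R^+_{w_k,B}=\operatorname{Peaks}X(w_k)=S\setminus I^{w_k}$ holds; this slip does not affect the conclusion.
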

\begin{remark}
    For flag Bott-Samelson varieties, we have  $\operatorname{Peaks}X(w_{k})= \mathcal{B}_{P^{w_k}}$ for each $1 \leq k \leq m$. Thus, condition $(1)$ of Theorem \ref{thm: Fano gBS} is automatically satisfied. 
\end{remark}

Any Bott-Samelson variety arises as a flag Bott-Samelson variety by taking
$\#(J_k)=1$ for every $1\leq k\leq m$. Similarly, every $G$-Bott-Samelson variety is also a special case of a flag Bott-Samelson variety, obtained by assuming that
\[J_1=S,\qquad \text{and}\qquad
\#(J_k)=1,\qquad \text{for all }2\leq k\leq m.
\]

\subsection{Fano Bott-Samelson varieties}
Let $w\in W$, and let $\tilde w=s_{i_1}s_{i_2}\cdots s_{i_r}$ be an expression (not necessarily reduced) of $w$.

Take $m = r$ and $w_j = s_{i_j}$ for all $1 \leq j \leq r$ in the construction of $\widehat{X}(\hat{w})$. Then we obtain
$
\widehat{X}(\hat{w}) \simeq Z(\tilde{w}).
$ For a given simple root $\alpha_i \in S$, we have
\[
\mathrm{Peaks}(X(s_i)) = \mathcal{B}_{P^{s_i}} = \{\alpha_i\}.
\]
Thus, condition~(1) of Theorem \ref{thm: Fano gBS} is vacuous. Note that
\[
\mathrm{Peaks}(Z(\tilde{w})) = \left\{ s_{i_r}s_{i_{r-1}}\cdots s_{i_2}(\alpha_{i_1}),\, \dots,\, s_{i_r}(\alpha_{i_{r-1}}), \, \alpha_{i_r} \right\}.
\]

Moreover, for each $1 \leq j \leq r$, we have
\[
S \setminus I^{s_{i_j}} = \{\alpha_{i_j}\} \quad \text{and} \quad \mu_{s_{i_j}}(\alpha_{i_j}) = \alpha_{i_j}.
\]
Hence, we obtain
\begin{equation}
\sum_{j=1}^r 
c_{j,\alpha_{i_j}} \,[\mathcal{L}_{j,\alpha_{i_j}}]
= [-K_{Z(\tilde{w})}].
\end{equation}

For $1 \leq j \leq r$, let
\[
c_j := c_{j,\alpha_{i_j}}
\quad\text{and}\quad
d_j := \left\langle \sum_{t=j}^r \alpha_{i_t},\, \alpha_{i_j} \right\rangle.
\]

Then the coefficients $c_j$'s can be computed inductively as follows.

\begin{theorem}\label{thm:BSCia} For each $1 \leq j \leq r-1$, 
\[
c_j = d_j - \sum_{t=j+1}^{r}\langle \omega_{i_t},\alpha_{i_j}\rangle c_t\quad \text{and} \quad c_r=2.
    \]        
\end{theorem}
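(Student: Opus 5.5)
We specialize the sublinear system in Eq.~\eqref{eq:sublinear} to $m=r$, $w_j=s_{i_j}$ and $\mu_{s_{i_j}}(\alpha_{i_j})=\alpha_{i_j}$. Since $\#(S\setminus I^{s_{i_j}})=1$ for every $j$, the matrix $\widehat M$ of Lemma~\ref{lem:integrable} is $r\times r$ unipotent upper triangular. Set
\[
\gamma^{(j)}_t:=s_{i_t}s_{i_{t-1}}\cdots s_{i_{j+1}}(\alpha_{i_j})\quad (t\ge j),\qquad \gamma^{(j)}_j=\alpha_{i_j}.
\]
Then Eq.~\eqref{eq:sublinearsol} reads
\[
c_j=\operatorname{ht}\big((\gamma^{(j)}_r)^\vee\big)+1-\sum_{t=j+1}^{r}c_t\,\langle\omega_{i_t},\gamma^{(j)}_t\rangle .
\]
These are exactly the coefficients in $[-K_{Z(\tilde w)}]=\sum_j c_j[\mathcal L_{j,\alpha_{i_j}}]$, because $Z(\tilde w)$ is smooth and $\widehat M$ is invertible over $\mathbb Z$. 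For $j=r$ this gives $c_r=\operatorname{ht}(\alpha_{i_r}^\vee)+1=2$, which is the base case.

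\emph{Step 1.} Observe that the $j$-th equation involves only $c_j,\dots,c_r$ and the tail $(s_{i_j},\dots,s_{i_r})$. The proof is therefore a direct rewriting of the $j$-th equation, with no outer induction needed.

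\emph{Step 2.} Expand both the height term and the pairings by the one-step reflection rule
\[
\gamma^{(j)}_t=\gamma^{(j)}_{t-1}-\langle\gamma^{(j)}_{t-1},\alpha_{i_t}\rangle\alpha_{i_t}.
\]
For the height this telescopes, using $\operatorname{ht}(\alpha^\vee)=1$ for simple coroots and pairing with $\rho$ as in the definition of $\operatorname{ht}$:
\[
\operatorname{ht}\big((\gamma^{(j)}_r)^\vee\big)=1-\sum_{t=j+1}^{r}\langle\gamma^{(j)}_{t-1},\alpha_{i_t}\rangle .
\]
For the pairings, using $\langle\omega_{i_t},\alpha_{i_t}\rangle=1$, we get
\[
\langle\omega_{i_t},\gamma^{(j)}_t\rangle=\langle\omega_{i_t},\gamma^{(j)}_{t-1}\rangle-\langle\gamma^{(j)}_{t-1},\alpha_{i_t}\rangle .
\]

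\emph{Step 3.} Substitute these expansions into Eq.~\eqref{eq:sublinearsol}. The aim is to show that the terms $\langle\gamma^{(j)}_{t-1},\alpha_{i_t}\rangle$, weighted by $(1-c_t)$ and by the earlier pairings, collapse to
\[
d_j-2=\sum_{t>j}\langle\alpha_{i_t},\alpha_{i_j}\rangle ,
\]
and that the remaining pairings reduce to $\langle\omega_{i_t},\alpha_{i_j}\rangle$. The tool for this is to use the already-established equations for the indices $t>j$, namely Eq.~\eqref{eq:sublinearsol} for $c_t$, to eliminate the combinations that arise. Concretely, I would prove by descending induction on $j$ the auxiliary identity
\[
\sum_{t=j+1}^{r}c_t\langle\omega_{i_t},\gamma^{(j)}_t\rangle+\sum_{t=j+1}^{r}\langle\gamma^{(j)}_{t-1},\alpha_{i_t}\rangle=\sum_{t=j+1}^{r}c_t\langle\omega_{i_t},\alpha_{i_j}\rangle-\sum_{t=j+1}^{r}\langle\alpha_{i_t},\alpha_{i_j}\rangle .
\]
Inserting this identity into Step~2 gives the stated formula $c_j=d_j-\sum_{t>j}\langle\omega_{i_t},\alpha_{i_j}\rangle c_t$.

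The main obstacle is this auxiliary identity, i.e.\ the bookkeeping that converts pairings with the reflected roots $\gamma^{(j)}_t$ into pairings with $\alpha_{i_j}$ itself. The difficulty is in tracking which of the symmetric or non-symmetric pairing conventions $\langle\cdot,\cdot\rangle$ is used at each step, since $G$ is not assumed simply laced here. If the direct telescoping becomes unwieldy, I would instead verify the claim through the Chow ring of $Z(\tilde w)$. The plan there is to express $[-K_{Z(\tilde w)}]=\sum_j T_j$ using Perrin's relative tangent formula $T_j=\sum_{k\le j}\langle\beta_j,\beta_k\rangle\xi_k$, and to write the $\mathcal L_j$ in terms of the $\xi_k$ via Proposition~\ref{Demazure}. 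The formula would then follow by checking the triangular change of basis one column at a time; in this route $d_j$ arises naturally as the coefficient contributed by $\sum_{t\ge j}T_t$.
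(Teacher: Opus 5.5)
There is a genuine gap. Your auxiliary identity in Step~3 is equivalent to the theorem itself, and you only say you would prove it by descending induction, without the step that makes it work. Expanding from the outer end, as in Step~2, relates $\gamma^{(j)}_t$ only to $\gamma^{(j)}_{t-1}$. By itself it does not connect equation $j$ to the equations for $t>j$.

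The missing idea is to peel off the \emph{innermost} reflection. Write $\langle\omega_{i_k},\gamma^{(j)}_k\rangle=\langle s_{i_{j+1}}\cdots s_{i_k}\omega_{i_k},\alpha_{i_j}\rangle$ and remove $s_{i_{j+1}},s_{i_{j+2}},\dots$ in turn. This gives
\[
\langle\omega_{i_k},\gamma^{(j)}_k\rangle=\langle\omega_{i_k},\alpha_{i_j}\rangle-\sum_{t=j+1}^{k}\langle\alpha_{i_t},\alpha_{i_j}\rangle\,\langle\omega_{i_k},\gamma^{(t)}_k\rangle \qquad (k\ge j),
\]
so row $j$ of the system is $\langle\omega_{i_k},\alpha_{i_j}\rangle$ minus a combination of the later rows. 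Set $b_t:=\operatorname{ht}\big((\gamma^{(t)}_r)^\vee\big)+1$. For every $t>j$, add $\langle\alpha_{i_t},\alpha_{i_j}\rangle$ times the \emph{original} $t$-th equation to the $j$-th one. This yields
\[
\sum_{k\ge j}\langle\omega_{i_k},\alpha_{i_j}\rangle c_k=b_j+\sum_{t>j}\langle\alpha_{i_t},\alpha_{i_j}\rangle b_t .
\]
The same inner expansion applied to $s_{i_{j+1}}\cdots s_{i_r}\rho$ gives
\[
b_j-1=1-\sum_{t>j}\langle\alpha_{i_t},\alpha_{i_j}\rangle(b_t-1),
\]
so the right-hand side equals $d_j$. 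Since $\langle\omega_{i_j},\alpha_{i_j}\rangle=1$, this is exactly the claimed formula. This is the paper's $P(\tilde w)A(\tilde w)=N(\tilde w)$ and $P(\tilde w)b(\tilde w)^T=d(\tilde w)$. It uses the original equations for $t>j$, so no induction on $j$ is needed.

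Second, your Step~2 formulas put the pairing in the wrong order. With $\langle\mu,\alpha\rangle=(\mu,\alpha^\vee)$ one has $(s_\alpha\gamma)^\vee=\gamma^\vee-\langle\alpha,\gamma\rangle\alpha^\vee$. The correct one-step rules are therefore
\[
\operatorname{ht}\big((\gamma^{(j)}_t)^\vee\big)=\operatorname{ht}\big((\gamma^{(j)}_{t-1})^\vee\big)-\langle\alpha_{i_t},\gamma^{(j)}_{t-1}\rangle,\qquad
\langle\omega_{i_t},\gamma^{(j)}_t\rangle=\langle\omega_{i_t},\gamma^{(j)}_{t-1}\rangle-\langle\alpha_{i_t},\gamma^{(j)}_{t-1}\rangle,
\]
with $\langle\alpha_{i_t},\gamma^{(j)}_{t-1}\rangle$, not $\langle\gamma^{(j)}_{t-1},\alpha_{i_t}\rangle$. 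This matters because the theorem is meant for arbitrary $G$ (Section~\ref{sec:Recover}, and the $B_2$ and $G_2$ examples).

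For instance, take type $B_2$ with $\alpha_1$ long, $\tilde w=(s_2,s_1)$ and $j=1$. Then $\gamma^{(1)}_2=\alpha_1+\alpha_2$ has coroot $2\alpha_1^\vee+\alpha_2^\vee$. So its height is $3$ and $\langle\omega_1,\gamma^{(1)}_2\rangle=2$, whereas your formulas give $2$ and $1$. With $c_2=2$, your auxiliary identity as written reads $3=2$. It must be corrected before it can be proved.

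Likewise, in your Chow-ring fallback, the coefficient of $\xi_j$ in $\sum_t T_t$ is $b_j$, not $d_j$. The quantity $d_j$ only appears after the row operations described above.
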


\begin{proof}
    By the construction of the coefficients $c_j$ (see Eq.\eqref{eq:sublinear}), we obtain a system of linear equations of the form
\begin{equation}\label{eq:bsmatrices}
A(\tilde{w}) (c_1, \dots, c_r)^T = b(\tilde{w})^T,
\end{equation}
where
\[
A(\tilde{w}) = (a_{jk}) \quad \text{with} \quad
a_{jk}
=
\left\langle
\omega_{i_k},
\, s_{i_k}\cdots s_{i_{j+1}}(\alpha_{i_j})
\right\rangle,
\]
and
\[
b(\tilde{w}) = (b_1, \ldots, b_r) \quad \text{with} \quad
b_j = \operatorname{ht}\!\big(((s_{i_{j+1}}\cdots s_{i_r})^{-1}\alpha_{i_j})^\vee\big) + 1.
\]
In order to solve this system,  we define two matrices associated with the expression $\tilde{w}$ as follows:
\[
N(\tilde{w}) := (a'_{jk})_{1 \leq j,k \leq r}, 
\quad \text{where} \quad
a'_{jk} =
\begin{cases}
\langle \omega_{i_k}, \alpha_{i_j} \rangle & \text{if } j \leq k,\\
0 & \text{otherwise},
\end{cases}
\]
and
\[
P(\tilde{w}) := (b'_{jk})_{1 \leq j,k \leq r}, 
\quad \text{where} \quad
b'_{jk} =
\begin{cases}
1 & \text{if } j = k,\\
\langle \alpha_{i_k}, \alpha_{i_j} \rangle & \text{if } j < k,\\
0 & \text{otherwise}.
\end{cases}
\]

By construction, both $N(\tilde{w})$ and $P(\tilde{w})$ are unipotent matrices and are invertible over $\mathbb{Z}$. 
The following identities will be established in Proposition~\ref{pro: matrix relation} below:
\[ P(\tilde{w}) \cdot A(\tilde{w}) = N(\tilde{w}) \quad \text{and} \quad P(\tilde{w}) \cdot b(\tilde{w})^T = d(\tilde{w}),\] where $d(\tilde{w})=(d_1, \dots, d_r)^T$.
Now consider the new system of equations: 
\[
N(\tilde{w}) (c_1, \dots, c_r)^T = d(\tilde{w}).\]
 By the definition of $N(\tilde{w})$, we solve this system of equations recursively using the row-reduced echelon form and obtain the unique solution
    \[
    c_j = d_j - \sum_{t=j+1}^{r}\langle \omega_{i_t},\alpha_{i_j}\rangle c_t, 
    \]
    for all $1\leq j\leq r$.    This completes the proof.
\end{proof}

\begin{proposition}\label{pro: matrix relation}
The following statements hold:
\begin{enumerate}
    \item $P(\tilde{w}) \cdot A(\tilde{w}) = N(\tilde{w})$.
    \item $P(\tilde{w}) \cdot b(\tilde{w})^T = d(\tilde{w})$, where $d(\tilde{w})=(d_1, \dots, d_r)^T$, with
    $d_k = \left\langle \sum_{t=k}^r \alpha_{i_t},\, \alpha_{i_k} \right\rangle
    \quad \text{for } 1 \leq k \leq r.
    $
\end{enumerate}
\end{proposition}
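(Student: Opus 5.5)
The plan is to reduce both identities to one telescoping formula in the coroot lattice. Throughout I use the paper's convention $\langle \mu,\beta\rangle=(\mu,\beta^\vee)$. I read the entries of $A(\tilde w)$ as coming from the sublinear system \eqref{eq:sublinear} with $w_t=s_{i_t}$. So $a_{jk}=\langle \omega_{i_k}, s_{i_k}\cdots s_{i_{j+1}}(\alpha_{i_j})\rangle$ for $j\le k$, and $a_{jk}=0$ for $j>k$. In particular $A(\tilde w)$ is upper triangular with $a_{jj}=\langle\omega_{i_j},\alpha_{i_j}\rangle=1$.

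For $j\le l\le k$, set $y^{(k)}_l:=s_{i_k}\cdots s_{i_{l+1}}(\alpha_{i_l}^\vee)$, which is the coroot of $s_{i_k}\cdots s_{i_{l+1}}(\alpha_{i_l})$. With this notation $a_{lk}=(\omega_{i_k},y^{(k)}_l)$.

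\textbf{Key identity.} For any $\lambda$ in the coroot space and any $j\le k$,
\[
s_{i_k}\cdots s_{i_{j+1}}(\lambda)=\lambda-\sum_{l=j+1}^{k}(\alpha_{i_l},\lambda)\, s_{i_k}\cdots s_{i_{l+1}}(\alpha_{i_l}^\vee).
\]
To prove it, write the left side minus $\lambda$ as the telescoping sum $\sum_{l=j+1}^{k}\bigl(s_{i_k}\cdots s_{i_l}(\lambda)-s_{i_k}\cdots s_{i_{l+1}}(\lambda)\bigr)$. Then use $s_{i_l}(\lambda)-\lambda=-(\alpha_{i_l},\lambda)\alpha_{i_l}^\vee$ in each summand.

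Apply this with $\lambda=\alpha_{i_j}^\vee$, noting $(\alpha_{i_l},\alpha_{i_j}^\vee)=\langle\alpha_{i_l},\alpha_{i_j}\rangle=b'_{jl}$. This gives
\begin{equation*}
y^{(k)}_j+\sum_{l=j+1}^{k}\langle\alpha_{i_l},\alpha_{i_j}\rangle\, y^{(k)}_l=\alpha_{i_j}^\vee .
\end{equation*}

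\textbf{Proof of (1).} Pair the displayed identity with $\omega_{i_k}$. The left side becomes $\sum_{l}b'_{jl}a_{lk}=(P(\tilde w)A(\tilde w))_{jk}$. Here the terms with $l>k$ vanish because $a_{lk}=0$, and $b'_{jl}=0$ for $l<j$. The right side is $\langle\omega_{i_k},\alpha_{i_j}\rangle=a'_{jk}$. This settles the case $j\le k$.

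For $j>k$, every term $b'_{jl}a_{lk}$ has either $l<j$, so $b'_{jl}=0$, or $l\ge j>k$, so $a_{lk}=0$. Hence the entry is $0=a'_{jk}$.

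\textbf{Proof of (2).} Take $k=r$. Then $y^{(r)}_l$ is exactly the coroot $\eta_l^\vee$ of the peak $(s_{i_{l+1}}\cdots s_{i_r})^{-1}\alpha_{i_l}$, so $b_l=\operatorname{ht}(y^{(r)}_l)+1$.

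Pair the identity with $\rho$, using $\operatorname{ht}(\alpha_{i_j}^\vee)=1$, to get $\sum_{l\ge j}b'_{jl}\operatorname{ht}(y^{(r)}_l)=1$. Next add $\sum_{l\ge j}b'_{jl}\cdot 1=1+\sum_{l>j}\langle\alpha_{i_l},\alpha_{i_j}\rangle$. This yields
\[
(P(\tilde w)b(\tilde w)^T)_j=2+\sum_{l=j+1}^{r}\langle\alpha_{i_l},\alpha_{i_j}\rangle=\Big\langle \sum_{t=j}^{r}\alpha_{i_t},\alpha_{i_j}\Big\rangle=d_j,
\]
since $\langle\alpha_{i_j},\alpha_{i_j}\rangle=2$.

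\textbf{Expected obstacle.} The only delicate point is bookkeeping of conventions, not any conceptual step. One must check that the entries of $A(\tilde w)$ really are the pairings $(\omega_{i_k},y^{(k)}_j)$ with the product $s_{i_k}\cdots s_{i_{j+1}}$ in the stated order. One must also check that the pairing $\langle\alpha_{i_l},\alpha_{i_j}\rangle$ in $P(\tilde w)$ equals $(\alpha_{i_l},\alpha_{i_j}^\vee)$, so that the telescoping works in the non-simply-laced case. I would therefore fix the identity in coroot space first and translate each matrix entry into it explicitly.
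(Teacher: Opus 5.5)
Your proof is correct. For (1) it is essentially the paper's argument. The paper rewrites $\langle\omega_{i_k}, s_{i_k}\cdots s_{i_{j+1}}(\alpha_{i_j})\rangle$ as $\langle s_{i_{j+1}}\cdots s_{i_k}(\omega_{i_k}),\alpha_{i_j}\rangle$ and peels off one reflection at a time, which is exactly your telescoping identity paired with $\omega_{i_k}$. It also treats $A(\tilde w)$ as upper triangular, as you do.

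For (2) your route is genuinely different.
\begin{itemize}
\item \emph{The paper's route.} It first imports $b_k=\sum_{j=k}^{r}\langle\beta_j,\beta_k\rangle$, with $\beta_j=s_{i_1}\cdots s_{i_{j-1}}(\alpha_{i_j})$. This comes from comparing two geometric formulas for $[-K_{Z(\tilde w)}]$: Perrin's relative tangent bundle formula behind Eq.~\eqref{eq:ACS for BSv}, and the Demazure/Mehta--Ramanathan description in Corollary~\ref{cor:antican}. It then inducts on $r$. The inductive step uses the relation $\langle\alpha_{i_r},\alpha_{i_k}\rangle=-\langle\beta_r,\beta_k\rangle-\sum_{t=k+1}^{r-1}\langle\alpha_{i_t},\alpha_{i_k}\rangle\langle\beta_r,\beta_t\rangle$, stated without proof; it is in fact another instance of your telescoping identity.
\item \emph{Your route.} You take $k=r$ in the same coroot identity and pair it with $\rho$. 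Linearity of $\operatorname{ht}$ gives $\sum_{l\ge j}b'_{jl}(b_l-1)=1$, and adding the $j$-th row sum of $P(\tilde w)$ yields $d_j$ at once.
\end{itemize}

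Your version is shorter and purely root-theoretic. It needs no induction and no geometric input, and it holds verbatim for non-reduced words and non-simply-laced types. It also makes transparent that (1) and (2) are the same identity, evaluated against $\omega_{i_k}$ and against $\rho$ respectively. In exchange, the paper's route interprets $b(\tilde w)$ through the roots $\beta_j$ that govern the relative tangent bundles $T_j$. This ties $d(\tilde w)$ to the decomposition $-K_{Z(\tilde w)}=\sum_j T_j$.
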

\begin{proof}
  For $j \leq k$, the $(j,k)$-th entry of the matrix $P(\tilde{w}) \cdot A(\tilde{w})$ is given by
\begin{equation}\label{eq:ajk'}
n'_{jk} :=
\left\langle \omega_{i_k},\, s_{i_k}\cdots s_{i_{j+1}}(\alpha_{i_j}) \right\rangle
+ \sum_{t=j+1}^{k}
\langle \alpha_{i_t},\, \alpha_{i_j} \rangle
\left\langle \omega_{i_k},\, s_{i_k}\cdots s_{i_{t+1}}(\alpha_{i_t}) \right\rangle,
\end{equation}
and all other entries are zero.  Observe that
\begin{align*}
\left\langle \omega_{i_k},\, s_{i_k}\cdots s_{i_{j+1}}(\alpha_{i_j}) \right\rangle
&= \left\langle s_{i_{j+1}}\cdots s_{i_k}(\omega_{i_k}),\, \alpha_{i_j} \right\rangle \\
&= \left\langle s_{i_{j+2}}\cdots s_{i_k}(\omega_{i_k})
- \big\langle s_{i_{j+2}}\cdots s_{i_k}(\omega_{i_k}),\, \alpha_{i_{j+1}} \big\rangle \alpha_{i_{j+1}},\,
\alpha_{i_j} \right\rangle \\
&= \left\langle s_{i_{j+2}}\cdots s_{i_k}(\omega_{i_k}),\, \alpha_{i_j} \right\rangle
- \langle \alpha_{i_{j+1}},\, \alpha_{i_j} \rangle
\left\langle \omega_{i_k},\, s_{i_k}\cdots s_{i_{j+2}}(\alpha_{i_{j+1}}) \right\rangle.
\end{align*}

Iterating this procedure, we obtain
\[
\left\langle \omega_{i_k},\, s_{i_k}\cdots s_{i_{j+1}}(\alpha_{i_j}) \right\rangle
= \langle \omega_{i_k},\, \alpha_{i_j} \rangle
- \sum_{t=j+1}^{k}
\langle \alpha_{i_t},\, \alpha_{i_j} \rangle
\left\langle \omega_{i_k},\, s_{i_k}\cdots s_{i_{t+1}}(\alpha_{i_t}) \right\rangle.
\]

Substituting this into Eq.\eqref{eq:ajk'}, we deduce that
$
n'_{jk} = \langle \omega_{i_k},\, \alpha_{i_j} \rangle = a'_{jk}.
$
Therefore, we conclude that
$P(\tilde{w}) \cdot A(\tilde{w}) = N(\tilde{w})$,
which proves (1).

  Proof of (2): By Corollary \ref{cor:antican} with Eq.(\ref{eq:ACS for BSv}), we have 
  \[
   b_k=\operatorname{ht}(s_{i_r}\cdots s_{i_{k+1}}(\alpha_{i_k})^\vee)+1=\sum_{j=k}^{r}\langle\beta_{j},\beta_{k}\rangle.
   \]
  We proceed by induction on $r$. If $r=1$, then $d_1=\langle \beta_1,\beta_1\rangle=\langle\alpha_1,\alpha_1\rangle$.  Now we assume that the result is true for $r-1$. 
   The matrix multiplication $P(\tilde{w})\cdot b(\tilde{w})^T$ simply gives 
$d_r=\langle\beta_r,\beta_r\rangle=\langle\alpha_{i_r},\alpha_{i_r}\rangle$, 
 and for $1\leq k\leq r-1$, the $k$-th entry of the column matrix $P(\tilde{w})\cdot b(\tilde{w})^T$ is 
   \begin{align*}  d_{k}:&=b_{k}+\sum_{t=k+1}^{r}\langle \alpha_{i_{t}},\alpha_{i_{k}}\rangle b_{t}\\
   &=\left(\sum_{j=k}^{r-1}\langle\beta_{j},\beta_k\rangle+\sum_{t=k+1}^{r-1}\langle\alpha_{i_t},\alpha_{i_{k}}\rangle\sum_{j=t}^{r-1}\langle\beta_{j},\beta_{t}\rangle\right)+ \left(\langle\beta_{r},\beta_{k}\rangle+\sum_{t=k+1}^{r}\langle\alpha_{i_{t}},\alpha_{i_k}\rangle\langle\beta_{r},\beta_{t}\rangle\right).
   \end{align*}
By induction, we have\[
\sum_{j=k}^{r-1}\langle\beta_{j},\beta_k\rangle+\sum_{t=k+1}^{r-1}\langle\alpha_{i_t},\alpha_{i_{k}}\rangle\sum_{j=t}^{r-1}\langle\beta_{j},\beta_{t}\rangle=\left\langle\sum_{t=k}^{r-1}\alpha_{i_{t}},\alpha_{i_k}\right\rangle.
\] Note that \[
\langle\alpha_{i_r},\alpha_{i_k}\rangle=-\langle\beta_{r},\beta_{k}\rangle-\sum_{t=k+1}^{r-1}\langle\alpha_{i_{t}},\alpha_{i_k}\rangle\langle\beta_{r},\beta_{t}\rangle.
\] This implies that \[
\langle\beta_{r},\beta_{k}\rangle+\sum_{t=k+1}^{r}\langle\alpha_{i_{t}},\alpha_{i_k}\rangle\langle\beta_{r},\beta_{t}\rangle=\langle\alpha_{i_{r}},\alpha_{i_k}\rangle.
\] Hence, we obtain \[
d_k=\left\langle\sum_{t=k}^{r-1}\alpha_{i_{t}},\alpha_{i_k}\right\rangle+\langle\alpha_{i_{r}},\alpha_{i_k}\rangle=\left\langle\sum_{t=k}^{r}\alpha_{i_{t}},\alpha_{i_k}\right\rangle.
\]
This completes the proof for $(2)$.
      \end{proof}

For $1\leq j\leq r$, set 
\[
m_{jj}:=\langle \alpha_{i_j}, \alpha_{i_j} \rangle=2 \quad \text{and} \quad m_{jk}:=\langle\alpha_{i_j}-\sum_{l=k+1}^jm_{jl}\omega_{i_l}, \alpha_{i_k}\rangle \quad \text{for all} \quad 1\leq k\leq j-1. \]
  \begin{corollary}\label{prop:BS} We have  
    \[
(c_1,\dots,c_r)=
\left(
\sum_{l=1}^{r}m_{l1},
\sum_{l=2}^{r}m_{l2},
\dots,
2
\right).
\]
\end{corollary}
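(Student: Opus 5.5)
Starting from the recursion of Theorem~\ref{thm:BSCia}, I would show that the vector $(m_{jk})$ unwinds it. The idea is to introduce, for each $j$, the weight-type vectors
\[
v_j:=\alpha_{i_j}-\sum_{l=1}^{j} m_{jl}\,\omega_{i_l}
\]
(or rather the partial versions $v_j^{(k)}:=\alpha_{i_j}-\sum_{l=k+1}^{j} m_{jl}\omega_{i_l}$, so that $m_{jk}=\langle v_j^{(k)},\alpha_{i_k}\rangle$ by definition). The claim to prove is
\[
c_k=\sum_{l=k}^{r} m_{lk}\qquad (1\le k\le r),
\]
and I would prove it by descending induction on $k$. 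For $k=r$ it is $c_r=2=m_{rr}$, matching Theorem~\ref{thm:BSCia}.

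For the inductive step, assume $c_t=\sum_{l=t}^r m_{lt}$ for all $t>k$. Theorem~\ref{thm:BSCia} gives
\[
c_k=\Big\langle \sum_{l=k}^r\alpha_{i_l},\alpha_{i_k}\Big\rangle-\sum_{t=k+1}^{r}\langle\omega_{i_t},\alpha_{i_k}\rangle\sum_{l=t}^r m_{lt}.
\]
Swap the double sum to $\sum_{l=k+1}^{r}\sum_{t=k+1}^{l} m_{lt}\langle\omega_{i_t},\alpha_{i_k}\rangle$, and regroup by $l$:
\[
c_k=\langle\alpha_{i_k},\alpha_{i_k}\rangle+\sum_{l=k+1}^{r}\Big\langle \alpha_{i_l}-\sum_{t=k+1}^{l} m_{lt}\omega_{i_t},\,\alpha_{i_k}\Big\rangle .
\]
The $l$-th bracket is exactly $m_{lk}$ by the defining recursion, and the first term is $2=m_{kk}$. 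Hence $c_k=\sum_{l=k}^r m_{lk}$, which completes the induction.

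The one point needing care is bilinearity in the pairing $\langle\cdot,\alpha_{i_k}\rangle$ with first argument a weight. This holds because the paper defines $\langle\mu,\alpha\rangle=(\mu,\alpha^\vee)$, which is linear in $\mu$, so $\langle\omega_{i_t},\alpha_{i_k}\rangle$ is to be read in that sense. This is consistent with its use in Theorem~\ref{thm:BSCia}, so no issue arises. The main (mild) obstacle is keeping the index ranges straight when interchanging the sums: $t$ runs over $k+1\le t\le l$ and $l$ over $k+1\le l\le r$, which matches the truncation $l\ge k+1$ in the definition of $m_{lk}$. Everything else is direct substitution.
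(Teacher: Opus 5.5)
Your argument is correct and takes the same approach as the paper: the paper's proof only says the result follows from the definitions of $m_{jk}$ and $d_j$ together with Theorem~\ref{thm:BSCia}, and you supply the details. Those details are the descending induction on $k$ through the recursion of Theorem~\ref{thm:BSCia}, the interchange of the double sum over $k+1\le t\le l\le r$, and the identification of each regrouped bracket with $m_{lk}$, with $m_{kk}=2$. The auxiliary vectors $v_j$ you introduce are never actually used (they coincide with the $\lambda_j$ of the later $G$-Bott-Samelson subsection), but they do no harm.
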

\begin{proof}
The proof follows from the definitions of $m_{jk}$ and $d_j$, together with Theorem~\ref{thm:BSCia}.
\end{proof}

Therefore, by Theorem~\ref{thm: Fano gBS}, we derive the following (see \cite[Theorem 1.2]{BS24}):
\begin{theorem}\label{thm:FanoBS}
    The Bott-Samelson variety $Z(\tilde w)$ is Fano (weak Fano) if and only if $\sum_{l=j}^rm_{lj}>0$ (resp. $\geq 0$) for all $1\leq j\leq r$.
\end{theorem}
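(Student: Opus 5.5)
Since $Z(\tilde w)$ is the generalized Bott-Samelson variety attached to the decomposition $\widehat w=(s_{i_1},\dots,s_{i_r})$, the statement should be a direct specialization of Theorem~\ref{thm: Fano gBS}. The only place where simply-lacedness entered that theorem was the existence of the maps $\mu_{w_j}$ with $\langle\omega_\alpha,\mu_{w_j}(\beta)\rangle=\delta_{\alpha\beta}$. For a Bott-Samelson variety these exist in every type, being the identity maps of the flag Bott-Samelson case. So the argument of Theorem~\ref{thm: Fano gBS} goes through verbatim, as noted in the remark following Theorem~\ref{thm:main}.

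First I will check condition (1) of Theorem~\ref{thm: Fano gBS}. For each $j$ we have $\operatorname{Peaks}X(s_{i_j})=\mathcal B_{P^{s_{i_j}}}=\{\alpha_{i_j}\}$. Hence $\operatorname{Peaks}X(w_j)\setminus\mathcal B_{P^{w_j}}$ is empty, condition (1) is vacuous, and $Z(\tilde w)$ is smooth and in particular Gorenstein; this also follows from Corollary~\ref{thm: Gorenstein gBS}. Therefore $-K_{Z(\tilde w)}=\sum_j c_j\mathcal L_j$ for unique integers $c_j$, because the $\mathcal L_j=\mathcal L_{j,\alpha_{i_j}}$ form a basis by Theorem~\ref{thm:LT line bundles}.

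By Theorem~\ref{thm:very ample}, or directly by Theorem~\ref{thm:LT line bundles}(2), $Z(\tilde w)$ is Fano if and only if all $c_j>0$. For weak Fano, nefness is equivalent to $c_j\ge 0$. Bigness needs care, because Proposition~\ref{prop:big} cites \cite{BC18}, which covers reduced expressions only. For a non-reduced $\tilde w$ I would argue instead as follows: if all $c_j\ge0$, then $-K$ is nef, and bigness is equivalent to $(-K)^{\dim}>0$. Alternatively, I would show that $-K$ is big for every expression by writing it as $\sum_j\xi_j+\mathcal O_r(\rho)$ with $\mathcal O_r(\rho)$ nef, and using that $\sum_j\xi_j$ is effective with support whose complement is the affine open cell. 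This makes $-K$ effective plus nef with nonempty ample locus, since $\mathcal O_r(\rho)$ together with the boundary separates points of the cell. This is the step I expect to be the main obstacle; I would first try the affine-cell argument, following the reduced case in \cite{BC18}.

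Finally, I need the identification $c_j=\sum_{l=j}^r m_{lj}$, which is Corollary~\ref{prop:BS}, derived from Theorem~\ref{thm:BSCia} and Proposition~\ref{pro: matrix relation}. If any ambiguity remains, I would verify it by inducting downward on $j$. Expanding $m_{lj}=\langle\alpha_{i_l},\alpha_{i_j}\rangle-\sum_{k=j+1}^{l}m_{lk}\langle\omega_{i_k},\alpha_{i_j}\rangle$ and summing over $l\ge j$ gives
\[
\sum_{l=j}^r m_{lj}=d_j-\sum_{k=j+1}^r\langle\omega_{i_k},\alpha_{i_j}\rangle\sum_{l=k}^r m_{lk},
\]
which is the same recursion satisfied by $c_j$ in Theorem~\ref{thm:BSCia}, with the same starting value $c_r=m_{rr}=2$. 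Substituting this identification into the positivity criteria above yields the theorem.
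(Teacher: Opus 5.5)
Your proposal is correct and follows the paper's route: you view $Z(\tilde w)$ as $\widehat X(s_{i_1},\dots,s_{i_r})$ with identity $\mu$-maps, observe that condition (1) of Theorem~\ref{thm: Fano gBS} is vacuous, and identify $c_j=\sum_{l=j}^r m_{lj}$ through Theorem~\ref{thm:BSCia} and Corollary~\ref{prop:BS}; your downward recursion check of that identification is also correct. Your worry about bigness is justified, since Proposition~\ref{prop:big} rests on \cite{BC18}, which is stated for reduced words, and your affine-cell argument settles it for arbitrary words: every regular function on $Z(\tilde w)\setminus\bigcup_j Z_j\cong\mathbb A^r$ lies in $H^0(\mathcal O(n\sum_j Z_j))$ for $n\gg0$, so $\sum_j\xi_j$ is already big without using $\mathcal O_r(\rho)$, and adding the globally generated $\mathcal O_r(\rho)$ keeps $-K_{Z(\tilde w)}$ big.
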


\subsection{Fano \texorpdfstring{$G$}{}-Bott-Samelson varieties} Now, consider the generalized decomposition 
\[
\widehat{w}=(w_0,w_1,\dots,w_r)=(w_{0},s_{i_1},\dots,s_{i_r})\] of $w=w_{0}s_{i_1}\cdots s_{i_r}\in W$, where $w_{0}$ is the maximal element in $W$, and $w_j=s_{i_j}$ for each $1\leq j\leq r$. Note that 
\[I^{w_0}\cap \operatorname{Supp}(w_0)=\emptyset \quad \text{and} \quad I^{s_{i_j}}\cap \operatorname{Supp}(s_{i_j})=\emptyset,\]
for $1\leq j\leq r$. Hence, $\widehat{w}$ is an admissible generalized decomposition. Moreover, we have 
\[P^{w_0}=B, \quad P_{w_0}=G\quad \text{and} \quad G_{w_0}=G. 
\]  
Therefore, \begin{align*}
    \widehat{X}(w_0,s_{i_1},\dots, s_{i_r})&=\overline{(P_{w_0}\cap G_{w_0})w_0(P^{w_0}\cap G_{w_0})}\times ^{P^{w_0}\cap G_{w_0}} \widehat{X}(s_{i_1},\dots, s_{i_r})\\&\simeq \overline{Gw_0B}\times^{B} \widehat{X}(s_{i_1},\dots, s_{i_r})\\&
    \simeq G\times^B Z(\tilde{w}).
\end{align*}
Thus, in this case, the generalized Bott-Samelson varieties coincide with the $G$-Bott-Samelson varieties.

Our goal in this subsection is to prove the following characterization of Fano and weak Fano $G$-Bott-Samelson varieties (see also \cite[Theorem 1.7]{BS24}). For a fixed integer $1 \leq j \leq r$, define
\[
\lambda_j:=\alpha_{i_j}-\sum_{l=1}^{j} m_{jl}\,\omega_{i_l}.
\]
\begin{theorem} \label{thm: GBS Fano} 
The variety $G\times^B Z(\tilde{w})$ is Fano (weak Fano) if and only if the following conditions hold:
\begin{enumerate}
    \item $\displaystyle \sum_{l=j}^{r} m_{lj} > 0$    (resp. $\displaystyle \sum_{l=j}^{r} m_{lj} \geq 0$)
    for all $1 \leq j \leq r-1$.  
    \item $2\rho + \displaystyle\sum_{j=1}^{r} \lambda_j$ is regular dominant   (resp. dominant).
\end{enumerate}
  \end{theorem}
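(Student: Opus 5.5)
The plan is to specialize Theorem~\ref{thm: Fano gBS} to the admissible decomposition $\widehat{w}=(w_0,s_{i_1},\dots,s_{i_r})$, using the identification $\widehat{X}(\widehat{w})\simeq G\times^B Z(\tilde w)$ established just above. Since $G$ need not be simply laced here, I first check that the required functions $\mu$ exist. This case is covered by the flag Bott--Samelson discussion with $J_1=S$ and $\#J_k=1$ for $k\ge 2$. Hence $\mu_{w_0}$ and $\mu_{s_{i_j}}$ are identity maps, and $\operatorname{Peaks}X(w_k)=\mathcal B_{P^{w_k}}$ for every $k$. So condition (1) of Theorem~\ref{thm: Fano gBS} is vacuous, and by the remark following the statement of the main theorem the method applies. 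Fanoness (resp. weak Fanoness) is therefore equivalent to positivity (resp. nonnegativity) of the coefficients $c_{0,\alpha}$ for $\alpha\in S=S\setminus I^{w_0}$, together with the coefficients $c_j:=c_{j,\alpha_{i_j}}$ for $1\le j\le r$.

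\textbf{Step 1: the tail coefficients.} By the triangular structure of $\widehat M$ in Lemma~\ref{lem:integrable} and the recursion in Eq.~\eqref{eq:sublinearsol}, the equations indexed by the peaks of $X(s_{i_j})$ with $j\ge1$ involve only $c_t$ with $t\ge j$. These equations coincide with the system for the Bott--Samelson variety $Z(\tilde w)$. Hence, by Theorem~\ref{thm:BSCia} and Corollary~\ref{prop:BS}, $c_j=\sum_{l=j}^r m_{lj}$ for all $j$, with $c_r=2>0$ automatically. This yields condition (1) of the theorem, stated only for $1\le j\le r-1$.

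\textbf{Step 2: the coefficients $c_{0,\alpha}$.} The peaks of $X(w_0)=G/B$ are the simple roots $\alpha\in S$, and their images in $\operatorname{Peaks}\widehat X(\widehat w)$ are $(s_{i_1}\cdots s_{i_r})^{-1}\alpha$. By Eq.~\eqref{eq:linear systems} with $\mu_{w_0}=\mathrm{id}$, each such peak gives the equation
\[
c_{0,\alpha}=\operatorname{ht}\big(((s_{i_1}\cdots s_{i_r})^{-1}\alpha)^\vee\big)+1-\sum_{t=1}^{r}c_t\,\big\langle \omega_{i_t},\,s_{i_t}\cdots s_{i_1}(\alpha)\big\rangle .
\]
Since the $c_{0,\alpha}$ are the coefficients of $\mathcal O_{G/B}(\omega_\alpha)$ pulled back along $\widehat X(\widehat w)\to G/B$, I aim to show $c_{0,\alpha}=\langle 2\rho+\sum_{j}\lambda_j,\alpha^\vee\rangle$. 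Then positivity (resp. nonnegativity) of all $c_{0,\alpha}$ is exactly regular dominance (resp. dominance) of $2\rho+\sum_j\lambda_j$, which is condition (2).

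\textbf{Step 3: the identity, by induction on $r$ (the main obstacle).} For $r=0$ the formula reads $c_{0,\alpha}=\operatorname{ht}(\alpha^\vee)+1=2=\langle 2\rho,\alpha^\vee\rangle$. For the inductive step, I would expand $\operatorname{ht}(s_{i_r}\cdots s_{i_1}(\alpha)^\vee)=\langle\rho, s_{i_r}\cdots s_{i_1}\alpha\rangle$ and $\langle\omega_{i_t},s_{i_t}\cdots s_{i_1}\alpha\rangle$ by peeling off one reflection at a time. This is the same telescoping that proves Proposition~\ref{pro: matrix relation}(1), namely $s_{i}(\mu)=\mu-\langle\mu,\alpha_i\rangle\alpha_i$. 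Doing so should rewrite the right-hand side as $\langle 2\rho,\alpha^\vee\rangle$ plus pairings of $\alpha^\vee$ with combinations $\alpha_{i_j}-\sum_{l\le j}m_{jl}\omega_{i_l}$, i.e.\ with the $\lambda_j$. The recursive definition of $m_{jk}$ is exactly what absorbs the cross terms $c_t\langle\omega_{i_t},\cdot\rangle$, using $c_t=\sum_{l\ge t}m_{lt}$ and reindexing the double sum over $l\ge t$ as a sum over $j$ of $\sum_{l\le j}m_{jl}\omega_{i_l}$.

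Keeping the bookkeeping of this double sum straight is where I expect the difficulty. As a fallback, I would argue geometrically: $-K_{G\times^B Z(\tilde w)}$ equals the pullback of $-K_{G/B}$ (weight $2\rho$) plus $G\times^B(-K_{Z(\tilde w)})$. Computing the $B$-linearization of $-K_{Z(\tilde w)}=\sum_j c_j\mathcal L_j$, the torus weight at the base point should be $\sum_j\lambda_j$. This would give the same identity and finish the proof via Theorem~\ref{thm: Fano gBS}.
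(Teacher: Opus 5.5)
Your Steps 1 and 2 are exactly the paper's reduction. The $\mu$'s are identities, condition (1) of Theorem~\ref{thm: Fano gBS} is vacuous, and the tail coefficients are $c_j=\sum_{l\ge j}m_{lj}$ by Corollary~\ref{prop:BS}. The identity $c_{0,\alpha}=\langle 2\rho+\sum_j\lambda_j,\alpha\rangle$ you aim for is also the right one.

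The gap is Step 3. It is the only nontrivial point of the proof, and you leave it as something the expansion \emph{should} produce. The mechanism you name is also not what closes it. The twisted cross terms $c_t\langle\omega_{i_t},s_{i_t}\cdots s_{i_1}(\alpha)\rangle$ are not absorbed by the recursive definition of the $m_{jk}$. They are absorbed by the tail system itself, i.e.\ by the fact that $(c_1,\dots,c_r)$ solves $A(\tilde w)(c_1,\dots,c_r)^T=b(\tilde w)^T$. The $m_{jk}$-recursion encodes that system only through Proposition~\ref{pro: matrix relation}, and the $m_{jk}$ enter only at the very end, via the reindexing you describe. No induction on $r$ is needed either.

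The missing observation, which is how the paper argues, is this: the $c_{0,\alpha}$-equation together with the $r$ tail equations is precisely the Bott--Samelson system
\[
A(\tilde w_\alpha)(c_{0,\alpha},c_1,\dots,c_r)^T=b(\tilde w_\alpha)^T
\]
of the longer word $\tilde w_\alpha=(s_\alpha,s_{i_1},\dots,s_{i_r})$. Multiply by $P(\tilde w_\alpha)$ and apply Proposition~\ref{pro: matrix relation} to $\tilde w_\alpha$. The first row of $N(\tilde w_\alpha)(c_{0,\alpha},c_1,\dots,c_r)^T=d(\tilde w_\alpha)$ then reads
\[
c_{0,\alpha}=\Big\langle \alpha+\sum_{j=1}^{r}\alpha_{i_j},\,\alpha\Big\rangle-\sum_{j=1}^{r}c_j\,\langle\omega_{i_j},\alpha\rangle .
\]
Now use $\langle\alpha,\alpha\rangle=2=\langle 2\rho,\alpha\rangle$ and your reindexing $\sum_j c_j\omega_{i_j}=\sum_{l=1}^{r}\sum_{j=1}^{l}m_{lj}\omega_{i_j}$; together they give $\langle 2\rho+\sum_j\lambda_j,\alpha\rangle$.

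If you prefer to telescope by hand, the same formula drops out. Put $a_{jt}=\langle\omega_{i_t},s_{i_t}\cdots s_{i_{j+1}}(\alpha_{i_j})\rangle$ and $b_j=\operatorname{ht}\big((s_{i_r}\cdots s_{i_{j+1}}(\alpha_{i_j}))^\vee\big)+1$. Peeling off reflections gives
\[
\operatorname{ht}\big((s_{i_r}\cdots s_{i_1}(\alpha))^\vee\big)+1=2-\sum_{j=1}^{r}\langle\alpha_{i_j},\alpha\rangle(b_j-1),\qquad \langle\omega_{i_t},s_{i_t}\cdots s_{i_1}(\alpha)\rangle=\langle\omega_{i_t},\alpha\rangle-\sum_{j=1}^{t}\langle\alpha_{i_j},\alpha\rangle\, a_{jt}.
\]
Substituting the tail equations $\sum_{t\ge j}a_{jt}c_t=b_j$ then cancels everything down to the displayed formula.

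Your geometric fallback is likewise only asserted. It would require actually computing the torus weight of the canonical $B$-linearization of $-K_{Z(\tilde w)}$ relative to $\bigotimes_j\mathcal L_j^{c_j}$.
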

  \begin{proof}

We begin by considering an associated decomposition of $\widehat{w}$,
\[
\tilde{w}=(s_{1,1},\dots,s_{r_1,1},s_{1,2},\dots,s_{1,r}),
\]
where we fix a reduced expression for $w_0$ such that  \[
w_0=\prod_{k=1}^{r_1}s_{k,1},\quad \text{and}\quad s_{i_j}=s_{1,j} \quad \forall \quad 1\leq j\leq r.
\] 
Recall the total ordering $\preceq_w$ defined in Subsection~\ref{subsection: M1}, then there exists some $1\leq u_k\leq r_1$, where $1\leq k\leq n$ such that the ordering over the set of simple roots $S$ is given as follows \[
S=\{\alpha_{u_1,1}\prec_{\widehat{w}}\alpha_{u_2,1}\prec_{\widehat{w}}\cdots \prec_{\widehat{w}}\alpha_{u_n,1}\}.
\]
Note that for the maximal element $w_0$, we have \[S\setminus I^{w_0}=S,\quad \text{and}\quad 
\operatorname{Peaks}X(w_0)=\mathcal{B}_{P^{w_0}}=S.
\] Also for a given simple root $\alpha_{i}\in S$, we have \[S \setminus I^{s_{i_j}} = \{\alpha_{i_j}\}, \quad \text{and}\quad 
\operatorname{Peaks}X(s_i)=\mathcal{B}_{P^{s_i}}=\{\alpha_i\}.
\] 
Thus, condition (1) of Theorem \ref{thm: Fano gBS} is vacuous, and so $\widehat{X}(\widehat{w})$ is Gorenstein. By the definition of peaks for $\widehat{X}(\widehat{w})$, we have the following. \begin{align*}
\operatorname{Peaks}\widehat{X}(\widehat{w})&=(s_{i_r}\cdots s_{i_1})\big(\operatorname{Peaks}X(w_0)\big) \sqcup  \operatorname{Peaks}Z(\tilde{w})\\
&=\bigsqcup_{1\leq k\leq n}\left\{s_{i_r}\cdots s_{i_1}(\alpha_{u_k,1})  \right\} \bigsqcup  \left\{ s_{i_r}s_{i_{r-1}}\cdots s_{i_2}(\alpha_{i_1}),\, \dots,\, s_{i_r}(\alpha_{i_{r-1}}), \, \alpha_{i_r} \right\}.
\end{align*}

Moreover, for each $1\leq k\leq n$ and for each  $1 \leq j \leq r$, we take
\[ \mu_{w_0}(\alpha_{u_k,1})=\alpha_{u_k,1},
 \quad \text{and} \quad \mu_{s_{i_j}}(\alpha_{i_j}) = \alpha_{i_j}.
\]
Since $\widehat{X}(\widehat{w})$ is Gorenstein. Hence, there exist some integers $c_{u_k}$ for $1\leq k\leq n$, and $c_j$ for $1\leq j\leq r$ such that \[
\sum_{1\leq k\leq n}c_{u_k}[\mathcal{L}_{1,\alpha_{u_k,1}}]+\sum_{j=1}^{r}c_j[\mathcal{L}_{j,\alpha_{i_j}}]=-[K_{\widehat{X}(\widehat{w})}].
\]

Now, by the construction of the coefficients $c_{u_k}$ and $c_j$, we obtain a system of linear equations of the form

\begin{equation}\label{eq: GBS soultion space}
\begin{pmatrix}
    I_{n}&M_1\\ [0]_{r\times n}&A(\tilde{w})
\end{pmatrix}
    (c_{u_1},\dots,c_{u_n},c_1,\dots,c_r)^T=
        (b',\, b(\tilde{w})
    )^T,
\end{equation} where \[
b'=(b_1',\dots,b_n') \quad \text{with}\quad b_k'=\operatorname{ht}(s_{i_r}\cdots s_{i_1}(\alpha_{u_k,1})^\vee)+1,
\]and the matrices $A(\tilde{w})$, $b(\tilde{w})$ and $M_1$ are the same as defined in Eq.\eqref{eq:bsmatrices} and in Subsection \ref{subsection: M1}, respectively.

Clearly, the values of the variables $c_1,\dots,c_r$ are determined exactly as in Corollary~\ref{prop:BS}. Precisely, we have \[
(c_1,\dots,c_r)=\left(
\sum_{l=1}^{r}m_{l1},
\sum_{l=2}^{r}m_{l2},
\dots,
2
\right).
\] We now determine the remaining variables $c_{u_k}$, for $1 \leq k \leq n$, by solving the corresponding system of linear equations. To this end, we fix a $1\leq k\leq n$ and define an expression \[
\tilde{w}_{k}:=(s_{u_k,1},s_{i_1},\dots,s_{i_r}).
\]
Since the solutions $c_{u_k}$ are independent of each other and depend only on the variables $c_1,\dots,c_r$. Thus, the solution of $c_{u_k}$ is given by the following system of linear equations 

\[
A(\tilde{w}_k)(c_{u_k},c_1\dots,c_r)^T=(b_k',\, b(\tilde{w}))^T=(b(\tilde{w}_k))^T.
\]

After multiplying the matrix $P(\tilde{w}_k)$ on both sides, and using Proposition \ref{pro: matrix relation}, we get \[
N(\tilde{w}_k)(c_{u_k},c_1\dots,c_r)^T=d(\tilde{w}_k).
\]
By the definitions of $N(\tilde{w}_k)$, $d(\tilde{w}_k)$ and using row-reduced echelon form, we obtain \begin{align*}
c_{u_k}&=\left\langle \alpha_{u_k}+ \sum_{j=1}^{r}\alpha_{i_j},\alpha_{u_k} \right\rangle-\sum_{j=1}^{r}c_j\left\langle\omega_{i_j}, \alpha_{u_k} \right\rangle\\
&= \left\langle \alpha_{u_k},\alpha_{u_k}\right\rangle+\left\langle\sum_{j=1}^{r}\alpha_{i_j},\alpha_{u_k}\right\rangle -\left\langle \sum_{j=1}^{r}c_j\omega_{i_j}, \alpha_{u_k} \right\rangle\\
&= \left\langle \alpha_{u_k},\alpha_{u_k}\right\rangle+ \left\langle \sum_{j=1}^r\alpha_{i_j}-\sum_{j=1}^r\bigg( \sum_{l=j}^r m_{lj} \bigg)\omega_{i_j}, \alpha_{u_k}\right\rangle, \quad \text{where }c_j= \sum_{l=j}^r m_{lj}\\
&=\left\langle 2\rho,\alpha_{u_k} \right\rangle+\left\langle\sum_{j=1}^{r}\lambda_j,\alpha_{u_k}\right\rangle\\
&=\left\langle 2\rho+ \sum_{j=1}^{r}\lambda_j,\alpha_{u_k}\right\rangle.
\end{align*}
For each $1 \leq k \leq n$, we associate the expression $\tilde{w}_k=(s_{u_k,1},s_{i_1},\dots,s_{i_r})$, and determine the corresponding coefficient $c_{u_k}$ by applying the above procedure. Therefore, by Theorem \ref{thm:very ample} (or by Theorem \ref{thm: Fano gBS}), $G\times^B Z(\tilde{w})$ is Fano (weak Fano) if and only if for each $1\leq j\leq r$ and for each $1\leq k\leq n$, \[c_{j}>0(\text{resp.}~\geq 0),\quad  \text{and} \quad c_{u_k}>0(\text{resp.}~\geq 0).\] This completes the proof.
\end{proof}

\subsection{Fano minuscule generalized Bott-Samelson varieties}\label{sec:minuscle}

In this subsection, we consider $\widehat{w}$ to be a good generalized reduced decomposition of a minuscule element $w\in W$ as in \cite[Section 5.2]{Per07} (see also Definition \ref{def:gBS}). 

Recall that any good generalized reduced decomposition of a minuscule element is admissible.
The good and minuscule conditions give the nice structure of the parabolic subgroups $P_{J_i}$ described in Definition \ref{def:gBS}. Precisely, the parabolic subgroups $P_{J_i}$ are the parabolic subgroups $P_{w_i\cdots w_m}$; for more details, see \cite[Proposition 5.3]{Per07}.

We first recall the notion of minuscule Schubert varieties.
\begin{definition}\ 

\begin{enumerate}    \item 
    A weight $\omega$ is called \emph{minuscule} if we have $\langle \omega,\gamma\rangle\leq 1$ for all positive roots $\gamma\in R^+$.
    \item An element $w\in W$ is called \emph{minuscule} if $\sum_{\alpha\in S\setminus I^w}\omega_{\alpha}$ is a minuscule weight.
    \item A flag variety $G/P$ is called \emph{minuscule} if there exists a minuscule element $w\in W$ such that $P^w=P$. Moreover, Schubert varieties of a minuscule flag variety are called \emph{minuscule Schubert varieties}.
    \end{enumerate}
\end{definition}

\begin{remark}\label{rmk:minuscluse1} To study minuscule flag varieties and their Schubert varieties, it suffices to restrict to simply laced groups; see \cite[Remark 3.4]{Per07}. So, in the rest of the section we assume that $G$ is of simply laced type. 
\end{remark}

From the proof of Theorem~$3.1$ in \cite{LMS79}, we observe the following:
\begin{corollary}\label{cor:LMS}
    Let $\tilde{w}=(s_{i_1},\dots,s_{i_r})$ be a reduced expression of a minuscule element $w\in W$. Then for $1\leq k\leq r$, we have\[
\langle \omega_{i_r},s_{i_r}\cdots s_{i_{k+1}}(\alpha_{i_k}) \rangle=1,
\]
\end{corollary}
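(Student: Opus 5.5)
We have to prove that for a reduced expression $\tilde w=(s_{i_1},\dots,s_{i_r})$ of a minuscule element $w$, every root $\gamma_k:=s_{i_r}\cdots s_{i_{k+1}}(\alpha_{i_k})$ satisfies $\langle\omega_{i_r},\gamma_k\rangle=1$. The plan has three steps: show $\alpha_{i_r}$ is the unique simple root outside $I^w$, express $\gamma_k$ as a positive root that $w$ sends to a negative root, and then combine positivity with the minuscule bound.

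\emph{Step 1: the last letter.} The last letter satisfies $w(\alpha_{i_r})=-s_{i_1}\cdots s_{i_{r-1}}(\alpha_{i_r})<0$ because the expression is reduced. Hence $\alpha_{i_r}\in S\setminus I^w$. Since $w$ is minuscule, the weight $\omega:=\sum_{\alpha\in S\setminus I^w}\omega_\alpha$ is minuscule. For a minuscule weight, $\langle\omega,\theta\rangle\le 1$ for the highest root $\theta$ of each simple component, and $\theta$ has positive coefficient at every simple root of that component. So $S\setminus I^w$ contains at most one simple root from each component of the Dynkin diagram. In particular, within the component of $\alpha_{i_r}$ (equivalently, within $\operatorname{Supp}(w)$, reducing to $G_w$ if necessary), $\omega_{i_r}$ is the only fundamental weight occurring in $\omega$.

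\emph{Step 2: the roots $\gamma_k$.} The set $\{\gamma_1,\dots,\gamma_r\}$ is exactly $R^+(w)$, so each $\gamma_k$ is a positive root with $w(\gamma_k)<0$. Since $w\in W^{P^w}$ is minimal and $w(\beta)>0$ for all $\beta\in I^w$, any positive root lying in $\bigoplus_{\beta\in I^w}\mathbb Z_{\ge0}\beta$ is sent to a positive root. Therefore each $\gamma_k$ has a strictly positive coefficient at some $\alpha\in S\setminus I^w$, that is, $\langle\omega_\alpha,\gamma_k\rangle\ge1$ for some such $\alpha$. (In the simply laced setting, roots and coroots may be identified, so these pairings are the coefficients of $\gamma_k$.) Because $\gamma_k$ is a root supported in $\operatorname{Supp}(w)$ and connected, its support meets only the component of $\alpha_{i_r}$. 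By Step 1, the only candidate is $\alpha=\alpha_{i_r}$, which gives $\langle\omega_{i_r},\gamma_k\rangle\ge1$.

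\emph{Step 3: conclusion.} Since $\omega$ is minuscule and $\gamma_k$ is positive, $\langle\omega,\gamma_k\rangle\le1$. By Step 2, all terms of $\omega$ other than $\omega_{i_r}$ pair to zero with $\gamma_k$. Hence $\langle\omega_{i_r},\gamma_k\rangle\le1$, and together with Step 2 this forces equality.

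The main obstacle is Step 1: justifying that a minuscule weight involves at most one fundamental weight per simple component, and handling a possibly non-connected $\operatorname{Supp}(w)$ cleanly. I would argue via the highest root of each component, whose coefficients are all positive, together with the fact that the root $\gamma_k$ lies in a single component.
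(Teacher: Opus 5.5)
Your core argument is the standard one behind the paper's citation. The paper gives no proof of its own, only a pointer to the proof of Theorem~3.1 of Lakshmibai--Musili--Seshadri. The argument runs: since $\gamma_k\in R^+(w)$ and $w(I^w)\subseteq R^+$, the root $\gamma_k$ is not in the $\mathbb{Z}_{\ge0}$-span of $I^w$. Hence $\langle\omega_\alpha,\gamma_k\rangle\ge1$ for some $\alpha\in S\setminus I^w$, and minusculity caps the total pairing at $1$. The descent argument and the highest-root bound in Step~1 are fine. You also do not need simple lacedness: the coefficient of $\alpha$ in $\gamma$ is positive iff that of $\alpha^\vee$ in $\gamma^\vee$ is.

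The genuine gap is the claim in Step~2 that the support of $\gamma_k$ meets only the component of $\alpha_{i_r}$, together with the ``equivalently, within $\operatorname{Supp}(w)$'' in Step~1. Connectedness of the support of $\gamma_k$ only puts it in \emph{some} component of $\operatorname{Supp}(w)$. This cannot be repaired, because the statement is false when $\operatorname{Supp}(w)$ meets several simple factors of $G$. For $G=SL_2\times SL_2$ and $w=s_1s_2$, one has $I^w=\emptyset$ and $\omega_1+\omega_2=\rho$ is minuscule. Yet $\langle\omega_2,s_2(\alpha_1)\rangle=\langle\omega_2,\alpha_1\rangle=0$.

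What your Steps~2--3 actually prove, with no connectedness hypothesis, is $\bigl\langle\sum_{\alpha\in S\setminus I^w}\omega_\alpha,\gamma_k\bigr\rangle=1$. To obtain the stated identity you must add the hypothesis the paper uses tacitly, for example $G$ simple or $\operatorname{Supp}(w)$ connected. The paper relies on it in the proof of Proposition~\ref{pro: Gorensteinmin}, where it asserts $\#(S\setminus I^{w_j})=1$. Under this hypothesis, Step~1 together with Lemma~\ref{lem: Supp(w)} gives $S\setminus I^w=\{\alpha_{i_r}\}$ outright. Step~2 then yields $\langle\omega_{i_r},\gamma_k\rangle\ge1$ with no component bookkeeping, and Step~3 gives equality.
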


\begin{proposition}\label{pro: Gorensteinmin} The minuscule generalized Bott-Samelson variety $\widehat{X}(\widehat{w})$ is Gorenstein if and only if, for each $1 \leq j \leq m$, the peaks of $X(w_j)$ have the same height.
\end{proposition}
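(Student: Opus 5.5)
By Corollary~\ref{thm: Gorenstein gBS}, $\widehat{X}(\widehat{w})$ is Gorenstein if and only if each Schubert variety $X(w_j)\subseteq G/P^{w_j}$ is Gorenstein. The statement is therefore about a single minuscule Schubert variety $X(w)$: I will show that $X(w)$ is Gorenstein if and only if all elements of $\operatorname{Peaks}X(w)$ have the same height. Since $G$ is simply laced (Remark~\ref{rmk:minuscluse1}), coroot heights equal root heights, and Theorem~\ref{thm: Gorenstein Xw} applies.

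\textbf{Step 1: the coefficients $d_\alpha$.} Fix a peak $\eta$. For each $\alpha\in S\setminus I^w$, the weight $\omega_\alpha$ is a summand of the minuscule weight $\sum_{\beta\in S\setminus I^w}\omega_\beta$, so $\sum_{\alpha} d_\alpha=\langle\sum_\alpha\omega_\alpha,\eta\rangle\le 1$ for every positive root $\eta$. Every peak lies in $R^+(w)$, and $w\in W^{P^w}$, so $\eta$ cannot lie in the span of $I^w$. Hence $\sum_\alpha d_\alpha=1$. I could also read this off Corollary~\ref{cor:LMS}, since each peak has the form $s_{i_r}\cdots s_{i_{k+1}}(\alpha_{i_k})$. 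The conclusion is that each peak $\eta$ has $d_{\alpha(\eta)}=1$ for exactly one $\alpha(\eta)\in S\setminus I^w$ and $d_\alpha=0$ for all other $\alpha$.

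\textbf{Step 2: reduce the Gorenstein criterion.} By Step 1, condition \eqref{eq:height relations for Xw} becomes
\[\operatorname{ht}(\eta)=\operatorname{ht}\bigl(\mu_w(\alpha(\eta))\bigr)\quad\text{for every }\eta\in\operatorname{Peaks}X(w)\setminus\mathcal{B}_{P^w}.\]
So $X(w)$ is Gorenstein if and only if every peak has the same height as the element of $\mathcal{B}_{P^w}$ attached to the same $\alpha$. The reverse implication is now immediate: if all peaks have equal height, this condition holds.

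\textbf{Step 3: the forward implication.} Assuming Gorenstein, Step 2 gives $\operatorname{ht}(\eta)=\operatorname{ht}(\mu_w(\alpha(\eta)))$ for every peak, but I still need all the $\mu_w(\alpha)$, $\alpha\in S\setminus I^w$, to share one height. This step is the main obstacle. My first attempt is to use that for minuscule $w$ one has $\#(S\setminus I^w)=1$. Indeed, a minuscule weight is a single fundamental weight, so in the simply laced setting the sum $\sum_{\alpha}\omega_\alpha$ is minuscule only when $S\setminus I^w$ is a singleton (assuming $G$ simple, or restricting to $G_w$). In that case $\mathcal{B}_{P^w}=\{\mu_w(\alpha)\}$ and every peak has the height of $\mu_w(\alpha)$, which finishes the argument.

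If instead $S\setminus I^w$ can have several elements, lying in different simple components of $G_w$, then $X(w)$ splits as a product of Schubert varieties, one for each component. I would then argue componentwise. The Gorenstein condition of Theorem~\ref{thm: Gorenstein Xw} also decouples across components, so the equal-height condition should be read per component. I would check that this matches the convention used in the source \cite[Proposition 6.3]{Per07}.
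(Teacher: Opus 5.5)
Your proof is correct and follows the paper's argument: reduce to each $X(w_j)$ via Corollary~\ref{thm: Gorenstein gBS}, use $\#(S\setminus I^{w_j})=1$ together with $\langle\omega_{\alpha_j},\eta\rangle=1$ for every peak (Corollary~\ref{cor:LMS}, or your direct Step~1), and read off the equal-height condition from Theorem~\ref{thm: Gorenstein Xw}. Your caveat about non-simple $G$ is well taken, because the paper asserts $\#(S\setminus I^{w_j})=1$ without comment, which tacitly assumes $G$ simple; for instance, take $G=SL_2\times SL_3$ and $w=s_1s_3s_2$ with $\alpha_1$ the $SL_2$ root, which gives the smooth $X(w)\simeq\mathbb{P}^1\times\mathbb{P}^2$ whose peaks $\alpha_1$ and $\alpha_2+\alpha_3$ have different heights.
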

\begin{proof} 
Since $w_j$ is minuscule, we have $|S\setminus I^{w_j}|=1$ for all $1 \leq j \leq m$. By Corollary~\ref{cor:LMS}, any choice of function from $S\setminus I^{w_j}\to \operatorname{Peaks}X(w_j)$ gives the required $\mu_{w_j}$ function as in Lemma \ref{lem:B_w,P^w}. 
Therefore, the proof follows from Theorem~\ref{thm: Gorenstein Xw} and Corollary~\ref{thm: Gorenstein gBS}.
\end{proof}

For $1\leq j\leq m$, let $\alpha_j$ be the element of $S\setminus I^{w_j}$. If the variety $\widehat{X}(\widehat{w})$ is Gorenstein, then we can explicitly solve for the coefficients $c_{j,\alpha_j}$ in Eq.\eqref{eq:sublinearsol} using Corollary \ref{cor:LMS}, Proposition~\ref{prop: divisor classes for line bundles on gBS}, and Proposition \ref{anti-can} we obtain 

\begin{corollary}\label{cor:minuscule} For $1\leq j\leq m-1$,     
\[
c_{j,\alpha_j}= \operatorname{ht}\bigl(((w_{j+1}\cdots w_m)^{-1}\mu_{w_j}(\alpha_j))^\vee\bigl)-\operatorname{ht}\bigl(((w_{j+2}\cdots w_m)^{-1}\mu_{w_{j+1}}(\alpha_{j+1}))^\vee\bigl)\]
and
\[
c_{m,\alpha_{m}}=\operatorname{ht}(\alpha_r^\vee)+1=2.
\]
\end{corollary}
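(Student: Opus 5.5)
The plan is to specialise the triangular solution in Eq.~\eqref{eq:sublinearsol} to the minuscule situation and then telescope. Set $u_j:=w_{j+1}\cdots w_m$ (with $u_m=1$) and $h_j:=\operatorname{ht}\bigl((u_j^{-1}\mu_{w_j}(\alpha_j))^\vee\bigr)$. Since $S\setminus I^{w_t}=\{\alpha_t\}$ for every $t$, Eq.~\eqref{eq:sublinearsol} reads
\[
c_{j,\alpha_j}=h_j+1-\sum_{t=j+1}^{m}c_{t,\alpha_t}\,\bigl\langle \omega_{\alpha_t},(w_{j+1}\cdots w_t)^{-1}\mu_{w_j}(\alpha_j)\bigr\rangle .
\]
Everything therefore reduces to two facts. \textbf{(a)} Every pairing in this sum equals $1$. \textbf{(b)} $h_m=1$.

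\emph{Step (b).} The element $w_m$ is minuscule and lies in $W^{P^{w_m}}$, so its only right descent is $\alpha_m$. Hence every reduced expression $\tilde w_m=s_{1,m}\cdots s_{r_m,m}$ ends with $s_{\alpha_m}$. The last root $\alpha_{r_m,m}=\alpha_m$ is a peak of $X(w_m)$. By the proof of Proposition~\ref{pro: Gorensteinmin}, any choice of $\mu_{w_m}$ is admissible, so I would choose $\mu_{w_m}(\alpha_m)=\alpha_m$. This gives $h_m=1$ and $c_{m,\alpha_m}=\operatorname{ht}(\alpha_m^\vee)+1=2$, which is the second formula.

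\emph{Step (a).} Fix $j<t$ and write $\mu_{w_j}(\alpha_j)=s_{r_j,j}\cdots s_{d+1,j}(\alpha_{d,j})$ for the appropriate position $d$. Then
\[
(w_{j+1}\cdots w_t)^{-1}\mu_{w_j}(\alpha_j)=s_{r_t,t}\cdots s_{1,j+1}\,s_{r_j,j}\cdots s_{d+1,j}(\alpha_{d,j}).
\]
This is one of the inversion roots attached to the concatenated word $\tilde w_j\tilde w_{j+1}\cdots\tilde w_t$. Because $\widehat w$ is a good reduced decomposition of a minuscule element, I would invoke Perrin \cite[Section~5]{Per07} for two properties: this concatenated word is a reduced expression of $w_j\cdots w_t$, and $w_j\cdots w_t$ is again minuscule with respect to $\omega_{\alpha_t}$, that is, it is a minuscule Schubert cell in $G/P^{w_t}$. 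Its reduced word ends in $s_{\alpha_t}$, by the descent argument of Step (b). Corollary~\ref{cor:LMS} then gives that the pairing equals $1$.

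\emph{Telescoping.} With (a), the recursion becomes $c_{j,\alpha_j}=h_j+1-\sum_{t>j}c_{t,\alpha_t}$, i.e.
\[
\sum_{t\ge j}c_{t,\alpha_t}=h_j+1 .
\]
This holds for $j=m$ by (b). Subtracting the identity for $j+1$ from the one for $j$ yields
\[
c_{j,\alpha_j}=h_j-h_{j+1}\qquad (1\le j\le m-1),
\]
which is the claimed formula.

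The main obstacle is (a): justifying that the partial products $w_j\cdots w_t$ are reduced and minuscule for the weight $\omega_{\alpha_t}$. If citing \cite{Per07} directly is not clean enough, I would fall back on the admissibility condition $I^{w_i}\cap\operatorname{Supp}(w_i)\subseteq J_{i+1}$ together with \cite[Proposition~5.3]{Per07}, which identifies $P_{J_{i+1}}$ with $P_{w_{i+1}\cdots w_m}$. From these I would show inductively that $\omega_{\alpha_t}$ pairs to at most $1$ with all the relevant roots, and to exactly $1$ since these roots lie in $R^+(w_j\cdots w_t)$.
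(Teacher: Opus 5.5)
Your route (specialise Eq.~\eqref{eq:sublinearsol}, make every pairing equal to $1$ via Corollary~\ref{cor:LMS}, then telescope) is exactly what the paper's one-line justification intends. However, both facts you isolate are false in general, and the displayed formulas themselves fail, so the argument cannot be patched.

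\textbf{Step (b).} Since $S\setminus I^{w_m}=\{\alpha_m\}$, suppose $r_m=\ell(w_m)\ge 2$. Then $w_ms_{\alpha_m}=s_{1,m}\cdots s_{r_m-1,m}$ has the right descent $\alpha_{r_m-1,m}\ne\alpha_m$, which lies in $I^{w_m}$. Hence $w_ms_{\alpha_m}\notin W^{P^{w_m}}$, and $\alpha_m\notin\operatorname{Peaks}X(w_m)$. Every peak $\eta$ satisfies $\langle\omega_{\alpha_m},\eta\rangle=1$, so no peak is a simple root. Choosing $\mu_{w_m}$ differently does not help: for $j=m$ the system has no correction terms, so $c_{m,\alpha_m}=h_m+1\ge 3$ unless $w_m$ is a simple reflection. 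For example, $\widehat w=(s_1,s_3s_2)$ in type $A_3$ is a good reduced decomposition of the minuscule element $s_1s_3s_2$ with minuscule factors, and $\operatorname{Peaks}X(s_3s_2)=\{\alpha_2+\alpha_3\}$. A line $\ell$ in a fibre $X(s_3s_2)\cong\mathbb P^2$ has $-K\cdot\ell=3$, $\mathcal L_{2,\alpha_2}\cdot\ell=1$ and $\mathcal L_{1,\alpha_1}\cdot\ell=0$, so $c_{2,\alpha_2}=3$, not $2$.

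\textbf{Step (a).} For $t=m$ the pairing is indeed $1$. A right factor of $w\in W^{P^w}$ in a reduced factorization lies in $W^{P^w}=W^{P^{w_m}}$, so Corollary~\ref{cor:LMS} applies to $w_j\cdots w_m$. For $t<m$, however, goodness does not force $w_j\cdots w_t\in W^{P^{w_t}}$, because consecutive factors can commute.

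In type $A_6$, take $\widehat w=(s_1,\,s_6s_5,\,s_3s_2s_4s_3)$. One checks that this is a good reduced decomposition of the minuscule element $w=s_1s_6s_5s_3s_2s_4s_3$ (with $S\setminus I^w=\{\alpha_3\}$) whose factors are smooth and minuscule. Yet $w_1w_2=s_1s_6s_5$ has the extra descent $\alpha_1\in I^{w_2}$, and $\langle\omega_{\alpha_5},w_2^{-1}\alpha_1\rangle=0$. The relevant heights are $h_1=\operatorname{ht}(\alpha_1+\alpha_2+\alpha_3)=3$, $h_2=\operatorname{ht}(\alpha_3+\alpha_4+\alpha_5+\alpha_6)=4$ and $h_3=\operatorname{ht}(\alpha_2+\alpha_3+\alpha_4)=3$. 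The system then gives $(c_{1,\alpha_1},c_{2,\alpha_5},c_{3,\alpha_3})=(0,1,4)$. This is confirmed on the curve $\{[p_1,1,1]\}$, where $-K$ has degree $0$ and only $\mathcal L_{1,\alpha_1}$ has nonzero degree. The corollary instead predicts $c_{1,\alpha_1}=h_1-h_2=-1$ and $c_{3,\alpha_3}=2$.

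What your argument actually proves is the following. Set $p_{jt}=\langle\omega_{\alpha_t},(w_{j+1}\cdots w_t)^{-1}\mu_{w_j}(\alpha_j)\rangle\in\{0,1\}$, with $p_{jm}=1$. Then $c_{m,\alpha_m}=h_m+1$ and $c_{j,\alpha_j}=h_j+1-\sum_{t>j}p_{jt}\,c_{t,\alpha_t}$. The telescoped formula $c_{j,\alpha_j}=h_j-h_{j+1}$ holds only under the extra hypothesis that all $p_{jt}=1$, for instance when $w_j\cdots w_t\in W^{P^{w_t}}$ for all $j<t$.
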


\subsubsection{Quivers} We now recall  
the quivers associated with minuscule Weyl group elements, and the notion of peaks for minuscule generalized Bott-Samelson varieties introduced by Perrin; see \cite[Section~5, p.~1274]{Per07}. We observe that this notion of peaks coincides with the notion of peaks defined in Section \ref{sec: Fano gBS}.

For a given reduced expression $\tilde{w}=(s_{i_1},\dots,s_{i_r})$ of $w=s_{i_1}\cdots s_{i_r}$, we define the quiver $Q_{\tilde{w}}$ associated to $\tilde{w}$ as follows:
\begin{definition}
The \emph{successor} $s(i)$ (resp. \emph{predecessor} $p(i)$) of an element $1\leq i\leq r$ is defined by $s(i)=\min \{k\in [1,r]\mid~k>i~\text{and}~\alpha_k=\alpha_i\}$ (resp. by $p(i)=\max \{k\in [1,r]\mid~k<i~\text{and}~\alpha_k=\alpha_i\}$).
\end{definition}
\begin{remark}
    Note that the successor and the predecessor of an element do not always exist.
\end{remark}
\begin{definition}
    The quiver $Q_{\tilde{w}}$ is the set of vertices $[1,r]$ together with the arrows defined as: there is an arrow from $i$ to $j$ if $\langle\alpha_i,\alpha_j\rangle\neq 0$ and $i<j<s(i)$ (or only $i<j$ if $s(i)$ does not exist).
\end{definition}

\begin{remark}\ 

\begin{enumerate}
\item The quiver $Q_{\tilde{w}}$ comes with a coloration of its vertices by simple roots, through the map $\beta: [1,r]\to S$ defined by $\beta(i)=\alpha_i$.
    \item 
    Recall that any minuscule element $w\in W$ has a unique reduced expression modulo commuting relations. Therefore, the quiver $Q_{\tilde{w}}$ does not depend on the reduced expression we choose for $w\in W^P$. 
    
    \end{enumerate}
\end{remark}
\begin{definition} \ 

\begin{enumerate}
    \item We can give a partial order relation $\preceq$ on $[1,r]$ by defining $i \preceq j$ if there is an arrow from $i$ to $j$. 
    \item A vertex of $Q_{\tilde{w}}$ is called \emph{peak} if it is minimal with respect to the partial ordering $\preceq$ and we denote by $\operatorname{Peaks}(Q_{\tilde{w}})$ the set of peaks of $Q_{\tilde{w}}$.  
    \end{enumerate}
\end{definition}

\begin{example}\label{quiver}
    Let $G/P^{s_4}= SL_{8}(\mathbb{C})/P^{s_4}$, where $P^{s_4}$ is the maximal parabolic subgroup associated with the minuscule fundamental weight  $\overline{\omega}=\omega_{\alpha_4}$. The longest element $w_{0,I^{s_4}}$ in $W^P$ is 
    $$w_{0,I^{s_4}}=(s_4s_3s_2s_1)(s_5s_4s_3s_2)(s_6s_5s_4s_3)(s_7s_6s_5s_4).$$ Its quiver $Q_{\overline{\omega}}$ is the following (all the arrows point down,  and the vertices covered by $\Box$ denote the peaks of the quiver. The map $\beta$ is the vertical projection to the Dynkin diagram $A_7$):

\begin{center}
\begin{tikzpicture}[scale=0.7]
  \fill (3,9) circle (3pt);
  \fill (2,8) circle (3pt);
  \fill (4,8) circle (3pt);
  \fill (1,7) circle (3pt);
  \fill (3,7) circle (3pt);
  \fill (5,7) circle (3pt);
  \fill (0,6) circle (3pt);
  \fill (2,6) circle (3pt);
  \fill (4,6) circle (3pt);
  \fill (6,6) circle (3pt);
  \fill (1,5) circle (3pt);
  \fill (3,5) circle (3pt);
  \fill (5,5) circle (3pt);
  \fill (2,4) circle (3pt);
  \fill (4,4) circle (3pt);
  \fill (3,3) circle (3pt);
  \fill (0,2) circle (3pt) node[left=6pt] {$\mathbf{A_7}$};
  \fill (1,2) circle (3pt);
  \fill (2,2) circle (3pt);
  \fill (3,2) circle (3pt) node [below=6pt] {$Q_{\overline{\omega}}$};
  \fill (4,2) circle (3pt);
  \fill (5,2) circle (3pt);
  \fill (6,2) circle (3pt);
  \node[draw, rectangle, inner sep=4pt] at (3,9) {};
  \draw (0,6) -- (1,5);
  \draw (1,5) -- (2,6);
  \draw (2,6) -- (1,7);
  \draw (2,6) -- (3,7);
  \draw (3,7) -- (2,8);
  \draw (3,7) -- (4,8) node[above=4pt] {$p(i)$};
  \draw (1,5) -- (2,4);
  \draw (2,4) -- (3,5);
  \draw (3,5) -- (2,6);
  \draw (3,5) -- (4,6) node[above=4pt] {$i$};
  \draw (4,6) -- (3,7);
  \draw (4,6) -- (5,7);
  \draw (2,4) -- (3,3);
  \draw (3,3) -- (4,4) node[below=4pt] {$s(i)$};
  \draw (4,4) -- (3,5);
  \draw (5,5) -- (4,6);
  \draw (4,4) -- (5,5);
  \draw (5,5) -- (6,6);
  \draw (6,6) -- (5,7);
  \draw (5,7) -- (4,8);
  \draw (4,8) -- (3,9);
  \draw (3,9) -- (2,8);
  \draw (2,8) -- (1,7);
  \draw (1,7) -- (0,6);
  \draw (0,2) -- (1,2);
  \draw (1,2) -- (2,2);
  \draw (2,2) -- (3,2);
  \draw (3,2) -- (4,2);
  \draw (4,2) -- (5,2);
  \draw (5,2) -- (6,2);
  \draw[->, thick] (3,2.8) -- (3,2.2) node[midway, left=8pt] {$\beta$};
\end{tikzpicture}
\end{center}
 By Definition \ref{def:Peaks Xw}, we have \[
\operatorname{Peaks}X(w_{0,I^{s_4}})=\{\eta:= (s_4s_5s_6s_7)(s_3s_4s_5s_6)(s_2s_3s_4s_5)s_1s_2s_3(\alpha_4)\}.
\]
\end{example} 
\begin{example}
Consider the Schubert variety $X(w)\subset SL_{8}(\mathbb{C})/P^{s_4}$ associated with $w=s_4s_1s_2s_3s_6s_5s_4$ (the minimal length representative in $W^{P^{s_4}}$). Its quiver is the following (all the arrows point down): 
\begin{center}
\begin{tikzpicture}[scale=0.7]
  \fill (3,9) node[mark size=3pt,color=black] {\pgfuseplotmark{x}};
  \fill (2,8) node[mark size=3pt,color=black] {\pgfuseplotmark{x}};
  \fill (4,8) node[mark size=3pt,color=black] {\pgfuseplotmark{x}};
\fill (1,7) node[mark size=3pt,color=black] {\pgfuseplotmark{x}};
  \fill (3,7) node[mark size=3pt,color=black] {\pgfuseplotmark{x}};
  \fill (5,7) node[mark size=3pt,color=black] {\pgfuseplotmark{x}};
  \fill (0,6) circle (3pt);
  \fill (2,6) node[mark size=3pt,color=black] {\pgfuseplotmark{x}};
  \fill (4,6) node[mark size=3pt,color=black] {\pgfuseplotmark{x}};
  \fill (6,6) node[mark size=3pt,color=black] {\pgfuseplotmark{x}};
  \fill (1,5) circle (3pt);
  \fill (3,5) circle (3pt);
  \fill (5,5) circle (3pt);
  \fill (2,4) circle (3pt);
  \fill (4,4) circle (3pt);
  \fill (3,3) circle (3pt);
  \fill (0,2) circle (3pt);
  \fill (1,2) circle (3pt);
  \fill (2,2) circle (3pt);
  \fill (3,2) circle (3pt)node [below=6pt] {$Q_{\tilde{{w}}}$};
  \fill (4,2) circle (3pt);
  \fill (5,2) circle (3pt);
  \fill (6,2) circle (3pt);
  
  \draw (0,6) -- (1,5);
  \draw (1,5) -- (2,4);
  \draw (2,4) -- (3,5);
  \draw (2,4) -- (3,3);
  \draw (3,3) -- (4,4);
  \draw (4,4) -- (3,5);
  \draw (4,4) -- (5,5);
  \draw (0,2) -- (1,2);
  \draw (1,2) -- (2,2);
  \draw (2,2) -- (3,2);
  \draw (3,2) -- (4,2);
  \draw (4,2) -- (5,2);
  \draw (5,2) -- (6,2);
\node[draw, rectangle, inner sep=4pt] at (0,6) {};
  \node[draw, rectangle, inner sep=4pt] at (3,5) {};
  \node[draw, rectangle, inner sep=4pt] at (5,5) {};
\end{tikzpicture}
\end{center}
Now, we have $$\operatorname{Peaks}X(w)=\{\eta_1=s_4s_5(\alpha_6),\eta_2=s_4s_5s_6s_3s_2(\alpha_1),\eta_3=s_4s_5s_6s_3s_2s_1(\alpha_4)\}.$$

\vspace{1cm}\begin{tikzpicture}[scale=0.5]
  \fill (3,9) node[mark size=3pt,color=black] {\pgfuseplotmark{x}};
  \fill (2,8) node[mark size=3pt,color=black] {\pgfuseplotmark{x}};
  \fill (4,8) node[mark size=3pt,color=black] {\pgfuseplotmark{x}};
\fill (1,7) node[mark size=3pt,color=black] {\pgfuseplotmark{x}};
  \fill (3,7) node[mark size=3pt,color=black] {\pgfuseplotmark{x}};
  \fill (5,7) node[mark size=3pt,color=black] {\pgfuseplotmark{x}};
  \fill (0,6) circle (3pt);
  \fill (2,6) node[mark size=3pt,color=black] {\pgfuseplotmark{x}};
  \fill (4,6) node[mark size=3pt,color=black] {\pgfuseplotmark{x}};
  \fill (6,6) node[mark size=3pt,color=black] {\pgfuseplotmark{x}};
  \fill (5,5) node[mark size=3pt,color=black] {\pgfuseplotmark{x}};
  \fill (1,5) circle (3pt);
  \fill (3,5) circle (3pt);
  \fill (2,4) circle (3pt);
  \fill (4,4) circle (3pt);
  \fill (3,3) circle (3pt);
  \fill (0,2) circle (3pt);
  \fill (1,2) circle (3pt);
  \fill (2,2) circle (3pt);
  \fill (3,2) circle (3pt)node [below=6pt] {$Q_{\widetilde{ws}_{\eta_1}}$};
  \fill (4,2) circle (3pt);
  \fill (5,2) circle (3pt);
  \fill (6,2) circle (3pt);
  
  \draw (0,6) -- (1,5);
  \draw (1,5) -- (2,4);
  \draw (2,4) -- (3,5);
  \draw (2,4) -- (3,3);
  \draw (3,3) -- (4,4);
  \draw (4,4) -- (3,5);

  \draw (0,2) -- (1,2);
  \draw (1,2) -- (2,2);
  \draw (2,2) -- (3,2);
  \draw (3,2) -- (4,2);
  \draw (4,2) -- (5,2);
  \draw (5,2) -- (6,2);
\node[draw, rectangle, inner sep=4pt] at (0,6) {};
  \node[draw, rectangle, inner sep=4pt] at (3,5) {};
\end{tikzpicture}\quad\quad\quad\quad\quad\quad\begin{tikzpicture}[scale=0.5]
  \fill (3,9) node[mark size=3pt,color=black] {\pgfuseplotmark{x}};
  \fill (2,8) node[mark size=3pt,color=black] {\pgfuseplotmark{x}};
  \fill (4,8) node[mark size=3pt,color=black] {\pgfuseplotmark{x}};
\fill (1,7) node[mark size=3pt,color=black] {\pgfuseplotmark{x}};
  \fill (3,7) node[mark size=3pt,color=black] {\pgfuseplotmark{x}};
  \fill (5,7) node[mark size=3pt,color=black] {\pgfuseplotmark{x}};
  \fill (0,6) node[mark size=3pt,color=black] {\pgfuseplotmark{x}};
  \fill (2,6) node[mark size=3pt,color=black] {\pgfuseplotmark{x}};
  \fill (4,6) node[mark size=3pt,color=black] {\pgfuseplotmark{x}};
  \fill (6,6) node[mark size=3pt,color=black] {\pgfuseplotmark{x}};
  \fill (1,5) circle (3pt);
  \fill (3,5) circle (3pt);
  \fill (5,5) circle (3pt);
  \fill (2,4) circle (3pt);
  \fill (4,4) circle (3pt);
  \fill (3,3) circle (3pt);
  \fill (0,2) circle (3pt);
  \fill (1,2) circle (3pt);
  \fill (2,2) circle (3pt);
  \fill (3,2) circle (3pt)node [below=6pt] {$Q_{\widetilde{ws}_{\eta_2}}$};
  \fill (4,2) circle (3pt);
  \fill (5,2) circle (3pt);
  \fill (6,2) circle (3pt);

  \draw (1,5) -- (2,4);
  \draw (2,4) -- (3,5);
  \draw (2,4) -- (3,3);
  \draw (3,3) -- (4,4);
  \draw (4,4) -- (3,5);
  \draw (4,4) -- (5,5);
  \draw (0,2) -- (1,2);
  \draw (1,2) -- (2,2);
  \draw (2,2) -- (3,2);
  \draw (3,2) -- (4,2);
  \draw (4,2) -- (5,2);
  \draw (5,2) -- (6,2);
\node[draw, rectangle, inner sep=4pt] at (1,5) {};
  \node[draw, rectangle, inner sep=4pt] at (3,5) {};
  \node[draw, rectangle, inner sep=4pt] at (5,5) {};
\end{tikzpicture}\quad\quad\quad\quad\quad\quad\begin{tikzpicture}[scale=0.5]
  \fill (3,9) node[mark size=3pt,color=black] {\pgfuseplotmark{x}};
  \fill (2,8) node[mark size=3pt,color=black] {\pgfuseplotmark{x}};
  \fill (4,8) node[mark size=3pt,color=black] {\pgfuseplotmark{x}};
\fill (1,7) node[mark size=3pt,color=black] {\pgfuseplotmark{x}};
  \fill (3,7) node[mark size=3pt,color=black] {\pgfuseplotmark{x}};
  \fill (5,7) node[mark size=3pt,color=black] {\pgfuseplotmark{x}};
  \fill (0,6) circle (3pt);
  \fill (2,6) node[mark size=3pt,color=black] {\pgfuseplotmark{x}};
  \fill (3,5) node[mark size=3pt,color=black] {\pgfuseplotmark{x}};
  \fill (4,6) node[mark size=3pt,color=black] {\pgfuseplotmark{x}};
  \fill (6,6) node[mark size=3pt,color=black] {\pgfuseplotmark{x}};
  \fill (1,5) circle (3pt);
  \fill (5,5) circle (3pt);
  \fill (2,4) circle (3pt);
  \fill (4,4) circle (3pt);
  \fill (3,3) circle (3pt);
  \fill (0,2) circle (3pt);
  \fill (1,2) circle (3pt);
  \fill (2,2) circle (3pt);
  \fill (3,2) circle (3pt)node [below=6pt] {$Q_{\widetilde{ws}_{\eta_3}}$};
  \fill (4,2) circle (3pt);
  \fill (5,2) circle (3pt);
  \fill (6,2) circle (3pt);

  \draw (0,6) -- (1,5);
  \draw (1,5) -- (2,4);
 
  \draw (2,4) -- (3,3);
  \draw (3,3) -- (4,4);
 
  \draw (4,4) -- (5,5);
  \draw (0,2) -- (1,2);
  \draw (1,2) -- (2,2);
  \draw (2,2) -- (3,2);
  \draw (3,2) -- (4,2);
  \draw (4,2) -- (5,2);
  \draw (5,2) -- (6,2);
\node[draw, rectangle, inner sep=4pt] at (0,6) {};

  \node[draw, rectangle, inner sep=4pt] at (5,5) {};
\end{tikzpicture}

\end{example}

Observe that the set $\operatorname{Peaks}Q_{\tilde{w}}$ is in bijection with the set $\operatorname{Peaks}X(w)$. Denote this bijection by $f$. Moreover, the set of Schubert divisors inside $X(w)$ is $\{\, X(ws_{f(i)})\mid i\in \operatorname{Peaks}Q_{\tilde{w}}\,\}.$

We now prove the following proposition. 
\begin{proposition}\label{cor: one-to-one}
    The quivers of the Schubert divisors of the minuscule Schubert variety $X(w)$ are in one-to-one correspondence with the set of peaks of $Q_{\tilde{w}}$. In particular, there exists a bijection $f$ between $\operatorname{Peaks}X(w)$ and $\operatorname{Peaks}Q_{\tilde w}$.
\end{proposition}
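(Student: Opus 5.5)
The plan is to make the bijection explicit through positions in the reduced word. Fix the reduced expression $\tilde w=(s_{i_1},\dots,s_{i_r})$ of the minuscule element $w$, unique up to commuting relations. To a vertex $k\in[1,r]$ we attach the root $\eta_k:=s_{i_r}\cdots s_{i_{k+1}}(\alpha_{i_k})\in R^+(w)$. Deleting the letter in position $k$ gives the word $\widetilde{ws}_{\eta_k}$, with $ws_{\eta_k}=s_{i_1}\cdots\widehat{s}_{i_k}\cdots s_{i_r}$. We then claim that $k\mapsto\eta_k$ restricts to a bijection $f:\operatorname{Peaks}Q_{\tilde w}\to\operatorname{Peaks}X(w)$. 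Since $Q_{\tilde w}$ has its arrows ordered so that the sources sit at the top, the quiver of the divisor $X(ws_{f(k)})$ should be $Q_{\tilde w}$ with the vertex $k$ removed. This is exactly what the pictures in the example above display.

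\textbf{Step 1 (peaks of the quiver give peaks of $X(w)$).} Let $k$ be a minimal vertex of $Q_{\tilde w}$, so no arrow $j\to k$ exists. Then every letter $s_{i_j}$ with $j<k$ that does not commute with $s_{i_k}$ has a successor of the same colour strictly between $j$ and $k$. Take the nearest such letters. Using the minuscule structure (heaps of minuscule elements), the letter $s_{i_k}$ can be commuted leftwards past all letters preceding it. So $w$ has a reduced expression ending with the reordered letters and beginning with $s_{i_k}$, or more precisely one where position $k$ is a source of the heap.

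It follows that the deleted word $s_{i_1}\cdots\widehat{s}_{i_k}\cdots s_{i_r}$ is reduced, so $\ell(ws_{\eta_k})=\ell(w)-1$. The deleted word is still a heap of a minuscule element: it is an order ideal of the heap with one source removed. Hence it stays in $W^{P^w}$. By Definition~\ref{def:Peaks Xw}, $\eta_k\in\operatorname{Peaks}X(w)$. Here I will use Corollary~\ref{cor:LMS}: we have $\langle\omega_{i_r},\eta_k\rangle=1$, so the $\eta_k$ are pairwise distinct and $f$ is injective.

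\textbf{Step 2 (surjectivity).} Let $\eta\in\operatorname{Peaks}X(w)$. By the discussion after Definition~\ref{def:Peaks Xw}, $\eta=\eta_k$ for some $k$, and $ws_\eta$ is obtained by deleting position $k$. We must show that $k$ is minimal in $Q_{\tilde w}$. Suppose instead there is an arrow $j\to k$ with $j<k<s(j)$. Then the deleted word contains $s_{i_j}$ and a later non-commuting neighbour arrangement with no intervening $s_{i_k}$ between $j$ and $s(j)$. In a minuscule heap, this forces either a non-reduced word (a braid or $s_is_i$ collapse) or a right descent outside $I^{w}$, meaning $ws_\eta\notin W^{P^w}$. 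Both contradict $\eta$ being a peak.

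Alternatively, one can count. By Proposition~\ref{pro:bases for cl(X(w))}, $\#\operatorname{Peaks}X(w)$ equals the number of Schubert divisors of $X(w)$. Perrin's results \cite{Per07} identify that number with $\#\operatorname{Peaks}Q_{\tilde w}$. Injectivity from Step~1 then gives bijectivity.

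\textbf{Step 3 (quivers of divisors).} For a peak $k$, the quiver $Q_{\widetilde{ws}_{\eta_k}}$ is the full subquiver on $[1,r]\setminus\{k\}$. The main obstacle is Step~1/Step~2, namely translating minimality in $Q_{\tilde w}$ into the two conditions: reducedness and the Grassmannian condition $ws_\eta\in W^{P^w}$. I would handle this through the heap/commutation-class description of minuscule words and the fact that each colour occurs with neighbours interlacing, as in \cite{LMS79, Per07}.
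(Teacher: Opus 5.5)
Your Step~1 and Step~3 are sound, and Step~1 uses the same assignment $k\mapsto\eta_k=s_{i_r}\cdots s_{i_{k+1}}(\alpha_{i_k})$ as the paper. If $k$ is a peak of $Q_{\tilde w}$, the last letter before $k$ of a neighbouring colour would send an arrow to $k$. An earlier letter of the same colour would, by reducedness, need such a neighbour in between. So every letter before $k$ commutes with $s_{i_k}$, and $ws_{\eta_k}=s_{i_k}w$ is a suffix of $w$ of length $r-1$ lying in $W^{P^w}$.

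Your justification of injectivity is a non sequitur. The identity $\langle\omega_{i_r},\eta_k\rangle=1$ holds for \emph{every} $k$, so it cannot separate the $\eta_k$. Injectivity holds simply because $\eta_1,\dots,\eta_r$ are the $r$ distinct elements of $R^+(w)$.

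The real gap is Step~2. The first route is an assertion, not an argument, and its dichotomy is misstated in two ways. First, $ws_\eta\notin W^{P^w}$ means a right descent \emph{in} $I^w$, not outside it. Second, a braid pattern does not make a word non-reduced: for $w=s_2s_1s_3s_2$, deleting the non-peak vertex $2$ gives the reduced word $s_2s_3s_2$, which fails only because $s_3\in I^w$ is a right descent. The counting route quotes ``Schubert divisors of $X(w)$ $\leftrightarrow$ peaks of $Q_{\tilde w}$'', which is exactly what the proposition asserts. The paper instead treats both directions at once. For every vertex $j$ it uses Perrin's Proposition~4.5 in \cite{Per07} to describe the Schubert variety obtained by deleting $j$ (quiver supported on $[1,r]\setminus A_j$), and reads off from its dimension whether it is a divisor.

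The missing idea is that, for minuscule $w$, every cover of $w$ inside $W^{P^w}$ is a left multiplication by a simple reflection. Corollary~\ref{cor:LMS} is exactly the tool for this; you used it in the wrong step. Put $\lambda=\omega_{i_r}$, so $S\setminus I^w=\{\alpha_{i_r}\}$ and $\lambda$ is minuscule.

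For $v\in W^{P^w}$, every $\gamma\in R^+(v)$ is not a root of the Levi of $P^w$, so $\langle\lambda,\gamma\rangle\geq 1$. Minusculity gives $\langle\lambda,\gamma\rangle\leq 1$, hence equality. Thus along a reduced word of $v$ each simple reflection lowers the weight by exactly one simple root, and $\lambda-v\lambda$ is a sum of exactly $\ell(v)$ simple roots.

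Now take $\eta\in\operatorname{Peaks}X(w)$ and set $\beta:=-w(\eta)\in R^+$. Then $ws_\eta=s_\beta w$, and $\langle w\lambda,\beta\rangle=-\langle\lambda,\eta\rangle=-1$ by Corollary~\ref{cor:LMS}, so $ws_\eta\lambda=w\lambda+\beta$. Since $ws_\eta\in W^{P^w}$ has length $\ell(w)-1$, counting simple roots in $\lambda-ws_\eta\lambda=(\lambda-w\lambda)-\beta$ forces $\beta=\alpha_t$ to be simple. Hence $ws_\eta=s_tw$ with $s_t$ a left descent of $w$.

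As $w$ has a unique reduced word up to commutations, the first occurrence $k'$ of $s_t$ in $\tilde w$ can be commuted to the front. So $k'$ is a peak of $Q_{\tilde w}$, and $ws_{\eta_{k'}}=s_tw=ws_\eta$, whence $\eta=\eta_{k'}$. This proves surjectivity directly. Combined with your Steps~1 and~3, it gives the proposition without appealing to a divisor count.
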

\begin{proof}
 Fix a reduced expression $\tilde{w}=s_{i_1}\cdots s_{i_r}$ of $w$.
 For any vertex $j\in[1,r]$, consider \[
 A_j:=\{1\leq i\leq r\mid i\preceq j\},
 \]and note that \[\#A_{j}=\begin{cases}
     1&\text{if }j\in \operatorname{Peaks}Q_{\tilde{w}},\\\geq 2& \text{otherwise}.
 \end{cases}\] Let $\tilde{w}^j$ be a reduced expression obtained from the reduced expression $\tilde{w}$ by removing the simple root $\alpha_{i_j}$. More precisely \[\
 \tilde{w}^j=\widetilde{ws}_{\eta},\qquad \text{where $\eta=s_{i_r}\cdots s_{i_{j+1}}(\alpha_{i_j})$}.\] Then, the quiver $Q_{\tilde{w}^j}$ is a subquiver of $Q_{\tilde w}$ with vertices $[1,r]\setminus A_{j}$ (see for example \cite[Proposition~4.5]{Per07}). In this case, the dimension of $X(ws_{\eta})$ is $(r-\#A_j)$. Thus, $X(ws_{\eta})$ is Schubert divisor if and only if $j\in \operatorname{Peaks}Q_{\tilde{w}}$. By Proposition \ref{pro:bases for cl(X(w))}, this is equivalent to $\eta\in \operatorname{Peaks}X(w)$ if and only if $j\in \operatorname{Peaks}Q_{\tilde{w}}$. This completes the proof.
\end{proof}
 
\begin{remark} \label{rmk: peaks}\ 

\begin{enumerate}
    \item 
    By Proposition \ref{cor: one-to-one}, we conclude that the definition of $\operatorname{Peaks}X(w)$ is equivalent to the definition of $\operatorname{Peaks}(Q_{\tilde{w}})$.
    
    \item For a minuscule element $w\in W$, Perrin constructs some partitions of $Q_{\tilde{w}}$ into quivers $(Q_{{\tilde{w}}_j})_{1\leq j\leq m}$ with $w_i$ minuscule elements such that $\widehat{w}=(w_1,\dots,w_m)$ is a good generalized decomposition of $w$ and  $$\operatorname{Peaks}(Q_{\tilde{w}})=\bigcup_{1\leq j\leq m}\operatorname{Peaks}(Q_{\tilde{w}_j}),$$see \cite[~Section~5.4]{Per07}.
    \item 
    By Proposition \ref{cor: one-to-one}, the definition of $\operatorname{Peaks}\widehat{X}(\widehat{w})$ defined in Section \ref{sec: Fano gBS} is bijectively corresponds to the notion of the peaks by Perrin as in (2) and we denote such bijection by $\widehat{f}$.
\end{enumerate}

\end{remark}
By Remark~\ref{rmk: peaks}(3), Proposition~\ref{pro: Gorensteinmin} for  $m=1$ gives the results of \cite{WY06} and \cite[Proposition~1.12]{Per09} on the characterization of Gorenstein minuscule Schubert varieties. 

 Furthermore, Proposition~\ref{anti-can} yields the following expression for the anticanonical divisor:
 $$[-K_{\widehat{X}(\widehat{w})}]=\sum_{\widehat{\eta}\in \operatorname{Peaks}\widehat{X}(\widehat{w})}(\operatorname{ht}(\widehat{\eta}^\vee)+1\big)\widehat{D}_{\widehat{\eta}}=\sum_{j\in \operatorname{Peaks}(Q_{\tilde{w}})}\big(\operatorname{ht}(\widehat{f}(j)^\vee)+1\big){\widehat{D}}_{\widehat{f}(j)}
$$ 
(see \cite[Fact 6.6]{Per07}).
Consequently, Corollary~\ref{cor:minuscule} recovers Corollary~6.7 of \cite{Per07}, that is, \[
\mathcal{O}[-K_{\widehat{X}(\widehat{w})}]=\sum_{1\leq j\leq m}c_{j,\alpha_j}\mathcal{L}_{j,\alpha_j}.
\] 
Thus, our results recover the corresponding results for minuscule generalized Bott-Samelson varieties.

\section{Examples}\label{sec:examples}
In this section, we illustrate our method through explicit calculations for Fano and weak Fano generalized Bott-Samelson varieties.
\begin{example}
Let $G = SL_4(\mathbb{C})$ and consider 
\[
\widehat{w} = (w_1, w_2) = (s_2 s_1 s_3, s_2s_1s_3s_2),
\] 
a non reduced admissible generalized decomposition of $w = s_2 s_1 s_3 s_2s_1s_3s_2 \in W$. 
By straightforward computation, we have
\[
S \setminus I^{w_1} = \{\alpha_1, \alpha_3\}, \quad 
S \setminus I^{w_2} = \{\alpha_2\}.
\]
The associated decomposition $\tilde{w}$ corresponding to $\widehat{w}$ is given as follows
\[
\tilde{w} = (s_{1,1}, s_{2,1}, s_{3,1}, s_{1,2},s_{2,2},s_{3,2},s_{4,2}) = (s_2, s_1, s_3, s_2, s_1, s_3, s_2),
\] with \[
w_1=\prod_{k=1}^{3}s_{k,1},\qquad\text{and}\qquad w_2=\prod_{k=1}^{4}s_{k,2}.
\]
 The total order $\preceq_{\widehat{w}}$ defined in Subsection~\ref{subsection: M1} gives 
\[
\alpha_{2,1} \prec_{\widehat{w}} \alpha_{3,1} \prec_{\widehat{w}} \alpha_{4,2}.
\]
By Definition \ref{def:Peaks Xw}, we have 
\[
\operatorname{Peaks} X(w_1) = \{\alpha_1 + \alpha_2 + \alpha_3, \alpha_1, \alpha_3\}, \quad
\operatorname{Peaks} X(w_2) = \{\alpha_1 + \alpha_2 + \alpha_3\}.
\]
Since $S\setminus I^{w_1}\subseteq \operatorname{Peaks}X(w_1)$. Thus, we consider the identity map \[
\mu_{w_1}:S\setminus I^{w_1}\to \operatorname{Peaks}X(w_1), \quad \alpha\mapsto \alpha,
\] and the map \[\mu_{w_2}:S\setminus I^{w_2}\to \operatorname{Peaks}X(w_2),\quad \alpha_2\mapsto \alpha_1 + \alpha_2 + \alpha_3.
\]Then, by the construction of the set $\mathcal{B}_{P^{w}}$, we have
\[
\mathcal{B}_{P^{w_1}} = \{\mu_{w_1}(\alpha_1),  \, \mu_{w_1}(\alpha_3)\}=\{\alpha_1,\, \alpha_3\}, \quad \text{and}\quad
\mathcal{B}_{P^{w_2}} = \{\mu_{w_2}(\alpha_2)\}=\{\alpha_1 + \alpha_2 + \alpha_3\}.
\]
Now, consider
\[
\operatorname{Peaks} X(w_1) \setminus \mathcal{B}_{ P^{w_1}} = \{\eta := \alpha_1 + \alpha_2 + \alpha_3\}, 
\quad \operatorname{Peaks} X(w_2) \setminus \mathcal{B}_{P^{w_2}} = \emptyset,
\]
and note that
\[
\operatorname{ht}(\eta^\vee) + 1 = 4 = \big(\operatorname{ht}(\mu_{w_1}(\alpha_1)^\vee) + 1\big) + \big(\operatorname{ht}(\mu_{w_1}(\alpha_3)^\vee) + 1\big).
\]
Then condition (1) of Theorem~\ref{thm: Fano gBS} is satisfied, so $\widehat{X}(\widehat{w})$ is Gorenstein.
The set of peaks of $\widehat{X}(\widehat{w})$ is given by \begin{align*}
\operatorname{Peaks}\widehat{X}(\widehat{w})&=w_2^{-1}\bigl(\operatorname{Peaks}X(w_1)\bigl)\sqcup \operatorname{Peaks}X(w_2)\\&=\{w_2^{-1}(\alpha_1+\alpha_2+\alpha_3),\,w_2^{-1}(\alpha_1),\,w_2^{-1}(\alpha_3)\}\sqcup \{\alpha_1 + \alpha_2 + \alpha_3\}\\&=\{\eta_{1,1}:=-\alpha_2,\,\eta_{2,1}:=\alpha_3,\,\eta_{3,1}:=\alpha_1,\,\eta_{1,2}:=\alpha_1 + \alpha_2 + \alpha_3\}.
\end{align*} Thus, by Proposition \ref{anti-can}, we get \begin{align*}
[-K_{\widehat{X}(\widehat{w})}]
&=
\sum_{\widehat{\eta}\in \operatorname{Peaks}\widehat{X}(\widehat{w})}
\bigl(\operatorname{ht}(\eta^\vee)+1\bigr)\,
\widehat{D}_{\widehat{\eta}} \\
&=
0\widehat{D}_{\widehat{\eta}_{1,1}}+2\widehat{D}_{\widehat{\eta}_{2,1}}+2\widehat{D}_{\widehat{\eta}_{3,1}}+4\widehat{D}_{\widehat{\eta}_{1,2}}.
\end{align*}
Next, to determine the Fano property of $\widehat{X}(\widehat{w})$, we solve the system of linear equations given in Eq.\eqref{eq:sublinear}. Recall the matrices $\widehat{M}_{P^{w}}$ and $\widehat{M}$ constructed in Subsection~\ref{subsub: Fano}, we have
\[
\widehat{M}_{P^{w_1}} =\begin{pmatrix}
    1&0&\left\langle\omega_2,\,w_2^{-1}(\mu_{w_1}(\alpha_1))\right\rangle\\0&1&\left\langle\omega_2,\,w_2^{-1}(\mu_{w_1}(\alpha_3))\right\rangle
\end{pmatrix}= \begin{pmatrix} 1 & 0 & 0 \\ 0 & 1 & 0 \end{pmatrix},\]and similarly\[ \quad
\widehat{M}_{P^{w_2}} =\begin{pmatrix} 0 & 0 & M_{P^{w_2}} \end{pmatrix}= \begin{pmatrix} 0 & 0 & 1 \end{pmatrix}.
\]
Thus, we get
\[
\widehat{M} =\begin{pmatrix}
    \widehat{M}_{P^{w_1}}\\\widehat{M}_{P^{w_2}}
\end{pmatrix}= \begin{pmatrix} 1 & 0 & 0 \\ 0 & 1 & 0 \\ 0 & 0 & 1 \end{pmatrix}\]

Applying Eq.\eqref{eq:Mhat matrices} with the total order $\preceq_{\widehat{w}}$, we get
\begin{align*}
\begin{pmatrix} 1 & 0 & 0 \\ 0 & 1 & 0 \\ 0 & 0 & 1 \end{pmatrix} 
\begin{pmatrix} c_{1,\alpha_1} \\ c_{1,\alpha_3} \\ c_{2,\alpha_2} \end{pmatrix} 
&= \begin{pmatrix} \operatorname{ht}((w_2^{-1}\mu_{w_1}(\alpha_1))^\vee) + 1 \\ 
\operatorname{ht}((w_2^{-1}\mu_{w_1}(\alpha_3))^\vee) + 1 \\ 
\operatorname{ht}(\mu_{w_2}(\alpha_2)^\vee) + 1 \end{pmatrix}  \\
\begin{pmatrix} c_{1,\alpha_1} \\ c_{1,\alpha_3} \\ c_{2,\alpha_2} \end{pmatrix} 
&=  \begin{pmatrix} 2 \\ 2 \\ 4 \end{pmatrix}.
\end{align*}
Hence, \[
\mathcal{O}([-K_{\widehat{X}(\widehat{w})}])=2\mathcal{L}_{1,\alpha_1}+2\mathcal{L}_{1,\alpha_3}+4\mathcal{L}_{2,\alpha_2}.
\]
Since all $c_{i,\alpha} > 0$, hence by Theorem~\ref{thm: Fano gBS}, we conclude that $\widehat{X}(\widehat{w})$ is Fano.
\end{example}
\begin{example} 
    Let $G=\operatorname{Spin}_{12}(\mathbb{C})$,  and consider $
    \widehat{w}=(w_1,w_2)=(s_1s_4s_5,s_4s_3)
    $ is an admissible
    generalized reduced decomposition of $w=s_1s_4s_5s_4s_3$ in $W$. Then, we have\[
    S\setminus I^{w_{1}}=\{\alpha_1,\alpha_5\}\quad\text{and}\quad S\setminus I^{w_{2}}=\{\alpha_3\}.
    \]Since \[
    \tilde{w}=(s_{1,1},s_{2,1},s_{3,1},s_{1,2},s_{2,2})=(s_1,s_4,s_5,s_4,s_3)
    \]is also an admissible
    generalized reduced decomposition of $w$. It follows from the total ordering $\preceq_{\widehat{w}}$ that\[\alpha_{1,1}\prec_{\widehat{w}}\alpha_{3,1}\prec_{\widehat{w}}\alpha_{2,2}.
    \] We also have 
    \[  \operatorname{Peaks}X(w_1)=\{s_5s_4(\alpha_1),s_5(\alpha_4)\}
    =\{\alpha_1,\alpha_4+\alpha_5\} \quad\text{and}\quad
\operatorname{Peaks}X(w_2)=\{s_3(\alpha_4)\}=\{\alpha_3+\alpha_4\}.
    \] Next, consider the maps \begin{align*}\mu_{w_1}: S\setminus I^{w_1}&\to \operatorname{Peaks}X(w_1),\quad \alpha_1\mapsto\alpha_1 \quad\text{and}\quad \alpha_5\mapsto\alpha_4+\alpha_5\\
    \mu_{w_2}: S\setminus I^{w_2}&\to \operatorname{Peaks}X(w_2), \quad\quad \alpha_4\mapsto\alpha_3+\alpha_4.
    \end{align*} 
    Then, we have \[
    \mathcal{B}_{P^{w_1}}=\{\mu_{w_1}(\alpha_1)=\alpha_1, \mu_{w_1}(\alpha_5)=\alpha_4+\alpha_5\},\quad\text{and} \quad \mathcal{B}_{P^{w_2}}=\{\mu_{w_2}(\alpha_3)=\alpha_3+\alpha_4\}.
    \]
    Hence,
\[
\operatorname{Peaks} X(w_1) \setminus \mathcal{B}_{ P^{w_1}} = \emptyset\quad \text{and} \quad 
\quad \operatorname{Peaks} X(w_2) \setminus \mathcal{B}_{P^{w_2}} = \emptyset .
\]
Thus, condition (1) of Theorem~\ref{thm: Fano gBS} is satisfied vacuously, so $\widehat{X}(\widehat{w})$ is Gorenstein.
Next, by the definition of matrices $\widehat{M}_{P^{w_i}}$ and $\widehat{M}$ constructed in Subsection~\ref{subsub: Fano}, we have\[
\widehat{M}_{P^{w_1}}=\begin{pmatrix}
    1&0&\left\langle\omega_3,w_{2}^{-1}(\mu_{w_1}(\alpha_1))\right\rangle\\0&1&\left\langle\omega_3,w_{2}^{-1}(\mu_{w_1}(\alpha_4))\right\rangle
\end{pmatrix}=\begin{pmatrix}
    1&0&0\\0&1&0
\end{pmatrix}\quad\text{and}\quad \widehat{M}_{P^{w_2}}=\begin{pmatrix}
    0&0&1
\end{pmatrix}.
\]Then, \[
\widehat{M}=\begin{pmatrix}
    1&0&0\\0&1&0\\0&0&1
\end{pmatrix}
\]Applying Eq.\eqref{eq:Mhat matrices} with the total order $\preceq_{\widehat{w}}$, we get\[
\widehat{M}=\begin{pmatrix}
    1&0&0\\0&1&0\\0&0&1
\end{pmatrix}\begin{pmatrix}
   c_{1,\alpha_1} \\ c_{1,\alpha_5} \\ c_{2,\alpha_3}
\end{pmatrix}=\begin{pmatrix} \operatorname{ht}((w_2^{-1}\mu_{w_1}(\alpha_1))^\vee) + 1 \\ 
\operatorname{ht}((w_2^{-1}\mu_{w_1}(\alpha_5))^\vee) + 1 \\ 
\operatorname{ht}(\mu_{w_2}(\alpha_3)^\vee) + 1 \end{pmatrix} =\begin{pmatrix}
    2\\2\\3
\end{pmatrix}
\]This implies that $c_{1,\alpha_1}=c_{1,\alpha_5}=2$ and $c_{2,\alpha_3}=3$, and we have\[
\mathcal{O}[-K_{\widehat{X}(\widehat{w})}]=2\mathcal{L}_{1,\alpha_1}+2\mathcal{L}_{1,\alpha_5}+3\mathcal{L}_{2,\alpha_3}.
\] Therefore, by Theorem \ref{thm: Fano gBS}, the variety $\widehat{X}(\widehat{w})$ is Fano.
\end{example}

\begin{example}
For type $B_2$, consider $G:=\operatorname{Spin}_5(\mathbb{C})$ and let us assume a non reduced admissible generalized decomposition  \[
\widehat{w}=(w_1,w_2,w_3)=(w_0,s_1,s_2)
\] of $w=w_0s_1s_2\in W$, where $w_0$ is the maximal element in $W$. In this case, we have \[\widehat{X}(\widehat{w})\simeq G\times ^B Z(s_1,s_2).\] 
For each simple root $\alpha_i\in S$, where $1\leq i\leq 2$, we have $S\setminus I^{s_{\alpha_i}}=\{\alpha_i\}$. Also, note that $I^{w_0}=\emptyset$ for $w_0$. Thus, we get \[
S\setminus I^{w_0}=\{\alpha_1,\alpha_2\}.
\]Now, we fix a reduced expression for $w_0$ as \[
\tilde{w}_0=(s_2,s_1,s_2,s_1)
\]such that $w_0=s_2s_1s_2s_1$. Then, the associated decomposition $\tilde{w}$ corresponding to $\widehat{w}$ is \[
\tilde{w}=(s_{1,1},s_{2,1},s_{3,1},s_{4,1},s_{1,2},s_{1,3})=(s_2,s_1,s_2,s_1,s_1,s_2),
\] 
where
\[
w_1=\prod_{k=1}^{4}s_{k,1}, \qquad
w_2=s_{1,2}, \qquad \text{and} \qquad
w_3=s_{1,3}.
\]
By the total order $\preceq_{\widehat{w}}$, 
the induced ordering on the collection
\[
\bigsqcup_{i=1}^{3}\left(S\setminus I^{w_i}\right)
\]
is given by
$
\alpha_{3,1}\prec_{\widehat{w}}
\alpha_{4,1}\prec_{\widehat{w}}
\alpha_{1,2}\prec_{\widehat{w}}
\alpha_{1,3}.
$ For each $1\leq i\leq 3$, the set of peaks of $X(w_i)$ is given by
 \[
\operatorname{Peaks}X(w_1)=\{\alpha_1,\alpha_2\},\quad \operatorname{Peaks}X(w_2)=\{\alpha_1\}\quad \text{and}\quad \operatorname{Peaks}X(w_3)=\{\alpha_2\}.
\]
Also, $S\setminus I^{w_i}\subseteq \operatorname{Peaks}X(w_i)$ for each $1\leq i\leq 3$. Thus, take $\mu_{w_i}$ to be indentity map, which gives \[
\mathcal{B}_{P^{w_1}}=\{\mu_{w_1}(\alpha_1)=\alpha_1,\, \mu_{w_1}(\alpha_2)=\alpha_2\},\quad \mathcal{B}_{P^{w_2}}=\{\mu_{w_2}(\alpha_1)=\alpha_1\},\quad \text{and} \quad \mathcal{B}_{P^{w_3}}=\{\mu_{w_3}(\alpha_2)=\alpha_2\}.
\]
Then condition $(1)$ of Theorem \ref{thm: Fano gBS} holds. Hence,
$\widehat{X}(\widehat{w})$ is Gorenstein.

Also, note that the set of peaks for $\widehat{X}(\widehat{w})$ is given by\begin{align*}
    \operatorname{Peaks}\widehat{X}(\widehat{w})&=(w_2w_3)^{-1}\bigl(\operatorname{Peaks}X(w_1)\bigl)\,\sqcup\, w_3^{-1}\bigl(\operatorname{Peaks}X(w_2)\bigl)\,\sqcup\,\bigl(\operatorname{Peaks}X(w_3)\bigl)\\&= s_2s_1\{\alpha_1,\alpha_2\}\sqcup s_2\{\alpha_1\}\sqcup\{\alpha_2\}\\&=\{-\alpha_1-2\alpha_2,\,\alpha_1+\alpha_2\}\sqcup\{\alpha_1+2\alpha_2\}\sqcup\{\alpha_2\}\\
    &=\{\widehat{\eta}_{1,1}:=-\alpha_1-2\alpha_2,\,\widehat{\eta}_{2,1}:=\alpha_1+\alpha_2,\,\widehat{\eta}_{1,2}:=\alpha_1+2\alpha_2,\,\widehat{\eta}_{1,3}:=\alpha_2\}.
    \end{align*} Then, by Proposition \ref{anti-can}, we obtain \begin{align*}
[-K_{\widehat{X}(\widehat{w})}]
&=
\sum_{\widehat{\eta}\in \operatorname{Peaks}\widehat{X}(\widehat{w})}
\bigl(\operatorname{ht}(\eta^\vee)+1\bigr)\,
\widehat{D}_{\widehat{\eta}} \\
&=
-\widehat{D}_{\widehat{\eta}_{1,1}}
+4\widehat{D}_{\widehat{\eta}_{2,1}}
+3\widehat{D}_{\widehat{\eta}_{1,2}}
+2\widehat{D}_{\widehat{\eta}_{1,3}}.
\end{align*}
     To determine the Fano property of $\widehat{X}(\widehat{w})$, we solve the system of linear equations given in Eq.\eqref{eq:sublinear}. We begin by recalling the constructions of the matrices $\widehat{M}_{P^w}$, which yields \begin{align*}
     \widehat{M}_{P^{w_1}}&=
\begin{pmatrix}
1 & 0 & \left\langle \omega_1,\, s_1(\mu_{w_1}(\alpha_2)) \right\rangle & \left\langle \omega_2,\, s_2s_1(\mu_{w_1}(\alpha_2)) \right\rangle \\
0 & 1 & \left\langle \omega_1,\, s_{1}(\mu_{w_1}(\alpha_1)) \right\rangle & \left\langle \omega_2,\, s_2s_{1}(\mu_{w_1}(\alpha_1)) \right\rangle
\end{pmatrix}\\
&=\begin{pmatrix}
    1&0&2&1\\0&1&-1&-1
\end{pmatrix}.
\end{align*}Similarly, we have \[
\widehat{M}_{P^{w_2}}=\begin{pmatrix}
    0&0&1&1
\end{pmatrix}, \qquad \text{and} \qquad \widehat{M}_{P^{w_3}}=\begin{pmatrix}
    0&0&0&1
\end{pmatrix}.
\]Then, by the definition of $\widehat{M}$, we get \[
\widehat{M}= \begin{pmatrix}
    1&0&2&1\\0&1&-1&-1\\0&0&1&1\\0&0&0&1
\end{pmatrix},\qquad \text{and} \qquad \widehat{M}^{-1}=\begin{pmatrix}
    1&0&-2&1\\0&1&1&0\\0&0&1&-1\\0&0&0&1
\end{pmatrix}.
\]Applying Eq.~\eqref{eq:Mhat matrices} with respect to the total order $\preceq_{\widehat{w}}$, we obtain \[
\begin{pmatrix}
    1&0&2&1\\0&1&-1&-1\\0&0&1&1\\0&0&0&1
\end{pmatrix} \begin{pmatrix}
    c_{1,\alpha_2}\\c_{1,\alpha_1}\\c_{2,\alpha_1}\\c_{\alpha_{3,2}}
\end{pmatrix}=\begin{pmatrix}
    \operatorname{ht}\bigl(((w_2w_3)^{-1}\mu_{w_1}(\alpha_2))^\vee\bigl)+1\\\operatorname{ht}\bigl(((w_2w_3)^{-1}\mu_{w_1}(\alpha_1))^\vee\bigl)+1\\\operatorname{ht}\bigl(((w_3)^{-1}\mu_{w_2}(\alpha_1))^\vee\bigl)+1\\\operatorname{ht}\bigl((\mu_{w_3}(\alpha_2))^\vee\bigl)+1
\end{pmatrix}=
    \begin{pmatrix}
        \operatorname{ht}(\widehat{\eta}_{2,1}^\vee)+1\\
        \operatorname{ht}(\widehat{\eta}_{1,1}^\vee)+1\\
        \operatorname{ht}(\widehat{\eta}_{1,2}^\vee)+1\\
        \operatorname{ht}(\widehat{\eta}_{1,3}^\vee)+1
    \end{pmatrix}=\begin{pmatrix}
        4\\-1\\3\\2
    \end{pmatrix}.
\]Multiplying by the inverse of the matrix $\widehat{M}$, we successively obtain \[
\begin{pmatrix}
    c_{1,\alpha_2}\\c_{1,\alpha_1}\\c_{2,\alpha_1}\\c_{\alpha_{3,2}}
\end{pmatrix} = \begin{pmatrix}
    1&0&-2&1\\0&1&1&0\\0&0&1&-1\\0&0&0&1
\end{pmatrix}\begin{pmatrix}
        4\\-1\\3\\2
    \end{pmatrix}=\begin{pmatrix}
        0\\2\\1\\2
    \end{pmatrix}.
\]Hence, \[
\mathcal{O}\bigl([-K_{\widehat{X}(\widehat{w})}]\bigl)=0\mathcal{L}_{1,\alpha_2}+2\mathcal{L}_{1,\alpha_1}+\mathcal{L}_{2,\alpha_1}+2\mathcal{L}_{3,\alpha_2}.
\] Since $c_{1,\alpha_2}=0$ and all the remaining coefficients are strictly positive, it follows from Theorem~\ref{thm: Fano gBS} that $\widehat{X}(\widehat{w})$ is weak Fano but not Fano. 
\end{example}

\begin{example}\label{ex: not Fano}
    Let $G=SL_5(\mathbb{C})$ and consider $$\widehat{w}=(w_1,w_2,w_3)=(s_2s_3s_1,s_2s_4,s_1s_3)$$ be an admissible generalized reduced decomposition of $w=s_2s_3s_1s_2s_4s_1s_3\in W$. Note that \[
    S\setminus I^{w_1}=\{\alpha_1,\alpha_3\},~S\setminus I^{w_2}=\{\alpha_2,\alpha_4\},~\text{and}~S\setminus I^{w_3}=\{\alpha_1,\alpha_3\}.
    \]
    Since $$\tilde{w}=(s_{1,1},s_{2,1},s_{3,1},s_{1,2},s_{2,2},s_{1,3},s_{2,3})=(s_2,s_3,s_1,s_2,s_4,s_1,s_3)$$ is also an admissible generalized reduced decomposition of $w$. Now, by the definition of total ordering $\preceq_{\widehat{w}}$,
    we get 
    \[    \alpha_{2,1}\prec_{\widehat{w}}\alpha_{3,1}\prec_{\widehat{w}}\alpha_{1,2}\prec_{\widehat{w}}\alpha_{2,2}\prec_{\widehat{w}}\alpha_{1,3}\prec_{\widehat{w}}\alpha_{2,3}.
    \]     
    Also, 
    \[
    \operatorname{Peaks}X(w_1)=\{\alpha_1+\alpha_2+\alpha_3,\alpha_3,\alpha_1\},~\operatorname{Peaks}X(w_2)=\{\alpha_2,\alpha_4\}~\text{and}~\operatorname{Peaks}X(w_3)=\{\alpha_1,\alpha_3\}.
    \]
    By the definition of $\mathcal{B}_{P^{w_i}}$, we get 
    \[
    \mathcal{B}_{P^{w_1}}=\{\mu_{w_1}(\alpha_1)=\alpha_1,\mu_{w_1}(\alpha_3)=\alpha_3\}\quad \text{and}\quad \mathcal{B}_{P^{w_j}}=\operatorname{Peaks}X(w_j)~~\text{for $j=2,3$}.
    \]
    Then, 
    \[
    \operatorname{Peaks}X(w_1)\setminus \mathcal{B}_{P^{w_1}}=\{\eta:=\sum_{i=1}^{3}\alpha_i\}.\]
    For $j=2,3$, we have 
    \[\operatorname{Peaks}X(w_j)\setminus \mathcal{B}_{P^{w_j}}=\emptyset.
    \]
    Moreover,
    \[
    \operatorname{ht}(\eta^\vee)+1=4=\big(\operatorname{ht}(\mu_{w_1}(\alpha_1)^\vee)+1\big)+\big(\operatorname{ht}(\mu_{w_1}(\alpha_3)^\vee)+1\big).
    \]
   All of these together satisfy condition (1) of Theorem~\ref{thm: Fano gBS}. 
Hence, the variety $\widehat{X}(\widehat{w})$ is Gorenstein. 

Next, by the definition of matrices $\widehat{M}_{P^{w_i}}$ and $\widehat{M}$ constructed in Subsection~\ref{subsub: Fano}, we have 
\[
    \widehat{M}_{P^{w_1}}=\begin{pmatrix}
        1&0&1&1&1&1\\ 0&1&1&0&0&1
    \end{pmatrix},\quad \quad \quad \widehat{M}_{P^{w_2}}=\begin{pmatrix}
        0&0&1&0&1&1\\0&0&0&1&0&1
    \end{pmatrix},
    \]
    and 
    \[\widehat{M}_{P^{w_3}}=
    \begin{pmatrix}
        0&0&0&0&1&0\\0&0&0&0&0&1
    \end{pmatrix}.
    \]
    Then,
\[
\widehat{M}
=
\begin{pmatrix}
1 & 0 & 1 & 1 & 1 & 1 \\
0 & 1 & 1 & 0 & 0 & 1 \\
0 & 0 & 1 & 0 & 1 & 1 \\
0 & 0 & 0 & 1 & 0 & 1 \\
0 & 0 & 0 & 0 & 1 & 0 \\
0 & 0 & 0 & 0 & 0 & 1
\end{pmatrix},
\qquad
\widehat{M}^{-1}
=
\begin{pmatrix}
1 & 0 & -1 & -1 & 0 & 1 \\
0 & 1 & -1 & 0 & 1 & 0 \\
0 & 0 & 1 & 0 & -1 & -1 \\
0 & 0 & 0 & 1 & 0 & -1 \\
0 & 0 & 0 & 0 & 1 & 0 \\
0 & 0 & 0 & 0 & 0 & 1
\end{pmatrix}.
\]
By Eq.\eqref{eq:Mhat matrices} with the total order $\preceq_{\widehat{w}}$, we obtain
\begin{align*}
    \begin{pmatrix}
1 & 0 & 1 & 1 & 1 & 1 \\
0 & 1 & 1 & 0 & 0 & 1 \\
0 & 0 & 1 & 0 & 1 & 1 \\
0 & 0 & 0 & 1 & 0 & 1 \\
0 & 0 & 0 & 0 & 1 & 0 \\
0 & 0 & 0 & 0 & 0 & 1
\end{pmatrix}\begin{pmatrix}
    c_{1,\alpha_3}\\
    c_{1,\alpha_1}\\
    c_{2,\alpha_2}\\
    c_{2,\alpha_4}\\
    c_{3,\alpha_1}\\
    c_{3,\alpha_3}\\
\end{pmatrix}&=\begin{pmatrix}
    \operatorname{ht}\big(((w_2w_3)^{-1}\mu_{w_1}(\alpha_3))^\vee\big)+1\\
    \operatorname{ht}\big(((w_2w_3)^{-1}\mu_{w_1}(\alpha_1))^\vee\big)+1\\
    \operatorname{ht}\big(((w_3)^{-1}\mu_{w_2}(\alpha_2))^\vee\big)+1\\
    \operatorname{ht}\big(((w_3)^{-1}\mu_{w_2}(\alpha_4))^\vee\big)+1\\
    \operatorname{ht}\big(\mu_{w_3}(\alpha_1)^\vee\big)+1\\
    \operatorname{ht}\big(\mu_{w_3}(\alpha_3)^\vee\big)+1
\end{pmatrix}=\begin{pmatrix}
    5\\
    3\\
    4\\
    3\\
    2\\
    2
\end{pmatrix}\\
\begin{pmatrix}
    c_{1,\alpha_3}\\
    c_{1,\alpha_1}\\
    c_{2,\alpha_2}\\
    c_{2,\alpha_4}\\
    c_{3,\alpha_1}\\
    c_{3,\alpha_3}\\
\end{pmatrix}&= \begin{pmatrix}
1 & 0 & -1 & -1 & 0 & 1 \\
0 & 1 & -1 & 0 & 1 & 0 \\
0 & 0 & 1 & 0 & -1 & -1 \\
0 & 0 & 0 & 1 & 0 & -1 \\
0 & 0 & 0 & 0 & 1 & 0 \\
0 & 0 & 0 & 0 & 0 & 1
\end{pmatrix}\begin{pmatrix}
    5\\
    3\\
    4\\
    3\\
    2\\
    2
\end{pmatrix}.
\end{align*}
By solving this, we get 
\[
c_{1,\alpha_3}=0,~c_{1,\alpha_1}=1,~ c_{2,\alpha_2}=0,~c_{2,\alpha_4}=1,~c_{3,\alpha_1}=2,\quad\text{and}\quad c_{3,\alpha_3}=2.
\] 
Therefore, by Theorem \ref{thm: Fano gBS}, the variety $\widehat{X}(\widehat{w})$ is weak Fano but not Fano. However, the variety $X(w_i)$ is Fano for all $1\leq i\leq 3$. 
    \end{example}

\begin{example} 
Let $G$ be a simple algebraic group of type $G_2$, and consider the reduced expression
$
\widetilde{w}=(s_2,s_1,s_2,s_1)=(w_1,w_2,w_3,w_4).
$ The total ordering $\preceq_{\widehat{w}}$ on the collection $$\bigsqcup_{k=1}^{4} S\setminus I^{w_k}$$ is given by $\alpha_2\prec_{\tilde{w}}\alpha_1\prec_{\tilde{w}}
\alpha_2\prec_{\tilde{w}} \alpha_2$. For $i=1,3$ and $j=2,4$, a straightforward computation shows that
\[
\operatorname{Peaks}X(w_i)
=
S\setminus I^{w_i}
=
\{\alpha_2\}
\qquad\text{and}\qquad
\operatorname{Peaks}X(w_j)
=
S\setminus I^{w_j}
=
\{\alpha_1\}.
\]
 Hence, for $i=1,3$ and $j=2,4$ we have 
\[
\mathcal{B}_{P^{w_i}}
=
\{\mu_{w_i}(\alpha_2)=\alpha_2\}
\qquad\text{and}\qquad
\mathcal{B}_{P^{w_j}}
=
\{\mu_{w_j}(\alpha_1)=\alpha_1\}.
\]
Then the condition $(1)$ of Theorem \ref{thm: Fano gBS} holds. Hence,
$\widehat{X}(\widehat{w})$ is Gorenstein. On the other hand, the set of peaks of the Bott-Samelson variety $Z(\widetilde{w})$ is
\begin{align*}
\operatorname{Peaks}Z(\widetilde{w})
&=
\{s_1s_2s_1(\alpha_2),\,s_1s_2(\alpha_1),\,s_1(\alpha_2),\,\alpha_1\}\\
&=
\{\eta_1:=3\alpha_1+2\alpha_2,\,\eta_2:=2\alpha_1+\alpha_2,\,\eta_3:=3\alpha_1+\alpha_2,\,\eta_4:=\alpha_1\}.
\end{align*} For $1\leq i\leq 4$, let $\xi_i$ denote the divisor class of $Z(\widetilde{ws}_{{\eta_i}})$. Then, by Proposition \ref{anti-can}, we obtain \begin{align*}
[-K_{Z(\tilde{w})}]&=\sum_{k=1}^{4}\bigl(\operatorname{ht}(\eta_k^\vee)+1\bigl)\xi_k= 4\xi_1+6\xi_2+3\xi_3+2\xi_4.
\end{align*}
Using the definition of $\widehat{M}_{P^w}$, we have \[
\widehat{M}_{P^{w_1}}=\begin{pmatrix}
    1&\left\langle\omega_1,s_1(\alpha_2) \right\rangle&\left\langle\omega_2,s_2s_1(\alpha_2) \right\rangle&\left\langle\omega_1,s_1s_2s_1(\alpha_2) \right\rangle
\end{pmatrix}=\begin{pmatrix}
    1&1&2&1
\end{pmatrix},\]

\[
\widehat{M}_{P^{w_2}}=\begin{pmatrix}
    0&1&\left\langle\omega_{2},s_2(\alpha_1)\right\rangle&\left\langle\omega_1,s_1s_2(\alpha_1)\right\rangle
\end{pmatrix}=\begin{pmatrix}
    0&1&3&2
\end{pmatrix},\]

\[
\widehat{M}_{P^{w_3}}=\begin{pmatrix}
    0&0&1&\left\langle\omega_1,s_1(\alpha_2)\right\rangle
    \end{pmatrix}=\begin{pmatrix}
        0&0&1&1
\end{pmatrix}
\quad \text{and}\quad
\widehat{M}_{P^{w_4}}=\begin{pmatrix}
    0&0&0&1
\end{pmatrix}.
\]
Then, we get \[
\widehat{M}=\begin{pmatrix}\widehat{M}_{P^{w_1}}\\ \widehat{M}_{P^{w_2}}\\ \widehat{M}_{P^{w_3}}\\ \widehat{M}_{P^{w_4}}
\end{pmatrix}=\begin{pmatrix}1&1&2&1\\ 0&1&3&2\\ 0&0&1&1\\ 0&0&0&1
\end{pmatrix}.
\] Applying Eq.\eqref{eq:Mhat matrices} with the total order $\preceq_{\widehat{w}}$, we obtain 
\[
\begin{pmatrix}
    1&1&2&1\\ 0&1&3&2\\ 0&0&1&1\\ 0&0&0&1
\end{pmatrix}\begin{pmatrix}
    c_{1}\\c_{2}\\c_{3}\\c_{4}
\end{pmatrix}=\begin{pmatrix}
   \operatorname{ht}\bigl(s_1s_2s_1(\mu_{w_1}(\alpha_2))^\vee\big)+1 \\
   \operatorname{ht}\bigl(s_1s_2(\mu_{w_2}(\alpha_1))^\vee\big)+1\\
   \operatorname{ht}\bigl(s_1(\mu_{w_3}(\alpha_2))^\vee\big)+1
        \\
        \operatorname{ht}\bigl((\mu_{w_4}(\alpha_1))^\vee\big)+1
   \end{pmatrix}=\begin{pmatrix}
  \operatorname{ht}(\eta_1^\vee)+1\\
  \operatorname{ht}(\eta_2^\vee)+1\\\operatorname{ht}(\eta_3^\vee)+1\\\operatorname{ht}(\eta_4^\vee)+1\\
\end{pmatrix}=\begin{pmatrix}
    4\\6\\3\\2
\end{pmatrix}.
\]
By solving this system, we get $c_{1}=1$, $c_{2}=-1$, $c_{3}=1$ and $c_{4}=2$. Thus, we get \[
\mathcal{O}[-K_{\widehat{X}(\widehat{w})}]=\mathcal{L}_{1,\alpha_2}-\mathcal{L}_{2,\alpha_1}+\mathcal{L}_{3,\alpha_2}+2\mathcal{L}_{4,\alpha_1}.
\] Hence, by Theorem~\ref{thm: Fano gBS} we conclude that $Z(s_2,s_1,s_2,s_1)$ is not weak Fano.
\end{example}

\begin{remark}
   In the above example, we can also compute the coefficients $(c_1,c_2,c_3,c_4)$ directly by using Theorem \ref{thm:BSCia}.
\end{remark}

\noindent{\bf Acknowledgements:} We would like to thank Michel Brion for helpful discussions.

\end{document}